\newtheorem{theorem}{Theorem}[section]
\newtheorem{lemma}[theorem]{Lemma}
\newtheorem{corollary}[theorem]{Corollary}
\newtheorem{proposition}[theorem]{Proposition}
\numberwithin{equation}{section}
\theoremstyle{definition}
\newtheorem{definition}[theorem]{Definition}
\newtheorem{remark}[theorem]{Remark}
\def\Cset{\mathbb{C}}
 \def\Rset{\mathbb{R}}
 \def\Zset{\mathbb{Z}}
 \def\Tset{\mathbb{T}}
\def\Iset{\mathbb{I}}
\def\Pset{\mathbb{P}}
\def\va{ \varepsilon}
\def\wt{\widetilde}
\def\wh{\widehat}
\def\leq{\leqslant }
\def\geq{\geqslant}
\def\RA{\mathbb{R}^{\mathcal A}}
\def\A{\mathcal{A}}
\def\iat{I_{\alpha}^t}
\def\iatl{I_{\alpha}^{t,(\ell)}}
\def\iatn{I_{\alpha}^{t,(n)}}
\def\iabn{I_{\alpha}^{b,(n)}}
\def\Log{{\rm Log}}
\def\iab{I_{\alpha}^b}
\def\AA{\mathcal{A}^{(2)}}
\def\RS{\mathbb{R}^{\Sigma}}
\def\RSO{\mathbb{R}_0^{\Sigma}}
\def\RO{R_{\omega}}
\def\cD{{\mathcal D}}
\def\whT{\widehat T}
\begin{document}

\title{Linearization of generalized interval exchange maps}

\author[S. Marmi]{Stefano Marmi}
\address{Scuola Normale
Superiore, Piazza dei Cavalieri 7, 56126 Pisa, Italy}
\email{ s.marmi(at)sns.it}
\author[P. Moussa]{Pierre Moussa}
\address{Institut de Physique Th\'eorique, CEA/Saclay,
91191 Gif-Sur-Yvette, France}
\email{ pierre.moussa(at)cea.fr}
\author[J.-C. Yoccoz]{Jean-Christophe Yoccoz}
\address{Coll\`ege de France, 3, Rue d'Ulm, 75005
Paris, France}
\email{ jean-c.yoccoz(at)college-de-france.fr }

\date{November 22 , 2011}

\subjclass[2000]{Primary: 37C15 (Topological and differentiable equivalence, conjugacy, invariants,
moduli, classification); Secondary: 37E05 (maps of the interval), 37J40 (Perturbations,
normal forms, small divisors, KAM theory, Arnold diffusion), 11J70 (Continued fractions
and generalizations)}

\begin{abstract}
A standard interval exchange map is a one-to-one map of the interval which is locally a translation except at finitely many
singularities. We define for such maps, in terms of the Rauzy-Veech continuous fraction algorithm, a diophantine
arithmetical condition called restricted Roth type which is almost surely satisfied in parameter space. Let $T_0$ be a
standard interval exchange map of restricted Roth type, and let $r$ be an integer $\geq 2$. We prove that, amongst $C^{r+3}$
deformations of $T_0$ which are $C^{r+3}$ tangent to $T_0$ at the singularities, those which are conjugated to $T_0$ by a
$C^r$ diffeomorphism close to the identity form a $C^1$ submanifold of codimension $(g-1)(2r+1) +s$. Here, $g$ is the genus
and $s$ is the number of marked points of the translation surface obtained by suspension of $T_0$. Both $g$ and $s$ can be
computed from the combinatorics of $T_0$.
\vskip 1. truecm
\centerline{\it To the memory of G\'erard Rauzy}

\end{abstract}

\maketitle

\tableofcontents

\section{Introduction}

\subsection{Presentation of the main result}

Many problems of stability in the theory of dynamical systems face the
difficulty of small divisors. The most famous example is probably given by
Kolmogorov-Arnold-Moser theory on the persistence of quasi-periodic solutions
of Hamilton's equations for quasi-integrable Hamiltonian systems (both
finite and infinite-dimensional, like nonlinear wave equations). This is a very
natural situation with many applications to physics and astronomy. What all
these different problems have in common is roughly speaking what follows:
one can associate some ``frequencies'' to the orbits under investigation and
some arithmetical condition is needed to prove their existence and stability.

The simplest example of quasiperiodic dynamics is given by irrational rotations of the circle.
Poincar\'e asked under which condition
a given homeomorphism of the circle is equivalent (in some sense, e.g. topologically or smoothly)
to some rotation and proved that any orientation-preserving homeomorphism of the circle with no periodic orbit is
semi-conjugate to an irrational rotation.
Denjoy proved that, when the rotation
number is irrational, adding regularity to a given homeomorphism $f$ (namely requiring $f$ to be piecewise $C^1$ with $Df$
of bounded variation) is enough to guarantee topological conjugacy to a rotation. The step to higher order differentiability
for the conjugacy $h$ requires
new techniques and additional hypotheses on the rotation number: a small divisor problem must be overcome and this
was first achieved (in the circle case) by Arnold in [A]: he proved that if the rotation number verifies a diophantine
condition and if the analytic diffeomorphism $f$ is close enough to a rotation,
then the conjugation is analytic. At the same time examples of analytic diffeomorphisms,
with irrational rotation number, for which the conjugation is not even absolutely continuous were given.
Later Herman ([He1]) proved a global result: there
exists a full Lebesgue measure set of  rotation
numbers for which a $C^\infty$ (resp. $C^\omega$) diffeomorphism is $C^\infty$ (resp. $C^\omega$) conjugated to
a rotation. In the finitely differentiable case one can prove a similar result
but the conjugacy is less regular than the diffeomorphism: this phenomenon of loss of
differentiability is typical of small divisors problems.

The suspension of  circle rotations produces linear flows on the two-dimensional torus. When analyzing the recurrence of
rotations or the suspended flows, the modular group $\hbox{GL}\, (2,\Zset)$ is of fundamental importance, providing the
renormalization scheme associated to the continued fraction of the rotation number.

A generalization of the linear flows on the two-dimensional torus is obtained by considering linear flows on
translation surfaces of higher genus (see e.g. [Zo1] for a nice introduction to the subject).
By a Poincar\'e section their dynamics can be reduced to (standard)
interval exchange maps (i.e.m.\ ), which generalize rotations of the circle.

A (standard) i.e.m.\ $T$ on an interval $I$ (of finite length) is
a one-to-one map which is locally a translation except at a finite number of discontinuities.
Thus $T$ is orientation-preserving and preserves Lebesgue measure. By asking only that
$T$ is locally an orientation-preserving homeomorphism one obtains the definition of a generalized i.e.m.
Let $d$ be the number of intervals of continuity of $T$.
When $d=2$, by identifying the endpoints of $I$, standard
i.e.m.'s correspond to rotations of the circle and generalized i.e.m.'s to homeomorphisms of the circle.
Standard i.e.m.\ can be suspended following the construction of Veech [Ve1] to give rise to translation surfaces.

Typical standard i.e.m.'s are minimal ([Kea1]) but note that
ergodic properties of minimal standard i.e.m.'s can differ substantially
from those of circle rotations: they need not be
ergodic ([Kea2], [KeyNew]) but almost every standard i.e.m.\ (both in the topological sense [RK]
and in the measure-theoretical sense [Ma, Ve2]) is ergodic. Moreover  the typical non rotational
standard i.e.m.\ is weakly mixing [AF].

Rauzy and Veech have defined an  algorithm
that generalizes the classical continued fraction
algorithm (corresponding to the choice $d=2$) and associates to an i.e.m.\ another i.e.m.\ which is its first
return map to an appropriate subinterval [Ra, Ve2]. Both
 Rauzy--Veech "continued fraction" algorithm and its accelerated
version due to Zorich [Zo2] are ergodic w.r.t.\ an absolutely continous
invariant measure in the space of normalized standard i.e.m.'s. However in the case of
the Rauzy--Veech algorithm the measure has infinite mass
whereas the invariant measure for the Zorich algorithm has finite
mass. The ergodic properties of these renormalisation dynamics in parameter space have been studied in detail ([Ve3],[ Ve4], [Zo3], [Zo4], [AvGoYo], [B], [AB],  [Y4]).

The possible combinatorial data for an i.e.m.\ (standard or generalized) are the vertices of {\it Rauzy diagrams};
the arrows of these diagrams correspond to the possible transitions under the Rauzy-Veech algorithm.

The Rauzy-Veech algorithm, which makes sense for generalized i.e.m.'s , stops if and only if the i.e.m.\ has a {\it
connection},
i.e.\ a finite orbit which starts and ends at a discontinuity. When
the i.e.m.\ has no connection, the algorithm associates to it an infinite path in a Rauzy diagram that can be viewed
as a ``rotation number''.

One can characterize the infinite paths associated to standard i.e.m.\ with no connections ($\infty$--complete paths, see
subsection
2.3). One says that a generalized i.e.m.\ $T$ is {\it irrational} if its associated path is $\infty$--complete; then $T$ is
semi--conjugated to any standard i.e.m.\ with the same rotation number [Y2].

This generalization of Poincar\'e's theorem suggests the following
very natural question: {\it what part of the theory of circle homeomorphisms and
diffeomorphisms generalizes to interval exchange maps ?}

All translation surfaces obtained by suspension from standard i.e.m.\ with a given Rauzy diagram have the same genus $g$,
and the same number $s$ of marked points; these numbers are related to the number $d$ of intervals of continuity
by the formula $d=2g+s-1$.

Regarding Denjoy's theorem, partial results ([CG], [BHM], [MMY2]) go in the negative direction, suggesting that
topological conjugacy to a standard i.e.m.\ has positive codimension in genus $g\ge 2$.

A first step in the direction of extending small divisor results
beyond the torus case was achieved by Forni's important paper ([For1], see also [For3])
on the cohomological equation associated to linear flows on
surfaces of higher genus.
In [MMY1], we considered the cohomological equation $\psi\circ T_0-\psi =\varphi$ for a standard i.e.m.\ $T_0$.
We found explicitly in terms of the Rauzy-Veech algorithm a full measure class of standard i.e.m.\ ( which we called Roth
type i.e.m.\ )
for which the cohomological equation has bounded solution provided that
the datum $\varphi $ belongs to a finite codimension subspace of the
space of functions having on each continuity interval a continuous derivative with bounded
variation. The improved loss of regularity (w.r.t.\ [For1]) will be decisive for the proof of our main result.

The cohomological equation is the linearization of the conjugacy equation $T\circ h=h\circ T_0$ for a generalized i.e.m.\
$T$
close to the standard i.e.m.\ $T_0$.

We say that a generalized i.e.m.\ $T$ is a {\it simple deformation of class} $C^r$ of a standard i.e.m.\ $T_0$ if
\begin{itemize}
\item $T$ and $T_0$ have the same discontinuities;
\item $T$ and $T_0$ coincide in the neighborhood of the endpoints of $I$ and of each discontinuity;
\item $T$ is a $C^r$ diffeomorphism on each continuity interval onto its image.
\end{itemize}

Our main result is a local conjugacy theorem which is stated in full generality in Section 5. For simple deformations
the result can be summarized as follows:

\vskip .3 truecm\noindent {\bf Theorem.}\hspace{5mm} {\it For almost all standard i.e.m.\ $T_0$  and for any integer $r\ge
2$, amongst
the $C^{r+3}$ simple deformations of $T_0$, those which are $C^r$-conjugate to $T_0$ by a diffeomorphism $C^r$ close to the
identity form a $C^1$ submanifold of
codimension $d^*=(g-1)(2r+1)+s$. }

\vskip .3 truecm\noindent

The standard i.e.m.\ $T_0$ considered in the theorem are the Roth type i.e.m.\ for which the Lyapunov exponents of the
KZ-cocycle (see subsection 2.6) are non zero (we call this {\it restricted Roth type}). They still form a full measure set
by Forni's theorem [For2].

The tangent space at $T_0$ to the $C^1$ submanifold of $C^{r+3}$ simple deformations which are $C^r$-conjugate to $T_0$
is formed of  $C^{r+3}$ functions $\varphi$ which vanish in a neighborhood of the singularities of $T_0$ and can be written as 
$$
\varphi = \psi\circ T_0 - \psi\, , 
$$
where $\psi$ is a  $C^r$ function vanishing at the singularities of $T_0$.

To extend this result to generalized i.e.m.'s $T$ of class $C^r$ which are not simple deformations of a standard i.e.m.\
$T_0$, there are gluing problems of the derivatives of $T$ at the discontinuities. Indeed there is  a conjugacy invariant
which is an obstruction to linearization (see Section 4).

An earlier result is presented in an unpublished manuscript of De La Llave and Gutierrez [DG], which was recently
communicated to us by P.\ Hubert. They consider standard i.e.m.\ with periodic paths for the Rauzy-Veech algorithm (for
$d=2$, this corresponds to rotations by a quadratic irrational). They prove that, amongst piecewise analytic generalized
i.e.m.\ , the bi-Lipschitz conjugacy class of such a standard i.e.m.\ contains a submanifold of finite codimension. They also prove that 
bi-Lipschitz conjugacy implies $C^1$-conjugacy.

The proof of our theorem is based on an adaptation of Herman's Schwarzian derivative trick. In [He2] Herman gave simple
proofs of local conjugacy theorems for diffeomorphisms $f$ of the circle: let $\omega$ denote the rotation number, assumed
to satisfy a diophantine condition $|\omega -p/q|\ge \gamma q^{-2-\tau}$ for some $\gamma>0$, $\tau < 1$, and
let $R_\omega$ be the corresponding rotation of the circle. Taking Schwarzian derivatives, the conjugacy equation $f\circ
h=h\circ R_\omega$ becomes $(Sh)\circ R_\omega -Sh=((Sf)\circ h)(Dh)^2$,
a linear difference equation in the Schwarzian derivative $Sh$ of the conjugacy (but the r.h.s.\ depends also on $h$).
Given a diffeomorphism $h$, one computes the r.h.s.\ $((Sf)\circ h)(Dh)^2$, solves the equation
$\psi\circ R_\omega - \psi = ((Sf)\circ h)(Dh)^2$ and then finds a diffeomorphism $\tilde{h}=\Phi (h)$ as smooth as $h$ with
$S\tilde{h}=\psi$. Herman now uses the  Schauder-Tychonov theorem to find a fixed point of $\Phi$ and thus the required
conjugacy.  He was aware of the possibility of using the contraction
principle (at the cost of one more derivative for $f$) as we do in our proof. Herman's method is presented in more detail in
Appendix B.1.

In Section 8 of the paper, we explain how to adapt our result to the setting of perturbations of linear flows on translation surfaces. 
Indeed we prove the following corollary of the main theorem (we refer the reader to Section 8 and to Appendix C for 
the definition of Roth-type translation surface and of simple deformation of a vertical vectorfield): 

\vskip .3 truecm\noindent {\bf Corollary.}\hspace{5mm} {\it Given a translation surface of restricted Roth type and  any integer $r\ge
2$, amongst the $C^{r+3}$ simple deformations of the vertical vectorfield, those which are $C^r$-equivalent to it by a diffeomorphism $C^r$ close to the
identity form a $C^1$ submanifold of codimension $d^*=(g-1)(2r+1)+s$. }

\vskip .3 truecm\noindent

\subsection{Open problems}

\smallskip\noindent

{\bf 1.}\hspace{3mm}{\it Prove the  theorem for $r=1$: for almost all standard i.e.m.\ $T_0$, amongst
the $C^{4}$ simple deformations of $T_0$, those which are $C^1$-conjugate to $T_0$ by a diffeomorphism $C^1$ close to the
identity form a $C^1$ submanifold of
codimension $d^*=3g-3+s$. }\\

A  {\it rationale} for this conjecture comes from the following argument.
Note that $d^*$ is equal here to $(d-1) + (g-1)$. The integer $d-1$ is the dimension of the space of standard i.e.m.\ up to
affine conjugacy. In order to have a $C^1$-conjugacy between a generalized i.e.m.\ $T$ and a standard i.e.m.\ $T_0$ with the
same rotation number, a necessary condition is that the Birkhoff sums of $\Log DT$ (equal to $\Log DT^n$) are bounded. The
integral of $\Log DT$ w.r.t.\ the unique invariant measure is automatically zero, taking care of the largest exponent of the
KZ-cocycle; killing the components w.r.t.\ the remaining $g-1$ positive exponents leads to the expected value of $d^*$.

On the other hand, when the derivatives of the iterates $ DT^n$ are allowed to grow exponentially fast, one could expect to
have wandering intervals (see [MMY2]).

This suggests the existence of a dichotomy between being $C^1$-conjugated to a standard i.e.m.\ and having wandering
intervals:
one can therefore ask whether the following is true:\\

\smallskip\noindent
{\bf 2.}\hspace{3mm}{\it  For almost all standard $T_0$,  any generalized
i.e.m.\ $T$ of class $C^4$ which is a simple deformation of $T_0$ and is
topologically conjugated to $T_0$ is also $C^1$-conjugated to $T_0$.}\\

The two conjectures above can be formulated in a slightly more general setting (not restricted to simple deformations) using
the conjugacy invariant introduced in Section 4.\\

\smallskip\noindent
{\bf 3.}\hspace{3mm} The local $C^r$ conjugacy class of a standard i.e.m.\ $T_0$ (of restricted Roth type) exhibited by our
theorem can be considered as a {\it local stable manifold}
for the renormalization operator ${\mathcal R}$ defined by  the Rauzy-Veech induction (with rescaling) on
generalized i.e.m.'s in a suitable functional space. By the standard techniques this local stable manifold extends to a
global stable manifold
$$W^s(T_0)=\cup_{n\ge 0} {\mathcal R}^{-n}(W^s_{loc}({\mathcal R}^n T_0)\,$$
which is the full $C^r$ conjugacy class of $T_0$.

\medskip
{\it Is this stable manifold ``properly
embedded'' in  parameter space?}

\medskip

More precisely, given a sequence of diffeomorphisms $h_n$ in ${\rm Diff}^r(\overline{I})$ such that $h_n\rightarrow\infty$,

\medskip

 {\it Is it possible that $h_n\circ T_0\circ h_n^{-1}\rightarrow T_0$ in the $C^{r+3}$ topology? Is it possible that
 $h_n\circ T_0\circ h_n^{-1}$ stays bounded in the $C^{r+3}$ topology?}

\medskip

In the case $d=2$
 , the answer to both questions is no. For the second question, this is a consequence of Herman's global conjugacy theorem
 for circle diffeomorphisms.\\

\smallskip\noindent
{\bf 4.}\hspace{3mm} {\it Describe the set of generalized $C^r$ interval exchange maps which are semi-conjugate to a given
standard
i.e.m.\ $T_0$ (with no connections).} \\

In the circle case, for a  diophantine rotation number,  one has a $C^\infty$ submanifold of
codimension $1$. In the Liouville case one  has still a topological manifold of codimension $1$ which is transverse to
all $1$-parameter strictly increasing families. One can therefore dare to ask:

\begin{enumerate}
\item {\it Is the above set a topological submanifold of codimension $d-1$?}
\item {\it if the answer is positive, does there exist a (smooth) field of "transversal" subspaces of dimension $d-1$?}
\end{enumerate}

The questions make sense for any $T_0$, but the answer could depend on the diophantine properties of $T_0$.

\smallskip\noindent
{\bf 5.}\hspace{3mm} {\it In a generic smooth family of generalized i.e.m.'s, is the rotation number irrational with
positive probability?}

In the circle case the answer is affirmative, thanks to Herman's theorem. This is not very likely in higher genus.

\smallskip\noindent
{\bf 6.}\hspace{3mm}{\it Let $r\ge 1$. Describe exactly (in terms of the Rauzy-Veech renormalization algorithm)  the set of
rotation
numbers such that the $C^r$ conjugacy class of $T_0$ has finite codimension in the space of $C^\infty$ generalized
i.e.m.'s.
Does this set depends on $r$?}

In the circle case, this set is (for any $r\geq 1$) the set of diophantine rotation numbers ([Y3],[He1]). In higher genus,
our theorem (in the stronger form stated in Section 5) guarantees that this set contains the restricted Roth type rotation
numbers and therefore has full measure.
It looks like that our methods extend to prove that the unrestricted Roth type rotation numbers also belong to this set (but the 
codimension of the  $C^r$ conjugacy class of $T_0$  is different).
Of course the codimension of the $C^r$ conjugacy class will depend on $r$ but the
point here is that we only require the codimension to be finite.

Note that the answer is not known even at the level of the cohomological equation!

A related question is the optimal loss of differentiability, for instance for restricted Roth type rotation numbers. A careful
reading of the proof (and of Appendix A) will convince the reader that we may consider  $C^{r+2+\tau}$ (for any $\tau >0$) simple 
deformations of  $T_0$ instead of  $C^{r+3}$ simple deformations and still get the same conclusion. On the other hand, the cohomological 
equation suggests that some form of the result could be true for  $C^{r+1+\tau}$  simple 
deformations of  $T_0$ (for any $\tau >0$). This is certainly true in genus $1$. 
This is however beyond the reach of our method.

\subsection{Summary of the paper}
In the next section we introduce standard and generalized
interval exchange maps. We recall the definition and the main properties of the Rauzy-Veech continued fraction
algorithm, and explain how it allows to define in a very natural way a "rotation number" for certain generalized i.e.m.'s. The
algorithm generates a dynamical system in parameter space, equipped with a very important cocycle, the Kontsevich-Zorich
cocycle.  The notations and the
presentation  of this section follow closely
the expository paper [Y1] (see also [Y2],[Y4]).

Section 3 is devoted to the study of the cohomological equation. We introduce a boundary operator on the  space of
piecewise-continuous functions
which vanishes on coboundaries and take care of the neutral component of the KZ-cocycle. We  review
the results of [MMY1] (Theorem 3.10), recalling in particular the definition of Roth type i.e.m. We actually improve on the
results of [MMY1] by showing that under the same assumptions one can obtain a continuous (instead of bounded)
solution. We also reformulate the results in higher smoothness using the boundary operator.

In Section 4 we introduce, for any integer $r\geq 1$,  an  invariant for $C^r$ conjugacy with values in the conjugacy
classes of the group $J^r$ of $r$-jets of orientation-preserving diffeomorphisms of $(\Rset,0)$. We show that it is also
preserved by the renormalization operator defined by the Rauzy-Veech algorithm. We  explain the
relation of this conjugacy invariant
with the boundary operator.

Section 5 contains the precise formulation of our main result (Theorem 5.1): $C^1$ parameter families of generalized i.e.m.'s
of class $C^{r+3}$
 through a standard i.e.m.\ $T_0$ of restricted Roth type are considered. It is assumed that the $C^{r+3}$-conjugacy invariant
 vanishes and an appropriate transversality hypothesis (related to the cohomological equation) is satisfied. The theorem
 then states that the local $C^r$-conjugacy class of $T_0$ intersects the family along a submanifold whose tangent space at
 $T_0$ is given by the cohomological equation. We also show how the hypothesis on the conjugacy invariant allows to reduce
 the proofs to the case of simple families.

Section 6 contains the proof of Theorem 5.1 when $r\geq 3$; following Herman, we use  Schwartzian derivatives to construct a
map whose fixed point is a candidate for the conjugating map. In the circle case, this fixed point is always the conjugating
map. In the present case, some extra equations representing gluing conditions have to be satisfied; these equations define
the local conjugacy class in parameter space.

Section 7 deals with the case $r=2$ of Theorem 5.1. Indeed, a $C^2$-diffeomorphism does not have in general a Schwartzian
derivative. We need a little improvement of Herman's Schwarzian derivative trick. We show how one can
effectively use the primitive of the Schwarzian derivative to construct a contracting map whose fixed point will turn out to
be the conjugacy, under appropriate gluing conditions.

In Section 8 we explain how to adapt our result to the simple deformations of linear flows on translation surfaces. After a brief
introduction to translation surfaces we study the action of the boundary operator at the level of the surface and we prove that 
the conjugacy invariant is trivial for simple deformations of the vertical vectorfield. We then introduce {\it restricted Roth type 
translation surfaces} and we prove the Corollary stated at the end of subsection 1.1.

In Appendix A  we show that the main result of [MMY1] (in the improved version of Theorem 3.10)  is
also valid with data whose
first derivatives are H\"older continuous instead of having bounded variation.

 Appendix B is devoted to the case of
circle diffeomorphisms. In subsection B.1, we deal with $C^r$-conjugacy, $r\geq 3$.  Herman's original result (through
Schauder-Tychonov fixed point theorem) gives a stronger conclusion in this setting; however, the simple variant based on the
fixed point theorem for contracting maps is a better preparation for the more difficult case of Section 6. In the same way,
subsection B.2 introduces the main idea of Section 7 in a simpler setting.

Finally Appendix C is devoted to the study of Roth-type translation surfaces. Proposition C.1 gives several equivalent formulations of 
condition (a) in the definition of a Roth-type i.e.m. (see subsection 3.3). This is then used in order to prove that the i.e.m.'s obtained as first return maps on 
an open bounded segment (in good position) of the vertical flow on a (restricted) Roth-type translation surface are of (restricted) Roth-type.

\vskip 1. truecm \noindent {\bf Acknowledgements}  This research
has been supported by the  following institutions: the Coll\`ege de France, the Scuola
Normale Superiore, the French ANR (grants 0863 Petits diviseurs et r\'esonances en g\'eomtrie, EDP et dynamique and 0864
Dynamique dans l'espace de Teichm\"uller) and the Italian MURST (PRIN grant 2007B3RBEY Dynamical Systems and applications).
We are also grateful to
the two former
institutions, to the Centro di Ricerca Matematica ``Ennio De
Giorgi'' in Pisa  and to the Max Planck Institute f\"ur Mathematik in Bonn for hospitality.
We are grateful to the referee for his suggestions and remarks which led to a considerable improvement of our paper. 

\section{Background}

\subsection{Interval exchange maps}

Let $I$ be an open bounded interval. A generalized interval exchange map (g.i.e.m.\ ) $T$ on $I$ is defined by the following
data. Let $\A$ be an alphabet with $d \geq 2$ symbols. Consider two partitions mod.$0$ of $I$  into $d$ open subintervals
indexed by $\A$ (the {\it top} and {\it bottom} partitions):
$$ I = \sqcup \iat = \sqcup\iab \,.$$
The map $T$ is defined on $\sqcup \iat$ and its restriction to each $\iat$ is an orientation-preserving homeomorphism onto
the corresponding $\iab$.

The g.i.e.m.\ $T$ is {\it standard} if $|\iat| =|\iab|$ for each $\alpha \in \A$ and the restriction of $T$ to each $\iat$ is
a translation.

Let $r$ be an integer $\geq 1$ or $\infty$. The g.i.e.m.\ $T$ is {\it of class $C^r$} if the restriction of $T$ to each $\iat$
extends to a $C^r$-diffeomorphism from the closure of $\iat$ onto the closure of $\iab$. For finite $r$, it is easy to see
that the g.i.e.m.'s  with  fixed $\A$ form a Banach manifold.

The points $u^t_1<\cdots<u^t_{d-1}$ separating the $ \iat$ are called the {\it singularities } of $T$. The points
$u^b_1<\cdots<u^b_{d-1}$ separating the $ \iab$ are called the singularities  of $T^{-1}$. We also write $I=(u_0,u_d)$,
$u^t_0 =u^b_0 =u_0$, $u^t_d=u^b_d=u_d$.

The {\it combinatorial data} of $T$ is the pair $\pi = (\pi_t,\pi_b)$ of bijections from $\A$ onto $\{1,\ldots,d\}$ such
that
$$\iat = (u^t_{\pi_t(\alpha)-1},u^t_{\pi_t(\alpha)}),\quad \iab = (u^b_{\pi_b(\alpha)-1},u^b_{\pi_b(\alpha)})$$
for each $\alpha \in \A$.

We always assume that the combinatorial data are {\it irreducible}: for $1 \leq k<d$, we have
$$ \pi_t^{-1}(\{1,\ldots ,k\})\not=\pi_b^{-1}(\{1,\ldots ,k\})\; . $$

\subsection{The elementary step of the Rauzy--Veech algorithm}

A {\it connection} is a triple $(u_i^t,u_j^b,m)$, where $m$ is a nonnegative integer, such that
$$ T^m(u_j^b)=u_i^t\; . $$

Keane has proved [Kea1] that a {\bf standard} i.e.m.\ with no connection is minimal.

Let $T$ be a g.i.e.m.\ with no connection. We have then $u^t_{d-1} \ne u^b_{d-1}$. Set $\widehat u_d :=
\max(u^t_{d-1},u^b_{d-1})$, $\widehat I := (u_0,\widehat u_d)$, and denote by $\widehat T$ the first return map of $T$ in
$\widehat I$. The return time is $1$ or $2$.

One checks that $\widehat T$ is a g.i.e.m.\ on $\widehat I$ whose combinatorial data $\widehat \pi$ are canonically labeled by
the same alphabet $\A$ than $\pi$ (cf.[MMY1] p.829). Moreover $\widehat T$ has no connection; this allows to iterate the
algorithm.

We say that $\widehat T$ is deduced from $T$ by an elementary step of the Rauzy--Veech algorithm. We say that the step is of
{\it top} (resp. {\it bottom}) {\it type} if $u^t_{d-1} < u^b_{d-1}$ (resp. $ u^t_{d-1} > u^b_{d-1}$). One then writes
$\widehat \pi = R_t(\pi)$ (resp. $\widehat \pi = R_b(\pi)$).

\subsection{Rauzy diagrams}

A {\it Rauzy class} on the alphabet $\A$ is a nonempty set of irreducible combinatorial data
which is invariant under $R_t,R_b$ and minimal with respect to this property.
A {\it Rauzy diagram} is a graph whose vertices are the elements of a Rauzy class and whose arrows
connect a vertex $\pi$ to its images $R_t(\pi)$ and $R_b(\pi)$. Each vertex is therefore the origin of two arrows. As
$R_t,R_b$ are invertible, each vertex is also the endpoint of two arrows.

An arrow connecting $\pi$ to $R_t(\pi)$ (respectively $R_b(\pi)$) is said to be of {\it top type} (resp.\ {\it bottom
type}). The {\it winner} of an arrow of top (resp.\ bottom) type starting at $\pi=(\pi_t,\pi_b)$ with $\pi_t(\alpha_t)=
\pi_b(\alpha_b)=d$ is the letter $\alpha_t$ (resp.\ $\alpha_b$) while the {\it loser} is $\alpha_b$ (resp.\ $\alpha_t$).

A path $\gamma$ in a Rauzy diagram is {\it complete} if each letter in $\A$ is the winner of at least one arrow in $\gamma$;
it is $k$--{\it complete} if $\gamma$ is the concatenation of $k$ complete paths. An infinite path is $\infty$--{\it
complete} if it is the concatenation of infinitely many complete paths.

\subsection{The Rauzy-Veech algorithm}

Let $T=T^{(0)}$ be an i.e.m.\ with no connection. We
denote by $\A$ the alphabet for the combinatorial data $\pi^{(0)}$ of $T^{(0)}$ and by ${\mathcal D}$ the Rauzy diagram on
$\A$ having $\pi^{(0)}$ as a vertex.

The i.e.m.\ $T^{(1)}$, with combinatorial data $\pi^{(1)}$, deduced from $T^{(0)}$ by the elementary step of the
Rauzy--Veech algorithm has also no connection. It is therefore possible to iterate this elementary step indefinitely
and get a sequence $T^{(n)}$ of i.e.m.\, with combinatorial data $\pi^{(n)}$, acting on a decreasing sequence $I^{(n)}$ of
intervals and a sequence $\gamma (n,n+1)$ of arrows in ${\mathcal D}$ from $\pi^{(n)}$ to $\pi^{(n+1)}$ associated to the
successive steps of the algorithm. For $m<n$, we also
write $\gamma (m,n)$ for the path from $\pi^{(m)}$ to $\pi^{(n)}$ made of the concatenation of the $\gamma (l,l+1)$, $m\leq
l<n$.

We write $\gamma(T)$ for the infinite path starting from $\pi^{(0)}$ formed by the $\gamma (n,n+1)$, $n\geq 0$.
If $T$ is a standard i.e.m.\ with no connection, then $\gamma(T)$ is $\infty$-complete ([MMY1] ,p.832). Conversely, an
$\infty$-complete path is equal to $\gamma(T)$ for some standard i.e.m.\ with no connection. On the other hand, for a
generalized i.e.m.\ $T$ with no connection, the path $\gamma(T)$ is not always $\infty$-complete.

\begin{definition}
A generalized i.e.m.\ $T$ is {\it irrational} if it has no connection and $\gamma(T)$ is $\infty$-complete. We then call
$\gamma(T)$ the {\it rotation number} of $T$.
\end{definition}

In the circle case $d=2$, the Rauzy diagram has one vertex and two arrows. If the rotation number of a circle homeomorphism
$T$ has a continued fraction expansion $[a_1,a_2,\ldots ]$, the associated $\infty$-complete path takes $a_1$ times the first
arrow, then $a_2$ times the second arrow, $a_3$ times the first arrow, \ldots.

From the definition, a standard i.e.m.\ is irrational iff it has no connection. Two standard i.e.m.\ with no connection are
topologically conjugated iff they have the same rotation number [Y2]. More generally, if $T$ is an irrational g.i.e.m.\ with
the same rotation number than a standard i.e.m.\ $T_0$, then there is, as in the circle case, a semiconjugacy  from $T$ to
$T_0$, i.e a continuous nondecreasing surjective map $h$ from the interval $I$ of $T$ onto the interval $I_0$ of $T_0$ such
that $T_0 \circ h = h \circ T$ (cf.[Y2]).

\subsection{Suspension and genus}
Let $T$ be a standard i.e.m.\ with combinatorial data $\pi= (\pi_t,\pi_b)$. For $\alpha \in \A$ let
$$\lambda_{\alpha} = |\iat|=|\iab|,\quad \tau_{\alpha}= \pi_b(\alpha)-\pi_t(\alpha), \quad \zeta_{\alpha}=\lambda_{\alpha}
+i \tau_{\alpha}.$$

In the complex plane, draw a top (resp. bottom) polygonal line from $u_0$ to $u_d$ through $u_0 + \zeta_{\pi_t^{-1}(1)}, u_0
+ \zeta_{\pi_t^{-1}(1)} +
\zeta_{\pi_t^{-1}(2)}, \ldots$ (resp. $u_0 + \zeta_{\pi_b^{-1}(1)}, u_0 + \zeta_{\pi_b^{-1}(1)} +
\zeta_{\pi_b^{-1}(2)}, \ldots$). These two polygonal lines bound a polygon. Gluing the $\zeta_{\alpha}$ bottom and top sides
of the polygon produces a translation surface $M_T$ ([Zo]). The vertices of the polygon form a set of marked points $\Sigma$
on $M_T$. The cardinality $s$ of $\Sigma$, the genus $g$ of $M_T$ and the number $d$ of intervals are related by
$$d=2g+s-1\;.$$
The genus $g$ can be computed directly from the combinatorial data as follows. Define an antisymmetric matrix $\Omega =
\Omega (\pi)$ by
\begin{equation*}
\Omega_{\alpha \,\beta}=\left\{
\begin{array}{cc}
+1 & \text{if } \pi_t(\alpha)<\pi_t(\beta),\pi_b(\alpha)>\pi_b(\beta),\\
-1 & \text{if } \pi_t(\alpha)>\pi_t(\beta),\pi_b(\alpha)<\pi_b(\beta),\\
0 &\text{otherwise.}
\end{array} \right.
\end{equation*}
Then the rank of $\Omega$ is $2g$. Actually ([Y1],[Y4]), if one identifies $\RA$ with the relative homology group
$H_1(M_T,\Sigma,\Rset)$ via the basis defined by the sides $\zeta_{\alpha}$ of the polygon, the image of $\Omega$ coincides
with the absolute homology group $H_1(M_T,\Rset)$. Another way to compute $s$ (and thus $g$) consists in going around the
marked points, as explained in subsection 3.1.

\subsection{The (discrete time) Kontsevich-Zorich cocycle}

Let ${\mathcal D}$ be a Rauzy diagram on an alphabet $\A$. To each arrow $\gamma$ of ${\mathcal D}$, we associate the matrix
$B_{\gamma} \in SL(\Zset^{\A})$
$$B_{\gamma} =  \Iset + E_{\alpha\,\beta},$$
where $\alpha$ is the loser of $\gamma$, $\beta$ is the winner of $\gamma$, and  $E_{\alpha\,\beta}$ is the elementary
matrix whose only nonzero coefficient is in position $\alpha\,\beta$.
For a path $\gamma$ in $\cD$ made of the successive arrows $\gamma_1\ldots \gamma_l$ we associate the product
$B_\gamma=B_{\gamma_l}\ldots B_{\gamma_1}$. It belongs to $SL(\Zset^{\A})$ and has nonnegative coefficients.

Let $T$ be a g.i.e.m.\ with no connection, whose combinatorial data is a vertex of $\cD$. Let $\whT$ be deduced from $T$ by a
certain number of steps of the Rauzy-Veech algorithm, and let $\gamma$ be the 	associated path of $\cD$. Let $\Gamma$ be
the space of functions on $\sqcup \iat$ which are constant on each $\iat$, and let $\widehat \Gamma$ be the corresponding
subspace for $\whT$. Both $\Gamma$ and $\widehat \Gamma$ are canonically
identified with $\RA$. Then $B_{\gamma}$ is the matrix of the following operator $S$ from $\Gamma$ to $\widehat \Gamma$ :
for $\chi \in \Gamma$
$$S\chi(x) = \sum_{0\leq i <r(x)} \chi(T^i(x))$$
where $x$ belongs to the domain $\widehat I$ of $\whT$ and $r(x)$ is the return time of $x$ in $\widehat I$.

Let ${\mathcal R}$ be the Rauzy class associated to $\cD$. Restricted to standard  i.e.m.\ (considered up to affine
conjugacy), the Rauzy-Veech algorithm defines a map $Q_{RV}$ on the parameter space $ {\mathcal R} \times \Pset (\RA)$. The
operator $S$ define a cocycle over these dynamics called the (extended) Kontsevich-Zorich cocycle.

\section{The cohomological equation revisited}

\subsection{The boundary operator}
Let $T$ be a generalized i.e.m.\ on an interval $I$, $ I = \sqcup \iat = \sqcup\iab$ the associated partitions (mod.$0$),
$\pi = (\pi_t,\pi_b)$ the combinatorial data of $T$ on an alphabet $\A$. We denote by $\,_b\alpha,\,_t\alpha, \alpha_b,
\alpha_t$ the elements of $\A$ such that
$\pi_b(\,_b\alpha) = \pi_t(\,_t\alpha)=1$, $\pi_b(\alpha_b)=\pi_t(\alpha_t)=d$.

We denote by $\AA$ the union of two disjoint copies of $\A$. Elements of $\AA$ are denoted by $(\alpha,L)$ or $(\alpha,R)$
and are associated to the left and right endpoints of the intervals $\iat$ (or $\iab$). More precisely, for $\upsilon \in
\AA$,
we denote by $u^t(\upsilon)$, $u^b(\upsilon)$ the left endpoints of $\iat$, $\iab$ respectively if $\upsilon = (\alpha,L)$,
and  by $u^t(\upsilon)$, $u^b(\upsilon)$ the right endpoints of $\iat$, $\iab$ respectively if $\upsilon = (\alpha,R)$.

Given combinatorial data $\pi = (\pi_t,\pi_b)$, the set $\AA$ is endowed with a permutation $\sigma$ defined as follows:
\begin{eqnarray*}
\sigma(\alpha,R) \;= &(\beta,L), &\quad {\rm when}\; \alpha \ne \alpha_t, \;\pi_t(\beta) = \pi_t(\alpha) +1, \\
\sigma(\alpha_t,R) \;=&(\alpha_b,R), &\\
\sigma(\alpha,L) \;= &(\beta,R), &\quad {\rm when}\; \alpha \ne \, _b\alpha, \;\pi_b(\beta) = \pi_b(\alpha) -1, \\
\sigma(\,_b\alpha,L)\; =&(\,_t\alpha,L). &
\end{eqnarray*}
The cycles of $\sigma$ are canonically associated to the marked  points of any translation surface constructed by suspension
from an i.e.m.\ having $\pi$ as combinatorial data. We denote by $\Sigma$ the set of cycles of $\sigma$, by $s$ the
cardinality of $\Sigma$. We have $d = 2g+s-1$.\\

Let $r\geq 0$ be an integer. We denote by $C^r(\sqcup \iat)$ the space of functions $\varphi$ on $\sqcup \iat$ such that,
for each $\alpha \in \A$, the restriction of $\varphi$ to $\iat$ extends to a $C^r$ function on the closure of $\iat$.

For a function $\varphi$ in $C^0(\sqcup \iat)$ and $\upsilon \in \AA$, we make a slight abuse of notation by writing
$\varphi(\upsilon)$ for the limit of $\varphi$ at the left (resp. right) endpoint of $\iat$ if $\upsilon = (\alpha,L)$
(resp.
$\upsilon = (\alpha,R)$). We also write $\va(\upsilon) =-1$ if $\upsilon = (\alpha,L)$, $\va(\upsilon) =+1$ if $\upsilon =
(\alpha,R)$.

\begin{definition}
The {\it boundary operator} $\partial : C^0(\sqcup \iat) \rightarrow \RS$ is defined by
$$(\partial \varphi)_C = \sum_{\upsilon \in C} \va(\upsilon)\; \varphi (\upsilon),$$
where $C$ is any cycle of $\sigma$. The kernel of the boundary operator is denoted by $C_{\partial}^0(\sqcup \iat)$.
\end{definition}

Note that
\begin{equation}
\sum_{C\in \Sigma} (\partial \varphi)_C = \sum_{\alpha \in \A} (\varphi(\alpha,R) - \varphi(\alpha,L)).
\end{equation}
When $\varphi$ belongs to $C^1(\sqcup \iat)$, this gives
\begin{equation}
\sum_{C\in \Sigma} (\partial \varphi)_C = \int_I D\varphi(x)\;dx.
\end{equation}

The following proposition summarizes the properties of the boundary operator. Recall that $\Gamma \subset C^0(\sqcup \iat)$
is the set of functions which are constant on each $\iat$. We denote by $\RSO$ the hyperplane of $\RS$ formed by the
vectors
for which the sum of the coordinates vanishes.

Let $M$ be a translation surface constructed by suspension from a standard i.e.m.\ $T_0$ having $\pi$ as combinatorial data.
Then, we can identify $\Sigma$ with the set of marked points on $M$, $\Gamma$ with the relative homology group
$H_1(M,\Sigma, \Rset)$ (the characteristic function of $\iat$ corresponds to oriented parallel sides with label $\alpha$ of
the polygon which gives rise to $M$ after the gluing). It is then clear that the operator $\partial$ restricted to $\Gamma$
is indeed the boundary operator
$$  \partial: H_1(M,\Sigma, \Rset) \rightarrow H_0(\Sigma,\Rset) = \RS \,.$$
\begin{proposition}
\begin{enumerate}
\item For  a g.i.e.m.\ $T$ with combinatorial data $\pi$, and $\psi \in C^0(\overline I)$, one has $\partial \psi =
    \partial (\psi \circ T)$.
\item The kernel $\Gamma_{\partial}$ of the restriction of $\partial$ to $\Gamma$ is the image of $\Omega(\pi)$, and the
    image is $\RSO$.
\item The boundary operator $\partial : C^0(\sqcup \iat) \rightarrow \RS$ is onto.
\item Let $T$ be a g.i.e.m.\ with combinatorial data $\pi$, and let $\wt T$, acting on a subinterval $\wt I \subset I$, be
    obtained from $T$ by one or several steps of the Rauzy-Veech algorithm. For $\varphi \in C^0(\sqcup \iat)$, denote
    by
$S \varphi \in C^0(\sqcup \wt {I}_{\alpha}^t)$ be the special Birkhoff sums corresponding to the first return in $\wt
I$. Then we have
$$\partial (S \varphi) = \partial \varphi ,$$
where the left-hand side boundary operator is defined using the combinatorial data $\wt \pi$ of $\wt T$.
\end{enumerate}
\end{proposition}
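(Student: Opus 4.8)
The plan is to prove the four assertions separately; (1) is a short combinatorial telescoping, (2) and (3) are essentially formal, and (4) is where the real work lies.

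\emph{Assertion (1).} Since $\psi$ is continuous on all of $\overline I$, the boundary value $\psi(\upsilon)$ of the restriction of $\psi$ to $\sqcup\iat$ is simply $\psi(u^t(\upsilon))$; and since the restriction of $T$ to each $\iat$ extends to a homeomorphism of $\overline{\iat}$ onto $\overline{I_\alpha^b}$ sending left (resp. right) endpoint to left (resp. right) endpoint, one has $(\psi\circ T)(\upsilon)=\psi(u^b(\upsilon))$. Thus $(\partial\psi)_C=\sum_{\upsilon\in C}\va(\upsilon)\,\psi(u^t(\upsilon))$, and I would regroup this sum by the value $u^t(\upsilon)\in\{u^t_0,\dots,u^t_d\}$. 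Each interior point $u^t_j$ ($0<j<d$) is $u^t(\cdot)$ of exactly two elements of $\AA$, namely $(\alpha,R)$ with $\pi_t(\alpha)=j$ and $(\beta,L)$ with $\pi_t(\beta)=j+1$; the first clause in the definition of $\sigma$ gives $\sigma(\alpha,R)=(\beta,L)$, so they lie in a common cycle, and since $\va(\alpha,R)=+1=-\va(\beta,L)$ their contributions to any $(\partial\psi)_C$ cancel. Only the extremes survive: $u^t_0$ contributes $-\psi(u_0)$ via $({}_t\alpha,L)$ and $u^t_d$ contributes $+\psi(u_d)$ via $(\alpha_t,R)$. Hence $(\partial\psi)_C$ equals $\psi(u_d)$ if $(\alpha_t,R)\in C$ and $-\psi(u_0)$ if $({}_t\alpha,L)\in C$ (these being additive when both occur). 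The same argument applied to $\psi\circ T$, using the bottom points $u^b_j$ and the third clause in the definition of $\sigma$, shows $(\partial(\psi\circ T))_C$ equals $\psi(u_d)$ if $(\alpha_b,R)\in C$ and $-\psi(u_0)$ if $({}_b\alpha,L)\in C$. The two descriptions agree for every $C$, because the second and fourth clauses of the definition of $\sigma$ give $\sigma(\alpha_t,R)=(\alpha_b,R)$ and $\sigma({}_b\alpha,L)=({}_t\alpha,L)$, placing each relevant pair in a common cycle. (As a check, summing over $C$ recovers $\psi(u_d)-\psi(u_0)=\int_I D\psi$, consistent with the displayed identities preceding the proposition.)

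\emph{Assertion (2).} This follows at once from the identification, recalled just before the proposition, of $\partial|_\Gamma$ with the topological boundary map $\partial\colon H_1(M,\Sigma,\Rset)\to H_0(\Sigma,\Rset)=\RS$, together with the long exact homology sequence of the pair $(M,\Sigma)$. Since $\Sigma$ is finite, $H_1(\Sigma,\Rset)=0$, so $H_1(M,\Rset)$ injects into $H_1(M,\Sigma,\Rset)$ and $\ker(\partial|_\Gamma)$ is precisely its image, which by subsection 2.5 equals $\mathrm{Im}\,\Omega(\pi)$. Likewise $\mathrm{Im}(\partial|_\Gamma)=\ker\bigl(H_0(\Sigma,\Rset)\to H_0(M,\Rset)\bigr)$; as $M$ is connected and $\Sigma\neq\emptyset$, this map is the surjection ``sum of coordinates'' and its kernel is the hyperplane $\RSO$.

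\emph{Assertion (3).} For each $\upsilon\in\AA$ pick $\varphi_\upsilon\in C^0(\sqcup\iat)$ equal, on the top interval whose label is that of $\upsilon$, to a continuous bump taking the value $1$ at $u^t(\upsilon)$ and $0$ at the other endpoint of that interval, and equal to $0$ on every other top interval. Then $\varphi_\upsilon(\upsilon')=\delta_{\upsilon\upsilon'}$, so $\partial\varphi_\upsilon=\va(\upsilon)\,e_{C(\upsilon)}$, where $C(\upsilon)$ is the cycle containing $\upsilon$ and $(e_C)_{C\in\Sigma}$ is the standard basis of $\RS$. Every cycle is nonempty, so all $e_C$ are in the image and $\partial$ is onto. (The contrast with (2) is exactly the identity $\sum_C(\partial\varphi)_C=\sum_\alpha(\varphi(\alpha,R)-\varphi(\alpha,L))$: on $\Gamma$ the two endpoint values on each interval coincide, forcing the image into $\RSO$, while a general $\varphi$ is unconstrained.)

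\emph{Assertion (4).} By composing special Birkhoff sums it suffices to treat a single elementary step; take it of top type (bottom type is symmetric), with winner $\alpha_t$, loser $\alpha_b$, $\wh u_d=u^b_{d-1}$, $\wh I=(u_0,\wh u_d)$ and $\wh T$ the first return map. Inspecting the first-return structure one gets: $\wh I_\alpha^t=\iat$ with return time $1$ for $\alpha\notin\{\alpha_t,\alpha_b\}$; $\wh I^t_{\alpha_b}=I^t_{\alpha_b}$ with return time $2$, along which $T$ carries $I^t_{\alpha_b}$ into $I^b_{\alpha_b}\subset I^t_{\alpha_t}$; and $\wh I^t_{\alpha_t}=(u^t_{d-1},\wh u_d)\subsetneq I^t_{\alpha_t}$ with return time $1$. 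Writing $v_\upsilon=\varphi(\upsilon)$ and $w_\upsilon=(S\varphi)(\upsilon)$ (the latter computed for $\wh T$), one then reads off $w_\upsilon=v_\upsilon$ except that $w_{(\alpha_t,R)}=V$, $w_{(\alpha_b,L)}=v_{(\alpha_b,L)}+V$ and $w_{(\alpha_b,R)}=v_{(\alpha_b,R)}+v_{(\alpha_t,R)}$, where $V$ is the value at the interior point $\wh u_d$ of the restriction of $\varphi$ to $I^t_{\alpha_t}$. Using the explicit Rauzy move $\wh\pi=R_t(\pi)$, the induced permutation $\wh\sigma$, and the canonical bijection $\wh\Sigma\leftrightarrow\Sigma$ of marked points, one checks cycle by cycle that $(\partial(S\varphi))_{\wh C}=(\partial\varphi)_C$. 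The decisive points are that $(\alpha_t,R)$ and $(\alpha_b,L)$ lie in a common $\wh\sigma$-cycle with opposite signs $\va$ — so the two occurrences of the spurious interior value $V$ cancel — and that the surplus term $v_{(\alpha_t,R)}$ in $w_{(\alpha_b,R)}$ exactly compensates the disappearance, in passing from $\pi$ to $\wh\pi$, of the endpoint $u_d$ that carried the value $v_{(\alpha_t,R)}$. This verification, together with the bookkeeping that matches the cycle structure of $\wh\sigma$ with that of $\sigma$, is where essentially all the work lies; everything else is formal. I expect this to be the main obstacle. One cannot sidestep it by first checking the identity on $\Gamma$ — where $S$ acts as the matrix $B_\gamma$, which induces the identity on $H_0(\Sigma,\Rset)$ and hence commutes with $\partial$ by (2) — because $\Gamma$ is not dense in $C^0(\sqcup\iat)$ and the interior value $V$ genuinely enters the general computation.
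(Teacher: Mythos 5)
Your proposal is correct and follows essentially the same route as the paper: (1) is the same telescoping cancellation over $\sigma$-cycles, (2) rests on the same homological identification of $\partial|_\Gamma$ (you merely spell out the long exact sequence that the paper leaves implicit), and (4) uses exactly the paper's formulas for how $\sigma$ versus $\wt\sigma$ and $\varphi$ versus $S\varphi$ differ, followed by the cycle-by-cycle cancellation. The only real variation is in (3): the paper obtains surjectivity with a one-line argument, observing that $\phi^*(x)=x$ gives $\sum_C(\partial\phi^*)_C=1\ne 0$ and then invoking (2) to see the image strictly contains the hyperplane $\RSO$, whereas you construct, via bump functions, explicit preimages of each standard basis vector $e_C$ — more hands-on and independent of (2), but equivalent in substance.
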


\begin{proof} Let $\psi \in C^0(\overline I)$, $C \in \Sigma$. For $\upsilon = (\alpha,R) \in C$ with $\alpha \ne \alpha_t$,
we have
$u^t(\upsilon) = u^t(\sigma(\upsilon))$ with $\va(\upsilon) = - \va(\sigma(\upsilon))$. Therefore $(\partial \psi)_C =
\va_1
\psi(1) - \va_0 \psi(0)$, where $\va_0$ (resp. $\va_1$) is $1$ or $0$ depending whether $(\,_t \alpha,L)$ (resp. $(\alpha_t
,R)$) belongs or not to $C$.

Similarly, for $\upsilon = (\alpha,L) \in C$ with $\alpha \ne \,_b\alpha$, we have
$u^b(\upsilon) = u^b(\sigma(\upsilon))$ with $\va(\upsilon) = - \va(\sigma(\upsilon))$. Therefore $(\partial (\psi \circ
T))_C = \va'_1
\psi(1) - \va'_0 \psi(0)$, where $\va'_0$ (resp. $\va'_1$) is $1$ or $0$ depending whether $(\,_b \alpha,L)$ (resp.
$(\alpha_b ,R)$) belongs or not to $C$.

As $\sigma(\,_b \alpha,L)= (\,_t\alpha,L)$ and $\sigma(\alpha_t,R) = (\alpha_b,R)$, we have $\va_0 =\va'_0$ and $\va_1=
\va'_1$. This proves (1).

The restriction of the operator $\partial$ to $\Gamma$ has been described in homological terms just before the proposition.
It follows from this description that the image of $\Gamma$ by $\partial$ is indeed $\RSO$. As the image of $\Omega(\pi)$ is
identified with the image of the absolute homology group $H_1(M,\Rset)$ in $H_1(M,\Sigma, \Rset)$, the proof of (2) is
complete.

Let $\phi^*(x) =x$; then $\sum_{C\in \Sigma} (\partial \phi^*)_C = 1$. Thus the image of $\partial$ is strictly bigger than
$\RSO$, which proves (3).

To prove (4), it is sufficient to consider the case where $\wt T$ is obtained from $T$ by one step of the Rauzy-Veech
algorithm. We assume that this step is of top type, the case of bottom type being symmetric. Denote by $\wt \sigma$ the
permutation of $\AA$ defined from the combinatorial data
$\wt \pi$ of $\wt T$, by $\alpha'_t$ the element of $\A$ such that $\pi_b(\alpha'_t) = \pi_b(\alpha_t)+1$, by $\wt \alpha_b$
the element of $\A$ such that $\pi_b(\wt \alpha_b) = d-1$. We have $\sigma(\upsilon) = \wt \sigma(\upsilon)$, except for
\begin{eqnarray*}
\sigma(\alpha'_t, L) =& (\alpha_t,R), \quad \wt \sigma(\alpha'_t,L) =& (\alpha_b,R),\\
\sigma(\alpha_t,R) =&(\alpha_b,R), \quad \wt \sigma(\alpha_t,R) =& (\wt \alpha_b,R),\\
\sigma(\alpha_b,L)=& (\wt \alpha_b,R) ,\quad  \wt \sigma(\alpha_b,L) =& (\alpha_t,R).
\end{eqnarray*}
Let $\varphi \in C^0(\sqcup \iat)$. For $\upsilon \in \AA$,
we have $S\varphi(\upsilon) = \varphi(\upsilon)$ except for
\begin{eqnarray*}
S\varphi(\alpha_b,L) &=& \varphi(\alpha_b,L) + \varphi(u^b_{d-1}),\\
S\varphi(\alpha_b,R) &=& \varphi(\alpha_b,R) + \varphi(\alpha_t,R),\\
S\varphi(\alpha_t,R) &=& \varphi(u^b_{d-1}).
\end{eqnarray*}
From these formulas, it is easy to see that $\partial(S\varphi) = \partial \varphi$.
\end{proof}

\begin{remark}
Let $\varphi \in C_{\partial}^0(\sqcup \iat)$ such that $\varphi(\upsilon) = 0$ for all $\upsilon \in \AA$. Assume also that
there exists $\psi \in C(\overline I )$ such that $\varphi = \psi \circ T - \psi$. Then, given such a function $\psi$, there
is a family $(\psi_C)_{C \in \Sigma}$ such that
$$ \psi(u^t(\upsilon)) = \psi(u^b(\upsilon)) = \psi_C \,,$$
for all $\upsilon \in C$, all $C \in \Sigma$. The function $\psi$, hence also the family $(\psi_C)_{C \in \Sigma}$, is only
well-defined up to an additive constant by $\varphi$. We will denote by $\nu(\varphi)$ the image in $\RS / \Rset$ of the
family  $(\psi_C)_{C \in \Sigma}$.
\end{remark}

\subsection{Continuity of the solutions of the cohomological equation}

The main tool in [MMY1] to obtain bounded solutions of the cohomological equations was the Gottschalk-Hedlund theorem ([GH],
[He1]).

\begin{theorem}
Let $f$ be a minimal homeomorphism of a compact metric space $X$, and let $\varphi$ be a continuous function on $X$. The
following properties are equivalent:
\begin{enumerate}
\item $\varphi = \psi \circ f - \psi$, for some continuous function $\psi$ on $X$;
\item $\varphi = \psi \circ f - \psi$, for some bounded function $\psi$ on $X$;
\item there exists $C>0$ such that the Birkhoff sums of $\varphi$  satisfy $|S_n \varphi(x)|<C$ for all $n\in \Zset$,
    $x\in X$;
\item there exists $C>0$, $x_0 \in X$ such that the Birkhoff sums of $\varphi$  satisfy $|S_n \varphi(x_0)|<C$ for all
    $n\geq 0$.
\end{enumerate}
\end{theorem}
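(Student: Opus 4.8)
\emph{Proof proposal.} First I would dispatch the easy implications $(1)\Rightarrow(2)\Rightarrow(3)\Rightarrow(4)$. A continuous function on the compact space $X$ is bounded, which gives $(1)\Rightarrow(2)$. If $\varphi=\psi\circ f-\psi$, telescoping yields $S_n\varphi=\psi\circ f^n-\psi$ for every $n\in\Zset$ (with the cocycle convention $S_{-n}\varphi=-\sum_{k=1}^n\varphi\circ f^{-k}$ for $n>0$), so $|S_n\varphi|\leq 2\|\psi\|_\infty$ everywhere; this gives $(2)\Rightarrow(3)$, and $(3)\Rightarrow(4)$ is trivial. The substance is $(4)\Rightarrow(1)$.

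For that the plan is the skew-product construction. I would consider the homeomorphism $F\colon X\times\Rset\to X\times\Rset$, $F(x,t)=(f(x),\,t+\varphi(x))$, whose orbit of $(x_0,0)$ is $\bigl(f^n(x_0),\,S_n\varphi(x_0)\bigr)_n$; by $(4)$ its forward part lies in the compact box $X\times[-C,C]$. Let $Y$ be the closure of this forward orbit and set $\Lambda=\bigcap_{n\geq 0}F^n(Y)$, a nonempty compact set with $F(\Lambda)=\Lambda$; since $f$ is minimal, the projection of $\Lambda$ to $X$ is all of $X$. Each fibre $\Lambda_x=\{t:(x,t)\in\Lambda\}$ is nonempty and compact, and I set $M(x)=\max\Lambda_x$, $m(x)=\min\Lambda_x$. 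Because $F$ carries $\Lambda_x$ bijectively onto $\Lambda_{f(x)}$ by the increasing affine map $t\mapsto t+\varphi(x)$, one gets $M\circ f=M+\varphi$ and $m\circ f=m+\varphi$, while a routine compactness argument shows $M$ is upper semicontinuous and $m$ lower semicontinuous.

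The core step is to turn $\Lambda$ into an honest graph. The width $g=M-m\geq 0$ is $f$-invariant and upper semicontinuous, hence attains its maximum on a nonempty closed $f$-invariant set, which by minimality is $X$; so $g\equiv c$ for a constant $c\geq 0$. If $c=0$, then $\Lambda$ is the graph of $\psi:=M=m$, which is continuous (being at once upper and lower semicontinuous) and satisfies $\psi\circ f-\psi=\varphi$ by the $F$-invariance of $\Lambda$. If $c>0$, I would replace $\Lambda$ by $\Lambda_1=\Lambda\cap\{(x,t):t\leq M(x)-c/2\}$: upper semicontinuity of $M$ makes this set closed, the defining inequality is $F$-invariant (both $t$ and $M(x)$ shift by $\varphi(x)$), it is nonempty (it contains $(x,m(x))$ since $m=M-c$) and still projects onto $X$, and its new width constant is $\leq c/2$. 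Iterating gives nested compact $F$-invariant sets $\Lambda_n$, each projecting onto $X$, with width constants $\leq c/2^n$; the intersection $\Lambda_\infty$ is nonempty, $F$-invariant, projects onto $X$, and has all fibres of width $0$, hence is the graph of the desired continuous $\psi$ with $\varphi=\psi\circ f-\psi$.

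The one genuinely delicate point is this final halving step — ruling out that the smallest invariant compact object one can extract from the skew-product is a ``thick tube'' rather than a single-valued graph. Everything else is formal bookkeeping with semicontinuity and invariance, and minimality of $f$ enters at exactly two places: to see that nonempty closed invariant subsets of $X$ are $X$, and that $f$-invariant semicontinuous functions are constant.
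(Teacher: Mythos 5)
Your proof is correct and complete. A point of context: the paper itself does not prove this statement — it is the classical Gottschalk–Hedlund theorem, which the paper simply invokes with the citations [GH] and [He1] — so there is no internal proof to compare against. What you give is the standard skew-product argument, and all the steps check out: the compactness of $Y$ from hypothesis (4), the fact that $\Lambda=\bigcap_{n\ge 0}F^n(Y)$ is a nonempty compact set with $F(\Lambda)=\Lambda$ that projects onto $X$ by minimality of $f$, the semicontinuity of the fibre extrema $M$ and $m$ and the relations $M\circ f=M+\varphi$, $m\circ f=m+\varphi$, and — the only delicate point — that the invariant upper semicontinuous width $g=M-m$ is constant by minimality of $f$.

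For the final reduction from a ``thick tube'' to a graph, your iterated halving is valid: each $\Lambda_{n+1}=\Lambda_n\cap\{t\le M_n(x)-c_n/2\}$ is closed (by usc of $M_n$), $F$-invariant, still full over $X$, keeps the same lower envelope $m$, and has width $\le c_n/2$, so $\bigcap_n\Lambda_n$ is a graph and its envelope $\psi$ is simultaneously usc and lsc, hence continuous. A commonly seen alternative, which you might find slightly more economical, is to take $\Lambda$ to be a \emph{minimal} compact $F$-invariant subset of $Y$ (Zorn's lemma) and argue directly that such a minimal set is a graph: if $(x_0,t_1)$ and $(x_0,t_2)$ both lie in $\Lambda$ with $\delta=t_2-t_1>0$, then density of the $F$-orbit of $(x_0,t_1)$ in $\Lambda$ produces $n_k$ with $f^{n_k}(x_0)\to x_0$ and $S_{n_k}\varphi(x_0)\to\delta$, whence $\Lambda_{x_0}$ is invariant under $t\mapsto t+\delta$, contradicting compactness. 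Both routes dispose of the same obstruction; yours has the advantage of not invoking Zorn's lemma, while the minimal-set argument avoids the explicit iteration.
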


Let $T$ be a (standard) i.e.m.\ with no connection. Let
$$Z:=\{T^{-m}(u^t_i),T^n(u^b_j); \; 0<i,j<d, \;m\geq 0,\, n\geq 0\}$$
be the union of the orbits of the singularities of $T$ and $T^{-1}$. In the interval $\overline I$ where $T$ is acting, we
split the points of $Z$ into a right and left limits to get a compact metric space $\wh I$ (homeomorphic to a Cantor set) on
which $T$ induces a minimal homeomorphism $\wh T$.
Denote by $p$ the canonical projection from $\wh I$ onto $\overline I$, so that $p \circ \wh T = T \circ p$.
Let $\varphi \in C^0(\sqcup \iat)$; then $\wh \varphi:= \varphi \circ p$ is continuous on $\wh I$. Assume that the Birfhoff
sums $(S_n \varphi)_{n\geq 0}$ of $\varphi$ for $T$ are bounded. Then the same is true for the Birkhoff sums of $\wh
\varphi$ for $\wh T$ and we conclude from Gottschalk-Hedlund's theorem that there exists a continuous function $\wh \psi$ on
$\wh I$ such that $\wh \varphi = \wh \psi \circ \wh T - \wh \psi$.
For an arbitrary continuous function $\wh \psi$ on $\wh I$, there is a priori no continuous function $\psi$ on $\overline I$
such that $\wh \psi = \psi \circ p$. However, we have the following elementary result, which was not observed in [MMY1].

\begin{proposition}
 Let $\wh\psi$ be a continuous function on $\wh I$. Assume that $\wh \varphi =\wh\psi \circ \wh T -\wh \psi $ is induced by
 a function $\varphi \in C^0(\sqcup \iat)$. Then $\wh \psi$ is induced by a continuous function on $\overline I$.
\end{proposition}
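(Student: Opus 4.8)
\emph{Plan of proof.} The plan is to show that $\wh\psi$ takes equal values at the two points of $\wh I$ lying over each point of $Z$. Since $p$ is a continuous surjection of the compact space $\wh I$ onto the Hausdorff space $\overline I$, it is closed, hence a quotient map; so a continuous function on $\wh I$ that is constant on the fibres of $p$ descends to a continuous function $\psi$ on $\overline I$ with $\wh\psi=\psi\circ p$. Recall that the fibres of $p$ are singletons over $\overline I\setminus Z$ and two-point sets $\{z^-,z^+\}$ over $z\in Z$, $z^-$ being the left-limit copy and $z^+$ the right-limit copy; so it is enough to prove that $g(z):=\wh\psi(z^+)-\wh\psi(z^-)$ vanishes for every $z\in Z$.

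First I would record the combinatorial consequences of the absence of connections. The $2(d-1)$ half-orbits $\{T^{-m}u^t_i\}_{m\geq 0}$ and $\{T^{n}u^b_j\}_{n\geq 0}$, $0<i,j<d$, are pairwise disjoint and their union is $Z$. Along a forward half-orbit $\{T^{n}u^b_j\}$ the map $T$ is continuous at every point, since a discontinuity point of $T$ is one of the $u^t_i$ and $T^{n}u^b_j=u^t_i$ would be a connection. Along a backward half-orbit, the point $T^{-m}u^t_i$ with $m\geq 1$ lies in the interior of a top interval (it is no $u^t_k$) and in the interior of a bottom interval ($T^{-m}u^t_i=u^b_j$ would give the connection $T^{m}u^b_j=u^t_i$), so both $T$ and $T^{-1}$ are continuous there; and $u^t_i$ itself lies in the interior of a bottom interval, so $T^{-1}$ is continuous from each side at $u^t_i$. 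It follows that $\wh T(z^\pm)=(Tz)^\pm$ for every $z\in Z$ which is not one of the $u^t_i$, that $\wh T^{-1}\bigl((u^t_i)^\pm\bigr)=(T^{-1}u^t_i)^\pm$, and that at every such point the datum $\varphi$ is continuous, so that $\wh\varphi(z^-)=\varphi(z)=\wh\varphi(z^+)$. Substituting these facts into $\wh\psi\circ\wh T-\wh\psi=\wh\varphi$ yields $g(Tz)=g(z)$ for every $z\in Z\setminus\{u^t_1,\dots,u^t_{d-1}\}$. Hence $g$ is constant along each of the $2(d-1)$ half-orbits, and it suffices to prove $g(u^b_j)=0$ and $g(u^t_i)=0$ for all $i,j$.

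Now I would invoke minimality. Since $\wh T$ is minimal on the Cantor set $\wh I$, every forward orbit and every backward orbit is dense; and since $\wh I$ has no isolated point, each such orbit accumulates on each of its own points. Applied to $(u^b_j)^-$ this produces $n_l\to\infty$ with $(T^{n_l}u^b_j)^-=\wh T^{n_l}\bigl((u^b_j)^-\bigr)\to(u^b_j)^-$ in $\wh I$. Projecting by $p$ gives $T^{n_l}u^b_j\to u^b_j$; moreover, for the left-limit copies to converge to $(u^b_j)^-$ one must have $T^{n_l}u^b_j<u^b_j$ for all large $l$, otherwise the left-limit copy would approach $(u^b_j)^+$ instead. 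But then the right-limit copies $(T^{n_l}u^b_j)^+$ also project into the one-sided neighbourhood $p^{-1}\bigl((u^b_j-\delta,u^b_j)\bigr)$ of $(u^b_j)^-$, so $(T^{n_l}u^b_j)^+\to(u^b_j)^-$ as well. Continuity of $\wh\psi$ then forces $g(T^{n_l}u^b_j)\to 0$, and since $g$ is constant along the half-orbit, $g(u^b_j)=0$. The symmetric argument applied to $(u^t_i)^-$ under $\wh T^{-1}$, using $\wh T^{-m}\bigl((u^t_i)^-\bigr)=(T^{-m}u^t_i)^-$, gives $g(u^t_i)=0$. Thus $g\equiv 0$ on $Z$, $\wh\psi$ is constant on the fibres of $p$, and $\wh\psi=\psi\circ p$ for a continuous $\psi$ on $\overline I$, as required.

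I expect the substance to lie in the third step: although the homeomorphism $\wh T$ never collapses a pair $\{z^-,z^+\}$ (it is a homeomorphism of the Cantor set), one must use the dynamics to push a whole sequence of such pairs into a shrinking one-sided neighbourhood of a fixed split point, so that the mere continuity of $\wh\psi$ forces the jump $g$ to vanish there. Controlling the sidedness --- arranging that the iterated points approach $u^b_j$ (respectively $u^t_i$) from the correct side, so that both copies land near the \emph{same} copy of the limit point rather than being separated by the gap at it --- is the delicate point, and it is precisely what dictates using forward half-orbits at the $u^b_j$ and backward half-orbits at the $u^t_i$. The second step is routine once one sees that the right object to track is the single scalar $g$ on each half-orbit.
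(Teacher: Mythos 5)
Your proof is correct and follows essentially the same route as the paper's: define the jump function $g$ (the paper's $\delta\psi$), show it is $T$-invariant away from the $u^t_i$ and hence constant along each of the $2(d-1)$ half-orbits making up $Z$, and then kill each constant using density of the half-orbits together with continuity of $\wh\psi$. You spell out the final step (the one-sided approach that places both copies of an orbit point into a shrinking neighbourhood of the same split copy of $u^b_j$ or $u^t_i$) more explicitly than the paper, which simply cites continuity and density, but the underlying mechanism is identical.
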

\begin{proof}
The continuous function $\wh \psi$ on $\wh I$
is induced by a continuous function on $\overline I$ iff, for every $z \in Z$, the values of $\wh \psi$ on the two points
$z_l,\; z_r$ of $\wh I$ sitting over $z$ are equal. For $z \in Z$, let $\delta \psi (z):= \wh \psi (z_r) - \wh \psi (z_l)$.
If $z \ne u^t_i$, the values of  $\wh\psi -\wh \psi \circ \wh T$
at $z_l$ and $z_r$ are the same, hence $\delta \psi (z) = \delta \psi (T(z))$. Therefore, for every $0< i,j <d, \;m, n \geq
0$, we have $\delta \psi (T^{-m}(u^t_i))=\delta \psi (u^t_i)$ and $\delta \psi (T^{n}(u^b_j))=\delta \psi (u^b_j)$. As $\wh
\psi$ is continuous on $\wh I$ and every half orbit
$\{T^n(u^b_j);\; n\geq 0 \}$ or $\{T^n(u^t_i);\; n\leq 0 \}$ is dense, we must have $\delta \psi(z)=0$ for all $z \in Z$,
and the conclusion of the proposition follows.
\end{proof}
\begin{corollary}
Let $T$ be a (standard) i.e.m.\ with no connection, $\varphi \in C^0(\sqcup \iat)$. If the Birfhoff sums $(S_n \varphi)_{n\geq
0}$ of $\varphi$ for $T$ are bounded, there exists $\psi \in C^0(\overline I)$ such that $\varphi = \psi \circ T - \psi$.
\end{corollary}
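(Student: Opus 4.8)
The plan is to combine the Gottschalk--Hedlund theorem stated above with the descent result of the Proposition above, using the Cantor-set model $\wh I$ already introduced before that Proposition. First I would recall the setup: since $T$ is a standard i.e.m.\ with no connection, Keane's theorem (subsection 2.2) makes $T$ minimal, and splitting the points of the countable set
$$Z=\{T^{-m}(u^t_i),\,T^n(u^b_j);\; 0<i,j<d,\; m,n\geq 0\}$$
into their left and right one-sided limits produces a compact metric space $\wh I$, homeomorphic to a Cantor set, on which $T$ induces a minimal homeomorphism $\wh T$, together with a continuous surjection $p\colon \wh I\to\overline I$ satisfying $p\circ\wh T=T\circ p$.

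Next I would lift the datum. Because $\varphi\in C^0(\sqcup\iat)$ has finite one-sided limits at every singularity of $T$, the pullback $\wh\varphi:=\varphi\circ p$ is a genuinely continuous function on $\wh I$: the splitting of $\wh I$ along $Z$ is exactly what absorbs the jumps of $\varphi$. Now choose a point $y_0\in\overline I$ whose whole forward $T$-orbit avoids the singularities of $T$ (the set of points failing this is countable, being a countable union of preimages of singularities, so such $y_0$ exists, and may be taken anywhere), and let $x_0\in\wh I$ be a point with $p(x_0)=y_0$. Then $p(\wh T^k x_0)=T^k(y_0)$ for all $k\geq 0$, hence $S_n\wh\varphi(x_0)=S_n\varphi(y_0)$ for every $n\geq 0$; by the hypothesis, these are bounded. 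Thus condition (4) of the Gottschalk--Hedlund theorem holds for $\wh\varphi$ and $\wh T$, and that theorem yields a continuous function $\wh\psi$ on $\wh I$ with $\wh\varphi=\wh\psi\circ\wh T-\wh\psi$.

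Finally I would descend. The Proposition above applies verbatim: $\wh\psi$ is continuous on $\wh I$ and $\wh\varphi=\wh\psi\circ\wh T-\wh\psi$ is induced by the function $\varphi\in C^0(\sqcup\iat)$, so $\wh\psi$ is induced by a continuous function $\psi\in C^0(\overline I)$, i.e.\ $\wh\psi=\psi\circ p$. Substituting, using $p\circ\wh T=T\circ p$, and invoking the surjectivity of $p$, one gets $\varphi=\psi\circ T-\psi$ as an identity in $C^0(\sqcup\iat)$, which is the claimed conclusion.

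I expect no genuine obstacle here: the substantive work has been front-loaded into the Gottschalk--Hedlund theorem and into the Proposition (the latter being the point ``not observed in [MMY1]''). The only step requiring a line of care is the verification in the second paragraph---that $\wh\varphi$ is continuous on $\wh I$ and that the boundedness of the Birkhoff sums transfers to $\wh T$ at a suitable base point---but this is immediate from the construction of $\wh I$ by separating one-sided limits along the countable set $Z$.
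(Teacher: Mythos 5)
Your proposal is correct and follows the same route as the paper: lift to the Cantor-set model $\wh I$, transfer boundedness of Birkhoff sums, apply Gottschalk--Hedlund to get $\wh\psi$, then invoke Proposition 3.5 to descend $\wh\psi$ to a continuous $\psi$ on $\overline I$. The only cosmetic difference is that you verify the transfer of boundedness via condition (4) at a carefully chosen base point, whereas the paper simply asserts that the Birkhoff sums of $\wh\varphi$ for $\wh T$ are bounded; both are fine.
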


\subsection{Interval exchange maps of Roth Type}

We recall the diophantine condition on the rotation number of an i.e.m.\ introduced in [MMY1].

Let $\underline \gamma$ be an $\infty$-complete path in a Rauzy diagram $\mathcal D$. Write $\underline \gamma$ as an
infinite concatenation
$$\underline \gamma = \gamma(1) * \cdots * \gamma(n)* \cdots $$
of finite complete paths of minimal length. Define then, for $n > 0$
$$Z(n) := B_{\gamma(n)},\quad B(n):= B_{\gamma(1)*\cdots *\gamma(n)} = Z(n)\cdots Z(1).$$

We introduce three conditions.

\hspace{1cm} (a) \hspace{5mm} For all $\tau>0$, $||Z(n+1)|| = \mathcal {O} (||B(n)||^{\tau})$. \\

\hspace{1cm} (b) \hspace{5mm} There exists $\theta >0$ and a hyperplane $\Gamma_0 \subset \Gamma = \RA$ such that
$$||B(n)_{|\Gamma_0}|| = \mathcal {O} (||B(n)||^{1-\theta})\,.$$

\hspace{1cm} (c) \hspace{5mm} Define
$$\Gamma_s = \{ \chi \in \Gamma, \exists \tau >0,\; ||B(n)\chi|| = \mathcal {O} (||B(n)||^{-\tau}) \}\,$$
and $\Gamma_s^{(n)} := B(n)\Gamma_s$ for $n \geq 0$. For $k<\ell$, denote by $B_s(k,\ell)$ the restriction of
$B(k,\ell):=B_{\gamma(k+1)*\cdots*\gamma(\ell)}$
to $\Gamma_s^{(k)}$ and by $B_{\flat}(k,\ell)$ the operator from $\Gamma /\Gamma_s^{(k)}$ to $\Gamma /\Gamma_s^{(\ell)}$
induced by $B(k,\ell)$.
We ask that, for all $\tau >0$,
$$||B_s(k,\ell)||= \mathcal {O} (||B(\ell)||^{\tau}), \quad ||(B_{\flat}(k,\ell))^{-1}|| = \mathcal {O}
(||B(\ell)||^{\tau})\,.$$

\begin{remark}
\begin{enumerate}
\item The definition of $Z(n)$ is slightly different from the definition in [MMY1], but an elementary computation shows
    that condition (a) with the present definition is equivalent to condition (a) with the definition of [MMY1].
\item Condition (b) is formulated in a slightly different way than in [MMY1], in order to depend only on the rotation
    number and not of the length data.
But actually condition (b) implies that there exists exactly one normalized standard i.e.m.\ $T$ with rotation number
$\underline \gamma$, that $T$ is uniquely ergodic, and that the hyperplane $\Gamma_0$ of condition (b) must be formed on
functions in $\Gamma$ with mean $0$ on $I$.
\item For any combinatorial data, the set of length data for which the associated i.e.m.\ has a rotation number satisfying
    (a), (b), (c) has full measure.
For (c), this is an immediate consequence of Oseledets theorem. For (b), it is a consequence from the fact that the
larger Lyapunov of the KZ-cocycle is simple (Veech). For (a), a proof is provided in [MMY1], but much better diophantine
estimates were later obtained in [AGY].
\item It follows from Forni's theorem [For2] on the hyperbolicity of the KZ-cocycle that, for almost all rotation
    numbers, one has $\dim \Gamma_s =g$.
\end{enumerate}
\end{remark}

\begin{definition}
A rotation number $\underline \gamma$ (or a standard i.e.m.\ $T$ having $\underline \gamma$ as rotation number) is of {\it
Roth type} if the three
conditions (a), (b), (c) are satisfied. It is of {\it restricted Roth type} if moreover one has $\dim \Gamma_s =g$.
\end{definition}

\begin{remark}
Let $T$ be a standard i.e.m.\ of restricted Roth type. Then $\Gamma_s$ is exactly equal to the subspace $\Gamma_T \subset
\Gamma$ of functions $\chi \in \Gamma$ which can be written as $\psi \circ T - \psi$, for some $\psi \in C^0(\overline I)$.
Indeed, we have $\Gamma_s \subset \Gamma_T$ from [Zo2] or [MMY1]. On the other hand, $\Gamma_T$ is contained in the subspace
$\Gamma^{\sharp} \subset \Gamma$ of functions which go to $0$ under the KZ-cocycle. As the KZ-cocycle acts trivially on
$\Gamma/ \Gamma_{\partial}$, $\Gamma^{\sharp}$ is contained in $\Gamma_{\partial}$ and is actually an isotropic subspace of
this symplectic space. As $\Gamma_s \subset \Gamma^{\sharp}$ and  $\dim \Gamma_s =g$, we must have $\Gamma_T= \Gamma_s =
\Gamma^{\sharp}$.
\end{remark}

Let $T$ be a standard i.e.m.\ of restricted Roth type. Choose a $g$-dimensional subspace $\Gamma_u  \subset \Gamma_{\partial}$
such that $\Gamma_{\partial} = \Gamma_s \oplus \Gamma_u$. We recall the main result of [MMY1], in the form that is convenient
for our purpose. We denote by
$C^{1+BV}(\sqcup \iat)$ the space of functions $\varphi \in C^1(\sqcup \iat)$ such that $D\varphi$ is a function of bounded
variation. We write
$$|D\varphi|_{BV} = \sum_{\alpha} {\rm Var}_{\iat} D\varphi,\quad ||\varphi||_{1+BV} = ||\varphi||_0 + ||D\varphi||_0 +
|D\varphi|_{BV}.$$
We denote by $C_{\partial}^{1+BV}(\sqcup \iat)$ the intersection of $C^{1+BV}(\sqcup \iat)$ with the kernel of the boundary
operator $\partial$.

\begin{theorem}
Let $T$ be a standard i.e.m.\ of restricted Roth type. There exist bounded linear operators $L_0: \varphi \mapsto \psi$ from
$C_{\partial}^{1+BV}(\sqcup \iat)$ to $C^0(\overline I)$ and $L_1: \varphi \mapsto \chi$ from $C_{\partial}^{1+BV}(\sqcup
\iat)$ to $\Gamma_u$ such that, for all $\varphi \in
C_{\partial}^{1+BV}(\sqcup \iat)$, we have
$$\varphi = \chi + \psi \circ T -\psi  \;.$$
\end{theorem}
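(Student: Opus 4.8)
The statement is, in essence, the main theorem of [MMY1]; I indicate the architecture of the argument and isolate the one new ingredient, Corollary 3.6 (which rests on Proposition 3.5), that turns the conclusion ``$\psi$ bounded'' of [MMY1] into ``$\psi$ continuous''. \textbf{Reduction.} It is enough to produce a bounded linear operator $L_1\colon C_{\partial}^{1+BV}(\sqcup\iat)\to\Gamma_u$, $\varphi\mapsto\chi$, such that the Birkhoff sums of $\varphi-\chi$ for $T$ satisfy $\sup_{n\in\Zset}\|S_n(\varphi-\chi)\|_0\leq C\|\varphi\|_{1+BV}$. Indeed $\chi\in\Gamma_u\subset\Gamma_{\partial}$ and $\partial\varphi=0$ give $\partial(\varphi-\chi)=0$, while $\varphi-\chi\in C^0(\sqcup\iat)$, so Corollary 3.6 provides $\psi\in C^0(\overline I)$ with $\varphi-\chi=\psi\circ T-\psi$. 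Imposing a fixed linear normalisation (say $\int_I\psi\,dx=0$) makes $\psi$ unique, and the quantitative form of Gottschalk--Hedlund (Theorem 3.4), transported by the descent of Proposition 3.5, gives $\|\psi\|_0\leq C'\sup_n\|S_n(\varphi-\chi)\|_0\leq C''\|\varphi\|_{1+BV}$; since the pair (cohomological equation, normalisation) is linear, $\varphi\mapsto\psi=:L_0\varphi$ is linear and bounded. Moreover it suffices to obtain $\chi\in\Gamma_{\partial}$: splitting $\chi=\chi_s+\chi_u$ along $\Gamma_{\partial}=\Gamma_s\oplus\Gamma_u$, the part $\chi_s\in\Gamma_s=\Gamma_T$ (Remark 3.9) is itself a continuous coboundary, so it can be absorbed into $\psi$ and one replaces $\chi$ by $\chi_u$.

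\textbf{Construction of $L_1$.} Write $\gamma(T)$ as a concatenation of complete paths of minimal length, keep the notation $Z(n),B(n)$, let $T^{(n)}$ be the renormalised i.e.m.\ on $I^{(n)}$ and $S(n)$ the associated special Birkhoff sum operator, which acts on $\Gamma$ by $B(n)$ and satisfies $\partial\circ S(n)=\partial$ (Proposition 3.2(4)). Since $T$ is a translation on each $\iatn$, one has $S(n)\varphi\in C_{\partial}^{1+BV}(\sqcup\iatn)$ with $D\,S(n)\varphi=S(n)(D\varphi)$. Decompose
\[
S(n)\varphi=\chi^{(n)}+\varphi^{(n)},
\]
with $\chi^{(n)}\in\Gamma^{(n)}\cong\RA$ piecewise constant equal on each $\iatn$ to the mean of $S(n)\varphi$ there, so that $\varphi^{(n)}$ has zero mean on each $\iatn$. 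Two estimates drive everything. First, having zero mean and derivative $S(n)(D\varphi)$, the function $\varphi^{(n)}$ has oscillation $\leq\max_\alpha|\iatn|\cdot\|S(n)D\varphi\|_0$, which by condition (a) and the contraction of the lengths $|\iatn|$ stays bounded by a multiple of $\|\varphi\|_{1+BV}$. Second, comparing levels $n$ and $n+1$, $\chi^{(n+1)}=B(n,n+1)\chi^{(n)}+e^{(n+1)}$ where $e^{(n+1)}$ collects the means of $S(n,n+1)\varphi^{(n)}$; estimating $e^{(n+1)}$ by the first estimate together with condition (a) for the cost $\|Z(n+1)\|$ of one step, one finds $\sum_n\|B(n)^{-1}e^{(n+1)}\|\leq C\|\varphi\|_{1+BV}$. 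Hence $B(n)^{-1}\chi^{(n)}$ converges in $\Gamma$ to some $\chi_\infty$ with $\|\chi_\infty\|\leq C\|\varphi\|_{1+BV}$, and a short argument using Proposition 3.2 and the triviality of the cocycle on $\Gamma/\Gamma_{\partial}\cong\RSO$ (this is where $\partial\varphi=0$ is used) shows $\chi_\infty\in\Gamma_{\partial}$. Then $L_1\varphi:=$ the $\Gamma_u$-component of $\chi_\infty$ is linear and bounded.

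\textbf{Boundedness of the remainder.} For $\chi=\chi_\infty$ one has $S(n)(\varphi-\chi)=\varphi^{(n)}+\big(\chi^{(n)}-B(n)\chi_\infty\big)$; the first summand is bounded by the first estimate, and $\chi^{(n)}-B(n)\chi_\infty=B(n)\big(B(n)^{-1}\chi^{(n)}-\chi_\infty\big)$ is $B(n)$ applied to the tail of the summable series above. Splitting $\chi_\infty\in\Gamma_{\partial}=\Gamma_s\oplus\Gamma_u$ and invoking condition (c) for the subexponential behaviour of $B_s(k,\ell)$ on $\Gamma_s$ and of $B_{\flat}(k,\ell)^{-1}$ on $\Gamma/\Gamma_s^{(k)}$, together with condition (b) for the spectral gap between the mean and the mean-zero directions, one gets that the special Birkhoff sums $S(n)(\varphi-\chi_\infty)$ are uniformly bounded by $C\|\varphi\|_{1+BV}$. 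Any ordinary Birkhoff sum $S_k(\varphi-\chi_\infty)$ is then reconstructed from boundedly many of these at suitable levels (a ``base-$B$'' decomposition of $k$), the combinatorial factors being polynomial by condition (a) and absorbed by the gain on the interval lengths, so $\sup_k\|S_k(\varphi-\chi_\infty)\|_0\leq C\|\varphi\|_{1+BV}$; replacing $\chi_\infty$ by its $\Gamma_u$-component only alters the remainder by an element of $\Gamma_s=\Gamma_T$ with bounded Birkhoff sums. This is exactly the hypothesis of the Reduction, so the theorem follows.

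\textbf{Main difficulty.} The technical heart is the second estimate: the summability, with sum $\lesssim\|\varphi\|_{1+BV}$, of the corrections $\|B(n)^{-1}\chi^{(n)}-B(n-1)^{-1}\chi^{(n-1)}\|$, achieved simultaneously with the \emph{boundedness} (not merely subexponential growth) of the Birkhoff sums of $\varphi-\chi$. This forces one to balance the subexponential latitude of condition (a) against the genuine quantitative gain coming from the $C^{1+BV}$ hypothesis --- the bounded variation of $D\varphi$ combined with the exponential contraction of the $|\iatn|$ --- uniformly over all renormalisation levels, while conditions (b) and (c) are what guarantee that $\chi_\infty$ lands in $\Gamma_{\partial}$ and that the stable-direction errors stay bounded. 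This balancing is carried out in [MMY1]; the refinement that $\psi$ is continuous and not merely bounded is supplied here by Corollary 3.6, and the variant with H\"older rather than $BV$ derivatives is treated in Appendix A.
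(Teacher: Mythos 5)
Your proposal is correct and follows essentially the same route as the paper: the paper treats Theorem 3.10 as the main theorem of [MMY1] and, in Remark 3.11, isolates exactly the three upgrades you identify (continuity of $\psi$ via Proposition 3.5/Corollary 3.6, reformulation of the mean-zero hypothesis through the boundary operator using Proposition 3.2 and Remark 3.9, and boundedness of the operators from the quantitative Birkhoff-sum estimate of [MMY1] together with Gottschalk--Hedlund). The extra material in your plan is a sketch of the [MMY1] renormalisation argument itself, which the paper does not reproduce but simply cites; this is consistent with, not different from, the paper's approach.
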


\begin{remark} In [MMY1], the result was formulated in the following weaker way: for every $\varphi \in C^{1+BV}(\sqcup
\iat)$ with $\int_I D\varphi(x)\,dx =0$, there exists $\chi \in \Gamma$ and a bounded function $\psi $ on $I$ such that
$\varphi = \chi + \psi \circ T - \psi$.
To obtain the present stronger form, we observe that
\begin{itemize}
\item by Proposition 3.5 or Corollary 3.6, the solution $\psi$ is automatically continuous on $\overline I$;
\item the condition $\int_I D\varphi(x)\,dx =0$ means that we ask that the sum of the components of $\partial \varphi$
    is $0$. In view of Proposition 3.2, part (2), it is then possible to substract $\chi \in \Gamma$ in order to have
    $\partial (\varphi - \chi) =0$. However, in view of Proposition 3.2, part (1), it is more natural to start with
    $\varphi \in  C_{\partial}^{1+BV}(\sqcup \iat)$. Then, the correction $\chi$ must belong to $\Gamma_{\partial}$. As
    $\Gamma_s = \Gamma_T$ from Remark 3.8, there is a unique way to find the correction $\chi \in \Gamma_u$ in order to
    have $\varphi - \chi = \psi \circ T - \psi$ for some $\psi \in C^0(\overline I)$.
\item That the operator $\varphi \mapsto \psi$ (and consequently also the operator $\varphi \mapsto \chi$ ) is bounded
    follows from the proof in [MMY1]. One shows that , for some $\chi \in \Gamma$, the Birkhoff sums of $\varphi - \chi$
    satisfy
$$||S_n(\varphi - \chi)||_{0} \leq C ||\varphi ||_{1+BV}\;,$$
and then Gottschalk-Hedlund's theorem imply that $||\psi||_0 \leq C \,||\varphi ||_{1+BV}$.
\end{itemize}
In Appendix A, we show that it is possible to deal in the same way with functions $\varphi \in C^{1+\tau}(\sqcup \iat)$, for
any $\tau >0$.
\end{remark}

\subsection{The cohomological equation in higher smoothness}

This subsection is a slight modification of the corresponding subsection in [MMY1], taking the boundary operator into
account. We assume that $T$ is a standard i.e.m.\ with no connection.

For $r \geq 1$ we denote by $\Gamma(r)$ the set of functions $\chi \in C^{\infty}(\sqcup \iat)$ such that the restriction of
$\chi$ to each $\iat$ is a polynomial of degree $<r$, by $\Gamma_{\partial}(r)$ the subspace of functions $\chi \in
\Gamma(r)$ which satisfy $\partial D^i\chi =0$ for all $0\leq i <r$, by $\Gamma_T(r)$ the subspace of functions $\chi \in
\Gamma(r)$ which can be written as $\psi \circ T - \psi$ for some $\psi \in C^{r-1}(\overline I)$. We observe that for $\psi
\in C^{r-1}(\overline I)$, we have $\partial D^i (\psi \circ T - \psi ) =0$ for all $0\leq i <r$, hence $\gamma_T(r) \subset
\Gamma_{\partial}(r)$.

\begin{proposition}
One has
$${\rm dim} \Gamma(r) =rd, \quad {\rm dim} \Gamma_{\partial}(r) =(2g-1)r+1, \quad {\rm dim} \Gamma_T(r) ={\rm dim} \Gamma_T
+r-1\,.$$
\end{proposition}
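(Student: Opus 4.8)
The plan is to prove the three dimension formulas in turn, in each case using the derivative operator $D$ to reduce the index from $r$ to $r-1$. The identity $\dim\Gamma(r)=rd$ is immediate, since the restriction of a function in $\Gamma(r)$ to each of the $d$ intervals $\iat$ is an arbitrary polynomial of degree $<r$, i.e.\ $r$ independent parameters per interval.

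For $\dim\Gamma_\partial(r)$ I would induct on $r$, the case $r=1$ being $\dim\Gamma_\partial(1)=\dim\Gamma_\partial=2g$ by Proposition 3.2(2). For the inductive step, consider $D$ restricted to $\Gamma_\partial(r)$: if $\chi\in\Gamma_\partial(r)$ then $\partial D^{i}(D\chi)=\partial D^{i+1}\chi=0$ for $0\le i\le r-2$, so $D$ maps $\Gamma_\partial(r)$ into $\Gamma_\partial(r-1)$, with kernel $\{\chi\in\Gamma:\partial\chi=0\}=\Gamma_\partial$ of dimension $2g$. The crucial point is that the image is precisely the hyperplane $\{\eta\in\Gamma_\partial(r-1):\int_I\eta\,dx=0\}$. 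Indeed, if $\eta=D\chi$ with $\chi\in\Gamma_\partial(r)\subset C^1(\sqcup\iat)$, then by (3.2) one has $\int_I\eta\,dx=\sum_{C\in\Sigma}(\partial\chi)_C=0$; conversely, given $\eta\in\Gamma_\partial(r-1)$ with $\int_I\eta\,dx=0$, one lifts it to some $\chi_0\in\Gamma(r)$ with $D\chi_0=\eta$ (the map $D\colon\Gamma(r)\to\Gamma(r-1)$ being onto), notes that $\partial D^{i}\chi_0=\partial D^{i-1}\eta=0$ for $1\le i\le r-1$ while $\sum_{C}(\partial\chi_0)_C=\int_I\eta\,dx=0$ forces $\partial\chi_0\in\RSO$, and, using that $\partial$ maps $\Gamma$ onto $\RSO$ (Proposition 3.2(2)), corrects $\chi_0$ by an element of $\Gamma$ to obtain an element of $\Gamma_\partial(r)$ mapping onto $\eta$. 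That $\{\eta\in\Gamma_\partial(r-1):\int_I\eta\,dx=0\}$ really is a hyperplane follows because $\eta\mapsto\int_I\eta\,dx$ is nonzero already on the subspace $\Gamma_\partial\subset\Gamma_\partial(r-1)$ of piecewise constant functions, where it is $\eta\mapsto\langle\lambda,\eta\rangle$ and $\Gamma_\partial=\mathrm{Im}\,\Omega(\pi)$: one has $\Omega(\pi)\lambda\ne0$ because, for $\beta=\pi_t^{-1}(1)$, $(\Omega(\pi)\lambda)_\beta=\sum_{\pi_b(\alpha)<\pi_b(\beta)}\lambda_\alpha>0$, the strictness being the irreducibility of $\pi$ at $k=1$. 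Hence $\dim\Gamma_\partial(r)=2g+\bigl(\dim\Gamma_\partial(r-1)-1\bigr)$, and with $\dim\Gamma_\partial(1)=2g$ this gives $\dim\Gamma_\partial(r)=(2g-1)r+1$.

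For $\dim\Gamma_T(r)$ I would use that $T$, being a standard i.e.m., is a translation on each $\iat$, so $D$ commutes with composition by $T$, and that $T$, having no connection, is minimal. Consider the coboundary operator $\delta_T\colon C^{r-1}(\overline I)\to C^{r-1}(\sqcup\iat)$, $\psi\mapsto\psi\circ T-\psi$, whose kernel (by minimality and continuity on $\overline I$) is the line of constants. Since $\Gamma_T(r)=\Gamma(r)\cap\mathrm{Im}\,\delta_T=\delta_T\bigl(\delta_T^{-1}(\Gamma(r))\bigr)$ and the constants lie in $\delta_T^{-1}(\Gamma(r))$, we get $\dim\Gamma_T(r)=\dim\delta_T^{-1}(\Gamma(r))-1$. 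Now $\psi\circ T-\psi\in\Gamma(r)$ iff its $(r-1)$-st derivative $(D^{r-1}\psi)\circ T-D^{r-1}\psi$ lies in $\Gamma$, so $\delta_T^{-1}(\Gamma(r))=\{\psi\in C^{r-1}(\overline I):D^{r-1}\psi\in V\}$, where $V:=\{\theta\in C^0(\overline I):\theta\circ T-\theta\in\Gamma\}$. The map $\theta\mapsto\theta\circ T-\theta$ sends $V$ onto $\Gamma\cap(\text{continuous coboundaries})=\Gamma_T$ (by the definition of $\Gamma_T$) with kernel the constants, hence $\dim V=\dim\Gamma_T+1$; and $D^{r-1}\colon C^{r-1}(\overline I)\to C^0(\overline I)$ is onto with kernel the polynomials of degree $\le r-2$ (dimension $r-1$), so $\dim\delta_T^{-1}(\Gamma(r))=\dim V+(r-1)=\dim\Gamma_T+r$. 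Therefore $\dim\Gamma_T(r)=\dim\Gamma_T+r-1$.

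I expect the genuine difficulties to be limited to the two observations above: first, the identification of the image of $D$ on $\Gamma_\partial(r)$ as a hyperplane, which rests on the non‑degeneracy $\Omega(\pi)\lambda\ne0$ forced by irreducibility of the combinatorial data; and second, in the last part, the remark that for a standard i.e.m.\ the operator $D$ commutes with composition by $T$, which is exactly what allows the computation of $\Gamma_T(r)$ to be reduced to the already understood space $\Gamma_T$ of piecewise constant continuous coboundaries. The remaining ingredients — surjectivity of $D$ in its various guises and the elementary bookkeeping for $\delta_T$ — are routine.
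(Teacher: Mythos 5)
Your proof is correct, and it follows the same underlying strategy as the paper: use the derivative $D$ to reduce the index $r$ by one, exploiting that $D$ commutes with composition by a standard i.e.m.\ and that minimality forces kernels to be constants. There are two small but genuine differences. For $\Gamma_\partial(r)$ the paper runs the same induction but leaves implicit the fact that $\chi\mapsto\int_I\chi$ is nonzero on $\Gamma_\partial(r)$ (so that the image of $D$ is a genuine hyperplane and not all of $\Gamma_\partial(r)$); you supply this by showing the functional is already nonzero on $\Gamma_\partial=\mathrm{Im}\,\Omega(\pi)$, using that $(\Omega(\pi)\lambda)_{\pi_t^{-1}(1)}=\sum_{\pi_b(\alpha)<\pi_b(\pi_t^{-1}(1))}\lambda_\alpha>0$ by irreducibility at $k=1$. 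This detail is necessary for the induction to close, so the addition is welcome. For $\Gamma_T(r)$ the paper inducts on $r$ via $D\colon\Gamma_T(r+1)\to\Gamma_T(r)$, identifying the kernel of $D$ with the line spanned by the translation vector of $T$; you instead compute $\dim\Gamma_T(r)$ in one shot as $\dim\delta_T^{-1}(\Gamma(r))-1$ and then pass through $D^{r-1}$ to the space $V=\{\theta\in C^0(\overline I):\theta\circ T-\theta\in\Gamma\}$ of dimension $\dim\Gamma_T+1$. The two computations are equivalent — they rest on the same facts (surjectivity of $D$, $D\circ(\cdot\circ T)=(\cdot\circ T)\circ D$, minimality) — but your direct version avoids having to identify the kernel $\Rset\delta$ at each stage of the induction, which is arguably cleaner. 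Both treatments are rigorous and arrive at the same formulas.
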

\begin{proof}
The first assertion is obvious. For $r\geq 1$, the derivation operator $D$ sends $\Gamma(r+1)$ into $\Gamma(r)$,
$\Gamma_T(r+1)$ into $\Gamma_T(r)$, $\Gamma_{\partial}(r+1)$ into $\Gamma_{\partial}(r)$.

Let $\chi \in \Gamma_T(r)$. Write $\chi = \psi \circ T - \psi$ with $\psi \in C^{r-1}(\overline I)$. Let $\psi_1 \in
C^r(\overline I)$ a primitive of
$\psi$ and $\chi_1 := \psi_1 \circ T - \psi_1$. Then $\chi_1$ belongs to $ \Gamma_T(r+1)$ and $D\chi_1 = \chi$.
Therefore $D: \Gamma_T(r+1) \rightarrow \Gamma_T(r)$ is onto. If $\chi_1 \in \Gamma_T(r+1)$ satisfies $D\chi_1 =0$,
we write $\chi_1 = \psi_1 \circ T - \psi_1$ with $\psi_1 \in C^r(\overline I)$. Then $\psi:= D\psi_1$ is continuous and
$T$-invariant,
hence constant (as T is minimal), which implies that $ \varphi_1 \in \Rset \delta$. Conversely, $\Rset \delta$ is contained
in the kernel of
$D:\; \Gamma_T(r+1) \rightarrow \Gamma_T(r)$, hence equal to this kernel. We conclude that
$\dim \Gamma_T(r) = \dim \Gamma_T +r-1$.

Let $\chi \in \Gamma_{\partial} (r)$. Let $\chi_1 \in \Gamma(r+1)$ with $D \chi_1 = \chi$. We have $\chi_1 \in
\Gamma_{\partial}(r+1)$ iff $\partial \chi_1 =0$. The sum of the components of $\partial \chi_1$ is equal to $\int_I
\chi(x)\,dx$, hence $\int_I \chi(x)\,dx =0$ is a necessary condition for $\chi$ to be in the image by $D$ of
$\Gamma_{\partial}(r+1)$.
On the other hand, the condition is also sufficient by Proposition 3.2, part (2). Also by Proposition 3.2, part (2), the
kernel of $D$ in $\Gamma_{\partial} (r)$ is $\Gamma_{\partial} = {\rm Im} \Omega(\pi)$ which is of dimension $2g$. We
conclude by induction on $r$ that ${\rm dim} \Gamma_{\partial}(r) =(2g-1)r+1$.
\end{proof}
We  define $C^{r+BV}(\sqcup \iat)$ as the space of functions $\varphi \in C^{r}(\sqcup \iat)$ such that $D^r \varphi$ is of
bounded variation. We endow this space with its natural norm. We denote by $C_{\partial}^{r+BV}(\sqcup \iat)$ the subspace
of $\varphi \in
C^{r+BV}(\sqcup \iat)$ such that  $\partial D^i\varphi =0$ for all $0\leq i <r$.

\begin{theorem}
There exists a bounded operator $\Pi: C_{\partial}^{r+BV}(\sqcup \iat) \rightarrow \Gamma_{\partial}(r)/\Gamma_T(r)$,
extending the canonical projection from $\Gamma_{\partial}(r)$ to $\Gamma_{\partial}(r)/\Gamma_T(r)$, and a bounded operator
$\varphi \mapsto \psi$ from the kernel of $\Pi$
 to $C^{r-1}(\overline I)$ such that, if $\varphi \in  C_{\partial}^{r+BV}(\sqcup \iat)$ satisfies $\Pi(\varphi) =0$, then
 we have
 $$\varphi = \psi \circ T - \psi \;.$$
\end{theorem}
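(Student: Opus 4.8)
The plan is to reduce Theorem 3.11 to the case $r=1$, which is essentially Theorem 3.9 (in the form of Remark 3.10, combined with the continuity statement of Corollary 3.6), by an induction on $r$ that uses the derivation operator $D$ to peel off one derivative at a time. First I would establish the base case: for $r=1$, the space $\Gamma_\partial(1)/\Gamma_T(1)$ is canonically $\Gamma_\partial/\Gamma_T = \Gamma_\partial/\Gamma_s \cong \Gamma_u$ (using Remark 3.8, $\Gamma_T=\Gamma_s$), and the operator $\Pi=L_1$ together with $\varphi\mapsto\psi=L_0$ from Theorem 3.9 does the job, once one notes via Proposition 3.2(1) that for $\varphi\in C^{1+BV}_\partial$ the correction $\chi=L_1\varphi$ automatically lies in $\Gamma_\partial$, hence the freedom to choose it in $\Gamma_u$; this is exactly the content of Remark 3.10.

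For the inductive step, suppose the theorem holds at level $r$ and let $\varphi\in C^{r+1+BV}_\partial(\sqcup\iat)$, so that $D\varphi\in C^{r+BV}_\partial(\sqcup\iat)$ (the boundary conditions match because $\partial D^i(D\varphi)=\partial D^{i+1}\varphi=0$ for $0\le i<r$). Apply the level-$r$ operators to $D\varphi$: we obtain $\Pi_r(D\varphi)\in\Gamma_\partial(r)/\Gamma_T(r)$, and, if this class vanishes, a function $\tilde\psi\in C^{r-1}(\overline I)$ with $D\varphi=\tilde\psi\circ T-\tilde\psi$. Then I would take a primitive: let $\psi\in C^r(\overline I)$ be a primitive of $\tilde\psi$ and set $\chi_0:=\psi\circ T-\psi\in\Gamma_T(r+1)$ — wait, more precisely $\varphi-(\psi\circ T-\psi)$ has derivative $D\varphi-(\tilde\psi\circ T-\tilde\psi)=0$, so it is a constant $c$ on $\overline I$, hence $\varphi=c+\psi\circ T-\psi$; absorbing the constant into the base-case argument (a constant is the obstruction class at level $1$, or rather lies in $\Gamma(1)$ and projects to $\Gamma_\partial(1)/\Gamma_T(1)$) gives that $\varphi\in\Gamma_T(r+1)\oplus(\text{something of dimension matching }\dim\Gamma_\partial(r+1)/\Gamma_T(r+1))$. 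To make this clean, the obstruction operator $\Pi_{r+1}$ should be built as follows: by Proposition 3.12 the short exact sequences $0\to\Gamma_\partial\to\Gamma_\partial(r+1)\xrightarrow{D}\Gamma_\partial(r)\to0$ (using $\int\chi\,dx=0$, automatic since $\partial\chi$ has zero component-sum when $\partial\chi=0$) and $0\to\Rset\delta\to\Gamma_T(r+1)\xrightarrow{D}\Gamma_T(r)\to0$ induce an exact sequence relating the quotients, so that $\Gamma_\partial(r+1)/\Gamma_T(r+1)$ fits in $0\to\Gamma_\partial/(\Gamma_T\cap\Gamma_\partial + \ldots)\to\Gamma_\partial(r+1)/\Gamma_T(r+1)\xrightarrow{D}\Gamma_\partial(r)/\Gamma_T(r)\to0$; one defines $\Pi_{r+1}(\varphi)$ by combining $\Pi_r(D\varphi)$ (its image under $D$) with the level-$1$ obstruction of the "integration constant plus boundary correction" $\varphi-(\psi\circ T-\psi)$. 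Boundedness of $\Pi_{r+1}$ and of $\varphi\mapsto\psi$ follows from boundedness of the level-$r$ operators, of the primitivation map $C^{r-1}\to C^r$ (bounded on a fixed interval), and of the level-$1$ operators $L_0,L_1$ from Theorem 3.9.

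The main obstacle, and the step requiring genuine care, is the bookkeeping of the finite-dimensional obstruction spaces: one must verify that the inductively constructed $\Pi_{r+1}$ is well-defined (independent of the choice of primitive $\psi$ of $\tilde\psi$, which differs by a constant — absorbed since constants lie in $\Gamma_T(r+1)$), that it genuinely extends the canonical projection $\Gamma_\partial(r+1)\to\Gamma_\partial(r+1)/\Gamma_T(r+1)$ (check on $\chi\in\Gamma_\partial(r+1)$: $D\chi\in\Gamma_\partial(r)$ and $\Pi_r$ is the canonical projection there, then descend), and that vanishing of $\Pi_{r+1}(\varphi)$ really yields $\varphi\in\Gamma_T(r+1)$ acting on the nose (i.e. $\varphi=\psi\circ T-\psi$ with the claimed regularity $\psi\in C^r\subset C^{(r+1)-1}$). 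The dimension count $\dim\Gamma_\partial(r+1)/\Gamma_T(r+1)=(2g-1)(r+1)+1-(\dim\Gamma_T+r)=(g-1)(2r+1)+\dim(\Gamma_\partial/\Gamma_T)-g+\ldots$ from Proposition 3.12 provides a consistency check that the exact sequence is being used correctly. No small-divisor input beyond Theorem 3.9 is needed; the entire argument is soft once that black box is in place.
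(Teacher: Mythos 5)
Your overall strategy — induction on $r$, applying the inductive hypothesis to $D\varphi$ and then taking primitives — is exactly the paper's approach: the base case is Theorem 3.10, and the induction step consists of writing $D\varphi = \chi_1 + \psi_1\circ T - \psi_1$ with $\chi_1\in\Gamma_{\partial}(r-1)$, $\psi_1\in C^{r-2}(\overline I)$, and integrating once. But two steps in your plan as written are incorrect.

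First, the claim that ``$\varphi-(\psi\circ T-\psi)$ has derivative $0$, so it is a constant $c$ on $\overline I$'' is false. Although $\psi$ is continuous on $\overline I$, the composite $\psi\circ T$ jumps at each singularity $u^t_i$, so $\psi\circ T-\psi$ (and hence the difference $\varphi-(\psi\circ T-\psi)$) lives in $C(\sqcup\iat)$, not in $C(\overline I)$. Vanishing derivative on each $\iat$ only forces it to be an element of $\Gamma=\Gamma(1)$, that is, a function constant on each $\iat$ with possibly different values. You do eventually say ``or rather lies in $\Gamma(1)$'', but the statement as first written would make the rest of the argument trivial and is not what happens; the correct conclusion and the reason it belongs to $\Gamma_{\partial}$ (namely $\partial\varphi=0$ together with $\partial(\psi\circ T-\psi)=0$ from Proposition 3.2(1)) both need to be stated.

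Second, the asserted short exact sequence $0\to\Gamma_{\partial}\to\Gamma_{\partial}(r+1)\xrightarrow{D}\Gamma_{\partial}(r)\to0$ is not exact: $D$ is not onto. By the proof of Proposition 3.12, the image of $D$ restricted to $\Gamma_{\partial}(r+1)$ is the codimension-one subspace $\{\chi\in\Gamma_{\partial}(r):\int_I\chi\,dx=0\}$; if $D$ were onto, the dimension count would give $\dim\Gamma_{\partial}(r+1)=2g+(2g-1)r+1$, one more than the stated $(2g-1)(r+1)+1$. Your parenthetical justification, that ``$\int\chi\,dx=0$ is automatic since $\partial\chi$ has zero component-sum when $\partial\chi=0$'', confuses two different integrals: by equation (3.2), the vanishing of the component sum of $\partial\chi$ is the statement $\int_I D\chi\,dx=0$, not $\int_I\chi\,dx=0$. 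The actual reason the residual $\chi_1:=D\varphi-(\psi_1\circ T-\psi_1)$ produced by the inductive hypothesis has zero mean is that $\int_I(\psi_1\circ T-\psi_1)\,dx=0$ (Lebesgue measure is $T$-invariant), so $\int_I\chi_1\,dx=\int_I D\varphi\,dx=\sum_C(\partial\varphi)_C=0$ because $\partial\varphi=0$. This identity, which uses $\partial\varphi=0$ and not $\partial\chi_1=0$, is precisely what guarantees that $\chi_1$ admits a primitive in $\Gamma_{\partial}(r)$ and makes the induction close up; without it the integration step fails.
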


In other terms, if we choose a subspace $\Gamma_u(r) \subset \Gamma_{\partial}(r)$ such that $\Gamma_{\partial}(r) =
\Gamma_T(r) \oplus \Gamma_u(r)$
and identify the quotient $\Gamma_{\partial}(r)/\Gamma_T(r)$ with $\Gamma_u(r)$, we can write any $\varphi \in
C_{\partial}^{r+BV}(\sqcup \iat) $ as
$\varphi = \Pi(\varphi) + \psi \circ T - \psi$, with $\psi \in C^{r-1}(\overline I)$.

\begin{proof} The proof is by induction on $r$, the case $r=1$ being the theorem above. Assume that $r>1$ and the result is
true for $r-1$. Let $\varphi \in C_{\partial}^{r+BV}(\sqcup \iat)$. According to the induction hypothesis, we can write
$$D \varphi = \chi_1 + \psi_1 \circ T - \psi_1 \;,$$
where $\chi_1 \in \Gamma_{\partial}(r-1)$ and $\psi_1 \in C^{r-2}(\overline I)$. Let $\psi \in C^{r-1}(\overline I)$ a
primitive of $\psi_1$. Then there exists a primitive $\chi $ of $\chi_1$ such that
$$ \varphi = \chi + \psi \circ T - \psi \;.$$
As $\partial \varphi =0$, we must also have $\partial \chi =0$ and thus $\chi$ belongs to $ \Gamma_{\partial}(r)$. This
completes the proof of the induction step.
\end{proof}

\section{A conjugacy invariant}

\subsection{Definition of the invariant}

Let $r$ be an integer $\geq 1$ or $\infty$. We denote by $J^r$ the group of $r$-jets at $0$ of orientation preserving
diffeomorphisms
of $\Rset$ fixing $0$.

Let $\pi = (\pi_t, \pi_b)$ an element of a Rauzy class $\mathcal R$ on an alphabet $\A$, and let $T$ be a generalized i.e.m.\
of class $C^r$
with combinatorial data $\pi$. For each $\upsilon \in \AA$, we define an element $j(T,\upsilon) \in J^r$ as the $r$-jet at
$0$ of
$$x \mapsto T(u^t(\upsilon) +x) -u^b(\upsilon)\,,$$
where $x$ varies in an interval of the form $(0,x_0)$ when $\upsilon =(\alpha,L)$, $(-x_0,0)$ when $\upsilon =(\alpha,R)$,
and the
$r$-jet at $0$ exists by definition of a generalized i.e.m.\ of class $C^r$.

For each cycle $C$ of $\sigma$, we choose an element $\upsilon_0 \in C$ and we write
$C = \{\upsilon_0, \upsilon_1 = \sigma (\upsilon_0),\cdots,\upsilon_{\kappa}\}$. We then define
$$J(T,C) := j(T,\upsilon_0)^{\varepsilon(\upsilon_0)}j(T,\upsilon_1)^{\varepsilon(\upsilon_1)}\cdots
j(T,\upsilon_{\kappa})^{\varepsilon(\upsilon_{\kappa})}\;\; \in J^r,$$
where, as in subsection 3.1, we have $\va(\upsilon) =-1$ if $\upsilon = (\alpha,L)$, $\va(\upsilon) =+1$ if $\upsilon =
(\alpha,R)$.

\medskip

\begin{definition} The invariant $J(T)$ of $T$ is the family, parametrized by the cycles $C \in \Sigma$ of $\sigma$, of the
conjugacy classes in $J^r$ of the $J(T,C)$.
\end{definition}

It is clear that the conjugacy class of $J(T,C)$ does not depend on the choice of the element $\upsilon_0 \in C$.

When $d=2$, the invariant $J(T)$ is the obstruction for $T$ to be $C^r$-conjugated to a $C^r$-diffeomorphism of the circle.

\subsection{ Conjugacy classes in $J^r$}

The classification of elements in $J^{\infty}$ up to conjugacy is well-known and a simple exercise. The classification in $J^r$ for finite $r$ is an
obvious consequence, truncating to order $r$ the Taylor developments. It is not used in the rest of the paper.

Let $j$ be an element of $J^{\infty}$. If $j$ is distinct from the neutral element of $J^{\infty}$, its contact with the
identity is an integer
$k\geq 1$.

If $k=1$, i.e the linear part of $j$ is distinct from the identity, then $j$ is conjugate to its linear part.

If $k>1$, there exists in the conjugacy class of $j$ a unique element of the form $x \mapsto x \pm x^k + a x^{2k-1}$ ($a \in
\Rset$).

\subsection {Invariance under conjugacy}

Let $r$ be an integer $\geq 1$ or $\infty$. Let $\pi = (\pi_t, \pi_b)$ an element of a Rauzy class $\mathcal R$ on an
alphabet $\A$, and let $T$ be a generalized i.e.m.\ of class $C^r$
with combinatorial data $\pi$.

Let $h$ be a $C^r$ orientation preserving diffeomorphism from the interval $I =(u_0,u_d)$ for $T$ to some other open bounded
interval
$\widehat I = (\widehat u_0, \widehat u_d)$, which extends to a $C^r$ diffeomorphism from the closure of $I$ to the closure
of $\widehat I$.

Let $\widehat T = h \circ T \circ h^{-1}$, acting on $\widehat I$.

\begin{proposition}
The invariants of $T$ and $\widehat T$ are the same.
\end{proposition}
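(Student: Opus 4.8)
The plan is to compare, cycle by cycle, the jets $j(\widehat T,\upsilon)$ with the jets $j(T,\upsilon)$ and show that the products $J(\widehat T,C)$ and $J(T,C)$ are conjugate in $J^r$, which is precisely what is needed since $J(T)$ is the collection of conjugacy classes of the $J(T,C)$. The first step is a local computation: fix a cycle $C$ of $\sigma$ and an element $\upsilon \in \AA$. Since $\widehat T = h\circ T\circ h^{-1}$ and $h$ is an orientation-preserving $C^r$ diffeomorphism, near the relevant endpoint $\widehat u^t(\upsilon) = h(u^t(\upsilon))$ one writes $\widehat T(\widehat u^t(\upsilon)+x) - \widehat u^b(\upsilon)$ in terms of $T$ and the germs of $h$ at $u^t(\upsilon)$ and $u^b(\upsilon)$. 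Introducing the $r$-jets $a_\upsilon \in J^r$ of $y\mapsto h(u^t(\upsilon)+y) - \widehat u^t(\upsilon)$ and $b_\upsilon \in J^r$ of $y \mapsto h(u^b(\upsilon)+y) - \widehat u^b(\upsilon)$, one gets the jet identity
$$
j(\widehat T,\upsilon) = b_\upsilon \, j(T,\upsilon) \, a_\upsilon^{-1}
$$
(all products taken in $J^r$; the orders of composition must be tracked carefully, with the side on which $x$ varies matching the $L$/$R$ type of $\upsilon$).

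The second step is to feed this into the definition of $J(\widehat T,C)$. Writing $C = \{\upsilon_0,\upsilon_1=\sigma(\upsilon_0),\dots,\upsilon_\kappa\}$, one forms the product $\prod_i j(\widehat T,\upsilon_i)^{\va(\upsilon_i)} = \prod_i \bigl(b_{\upsilon_i} j(T,\upsilon_i) a_{\upsilon_i}^{-1}\bigr)^{\va(\upsilon_i)}$. The crucial point is a telescoping/cancellation phenomenon governed by the permutation $\sigma$: by the very definition of $\sigma$ in subsection 3.1, consecutive elements $\upsilon_i$ and $\upsilon_{i+1}=\sigma(\upsilon_i)$ are attached to the \emph{same} point of $I$ — either $\sigma$ matches a right endpoint of some $\iat$ with the left endpoint of the next one (so $u^t(\upsilon_i) = u^t(\sigma(\upsilon_i))$), or it passes through the global endpoints $u_0,u_d$, or it matches via the bottom partition ($u^b(\upsilon_i)=u^b(\sigma(\upsilon_i))$). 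This is exactly the geometry underlying the cycles being the marked points of the suspension surface, and it is the same bookkeeping already used in the proof of Proposition 3.2(1). In each case the corresponding $a$- or $b$-jets coincide, and because the sign $\va$ flips appropriately between $\upsilon_i$ and $\sigma(\upsilon_i)$, the factor $a_{\upsilon_i}^{-\va(\upsilon_i)}$ cancels against $b_{\upsilon_{i+1}}^{\va(\upsilon_{i+1})}$ (or against the analogous neighbor). After all interior cancellations, only the two extreme jets associated with $\upsilon_0$ survive, yielding $J(\widehat T,C) = c \, J(T,C)\, c^{-1}$ for a single jet $c \in J^r$ (the jet of $h$ at the point $u^t(\upsilon_0)$ or $u^b(\upsilon_0)$, depending on the type of $\upsilon_0$).

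The third step is simply to observe that this exhibits $J(\widehat T,C)$ and $J(T,C)$ as conjugate elements of $J^r$, hence they define the same conjugacy class; since $C$ was arbitrary, the families coincide and $J(\widehat T) = J(T)$. The only subtlety worth spelling out is that $h$ need not send the interval of $T$ to itself, so $\widehat T$ has combinatorial data $\widehat\pi$; but $h$ is required to be an increasing homeomorphism of closed intervals, so it sends the ordered singularities of $T$ to the ordered singularities of $\widehat T$ and thus $\widehat\pi = \pi$ and $\widehat\sigma = \sigma$ — this must be noted before the cycle-by-cycle argument even makes sense.

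The main obstacle is bookkeeping rather than conceptual: one must be scrupulous about the order of composition in $J^r$ (which is noncommutative) and about which endpoint ($u^t$ vs.\ $u^b$, left vs.\ right) each jet $a_\upsilon$ or $b_\upsilon$ lives at, so that the cancellation forced by the four-case definition of $\sigma$ lines up exactly with the alternation of the exponents $\va(\upsilon_i)$. Getting a single sign wrong collapses the whole telescoping. I would organize the proof by writing out the identity $j(\widehat T,\upsilon) = b_\upsilon\, j(T,\upsilon)\, a_\upsilon^{-1}$ once and for all, then treating the cancellation in the four cases of the definition of $\sigma$ in parallel with the four cases already handled in the proof of Proposition 3.2(1), from which the present proof essentially inherits its structure.
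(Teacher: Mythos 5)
Your proposal follows the paper's argument essentially step by step: the identity $j(\widehat T,\upsilon)=b_\upsilon\,j(T,\upsilon)\,a_\upsilon^{-1}$ (your $a_\upsilon,b_\upsilon$ are the paper's $j(h,u^t(\upsilon))$ and $j(h,u^b(\upsilon))$), and then a telescoping along the cycle $C$ controlled by $\sigma$. The overall plan is sound, and the observation $\widehat\pi=\pi$ is a correct and useful preliminary.

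One part of your description of the telescoping is imprecise and would need to be fixed to turn this into a proof. You write that ``the sign $\va$ flips appropriately between $\upsilon_i$ and $\sigma(\upsilon_i)$'' and that ``$a_{\upsilon_i}^{-\va(\upsilon_i)}$ cancels against $b_{\upsilon_{i+1}}^{\va(\upsilon_{i+1})}$''. In fact the sign does \emph{not} flip in two of the four cases of the definition of $\sigma$: for $\upsilon=(\alpha_t,R)$ one has $\sigma(\upsilon)=(\alpha_b,R)$ (both $R$), and for $\upsilon=(\,_b\alpha,L)$ one has $\sigma(\upsilon)=(\,_t\alpha,L)$ (both $L$). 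In those two cases the cancellation happens not because the sign flips, but because the underlying points agree across the top and bottom partitions at the global endpoints: $u^t(\alpha_t,R)=u^b(\alpha_b,R)=u_d$ and $u^b(\,_b\alpha,L)=u^t(\,_t\alpha,L)=u_0$, so an $a$-jet cancels against a $b$-jet or vice versa. The device the paper uses to package all four cases uniformly is to set $j(\widehat T,\upsilon)^{\va(\upsilon)}=j(h,x_+(\upsilon))\,j(T,\upsilon)^{\va(\upsilon)}\,j(h,x_-(\upsilon))^{-1}$ and then verify in each case that $x_-(\upsilon)=x_+(\sigma(\upsilon))$; this sidesteps the $a$-vs-$b$ and sign-flip bookkeeping entirely and is the cleanest way to make your ``telescoping forced by $\sigma$'' precise.
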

\begin{proof}
 For $x_0$ in the closure of $I$, let $j(h,x_0)$ be the $r$-jet at $0$ of $x \mapsto h(x_0 +x) - h(x_0)$. For $\upsilon \in
 \AA$, we have
$$j(\wh T ,\upsilon) = j(h, u^b(\upsilon)) j(T,\upsilon) j(h,u^t(\upsilon))^{-1} .$$
Writing $j(\wh T ,\upsilon)^{\va(\upsilon)} = j(h,x_+ (\upsilon)) j( T ,\upsilon)^{\va(\upsilon)} j(h,x_- (\upsilon))^{-1}$,
we check that for all $\upsilon \in \AA$ we have $x_- (\upsilon) = x_+(\sigma(\upsilon))$:
\begin{itemize}
\item if $\upsilon =(\alpha,R)$, $\alpha \ne \alpha_t$, then $x_- (\upsilon) = x_+(\sigma(\upsilon))=u^t(\upsilon)$;
\item if $\upsilon =(\alpha,L)$, $\alpha \ne \,_b\alpha$, then $x_- (\upsilon) = x_+(\sigma(\upsilon))=u^b(\upsilon)$;
\item if $\upsilon =(\alpha_t ,R)$, then $x_- (\upsilon) = x_+(\sigma(\upsilon))=1$;
\item if $\upsilon =(\,_b\alpha ,L)$, then $x_- (\upsilon) = x_+(\sigma(\upsilon))=0$.
\end{itemize}
We obtain therefore, for $C \in \Sigma$, $\upsilon_0$ as in the definition of $J(T,C)$
$$ J(\wh T,C) = j(h, x_+(\upsilon_0)) J(T,C) j(h, x_+(\upsilon_0))^{-1}.$$
The proof of the proposition is complete.
\end{proof}

\subsection {Invariance under renormalization}
Let $r$ be an integer $\geq 1$ or $\infty$. Let $\pi = (\pi_t, \pi_b)$ an element of a Rauzy class $\mathcal R$ on an
alphabet $\A$, and let $T$ be a generalized i.e.m.\ of class $C^r$
with combinatorial data $\pi$.

 We assume that $u^t_{d-1} \ne u^b_{d-1}$, so we can perform one step of the Rauzy-Veech algorithm to obtain
a generalized i.e.m.\ $\widetilde T$, which is also of class $C^r$. We denote by $\widetilde \pi$ the combinatorial data for
$\widetilde T$.
As in Proposition 3.2, part (4), the set of cycles of the permutation $\wt \sigma$ of $\AA$ induced by $\wt \pi$ is
naturally identified with $\Sigma$.

\begin{proposition}
The invariants of $T$ and $\widetilde T$ are the same.
\end{proposition}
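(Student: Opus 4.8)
The statement to prove is Proposition 4.6: that the conjugacy invariant $J(T)$ is unchanged when $T$ is replaced by $\widetilde T$, obtained from $T$ by one step of the Rauzy--Veech algorithm. The overall strategy mirrors the proof of Proposition 4.5 (invariance under conjugacy): I will track, for each $\upsilon\in\AA$, how the $r$-jet $j(\widetilde T,\upsilon)$ relates to the jets $j(T,\cdot)$, and then check that when one forms the products $J(\widetilde T,C)=\prod_{\upsilon\in C}j(\widetilde T,\upsilon)^{\va(\upsilon)}$ around a cycle $C$ of $\widetilde\sigma$, all the ``interior'' jet factors telescope, leaving a conjugate of $J(T,C)$. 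As in Proposition 3.2(4) and Proposition 4.5, it suffices to treat a single elementary step, and I will assume it is of top type, the bottom case being symmetric.

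First I would record the local description of $\widetilde T$ in terms of $T$. Recall $\widehat I=(u_0,u^b_{d-1})$ (top type, so $u^t_{d-1}<u^b_{d-1}$) and $\widetilde T$ is the first return map of $T$ to $\widehat I$; the return time is $1$ except on the subinterval whose points first land in $(u^t_{d-1},u^b_{d-1})$ and then return via one more application of $T$. Concretely, for all $\upsilon\in\AA$ one has $j(\widetilde T,\upsilon)=j(T,\upsilon)$ except for the three distinguished points where the return time is $2$ or where the combinatorial identification shifts endpoints — precisely the $\upsilon$ appearing in the list of exceptions in the proof of Proposition 3.2(4), namely $(\alpha'_t,L)$, $(\alpha_t,R)$, $(\alpha_b,L)$ (with $\alpha'_t,\widetilde\alpha_b$ as defined there). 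For those, $\widetilde T$ near the relevant endpoint is a composition $T\circ T$, so the chain rule for jets gives $j(\widetilde T,\upsilon)$ as a product of two $j(T,\cdot)$ factors, one of them being the jet of $T$ at the singularity $u^t_{d-1}=u^b_{d-1}^{\,-}$ (from the two sides), which I will call $\mathfrak j_0,\mathfrak j_1$ for the left/right limits.

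Next, the combinatorial heart: I compare the cycles of $\sigma$ and $\widetilde\sigma$. From Proposition 3.2(4), $\sigma$ and $\widetilde\sigma$ differ only in how they thread the three points $(\alpha'_t,L)\to(\alpha_t,R)\to(\alpha_b,R)$ versus $(\alpha'_t,L)\to(\alpha_b,R)$ and $(\alpha_t,R)\to(\widetilde\alpha_b,R)$, $(\alpha_b,L)\to(\widetilde\alpha_b,R)$ etc.; in particular the \emph{set} of cycles is canonically identified, and each $\widetilde\sigma$-cycle $C$ is obtained from the corresponding $\sigma$-cycle by either omitting or reinserting the extra endpoint $(\alpha_t,R)$ at the appropriate spot. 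Writing out $J(\widetilde T,C)=j(\widetilde T,\upsilon_0)^{\va(\upsilon_0)}\cdots$, I substitute the formulas from the previous paragraph. The extra jet factors $\mathfrak j_0^{\pm1},\mathfrak j_1^{\pm1}$ introduced at the exceptional $\upsilon$'s appear in adjacent positions (because the exceptional points are $\sigma$-consecutive), with opposite exponents dictated by the signs $\va$, and hence cancel pairwise — exactly the telescoping one sees in Proposition 4.5. After the dust settles, each $J(\widetilde T,C)$ equals $J(T,C)$ up to a cyclic reordering of the product, i.e.\ up to conjugacy in $J^r$; since $J(T)$ is defined only as the family of conjugacy classes, this proves the proposition.

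\textbf{Main obstacle.} The only real work is bookkeeping: getting the three exceptional formulas for $j(\widetilde T,\upsilon)$ exactly right (including which side-limits of $j(T,\cdot)$ at $u^t_{d-1}$ enter, and in which order, under the chain rule for $r$-jets), and then checking that the resulting extra factors land in $\sigma$-adjacent slots with matching inverse exponents so that they cancel. This is the same mechanism as in the proof of Proposition 4.5, so I expect no conceptual difficulty — just a careful case analysis (top type; and then the symmetric remark for bottom type), perhaps split according to whether the ``long'' piece of $\widehat I$ sits at the left or right end of the relevant $\widetilde I^t_\alpha$, which determines whether $\va(\upsilon)=\pm1$ at the doubled point.
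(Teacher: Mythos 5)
Your high-level strategy is the same as the paper's — track how the $r$-jets $j(\wt T,\upsilon)$ relate to those of $T$, then observe that when the products $J(\wt T,C)$ are assembled along $\wt\sigma$-cycles the extra factors cancel telescopically, just as in the proof of Proposition 4.5. However, the concrete data you feed into that mechanism is wrong in a way that would stall the argument if carried out literally.

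The set of $\upsilon\in\AA$ at which $j(\wt T,\upsilon)\ne j(T,\upsilon)$ is $\{(\alpha_t,R),(\alpha_b,L),(\alpha_b,R)\}$, \emph{not} $\{(\alpha'_t,L),(\alpha_t,R),(\alpha_b,L)\}$. You have imported the exceptional set for the permutations ($\sigma$ vs.\ $\wt\sigma$, as in Proposition 3.2(4)) and used it as the exceptional set for the jets; these two sets are different. Concretely: $\wt T$ coincides with $T$ on $I^t_{\alpha'_t}$, so $(\alpha'_t,L)$ is not exceptional for the jets; conversely $\wt T=T^2$ on $I^t_{\alpha_b}$, so \emph{both} ends $(\alpha_b,L)$ and $(\alpha_b,R)$ are exceptional. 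Also, your description of the new jet factor is off: only one new jet $j^*$ enters, namely the (two-sided) jet of $T$ at the interior point $\wh u_d=u^b_{d-1}$ of $I^t_{\alpha_t}$ — not a pair of one-sided jets at the singularity $u^t_{d-1}$. And at $(\alpha_t,R)$ there is no chain-rule composition at all: the right endpoint of the top interval simply moves from $u_d$ to $u^b_{d-1}$, so $j(\wt T,(\alpha_t,R))$ \emph{is} $j^*$ rather than a product. The correct formulas are $j(\wt T,(\alpha_t,R))=j^*$, $j(\wt T,(\alpha_b,L))=j^*\,j(T,(\alpha_b,L))$, and $j(\wt T,(\alpha_b,R))=j(T,(\alpha_t,R))\,j(T,(\alpha_b,R))$; the key cancellation then uses that $\wt\sigma(\alpha_b,L)=(\alpha_t,R)$, so that $j(\wt T,(\alpha_b,L))^{-1}j(\wt T,(\alpha_t,R))=j(T,(\alpha_b,L))^{-1}$. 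If you redo the bookkeeping with the right exceptional set and the single factor $j^*$, the telescoping works exactly as you anticipate; but as written, your cancellation step would not close up.
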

\begin{proof}
We assume that the step of the Rauzy-Veech algorithm from $T$ to $\wt T$ is of top type, the case of bottom type being
symmetric. Denote by $\alpha'_t$ the element of $\A$ such that $\pi_b(\alpha'_t) = \pi_b(\alpha_t)+1$, by $\wt \alpha_b$ the
element of $\A$ such that $\pi_b(\wt \alpha_b) = d-1$. We have $\sigma(\upsilon) = \wt \sigma(\upsilon)$, except for
\begin{eqnarray*}
\sigma(\alpha'_t, L) =& (\alpha_t,R), \quad \wt \sigma(\alpha'_t,L) =& (\alpha_b,R),\\
\sigma(\alpha_t,R) =&(\alpha_b,R), \quad \wt \sigma(\alpha_t,R) =& (\wt \alpha_b,R),\\
\sigma(\alpha_b,L)= & (\wt \alpha_b,R) ,\quad  \wt \sigma(\alpha_b,L) =& (\alpha_t,R).
\end{eqnarray*}
On the other hand, we have $j(\wt T, \upsilon) = j(T,\upsilon)$ except for
\begin{eqnarray*}
j(\wt T, (\alpha_t,R)) & = & j^*\,\\
j(\wt T, (\alpha_b,L)) & = & j^* j(T,(\alpha_b,L)) \,,\\
j(\wt T, (\alpha_b,R)) & = & j(T,\alpha_t,R)) j(T,(\alpha_b,R))\, ,
\end{eqnarray*}
where $j^*$ is the $r$-jet of $T$ at $u^t_{d-1}$. Thus, we have
\begin{equation*}
j(\wt T, (\alpha_b,L))^{-1} \,j(\wt T, (\alpha_t,R)) = j(T,(\alpha_b,L))^{-1}\,.
\end{equation*}
In view of the formulas for $\wt \sigma$, we obtain the cancellations that prove the proposition.
\end{proof}

\subsection{Relation with the boundary operator}
Let $r$ be an integer $\geq 1$ or $\infty$. Let $\pi = (\pi_t, \pi_b)$ an element of a Rauzy class $\mathcal R$ on an
alphabet $\A$, let $T_0$ be a standard i.e.m.\
with combinatorial data $\pi$, and let $(T_t)_{t \in [-t_0,t_0]}$ be a family of g.i.e.m.'s of class $C^r$ through $T_0$ with
the same combinatorial data $\pi$.

We assume that that $t \mapsto T_t$ is of class $C^1$ in the following sense: denote by $u^t_1(t)<\ldots <u^t_{d-1}(t)$
the singularities of $T_t$, by $u^b_1(t)<\ldots <u^b_{d-1}(t)$ those of $T_t^{-1}$; then the functions $t \mapsto u^t_i(t)$,
$t \mapsto u^b_j(t)$ are of class $C^1$; moreover, for each $\alpha \in \A$, each $0\leq i \leq r$, the partial derivative
$\partial _{t} \partial _x^i  T_t(x)$ should be defined on $\{(t,x); \,t\in [-t_0,t_0],\, x\in \iat (t)\,\}$ and extend to a
continuous function on the closure of this set (i.e including the endpoints of $\iat (t)$).

The function $\varphi(x):= \frac{d}{dt}_{|t=0} T_t(x)$ is then an element of $C^r(\sqcup \iat)$.

\begin{proposition}
\begin{enumerate}
\item One has $\partial \varphi = 0$. Conversely, for any $\varphi \in C^r_{\partial}(\sqcup \iat)$, there exists a
    $C^1$-family of g.i.e.m.'s of class $C^r$ such that $\varphi= \frac{d}{dt}_{|t=0} T_t$.
\item Assume that, for some $1 \leq k \leq r$,  the conjugacy invariant of $T_t$ in $J^k$ is trivial for all $t\in
    [-t_0,t_0]$. Then one has $\partial D^{\ell} \varphi = 0$ for all $1 \leq \ell \leq k$.
\end{enumerate}
\end{proposition}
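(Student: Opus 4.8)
My plan is to derive both statements from one differentiation at $t=0$, which I carry out first. For $\upsilon\in\AA$ write $u^t(\upsilon)(t),u^b(\upsilon)(t)$ for the corresponding endpoints of $\iat(t),\iab(t)$, and a dot for $\frac{d}{dt}$ at $t=0$. For every $t$ the $C^r$-extension of $T_t$ satisfies $T_t(u^t(\upsilon)(t))=u^b(\upsilon)(t)$; differentiating this at $t=0$ and using $D_xT_0\equiv 1$ (because $T_0$ is standard) gives the basic identity
\[
\varphi(\upsilon)=\dot u^b(\upsilon)-\dot u^t(\upsilon),\qquad \upsilon\in\AA .
\]
Applying the same chain rule to the curve $t\mapsto\bigl(x\mapsto T_t(u^t(\upsilon)(t)+x)-u^b(\upsilon)(t)\bigr)$, which vanishes at $x=0$ for every $t$, shows that its $t$-derivative at $t=0$ is the map $x\mapsto\varphi(u^t(\upsilon)+x)-\varphi(\upsilon)$.

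For part (1), the basic identity gives $(\partial\varphi)_C=\sum_{\upsilon\in C}\va(\upsilon)\bigl(\dot u^b(\upsilon)-\dot u^t(\upsilon)\bigr)$, and I would run the telescoping from the proof of Proposition 3.2(1) twice. Since $\dot u^t(\upsilon)$ depends only on which top partition point $u^t(\upsilon)$ is, $\sum_{\upsilon\in C}\va(\upsilon)\dot u^t(\upsilon)$ telescopes exactly as $(\partial\psi)_C$ does there, to $\va_1\dot u_d-\va_0\dot u_0$, where $\va_0,\va_1\in\{0,1\}$ record whether $({}_t\alpha,L)$, resp.\ $(\alpha_t,R)$, lies in $C$; and, using $\sigma(\alpha_t,R)=(\alpha_b,R)$ and $\sigma({}_b\alpha,L)=({}_t\alpha,L)$, the sum $\sum_{\upsilon\in C}\va(\upsilon)\dot u^b(\upsilon)$ telescopes as $(\partial(\psi\circ T))_C$ does, to the same value. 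Hence $(\partial\varphi)_C=0$. For the converse, given $\varphi\in C^r_\partial(\sqcup\iat)$ I would first solve the linear system $\dot u^b(\upsilon)-\dot u^t(\upsilon)=\varphi(\upsilon)$, $\upsilon\in\AA$, for velocities of all the partition points of $\overline I$ — the endpoints $u_0,u_d$ included, so that $\overline I$ itself is allowed to move; this is solvable precisely because $\partial\varphi=0$, the image of the velocity map lying in, and by a dimension count equalling, the kernel of $\partial$ on the space of boundary data (codimension $s=\#\Sigma$ in both cases). Then I would build a family with those velocities, e.g.\ moving the partition points linearly in $t$, taking $T_t=T_0+t\varphi$ on the middle third of each $\iat(t)$ and interpolating by a partition of unity to the affine map forced near the moving endpoints; the interpolation is supported away from the endpoints, so it contributes nothing to $\frac{d}{dt}\big|_{t=0}T_t$, which therefore equals $\varphi$.

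For part (2), fix $1\le k\le r$ and a cycle $C$. The curve $t\mapsto j(T_t,\upsilon)$ lies in $J^k$ and passes through the identity jet at $t=0$ ($T_0$ being standard), and by the second computation above its derivative there, read in the Lie algebra of $J^k$ — the $k$-jets at $0$ of vector fields vanishing at $0$ — is the $k$-jet of $x\mapsto\varphi(u^t(\upsilon)+x)-\varphi(\upsilon)$, i.e.\ the polynomial $\sum_{\ell=1}^{k}\frac1{\ell!}(D^\ell\varphi)(\upsilon)\,x^\ell$. By hypothesis $J(T_t,C)=\prod_{\upsilon\in C}j(T_t,\upsilon)^{\va(\upsilon)}$ is the identity jet for all $t$; differentiating this product at $t=0$, where every factor is the identity, the Leibniz rule in the Lie group $J^k$ gives
\[
0=\frac{d}{dt}\Big|_{t=0}J(T_t,C)=\sum_{\upsilon\in C}\va(\upsilon)\,\frac{d}{dt}\Big|_{t=0}j(T_t,\upsilon)=\sum_{\ell=1}^{k}\frac{x^\ell}{\ell!}\sum_{\upsilon\in C}\va(\upsilon)(D^\ell\varphi)(\upsilon)=\sum_{\ell=1}^{k}\frac{x^\ell}{\ell!}(\partial D^\ell\varphi)_C .
\]
Comparing coefficients of $x^\ell$ gives $(\partial D^\ell\varphi)_C=0$ for all $1\le\ell\le k$ and all $C$, which is the assertion.

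The only step requiring real care is the converse of part (1): proving the velocity system solvable from $\partial\varphi=0$ alone (the codimension count identifying the image of the velocity map with the kernel of $\partial$ on boundary data) and then honestly assembling a $C^1$-family of g.i.e.m.'s realizing $\varphi$ — which, as the genus-$1$ case already shows, forces one to allow the interval to move and to glue by a partition of unity near the moving partition points. The forward half of (1) and all of (2) amount to the single differentiation identity together with the telescoping already done in the proof of Proposition 3.2.
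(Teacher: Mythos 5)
Your proof is correct and follows essentially the same route as the paper's: both differentiate $T_t(u^t(\upsilon)(t))=u^b(\upsilon)(t)$ at $t=0$ (using $DT_0\equiv 1$) to get $\varphi(\upsilon)=\dot u^b(\upsilon)-\dot u^t(\upsilon)$, deduce $\partial\varphi=0$ by the same telescoping as in Proposition~3.2(1), and for (2) compute $\frac{d}{dt}\big|_{t=0}j(T_t,\upsilon)$ and invoke first-order commutativity of the product in $J^k$ at the identity. Your extra care over the converse of (1) --- the dimension count identifying the image of the velocity map with $\ker\partial$ on $\Rset^{\AA}$, and the consequent need to let the endpoints $u_0,u_d$ move (as the $d=2$, $s=1$ case already forces, since fixing the endpoints there gives an image of dimension $2<3=\dim\ker\partial$) --- fills in a genuine detail that the paper's one-line ``it is easy to complete the construction'' glosses over.
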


\begin{proof}
\begin{enumerate}
\item For $\upsilon \in \AA$, let $\delta u^t(\upsilon) = \frac{d}{dt}_{|t=0} u^t(\upsilon,t),
\delta u^b(\upsilon) = \frac{d}{dt}_{|t=0} u^b(\upsilon,t)$. Differentiating at $t=0$ the relation
$T_t(u^t(\upsilon,t))=u^b(\upsilon,t)$ gives
$\delta u^t(\upsilon) + \varphi (u^t(\upsilon)) = \delta u^b(\upsilon)$, from which $\partial \varphi =0$ follows
easily.

Conversely, for $\varphi \in C^r_{\partial}(\sqcup \iat)$, one can choose the $u^t_i(t), u^b_j(t)$ such that $\delta
u^t(\upsilon) + \varphi (u^t(\upsilon)) = \delta u^b(\upsilon)$. Then it is easy to complete the construction to get a
family $(T_t)$ with the required property.
\item Writing the $k$-jet of a germ of $C^r$-diffeomorphism $f$ of $(\Rset ,0)$ as $(Df(0),\cdots,D^k f(0))$, we have,
    for $\upsilon \in \AA$
$$ \frac{d}{dt}_{|t=0} j(T_t,\upsilon) = (D\varphi(u^t(\upsilon)), \cdots ,D^k \varphi(u^t(\upsilon))).$$
As the product close to the identity in any Lie group (like $J^k$) is commutative up to second order terms, the
assertion of the proposition follows.
\end{enumerate}
\end{proof}

\begin{remark}
The conjugacy invariant in $J^1$ (which is commutative) can be defined directly from the boundary operator: identifying
$J^1$ with $\Rset$ by associating to a germ $f$ the logarithm of its derivative at $0$, we have indeed that the invariant in
$J^1$ of a g.i.e.m.\ $T$ of class $C^1$ is $\partial \log DT$, where $\log DT$ is considered as a function in $C(\sqcup
\iat)$.
\end{remark}

\begin{remark}
For an affine i.e.m.\ $T$, the invariant in $J^{\infty}$ coincides with the invariant in $J^1$. The function $\log DT$ belongs
to $\Gamma$, and the invariant takes its values in $\RSO$.
\end{remark}

\section{The main theorem: statement and reduction to the simple case}

\subsection{The setting}
Let $\pi = (\pi_t, \pi_b)$ an element of a Rauzy class $\mathcal R$ on an alphabet $\A$, and let $T_0$ be a standard i.e.m.\
{\bf of restricted Roth type} with combinatorial data $\pi$.

We fix an integer $r\geq 2$. We will consider a smooth family $(T_t)$ through $T_0$ of generalized i.e.m.\ , acting on the same
interval $I=(u_0,u_d)$.
Our main theorem will describe the set of parameters for which $T_t$ is conjugated to $T_0$ by a $C^r$-diffeomorphism of the
closure of $I$ which is $C^r$-close to the identity.

We set
$$d^* = (2r+1)(g-1) +s .$$
Let $\ell$ be an integer $\geq 0$. The parameter $t$ runs in a neighborhood $V:= [-t_0,t_0]^{\ell +d^*}$ of $0$ in
$\Rset^{\ell +d^*}$. We write $t=(t',t'')$ with $t' \in [-t_0,t_0]^{\ell}$ and $t'' \in [-t_0,t_0]^{d^*}$. We also assume
that
\begin{itemize}
\item Each $T_t$ is a generalized i.e.m.\ (with the same combinatorial data than $T_0$) of class $C^{r+3}$ .
\item The map $t \mapsto T_t$ is of class $C^1$ in the following sense. Denote by $u^t_1(t)<\ldots <u^t_{d-1}(t)$
the singularities of $T_t$, by $u^b_1(t)<\ldots <u^b_{d-1}(t)$ those of $T_t^{-1}$. Then the functions $t \mapsto
u^t_i(t)$, $t \mapsto u^b_j(t)$ are of class $C^1$. Moreover, for each $\alpha \in \A$, each $0\leq i \leq {r+3}$, each
$1\leq i \leq \ell +d^*$ , the partial derivative $\partial _{t_j} \partial _x^i  T_t(x)$ should be defined on $\{(t,x);
\,t\in V,\, x\in \iat (t)\,\}$ and extend to a continuous function on the closure of this set (i.e including the
endpoints of $\iat (t)$).
\end{itemize}

As we look for g.i.e.m.'s which are $C^R$-conjugated to standard i.e.m.\ , it is certainly natural and necessary to assume that
the conjugacy invariant in $J^r$ of $T_t$ is trivial for all $t \in V$. We will actually need the stronger assumption
\begin{itemize}
\item For all $t \in V$, the conjugacy invariant of $T_t$ in $J^{r+3}$ is trivial.
\end{itemize}

Consider the derivative with respect to $t$ of $T_t$ at $t=0$. It can be viewed as a linear map $\Delta T$ from $\Rset^{\ell
+d^*}$
to $C^{r+3}(\sqcup \iat)$ (where we write $\iat$ instead of $\iat(0)$). Because the $J_{r+3}$ invariant is trivial for all
$t \in V$, it follows from Proposition 4.4 that any function $ \varphi$ in the image of
 $\Delta T$ satisfies
 $$\partial D^{\ell} \varphi =0, \quad \quad \forall 0 \leq \ell \leq r+3 .$$

 In particular, the image of $\Delta T$ is contained in the space $C^{r+1+BV}_{\partial}(\sqcup \iat)$ of subsection 3.4 and we
 can compose $\Delta T$ with the operator $\Pi: C_{\partial}^{r+1+BV}(\sqcup \iat) \rightarrow
 \Gamma_{\partial}(r+1)/\Gamma_T(r+1)$ of Theorem 3.13 to obtain a map
$\overline{\Delta T}: \Rset^{\ell +d^*} \rightarrow \Gamma_{\partial}(r+1) /\Gamma_T(r+1)$. Observe that, according to
Proposition 3.12, the dimension of $ \Gamma_{\partial}(r+1) /\Gamma_T(r+1)$ is  $g+r(2g-2)=d^* -s+1$. We will make the
following transversality assumption:\\
\begin{itemize}
\item
(Tr1)  The restriction of $\overline{\Delta T}$ to $\{0\}  \times \Rset^{d^*}$ is an homomorphism {\bf onto} $
\Gamma_{\partial}(r+1) /\Gamma_T(r+1)$.\\
\end{itemize}
After a linear change of variables in parameter space, we can and will also assume that $\Rset^{\ell} \times \{0\}$ is
contained in the kernel of $\overline{\Delta T}$.\\

When $s=1$, $d^*$ is equal to the dimension of $ \Gamma_{\partial}(r+1) /\Gamma_T(r+1)$; then (Tr1) means that the
restriction of $\overline{\Delta T}$ to $\{0\} \times  \Rset^{d^*}$ is an isomorphism onto $ \Gamma_{\partial}(r+1)
/\Gamma_T(r+1)$.\\

When $s>1$, we will ask for one more tranversality condition. Let $t \in {\rm Ker}\overline{\Delta T}, \varphi:= \Delta T(t)
\in C^{r+3}(\sqcup \iat)
\cap C_{\partial}^{r+1+BV}(\sqcup \iat)$. The image  $\Pi (\varphi)$ in $\Gamma_{\partial}(r+1) /\Gamma_T(r+1)$ is equal to
$0$.
On the other hand, let $\wh \psi \in C^{r+3}(\overline I)$ a function such that $\wh \psi (0) = \wh \psi (1) = 0$, and
$\wh \psi (u^t_i) = \frac{d}{d\tau} u^t_i(\tau t)_{|\tau=0}$,
$\wh \psi (u^b_j) = \frac{d}{d\tau} u^b_j(\tau t)_{|\tau=0}$ for all
$0<i,j<d$. Then $\varphi_1 := \varphi + \wh \psi -\wh \psi \circ T$ satisfies $\Pi(\varphi_1) =0$ and $ \varphi_1
(\upsilon)
=0$ for all $\upsilon \in \AA$. Writing $\varphi_1 = \psi_1 \circ T_0 - \psi_1$ and considering the values of $\psi_1$ on
the cycles of $\sigma$, we define as in Remark 3.3 an element $\nu(\varphi_1) \in \RS /\Rset$.  It is obvious that this
vector only depends on $t$ (not on the choice of $\wh \psi$) and we denote it by $\overline \nu(t)$.
We ask that\\
\begin{itemize}
\item
(Tr2) The restriction of $\overline \nu$ to the intersection of the kernel of $\overline{\Delta T}$ with  $\{0\} \times
\Rset^{d^*}$ is an isomorphism onto $\RS /\Rset$.\\
\end{itemize}
After a linear change of variables in parameter space, we can and will assume that $ \Rset^{\ell} \times \{0\}$ is equal to
the kernel of $\overline\nu$.

\subsection{Statement of the Theorem}

Under the hypotheses of the last subsection, we have

\begin{theorem}
There exists $t_1 \leq t_0$ and a neighborhood $W$ of the identity in ${\rm Diff}^r(\overline I)$ with the following
properties:
\begin{enumerate}
\item for every $t' \in [-t_1,t_1]^{\ell}$, there exists a unique $t''=:\theta(t') \in [-t_1,t_1]^{d^*}$ and a unique
    $h=:h_{t'} \in W$ such that, with $t=(t',t'')$
$$T_t= h \circ T_0 \circ h^{-1} \;;$$
\item the maps $t' \mapsto t''=\theta(t')$ and $t' \mapsto h_{t'}$ are of class $C^1$; moreover $\theta(0) =0$ and
    $D\theta _{|t'=0} =0$.
\end{enumerate}
\end{theorem}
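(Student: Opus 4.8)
The plan is to set up a functional-analytic fixed-point scheme in the spirit of Herman's Schwarzian trick, with the parameter $t''$ and the conjugacy $h$ as the unknowns, and to apply an implicit function theorem / contraction argument. First I would reformulate the conjugacy equation $T_t \circ h = h \circ T_0$ in a way that uncouples the "small-divisor" part from the "gluing" part. Given a candidate diffeomorphism $h$ close to the identity, one differentiates the conjugacy equation the appropriate number of times (taking Schwarzian derivatives when $r \geq 3$, as in Section 6, or their primitives when $r=2$, as in Section 7) to obtain a cohomological equation of the form $\varphi_{t,h} = \psi \circ T_0 - \psi$ where $\varphi_{t,h} \in C^{r+1+BV}_\partial(\sqcup \iat)$ is built from the data of $T_t$ and the derivatives of $h$. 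Because the $J^{r+3}$-conjugacy invariant of $T_t$ vanishes (Proposition 4.4), $\varphi_{t,h}$ indeed lies in $C^{r+1+BV}_\partial(\sqcup \iat)$, so Theorem 3.13 applies: there is a unique obstruction $\Pi(\varphi_{t,h}) \in \Gamma_\partial(r+1)/\Gamma_T(r+1)$ whose vanishing is necessary, and when it vanishes one recovers $\psi \in C^{r-1}(\overline I)$ depending boundedly and smoothly on the data. This $\psi$ (respectively its exponential/primitive) determines a new diffeomorphism $\tilde h = \Phi_t(h)$.

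Next I would organize the equations to be solved. The condition $\Pi(\varphi_{t,h}) = 0$ is, by the transversality hypothesis (Tr1), solvable for $t''$ as a function of $(t', h)$: since $\overline{\Delta T}$ restricted to $\{0\}\times\Rset^{d^*}$ is onto $\Gamma_\partial(r+1)/\Gamma_T(r+1)$ and $\Rset^\ell \times \{0\}$ is in its kernel, the implicit function theorem (at the linear level, using that the map is $C^1$ in $t$) produces part of $t''$ as a $C^1$ function of the remaining unknowns, with vanishing first derivative at $0$. When $s > 1$ there is a residual finite-dimensional obstruction $\overline\nu \in \RS/\Rset$ coming from the ambiguity in $\psi$ on the cycles of $\sigma$ (Remark 3.3); hypothesis (Tr2) lets us absorb the remaining $s-1$ parameters of $t''$ by matching $\overline\nu$. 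Once all the obstructions are killed, the map $h \mapsto \Phi_{t',\theta(t')}(h)$ is a well-defined self-map of a small neighborhood $W$ of the identity in $\mathrm{Diff}^r(\overline I)$, and a fixed point of it is exactly a conjugacy $h_{t'}$ with $T_{(t',\theta(t'))} = h_{t'} \circ T_0 \circ h_{t'}^{-1}$.

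To produce the fixed point I would verify that $\Phi$ is a contraction in a suitable norm (this is why $C^{r+3}$, i.e.\ $r+3$ derivatives on $T_t$, is required — it buys the extra derivative that turns Herman's Schauder--Tychonov argument into a genuine Banach contraction). The boundedness of the operators $L_0, L_1$ (equivalently $\Pi$ and $\varphi \mapsto \psi$) from Theorems 3.10 and 3.13, together with the algebra/composition estimates for the Schwarzian (or its primitive) and careful bookkeeping of the loss of one derivative, should give $\|\Phi_{t'}(h_1) - \Phi_{t'}(h_2)\|_{C^r} \leq \tfrac12 \|h_1 - h_2\|_{C^r}$ on a small enough $W$ once $t_1$ is chosen small. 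Uniqueness of $h_{t'}$ in $W$ is then immediate, and uniqueness of $t''=\theta(t')$ follows from the fact that any $C^r$-conjugacy near the identity forces $\Pi(\varphi) = 0$ and $\overline\nu = 0$, which by (Tr1)--(Tr2) pins down $t''$. Finally the $C^1$ dependence of $\theta$ and $h_{t'}$ on $t'$, and the relations $\theta(0)=0$, $D\theta_{|t'=0}=0$, follow from the $C^1$ regularity of all the operators involved in the fixed-point scheme plus the uniform-contraction version of the implicit function theorem; the normalization that $\Rset^\ell\times\{0\}$ lies in the kernels of $\overline{\Delta T}$ and $\overline\nu$ is exactly what makes $D\theta_{|t'=0}=0$.

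The step I expect to be the main obstacle is establishing the contraction estimate for $\Phi$ in the non-circle case: one must control the composition $h \mapsto (\text{Schwarzian of }T_t)\circ h \cdot (Dh)^2$ (or its $r=2$ surrogate) in $C^{r-1+BV}$, push it through the solution operator of the cohomological equation, and then reconstruct $\tilde h$ from $\psi$, all while tracking the cycle-obstruction $\overline\nu$ — the interplay of the boundary operator, the finitely many gluing conditions at the singularities, and the loss of differentiability is what makes this genuinely harder than Herman's original argument, and it is where the hypotheses (Tr1), (Tr2) and restricted Roth type must all be used simultaneously.
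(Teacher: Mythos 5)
Your proposal correctly identifies the main ingredients (Herman's Schwarzian trick, the solution operator of the cohomological equation, the boundary operator, the transversality conditions (Tr1), (Tr2), and the need for a contraction/implicit-function-theorem argument), but the logical organization you sketch does not match what actually works, and the mismatch is not merely cosmetic.

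First, a dimensional inconsistency. You say that differentiating the conjugacy equation via Schwarzian derivatives produces $\varphi_{t,h}\in C^{r+1+BV}_\partial(\sqcup\iat)$ with obstruction $\Pi(\varphi_{t,h})\in\Gamma_\partial(r+1)/\Gamma_T(r+1)$, to be killed by (Tr1). But $T_t$ is $C^{r+3}$, so $ST_t$ is $C^{r}$, and $\Phi(t,h)=ST_t\circ h\,(Dh)^2$ with $h\in\mathrm{Diff}^r(\overline I)$ lands only in $C^{r-1}$ (Lemma 6.2 of the paper). The obstruction to solving that cohomological equation lives in a space of dimension roughly $(2r-5)(g-1)$, not $(2r+1)(g-1)+1=\dim\big(\Gamma_\partial(r+1)/\Gamma_T(r+1)\big)$. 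Hypothesis (Tr1) is about $\overline{\Delta T}=\Pi\circ\Delta T$ applied to the \emph{undifferentiated} $\delta\varphi=\Delta T(\delta t)\in C^{r+1+BV}_\partial$, not to the Schwarzian-derived quantity. As stated, your step ``solve $\Pi(\varphi_{t,h})=0$ for $t''$ via (Tr1)'' has the wrong source and target.

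Second, and more substantively, a fixed point of the Schwarzian-derived map $h\mapsto\mathcal P(\Psi(t,h),c_0,c_1)$ is \emph{not} automatically a conjugacy, even when every obstruction to solving the cohomological equation for $Sh$ vanishes. One must separately impose the gluing conditions at the singularities — the system (6.1)--(6.4): $h(u^t_i)=u^t_i$, continuity of $\Log Dh$ and $D\Log Dh$ along the cycles of $\sigma$, and $L_1(\Phi(t,h))=0$. In the paper's argument the fixed point $h=\mathcal H(t,c_0,c_1)$ is found for \emph{all} $(t,c_0,c_1)$ near $(0,0,0)$, with no hypothesis on obstructions, and it is only afterwards that (6.1)--(6.4) (a system of $d^*+2$ equations) characterizes the $(t,c_0,c_1)$ for which $\mathcal H(t,c_0,c_1)$ conjugates $T_t$ to $T_0$ (Proposition 6.13). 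The crucial step — absent from your outline — is that the \emph{linearization} of this $(d^*+2)$-equation system, obtained by integrating $D^3\delta\varphi=D^3\delta h\circ T_0-D^3\delta h$ three times and using the gluing conditions at each stage to kill the constants of integration $\chi_2,\chi_1,\chi_0\in\Gamma(1)$, is shown to be equivalent to $\Pi(\delta\varphi)=0$ and $\nu(\delta\varphi)=0$ for $\delta\varphi=\Delta T(\delta t)$. Only at \emph{that} point do (Tr1) and (Tr2) enter, via the implicit function theorem. Your narrative inverts this: it tries to use (Tr1), (Tr2) to kill the obstruction of the Schwarzian equation \emph{before} the fixed point, and treats the gluing conditions as a technical aside inside the contraction estimate. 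They are instead the content of the codimension-$d^*$ equations, and (Tr1)/(Tr2) are used to solve them, not the Schwarzian cohomological equation.

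You also omit the reduction to simple families (Proposition 5.3), without which the gluing analysis is considerably messier, and you present the fixed point as a contraction estimate whereas the paper packages it as an application of the implicit function theorem (Lemma 6.8) exploiting that the $h$-differential of $\mathcal P(\Psi(t,\cdot),c_0,c_1)$ at $(0,\mathrm{id},0,0)$ vanishes. These last two are more minor, but the two issues above are genuine gaps: the proposal's route, taken literally, would not close.
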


The theorem thus states that, amongst $C^{r+3}$ g.i.e.m.'s  close to $T_0$ with trivial conjugacy invariant in $J^{r+3}$, those
which are conjugated to $T_0$ by a $C^r$ diffeomorphism close to the identity form a $C^1$ submanifold of codimension $d^* =
(g-1)(2r+1) +s$. The theorem also describes the tangent space to this submanifold at $T_0$, in terms of the cohomological
equation.

\medskip

As we look for a $C^r$-conjugacy to a standard i.e.m.\ , it is natural to restrict our attention to generalized i.e.m.'s with
trivial $C^r$-conjugacy invariant in $J^r$. It is unclear whether it is necessary to assume, as we do, that the $C^{r+3}$
conjugacy invariant is trivial. In the circle case ($d=2$), a linearization theorem   still holds if one only assumes that
the $C^{r+1}$-conjugacy invariant is trivial; the situation is unclear when only the $C^r$-conjugacy invariant is assumed to
be trivial.

\subsection{Simple families}

\begin{definition} We say that a  family $(T_t)$ as above is {\it simple} if $u^t_i(t)$ is, for all $0<i<d$, independent of
$t$, and if, for all $\alpha \in \A$ and all $t$, $T_t$ coincides with $T_0$ in the neighborhood of each endpoint of
$\iat$.
\end{definition}

The aim of this section is to show the

\begin{proposition} There exists $t_2<t_0$ and a $C^1$ family $(\wt h_t)_{t \in [-t_2,t_2]^{\ell +d^*}}$ in ${\rm
Diff}^{r+3}(\overline I)$
such that the family $(\wt T_t) := (\wt h_t^{-1} \circ T_t \circ \wt h_t )$ is simple and still satisfies the hypotheses of
the last section.
\end{proposition}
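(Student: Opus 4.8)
The plan is to build the conjugating family $(\wt h_t)$ in two stages, first straightening the singularities to be independent of $t$, and then correcting the germs of $T_t$ at the endpoints so that they coincide with those of $T_0$. For the first stage, recall that the functions $t \mapsto u^t_i(t)$ are of class $C^1$. Choose, for each $t$, a diffeomorphism $g_t \in {\rm Diff}^{r+3}(\overline I)$, depending in a $C^1$ fashion on $t$, with $g_0 = {\rm id}$, $g_t(u^t_i(0)) = u^t_i(t)$ for all $0<i<d$, and $g_t$ equal to the identity near $u_0,u_d$; such a family is readily constructed by, say, interpolating linearly between the prescribed values on consecutive singularities and smoothing, or by integrating a time-dependent vector field supported away from the endpoints. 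Replacing $T_t$ by $g_t^{-1}\circ T_t \circ g_t$ we may assume the top singularities $u^t_i$ are independent of $t$; note the bottom singularities $u^b_j(t)$ will still move, but that is allowed by the definition of simple.

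For the second stage, the point is that the hypothesis that the conjugacy invariant $J(T_t)$ in $J^{r+3}$ is trivial for all $t$ means precisely that, cycle by cycle, the product of the germs $j(T_t,\upsilon)^{\va(\upsilon)}$ around each cycle $C$ of $\sigma$ is conjugate to the identity in $J^{r+3}$. I would use this to produce, for each $\upsilon \in \AA$, a germ (equivalently an $(r+3)$-jet, realized by an honest $C^{r+3}$ function) $k_t^\upsilon$ at the relevant endpoint $u^t(\upsilon)$, depending $C^1$-ly on $t$ and reducing to the identity at $t=0$, such that conjugating $T_t$ by a diffeomorphism whose germ at $u^t(\upsilon)$ is $k_t^\upsilon$ kills the germ of $T_t$ there, i.e.\ makes it agree with $T_0$ (which near its singularities is a translation). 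The compatibility needed so that these local prescriptions can be glued into a single global $C^{r+3}$ diffeomorphism $k_t$ of $\overline I$ is exactly the relation $x_-(\upsilon) = x_+(\sigma(\upsilon))$ from the proof of Proposition 4.3 together with the triviality of the invariant around each cycle: the conjugating germs can be propagated consistently around each cycle of $\sigma$, the ambiguity being an element of the centralizer, which we fix once and for all. One then extends these germs to a globally defined $C^{r+3}$ family $(k_t)$ equal to the identity outside small neighborhoods of the singularities, and replaces $T_t$ by $k_t^{-1}\circ T_t \circ k_t$. Setting $\wt h_t := g_t \circ k_t$ gives the desired family, and $\wt T_t := \wt h_t^{-1}\circ T_t \circ \wt h_t$ is simple by construction.

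Finally one must check that $(\wt T_t)$ still satisfies the hypotheses of the previous subsection. The smoothness in $(t,x)$ of order $(r+3,1)$ is preserved because composition with the $C^1$-in-$t$, $C^{r+3}$-in-$x$ families $g_t, k_t$ preserves the relevant joint regularity class; the combinatorial data is unchanged since conjugation by an orientation-preserving diffeomorphism of $\overline I$ fixing the endpoints does not alter $\pi$; the triviality of the $J^{r+3}$-conjugacy invariant is preserved by Proposition 4.3; and the transversality conditions (Tr1) and (Tr2) are invariant statements about the local $C^r$-conjugacy class, hence unaffected by a $C^1$ family of conjugacies that is the identity at $t=0$ --- more concretely, $\Delta \wt T$ differs from $\Delta T$ by adding a coboundary $\psi\circ T_0 - \psi$ (the infinitesimal conjugacy) and by composition with $Dg_0 = Dk_0 = {\rm id}$, so $\overline{\Delta \wt T} = \overline{\Delta T}$ modulo $\Gamma_T(r+1)$, and similarly $\overline\nu$ is unchanged up to the additive constant already quotiented out in $\RS/\Rset$.

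I expect the main obstacle to be the second stage: carefully converting the cycle-by-cycle triviality of the $J^{r+3}$ invariant into a \emph{globally consistent} $C^1$-in-$t$ family of conjugating germs, i.e.\ making the propagation around each $\sigma$-cycle well defined (choosing the centralizer ambiguity coherently) and then checking that the resulting germs glue to a genuine $C^{r+3}$ diffeomorphism of $\overline I$ depending $C^1$-ly on $t$ and equal to the identity at $t=0$. The bookkeeping with the permutation $\sigma$ and the exponents $\va(\upsilon)$ --- already visible in the proofs of Propositions 4.2 and 4.3 --- is where the real work lies; everything else is routine interpolation and a check that standard regularity classes are stable under the operations performed.
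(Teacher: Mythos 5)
Your proposal matches the paper's argument in all essentials: a preliminary conjugation making the combinatorial endpoints $t$-independent, followed by a second conjugation that kills the germs of $T_t$ at the endpoints using the triviality of the $J^{r+3}$-invariant around cycles of $\sigma$ as the compatibility condition, then a verification that the hypotheses of \S5.1 survive. Two small remarks: the paper's first step normalizes \emph{both} the top singularities $u^t_i(t)$ and the bottom singularities $u^b_j(t)$ at once (possible since $T_0$ has no connection, so the $u^t_i,u^b_j$ are all distinct), which makes the germ bookkeeping in the second step cleaner because all the relevant base points are then stationary; and your phrase ``a germ at the relevant endpoint $u^t(\upsilon)$'' understates what must be prescribed --- as in the proof of Proposition 4.3, the constraint $j(\wt T,\upsilon)=j(\overline h,u^b(\upsilon))\,j(\wh T,\upsilon)\,j(\overline h,u^t(\upsilon))^{-1}$ involves the jets of the conjugacy at \emph{both} $u^t(\upsilon)$ and $u^b(\upsilon)$, and it is precisely the relation $x_-(\upsilon)=x_+(\sigma(\upsilon))$ that lets these jets be propagated alternately between top and bottom endpoints around each $\sigma$-cycle, with the triviality of $J(T_t,C)$ closing the loop.
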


\begin{proof} Write $u^t_i$, $u^b_j$ for $u^t_i (0)$, $u^b_j (0)$. A first step is to choose a $C^1$ family $(\wh h_t)_{t
\in [-t_2,t_2]^{\ell +d^*}}$ in ${\rm Diff}^{\infty}(\overline I)$
such that $\wh h_t(u^t_i) = u^t_i(t),\; \wh h_t(u^b_j) = u^b_j(t)$ for all $t \in [-t_2,t_2]^{\ell +d^*}$. This is possible,
after taking
$t_2<t_0$ sufficiently small, since the $u^t_i, u^b_j$ are all distinct (as $T_0$ has no connection). Then, for the family
$(\wh T_t)
:= (\wh h_t^{-1} \circ T_t \circ \wh h_t )$, we have that the $u^t_i(t)$ and $u^b_j(t)$ are independent of $t$.

Next, for $\upsilon \in \AA$, $t \in [-t_2,t_2]^{\ell+d^*}$, we introduce the $(r+3)\,$-jet $j(\wh T_t, \upsilon)$ of
Subsection 4.1. For every cycle $C= \{\upsilon_0, \ldots, \upsilon_{\kappa}\}$ of $\sigma$, every $t \in [-t_2,t_2]^{\ell
+d^*}$, we have
$$J(\wh T_t, C):= j(\wh T_t, \upsilon_0)^{\va(\upsilon_0)} \ldots j(\wh T_t, \upsilon_{\kappa})^{\va(\upsilon_{\kappa})} = 1
\;.$$

We look now for a $C^1$ family $(\overline h_t)_{t \in [-t_2,t_2]^{\ell +d^*}}$ in ${\rm Diff}^{r+3}(\overline I)$ such
that:
\begin{enumerate}
\item $\overline h_t(u^t_i) = u^t_i, \; \overline h_t(u^b_j) = u^b_j\quad$, for all $t \in [-t_2,t_2]^{\ell +d^*}$,
    $0<i,j<d$;
\item $\wh T_t \circ \overline h_t(u^t(\upsilon) +x) = \overline h_t(u^b(\upsilon) +x)$, for all $\upsilon \in \AA$ of
    the form $(\alpha,L)$,
, $x>0$ small enough;

\item $\wh T_t \circ \overline h_t(u^t(\upsilon) -x) = \overline h_t(u^b(\upsilon) -x)$, for all $\upsilon \in \AA$ of
    the form $(\alpha,R)$,
 $x>0$ small enough.

\end{enumerate}
These conditions obviously imply that $\wt T_t:= \overline h_t \,^{-1} \circ \wh T_t \circ \overline h_t$ is simple (and
still satisfies the hypotheses of the last subsection). It is possible to solve (1)-(3) for $\overline h_t \in {\rm
Diff}^{r+3}(\overline I)$, since (2) and (3) connect values of $\overline h_t$ on different small intervals bounded by the
singularities. Compatibility conditions then occur on the product of $r$-jets along cycles of $\sigma$; they are fulfilled
as soon as the conjugacy invariant is trivial.
\end{proof}

According to Proposition 5.3, it is sufficient to prove Theorem 5.1 for simple families. This will be done in the next
section for $r \geq 3$, and in Section 7 for $r=2$.

\section{Proof: $C^r$-conjugacy, $r\geq 3$}

In this section, we assume that $r\geq 3$ and will prove the theorem in this case. The case $r=2$ will be dealt with in the
next section. Let therefore
$(T_t)$ be a $C^1$ family of $C^{r+3}$ g.i.e.m.'s satisfying the hypotheses of subsection 5.1. According to Proposition 5.3, we
can and will also assume that the family is simple.

Recall that the Schwarzian derivative of a $C^3$ orientation preserving diffeomorphism $f$ is defined by
$$ Sf:= D^2\Log Df - \frac12 (D\Log Df)^2\;.$$

The composition rule for Schwarzian derivatives is

$$S(f\circ g) = Sf \circ g \;(Dg)^2 + Sg\;.$$

\subsection{Smoothness of the composition map}

The tangent space at $id$ to ${\rm Diff}^r(\overline I)$ is the space  $C^r_{0,0}(\overline I)$ of $C^r$-functions on
$\overline I$ vanishing at $u_0$ and $u_d$.

\begin{lemma} The composition map
$$C^r(\overline I) \times {\rm Diff}^r(\overline I) \rightarrow C^{r-1}(\overline I)$$
$$(\varphi,h) \mapsto \varphi \circ h$$
is of class $C^1$. Its differential at $(0,id)$ is the map $(\delta \varphi, \delta h) \mapsto \delta \varphi$ from
$C^r(\overline I) \times C^r_{0,0}(\overline I)$ to $C^{r-1}(\overline I)$.
\end{lemma}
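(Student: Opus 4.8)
The plan is to prove $C^1$-regularity by exhibiting continuous partial derivatives. Write $\mu(\varphi,h):=\varphi\circ h$; the source is the product of the Banach space $C^r(\overline I)$ with the open subset ${\rm Diff}^r(\overline I)$ of the affine space $id+C^r_{0,0}(\overline I)$, and the target is $C^{r-1}(\overline I)$. I claim the partials are
$$\partial_\varphi\mu_{(\varphi,h)}(\delta\varphi)=\delta\varphi\circ h,\qquad \partial_h\mu_{(\varphi,h)}(\delta h)=(D\varphi\circ h)\,\delta h,$$
the second landing only in $C^{r-1}$ because $D\varphi\in C^{r-1}$. Granting that these exist everywhere and depend continuously on $(\varphi,h)$ as elements of $L(C^r(\overline I),C^{r-1}(\overline I))$, respectively $L(C^r_{0,0}(\overline I),C^{r-1}(\overline I))$, the standard criterion that existence and continuity of the partial derivatives imply that a map is Fr\'echet $C^1$ finishes the proof, with $D\mu_{(\varphi,h)}(\delta\varphi,\delta h)=\delta\varphi\circ h+(D\varphi\circ h)\delta h$; specializing to $(\varphi,h)=(0,id)$ gives $(\delta\varphi,\delta h)\mapsto\delta\varphi$.

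The technical heart is a pair of estimates for the substitution map $g\mapsto\psi\circ g$. Fix a bounded subset $\mathcal K$ of $\{g\in C^{r-1}(\overline I):g(\overline I)\subseteq\overline I\}$. First, the composition bound $\|\psi\circ g\|_{C^{r-1}}\le C_{\mathcal K}\,\|\psi\|_{C^{r-1}}$ for $g\in\mathcal K$, $\psi\in C^{r-1}(\overline I)$, which follows from the Fa\`a di Bruno formula together with continuity of multiplication on $C^{r-1}(\overline I)$. Second, a Lipschitz bound that costs exactly one derivative:
$$\|\psi\circ g_1-\psi\circ g_0\|_{C^{r-1}}\le C_{\mathcal K}\,\|\psi\|_{C^{r}}\,\|g_1-g_0\|_{C^{r-1}},\qquad g_0,g_1\in\mathcal K,\ \psi\in C^{r}(\overline I),$$
which I would prove by induction on $r$: for $r=1$ it is the mean value inequality, and for the inductive step one differentiates once and writes $D(\psi\circ g_1)-D(\psi\circ g_0)=(\psi'\circ g_1)(g_1'-g_0')+\big((\psi'\circ g_1)-(\psi'\circ g_0)\big)g_0'$, bounding the first summand in $C^{r-2}$ by the product continuity plus the first composition bound applied to $\psi'\in C^{r-2}$, and the second summand by the induction hypothesis applied to $\psi'\in C^{r-1}$ — this second summand is precisely where the extra derivative of $\psi$ is consumed. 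Combining these two estimates with the density of $C^\infty(\overline I)$ in $C^{r-1}(\overline I)$ yields a third fact: for \emph{fixed} $\psi\in C^{r-1}(\overline I)$, the map $g\mapsto\psi\circ g$ is continuous from $\mathcal K$ into $C^{r-1}(\overline I)$ (approximate $\psi$ in $C^{r-1}$ by a smooth $\psi_\varepsilon$ and apply the Lipschitz bound to $\psi_\varepsilon$, estimating the two error terms by the first bound).

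With these in hand the verification is soft. Since $\mu$ is linear in $\varphi$, the partial $\partial_\varphi\mu_{(\varphi,h)}(\delta\varphi)=\delta\varphi\circ h$ is exact, it is bounded $C^r\to C^{r-1}$ by the first estimate, and it depends continuously on $h$ by the Lipschitz estimate (with $\psi=\delta\varphi$, $\|\delta\varphi\|_{C^r}$ in the denominator). For $\partial_h$, given $h\in{\rm Diff}^r(\overline I)$ the segment $h+t\delta h$ fixes the endpoints of $I$ (as $\delta h\in C^r_{0,0}$) and hence maps $\overline I$ into itself for all $t\in[0,1]$ once $\|\delta h\|_{C^r}$ is small, so it stays in a fixed $\mathcal K$; writing
$$\varphi\circ(h+\delta h)-\varphi\circ h-(D\varphi\circ h)\delta h=\int_0^1\big[(D\varphi\circ(h+t\delta h))-(D\varphi\circ h)\big]\,\delta h\,dt,$$
the product continuity bounds the $C^{r-1}$-norm of the right side by $C\,\|\delta h\|_{C^{r-1}}\,\sup_{t\in[0,1]}\|(D\varphi\circ(h+t\delta h))-(D\varphi\circ h)\|_{C^{r-1}}$, and the supremum is dominated by the modulus of continuity at $h$ of $g\mapsto D\varphi\circ g$ (third fact, with $\psi=D\varphi\in C^{r-1}$) evaluated at $\|\delta h\|_{C^{r-1}}$, hence is $o(1)$; this gives $\partial_h\mu_{(\varphi,h)}(\delta h)=(D\varphi\circ h)\delta h$ as claimed. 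Continuity of $\partial_h\mu$ in $(\varphi,h)$ follows by splitting $(D\varphi_1\circ h_1)-(D\varphi_0\circ h_0)$ into $(D\varphi_1-D\varphi_0)\circ h_1$ (controlled by the first estimate) and $D\varphi_0\circ h_1-D\varphi_0\circ h_0$ (controlled by the third fact) and then multiplying by $\delta h$. The main obstacle — and essentially the only nonformal point — is the bookkeeping in the induction for the Lipschitz estimate, namely verifying that exactly one derivative of $\psi$, and no more, is spent; everything downstream is routine.
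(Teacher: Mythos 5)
Your argument is correct and fills in the details of a fact the paper treats as standard: the paper's entire "proof" is the one-line remark that the map valued in $C^r$ is only continuous but becomes $C^1$ when valued in $C^{r-1}$, together with the assertion that the formula for the derivative is elementary. Your approach — exhibiting the partial derivatives $\delta\varphi\mapsto\delta\varphi\circ h$ and $\delta h\mapsto(D\varphi\circ h)\delta h$, isolating the loss of one derivative in the $h$-partial, and proving the needed composition and Lipschitz estimates by a Fa\`a di Bruno induction — is exactly the natural route behind the paper's hint, and your bookkeeping in the induction (spending precisely one derivative of $\psi$ in the second summand) is right.
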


The map $(\varphi,h) \mapsto \varphi \circ h$ {\bf valued in} ${\mathbf C^r(\overline I)}$ is only continuous. It becomes $C^1$ when seen 
as taking its values in $C^{r-1}(\overline I)$. The formula for the derivative at $(0,id)$ is elementary.

We denote by $C^k_{comp}(\sqcup \iat)$ the space of functions $\varphi \in C^k(\sqcup \iat)$ which vanish in the
neighborhood of the endpoints of each $\iat$. Obviously, a map $\varphi \in C^k_{comp}(\sqcup \iat)$ satisfies $\partial
D^{\ell} \varphi =0$ for $0 \leq \ell \leq k$.

\begin{lemma}
 The map
$$ \Phi: [-t_0,t_0]^{\ell +d^*} \times {\rm Diff}^r(\overline I) \rightarrow C^{r-1}_{comp}(\sqcup \iat)$$
$$(t,h) \mapsto ST_t \circ h (Dh)^2$$
is of class $C^1$. Its differential at $(0,id)$ is the map $(\delta t, \delta h) \mapsto D^3 \delta \varphi$  from
$\Rset^{\ell +d^*} \times C^r_{0,0}(\overline I)$ to $C^{r-1}_{comp}(\sqcup \iat)$
, with $\delta\varphi = \Delta T(\delta t)$.
\end{lemma}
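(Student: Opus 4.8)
The plan is to factor $\Phi$ through elementary maps each of which is visibly of class $C^1$ between the relevant Banach (affine) manifolds, and then to recover the differential at $(0,\mathrm{id})$ by the chain rule; the one simplification that makes the differential clean is that $ST_0\equiv 0$, which kills every contribution of $\delta h$. The first step is to record, from simplicity and the hypotheses of subsection 5.1, that $t\mapsto ST_t$ is a $C^1$ map from the parameter cube into $C^{r}(\sqcup\iat)$, with values in $C^{r}_{comp}(\sqcup\iat)$, and with differential at $t=0$ equal to $\delta t\mapsto D^3\Delta T(\delta t)=D^3\delta\varphi$. Indeed, since the family is simple, $T_t$ coincides with $T_0$ on a fixed neighborhood $N$ of the finite set of endpoints of the $\iat$, and $T_0$ is a translation on each $\iat$; hence $\Log DT_t\equiv 0$ and $ST_t\equiv 0$ on $N$, which together with $T_t\in C^{r+3}$ makes $ST_t$ an element of $C^{r}_{comp}(\sqcup\iat)$. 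The joint-continuity hypotheses on the partials $\partial_{t_j}\partial_x^i T_t$ for $i\leq r+3$ are precisely what is needed for $t\mapsto T_t$ to be $C^1$ into $C^{r+3}(\sqcup\iat)$, and the Schwarzian operator $f\mapsto Sf=D^2\Log Df-\tfrac12(D\Log Df)^2$ is smooth from the open set of $C^{r+3}$ diffeomorphisms into $C^{r}$, being built from the bounded linear operators $D^k$ and from the superposition operator $g\mapsto\Log g$, which is smooth on $\{g>0\}$ because $C^{r+2}$ is a Banach algebra; composing gives the asserted $C^1$ map. Its differential at $0$ is $dS_{T_0}\circ\Delta T$, and a first-order computation using $DT_0\equiv 1$ (so $\Log DT_0\equiv 0$, whence the $(D\Log Df)^2$ term contributes nothing to first order) gives $dS_{T_0}(\delta f)=D^3\delta f$, hence the stated formula.

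The second step is to write $\Phi=\mu\bigl(A,B\bigr)$ with $A(t,h)=ST_t\circ h$, $B(h)=(Dh)^2$, and $\mu$ pointwise multiplication, and to check that each factor is $C^1$ into $C^{r-1}$. For $A$: compose the $C^1$ map $t\mapsto ST_t$ (into $C^{r}(\overline I)$, identifying $C^{r}_{comp}(\sqcup\iat)$ with a closed subspace) with the composition map $C^{r}(\overline I)\times {\rm Diff}^r(\overline I)\to C^{r-1}(\overline I)$, which is $C^1$ by the composition lemma above. For $B$: the map $h\mapsto Dh$ is continuous affine into $C^{r-1}(\overline I)$, and squaring is a bounded quadratic map on the Banach algebra $C^{r-1}(\overline I)$, so $B$ is smooth. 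And $\mu : C^{r-1}\times C^{r-1}\to C^{r-1}$ is bounded bilinear, hence smooth. For $h$ in a $C^0$-neighborhood of $\mathrm{id}$, $h$ carries a fixed neighborhood $N'$ of the endpoints of the $\iat$ into $N$, so $ST_t\circ h$, and hence $\Phi(t,h)$, vanishes on $N'$; thus $\Phi$ indeed lands in $C^{r-1}_{comp}(\sqcup\iat)$ (and one shrinks $t_0$ and restricts $h$ to such a neighborhood throughout the argument). The chain rule for $C^1$ maps of Banach manifolds then yields that $\Phi$ is $C^1$.

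For the differential at $(0,\mathrm{id})$: by Leibniz, $D\Phi(0,\mathrm{id})(\delta t,\delta h)=\mu\bigl(DA(0,\mathrm{id})(\delta t,\delta h),\,B(\mathrm{id})\bigr)+\mu\bigl(A(0,\mathrm{id}),\,DB(\mathrm{id})(\delta h)\bigr)$; since $B(\mathrm{id})=(D\,\mathrm{id})^2=1$ and $A(0,\mathrm{id})=ST_0=0$, this reduces to $DA(0,\mathrm{id})(\delta t,\delta h)$. By the chain rule together with the formula of the composition lemma for the differential at the base point $(0,\mathrm{id})$ — which is exactly the base point at hand, precisely because $ST_0=0$ — one obtains $DA(0,\mathrm{id})(\delta t,\delta h)=dS_{T_0}\bigl(\Delta T(\delta t)\bigr)=D^3\delta\varphi$. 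Hence $D\Phi(0,\mathrm{id})(\delta t,\delta h)=D^3\delta\varphi$, independent of $\delta h$, as claimed. The one delicate point throughout is the loss-of-derivative bookkeeping: the Schwarzian costs three derivatives (forcing the hypothesis $C^{r+3}$) and composing with a ${\rm Diff}^r$ diffeomorphism costs one more (the composition lemma), so $C^{r-1}$ is the natural target; the rest is Banach-algebra and superposition-operator calculus, the only genuinely non-formal input being the composition lemma.
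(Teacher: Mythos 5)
Your proof is correct and follows essentially the same route as the paper's: compute the derivative of $t\mapsto ST_t$ at $t=0$ (getting $\delta t\mapsto D^3\Delta T(\delta t)$ because $T_0$ is piecewise a translation, so $ST_0\equiv 0$ and $D\Log DT_0\equiv 0$), then apply Lemma 6.1 together with the Leibniz rule for the product with $(Dh)^2$; the paper compresses this into "follows from Lemma 6.1 and an elementary computation," while you have carefully spelled out the factorization $\Phi=\mu(A,B)$, the Banach-algebra and superposition-operator smoothness facts, and the point about the target landing in $C^{r-1}_{comp}$ after shrinking the neighborhood of $(0,\mathrm{id})$.
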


\begin{proof}
From the formula above for $ST_t$, the derivative of $t\mapsto ST_t$ at $t=0$ is $\delta t\mapsto D^3\Delta T(\delta t)$. The Lemma
then follows from Lemma 6.1 and an elementary computation.
\end{proof}

\subsection{The cohomological equation}

 We fix in the following a subspace $\Gamma_u$ in $\Gamma_{\partial}(r-2)$ such that
$$\Gamma_{\partial}(r-2) = \Gamma_T(r-2) \oplus \Gamma_u\ \oplus \Rset 1\;.$$
According to Proposition 3.12, we have
$$\dim \Gamma_u = (2r-5)(g-1).$$
From Theorem 3.13, there exist bounded linear operators $L_0: \;C^{r-1}_{\partial}(\sqcup \iat) \rightarrow C^{r-3}_0(I)$,
$L_1: \;C^{r-1}_{\partial}(\sqcup \iat) \rightarrow \Gamma_u$ such that, for $\varphi \in C^{r-1}_{\partial}(\sqcup \iat)$,
we have
$$\varphi = \int_I \varphi(x) dx + L_1(\varphi) + L_0(\varphi) \circ T_0 - L_0(\varphi)\;.$$
Here, $C^{r-3}_0(I)$ is the space of $C^{r-3}$ functions on $I$ which vanish at $u_0$.

\begin{lemma}
The map
$$\Psi: [-t_0,t_0]^{\ell +d^*} \times {\rm Diff}^r(\overline I) \rightarrow C^{r-3}_0(\overline I) $$
$$(t,h) \mapsto L_0(\Phi(t,h))$$
is of class $C^1$. Its differential at $(0,id)$ is the map $(\delta t, \delta h) \mapsto L_0(D^3 \delta \varphi)$  from
$\Rset^{\ell +d^*} \times C^r_{0,0}(\overline I)$ to $C^{r-3}_0(\overline I)$
, with $\delta\varphi = \Delta T(\delta t)$.
\end{lemma}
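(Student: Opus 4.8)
The plan is to deduce the statement by composing three maps whose smoothness is already under control. First I would write $\Psi = L_0 \circ \Phi$ explicitly: by Lemma 6.2 the map $\Phi \colon [-t_0,t_0]^{\ell+d^*}\times \mathrm{Diff}^r(\overline I) \to C^{r-1}_{comp}(\sqcup \iat)$, $(t,h)\mapsto ST_t\circ h\,(Dh)^2$, is of class $C^1$, with differential at $(0,\mathrm{id})$ given by $(\delta t,\delta h)\mapsto D^3\delta\varphi$ where $\delta\varphi = \Delta T(\delta t)$. Since functions in $C^{r-1}_{comp}(\sqcup\iat)$ vanish near the endpoints of each $\iat$, they satisfy $\partial D^\ell\varphi = 0$ for $0\le \ell\le r-1$, so $C^{r-1}_{comp}(\sqcup\iat)\subset C^{r-1}_\partial(\sqcup\iat)$, and the bounded linear operator $L_0\colon C^{r-1}_\partial(\sqcup\iat)\to C^{r-3}_0(I)$ of Theorem 3.13 (in the form recalled just before the lemma) is defined on the image of $\Phi$. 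Thus $\Psi$ is well-defined as a map into $C^{r-3}_0(\overline I)$.

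The key point is then simply that a bounded (hence $C^\infty$) linear operator composed with a $C^1$ map is $C^1$, and that its differential is obtained by applying the operator to the differential. Concretely, $\Psi = L_0\circ\Phi$ is $C^1$ because $\Phi$ is $C^1$ by Lemma 6.2 and $L_0$ is a bounded linear map, and the chain rule gives
$$
D\Psi_{(0,\mathrm{id})}(\delta t,\delta h) = L_0\big(D\Phi_{(0,\mathrm{id})}(\delta t,\delta h)\big) = L_0\big(D^3\delta\varphi\big),
$$
with $\delta\varphi = \Delta T(\delta t)$, which is exactly the asserted formula. One small check worth spelling out is that the target space really is $C^{r-3}_0(\overline I)$ (not merely $C^{r-3}_0(I)$): since $T_0$ coincides with the singularities' data near the endpoints and $h$ fixes $u_0,u_d$, the output of $\Phi$ vanishes near $u_0$, so its image under $L_0$ extends continuously to $\overline I$ with value $0$ at $u_0$; and boundedness of $L_0$ with respect to the relevant norms lets one work on $\overline I$ throughout.

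I do not expect a genuine obstacle here — this lemma is purely a bookkeeping step assembling Lemma 6.2 and Theorem 3.13. The only thing requiring a moment of care is the domain/codomain matching: verifying that $\Phi$ lands in the subspace $C^{r-1}_\partial(\sqcup\iat)$ on which $L_0$ acts (immediate from the "comp" condition), and that the regularity index $r-3$ in the target is consistent with $L_0$ mapping $C^{r-1}$-data to $C^{r-3}$-solutions, which is the loss of two derivatives built into Theorem 3.13 (stated there as $C^{r-1}_\partial \to C^{r-3}_0$ after shifting the index $r$ of that theorem down by appropriate amount). Once those identifications are in place, the $C^1$ statement and the formula for the differential are formal consequences of the chain rule.
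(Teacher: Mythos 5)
Your proposal is correct and takes essentially the same approach as the paper: the paper's own proof is the one-line observation that $L_0$ is linear and the derivative of $\Phi$ was computed in Lemma 6.2, which is precisely your chain-rule argument. Your additional checks (that $\Phi$ lands in the domain of $L_0$ and that the regularity indices match) are sound bookkeeping that the paper leaves implicit.
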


\begin{proof}
Indeed, $L_0$ is linear and the derivative of $\Phi$ has been computed in Lemma 6.2. 
\end{proof}

\subsection{Relation between a diffeomorphism and its Schwarzian derivative}

The next three lemmas present the Schwarzian derivative operator as a composition of a first-order quasilinear operator with 
a second-order non-linear differential operator which is sometimes called the nonlinearity operator.

\begin{lemma}
The map
$$ {\mathcal N}:{\rm Diff}^r(\overline I) \rightarrow C^{r-2}(\overline I)$$
$$h \mapsto D\Log Dh$$
is a $C^{\infty}$-diffeomorphism. Its differential at $id \in {\rm Diff}^r(\overline I)$ is the map $\delta h \mapsto
D^2\delta h$ from $C^r_{0,0}(\overline I)$  to $C^{r-2}(\overline I)$.
\end{lemma}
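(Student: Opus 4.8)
The plan is to realize $\mathcal{N}$ as a composition of bounded linear maps and superposition (Nemytskii) operators, and then to write down its inverse explicitly, of the same form; the differential at $id$ will fall out by a one-line linearization.

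First I would fix the building blocks. Derivation $D$ is bounded linear $C^k(\overline I)\to C^{k-1}(\overline I)$; integration $g\mapsto\int_{u_0}^{\cdot}g(s)\,ds$ is bounded linear $C^{k-1}(\overline I)\to C^k(\overline I)$; and for any $C^\infty$ real function $\phi$ on an open interval, the superposition operator $v\mapsto\phi\circ v$ is of class $C^\infty$ on the (open) set of $v\in C^k(\overline I)$ taking values in the domain of $\phi$ --- a standard fact, using that $C^k(\overline I)$ is a Banach algebra, with differential $\delta v\mapsto(\phi'\circ v)\,\delta v$. Since an orientation-preserving $h\in{\rm Diff}^r(\overline I)$ has $Dh$ a strictly positive element of $C^{r-1}(\overline I)$, we can factor $\mathcal{N}(h)=D\big(\Log\circ(Dh)\big)$ through the $C^\infty$ superposition operator $v\mapsto\Log v$ on $\{v\in C^{r-1}(\overline I):v>0\}$; hence $\mathcal{N}$ is $C^\infty$ as a map into $C^{r-2}(\overline I)$.

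Next I would construct the inverse. Given $g\in C^{r-2}(\overline I)$, set $G(x):=\exp\!\big(\int_{u_0}^{x}g(s)\,ds\big)$, a strictly positive element of $C^{r-1}(\overline I)$, and define
$$h(x):=u_0+(u_d-u_0)\,\frac{\int_{u_0}^{x}G(s)\,ds}{\int_{u_0}^{u_d}G(s)\,ds}\,.$$
Then $h\in C^r(\overline I)$, $Dh=(u_d-u_0)\big(\int_{u_0}^{u_d}G\big)^{-1}G>0$, and $h(u_0)=u_0$, $h(u_d)=u_d$, so $h\in{\rm Diff}^r(\overline I)$ and $D\Log Dh=D\Log G=g$. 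Conversely, any $h$ with $\mathcal{N}(h)=g$ has $Dh=cG$ for a positive constant $c$, and the boundary conditions $h(u_0)=u_0$, $h(u_d)=u_d$ force $c$ to the value above; so $\mathcal{N}$ is a bijection whose inverse is $g\mapsto h$. This inverse is again a composition of bounded linear maps (integration, the evaluation functional $G\mapsto\int_{u_0}^{u_d}G$, and $D$ for the smoothness check), the $C^\infty$ superposition operator $u\mapsto\exp u$, and the smooth scalar division $(A,\lambda)\mapsto\lambda^{-1}A$ on $C^r(\overline I)\times(0,\infty)$ (smooth because $\int_{u_0}^{u_d}G>0$); hence $\mathcal{N}^{-1}$ is $C^\infty$, and $\mathcal{N}$ is a $C^\infty$-diffeomorphism.

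Finally, for the differential at $id$ I would linearize directly: writing $h=id+\delta h$ with $\delta h\in C^r_{0,0}(\overline I)$, one has $Dh=1+D\delta h$ and $\Log(1+D\delta h)=D\delta h+O(\|D\delta h\|^2)$ in $C^{r-1}(\overline I)$, so $\mathcal{N}(h)=D^2\delta h+o(\|\delta h\|_{C^r})$; thus $D\mathcal{N}(id)$ is $\delta h\mapsto D^2\delta h$ from $C^r_{0,0}(\overline I)$ to $C^{r-2}(\overline I)$ (equivalently, apply the chain rule to the factorization above, noting that the differential of $v\mapsto\Log v$ at the constant function $1$ is the identity). There is no genuine obstacle here: the only technical input is the $C^\infty$-smoothness of the superposition operators between the $C^k(\overline I)$ spaces, and everything else is the explicit formula for $\mathcal{N}^{-1}$ together with bookkeeping on bounded linear maps.
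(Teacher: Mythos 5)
Your proof is correct and follows essentially the same route as the paper: smoothness via composing bounded linear operators with smooth superposition operators, and an explicit inverse built by integrating $\exp$ of a primitive of the datum and normalizing a free constant so that both boundary conditions $h(u_0)=u_0$, $h(u_d)=u_d$ hold (the paper normalizes the primitive so $\exp$ has mean value $1$; you normalize by scaling after exponentiation — these are the same adjustment). The linearization at $id$ is handled identically.
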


\begin{proof}
That $ {\mathcal N}$ is a  $C^{\infty}$ map and the formula for its differential at $id$ is elementary. Given 
$N\in C^{r-2}(\overline I)$, let $N_1\in C^{r-1}(\overline I)$ be the primitive of $N$ such that the mean value over $\overline I$
of $\exp (N_1)$ is $1$. Define then 
$$
h(x) = \int_{u_0}^x \exp( N_1(t))dt\, . 
$$
It is clear that $h\in {\rm Diff}^r(\overline I)$, that it is the only element of ${\rm Diff}^r(\overline I)$ such that 
${\mathcal N} (h)=N$ and that $N\mapsto h$ is $C^{\infty}$.
\end{proof}

\begin{lemma}
The map
$${\mathcal Q}: C^{r-2}(\overline I) \rightarrow C^{r-3}_0(\overline I) \times \Rset^2$$
$$ N \mapsto (\psi = DN-\frac12 N^2 -c_0, c_0 = DN(u_0) - \frac12 N^2(u_0), c_1= N(u_0)) $$
is of class $C^{\infty}$. Its differential at $0$ is given by
$$\delta \psi = D \delta N - \delta c_0, \delta c_0 = D\delta N(u_0), \delta c_1 = \delta N(u_0) \,,$$
which is an isomorphism from $C^{r-2}(\overline I)$ onto $C^{r-3}_0(\overline I) \times \Rset^2$.
Therefore, the restriction of ${\mathcal Q}$ to an appropriate neighborhood of $0 \in C^{r-2}(\overline I)$ is a
$C^1$-diffeomorphism onto a neighborhood of $(0,0,0) \in  C^{r-3}_0(\overline I) \times \Rset^2$.
\end{lemma}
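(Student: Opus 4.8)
The plan is to recognize $\mathcal Q$ as a polynomial map of degree $\leq 2$ between Banach spaces and then apply the inverse function theorem at the origin. First I would check regularity and the target: the operator $N\mapsto DN$ is bounded linear from $C^{r-2}(\overline I)$ to $C^{r-3}(\overline I)$; the squaring map $N\mapsto N^2$ is a continuous quadratic map on $C^{r-2}(\overline I)$ because the pointwise product is continuous bilinear there (that is, $C^{r-2}(\overline I)$ is a Banach algebra for a norm equivalent to the usual one); and $N\mapsto DN(u_0)$ and $N\mapsto N(u_0)$ are bounded linear functionals, using $r\geq 3$ so that $DN(u_0)$ makes sense. Hence each component of $\mathcal Q$ is polynomial of degree $\leq 2$, so $\mathcal Q$ is $C^{\infty}$. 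By construction $\psi(u_0)=DN(u_0)-\tfrac12 N^2(u_0)-c_0=0$, so $\psi\in C^{r-3}_0(\overline I)$, and $\mathcal Q(0)=(0,0,0)$.

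Next I would compute the differential at $0$. Linearizing, the quadratic terms $\tfrac12 N^2$ and $\tfrac12 N^2(u_0)$ drop out, so $D\mathcal Q(0)\cdot\delta N=(\delta\psi,\delta c_0,\delta c_1)$ with $\delta\psi=D\delta N-\delta c_0$, $\delta c_0=D\delta N(u_0)$, $\delta c_1=\delta N(u_0)$, as stated. To see this linear map is an isomorphism from $C^{r-2}(\overline I)$ onto $C^{r-3}_0(\overline I)\times\Rset^2$, I would invert it explicitly: given $(\delta\psi,\delta c_0,\delta c_1)$ the only possible preimage is $\delta N(x):=\delta c_1+\int_{u_0}^x\bigl(\delta\psi(t)+\delta c_0\bigr)\,dt$, which does lie in $C^{r-2}(\overline I)$; the constraint $\delta\psi(u_0)=0$ built into $C^{r-3}_0(\overline I)$ then forces $D\delta N(u_0)=\delta c_0$, so all three equations hold, and uniqueness is clear. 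A continuous linear bijection between Banach spaces is a topological isomorphism by the open mapping theorem.

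Finally, since $\mathcal Q$ is $C^1$ (in fact $C^{\infty}$) and $D\mathcal Q(0)$ is a Banach-space isomorphism, the inverse function theorem yields a neighborhood of $0$ in $C^{r-2}(\overline I)$ on which $\mathcal Q$ restricts to a $C^1$-diffeomorphism onto a neighborhood of $\mathcal Q(0)=(0,0,0)$, which is the assertion. There is essentially no obstacle here; the one point deserving a moment's care is the bijectivity of $D\mathcal Q(0)$, namely that the primitive formula for $\delta N$ is well defined in $C^{r-2}(\overline I)$ and automatically consistent with the evaluation relation $\delta c_0=D\delta N(u_0)$ — which is exactly the reason the constant $c_0$ is subtracted in the definition of $\psi$.
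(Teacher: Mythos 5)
Your proof is correct and follows essentially the same route the paper takes: the paper's proof simply remarks that the first two statements follow by elementary computation and the last from the inverse function theorem in Banach spaces, and you have filled in exactly those computations (polynomiality of $\mathcal Q$ via the Banach-algebra structure on $C^{r-2}(\overline I)$, the linearization at $0$, and the explicit two-sided inverse for $D\mathcal Q(0)$, noting that $\delta\psi(u_0)=0$ is what makes the evaluation condition $\delta c_0 = D\delta N(u_0)$ consistent). No corrections needed.
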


\begin{proof}
The first two statements are obtained by elementary computation. The last one is a consequence of the inverse function
theorem in Banach spaces. 
\end{proof}

\begin{lemma}
The map
$$ {\mathcal S}:= {\mathcal Q} \circ {\mathcal N}:{\rm Diff}^r(\overline I) \rightarrow C^{r-3}_0(\overline I) \times
\Rset^2$$
$$ h \mapsto (\psi, c_0,c_1)=(Sh-Sh(u_0), Sh(u_0), D\Log Dh(u_0))$$
is of  class $C^{\infty}$, and its restriction to an appropriate neighborhood of $id \in {\rm Diff}^r(\overline I)$ is a
$C^{\infty}$-diffeomorphism
onto a neighborhood of $(0,0,0) \in C^{r-3}_0(\overline I) \times \Rset^2$. The differential of ${\mathcal S}$ at $id \in
{\rm Diff}^r(\overline I)$ is the isomorphism
$\delta h \mapsto (D^3\delta h - D^3\delta h(u_0), D^3\delta h(u_0), D^2\delta h(u_0))$ from $C^r_{0,0}(\overline I)$ to
$C^{r-3}_0(\overline I) \times \Rset^2$.
\end{lemma}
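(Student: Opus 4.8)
The plan is to obtain this lemma directly from the two preceding lemmas (the one on $\mathcal N$ and the one on $\mathcal Q$) together with the chain rule and the inverse function theorem in Banach spaces; no new analytic estimate is needed. First I would record the base point: since $h=id$ has $Dh\equiv 1$ and hence $D\Log Dh\equiv 0$, one has $\mathcal N(id)=0$, and therefore $\mathcal S(id)=\mathcal Q(\mathcal N(id))=\mathcal Q(0)=(0,0,0)$, so $\mathcal S$ does send a neighbourhood of $id$ towards a neighbourhood of $(0,0,0)$. The formula for $\mathcal S(h)$ is then a substitution: writing $N:=\mathcal N(h)=D\Log Dh$ and using $D^2\Log Dh-\tfrac12(D\Log Dh)^2=Sh$, the definition of $\mathcal Q$ gives
$$
\psi=DN-\tfrac12N^2-c_0=Sh-c_0,\qquad c_0=DN(u_0)-\tfrac12N^2(u_0)=Sh(u_0),\qquad c_1=N(u_0)=D\Log Dh(u_0),
$$
which is exactly the asserted expression $\mathcal S(h)=(Sh-Sh(u_0),\,Sh(u_0),\,D\Log Dh(u_0))$.

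Next I would handle regularity and the differential. The map $\mathcal N$ is $C^{\infty}$ (preceding lemma) and $\mathcal Q$ is $C^{\infty}$ (preceding lemma), hence $\mathcal S=\mathcal Q\circ\mathcal N$ is $C^{\infty}$ on all of ${\rm Diff}^r(\overline I)$. For the differential at $id$, the chain rule together with $\mathcal N(id)=0$ gives $D\mathcal S_{id}=D\mathcal Q_0\circ D\mathcal N_{id}$; since $D\mathcal N_{id}$ is $\delta h\mapsto D^2\delta h$ and $D\mathcal Q_0$ is $\delta N\mapsto(D\delta N-D\delta N(u_0),\,D\delta N(u_0),\,\delta N(u_0))$, substituting $\delta N=D^2\delta h$ yields
$$
D\mathcal S_{id}(\delta h)=\bigl(D^3\delta h-D^3\delta h(u_0),\,D^3\delta h(u_0),\,D^2\delta h(u_0)\bigr),
$$
the claimed formula. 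Both $D\mathcal N_{id}$ and $D\mathcal Q_0$ are topological isomorphisms (onto $C^{r-2}(\overline I)$ and onto $C^{r-3}_0(\overline I)\times\Rset^2$ respectively), so their composition $D\mathcal S_{id}$ is an isomorphism from $C^r_{0,0}(\overline I)$ onto $C^{r-3}_0(\overline I)\times\Rset^2$.

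Finally, $\mathcal S$ is a $C^{\infty}$ map from the open set ${\rm Diff}^r(\overline I)$ of the affine space $id+C^r_{0,0}(\overline I)$ into the Banach space $C^{r-3}_0(\overline I)\times\Rset^2$, and $D\mathcal S_{id}$ is a topological isomorphism; the inverse function theorem in Banach spaces then gives that $\mathcal S$ restricts to a $C^{\infty}$-diffeomorphism from a sufficiently small neighbourhood of $id$ onto a neighbourhood of $(0,0,0)$. I do not anticipate any real difficulty here: everything has been isolated in the two preceding lemmas, and what is left is one chain-rule computation plus a single application of the inverse function theorem. The only point worth flagging is that one should apply that theorem to $\mathcal S$ itself — whose $C^{\infty}$ regularity, not merely $C^1$, is guaranteed — rather than compose local inverses of $\mathcal N$ and $\mathcal Q$, so that the $C^{\infty}$-diffeomorphism conclusion is reached at once.
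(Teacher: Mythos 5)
Your proof is correct and follows essentially the same route as the paper, which disposes of this lemma in one line as a "direct consequence of the last two lemmas." You have simply spelled out the chain rule computation and the application of the inverse function theorem; your remark about applying the inverse function theorem directly to $\mathcal S$ rather than composing local inverses is a sound way to ensure the $C^\infty$ (rather than merely $C^1$) conclusion, given that Lemma 6.5 was stated with a $C^1$-diffeomorphism conclusion.
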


\begin{proof}
This is a direct consequence of the last two lemmas.
\end{proof}

We denote by $W_0$, $W_1$ neighborhoods of $id $ in ${\rm Diff}^r(\overline I)$ and of $(0,0,0) $ in $C^{r-3}_0(\overline I)
\times \Rset^2$ respectively such that
${\mathcal S}$ defines a $C^{\infty}$-diffeomorphism from $W_0$ onto $W_1$. We denote by ${\mathcal P}: W_1 \rightarrow W_0$
the inverse diffeomorphism, and by $P$ the differential of ${\mathcal P}$ at $(0,0,0) $.

\subsection{The fixed point theorem}

\begin{lemma}
The map
$$(t,h,c_0,c_1) \mapsto {\mathcal P}(\Psi(t,h), c_0,c_1)$$
is defined and of class $C^1$ in a neighborhood of $(0,id,0,0)$ in $[-t_0,t_0]^{\ell +d^*} \times {\rm Diff}^r(\overline
I)\times \Rset^2$, with values in $W_0$. Its differential
at $(0,id,0,0)$ is the map $(\delta t, \delta h, \delta c_0, \delta c_1) \mapsto P(L_0(D^3 \delta \varphi),\delta c_0,
\delta c_1)$, with  $\delta\varphi = \Delta T(\delta t)$, from $\Rset ^{\ell +d^*} \times C^r_{0,0}(\overline I) \times \Rset^2$ to
$C^r_{0,0}(\overline I)$.
\end{lemma}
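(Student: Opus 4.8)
The plan is to recognize the map as a composition of pieces whose $C^1$‑regularity and differentials have already been established, and then to conclude by the chain rule for maps between Banach spaces. Write the map as
$$(t,h,c_0,c_1)\ \longmapsto\ \bigl(\Psi(t,h),c_0,c_1\bigr)\ \longmapsto\ {\mathcal P}\bigl(\Psi(t,h),c_0,c_1\bigr)\,.$$
The first thing to check is that this composition makes sense on a neighborhood of $(0,id,0,0)$, i.e.\ that the intermediate point $(\Psi(t,h),c_0,c_1)$ lies in the neighborhood $W_1\subset C^{r-3}_0(\overline I)\times\Rset^2$ on which ${\mathcal P}$ is defined. This is where the hypothesis that $T_0$ is a \emph{standard} i.e.m.\ is used: on each $\iat$ the map $T_0$ is a translation, so $DT_0\equiv 1$ and $ST_0=0$; hence $\Phi(0,id)=ST_0\circ id\,(D\,id)^2=0$ and therefore $\Psi(0,id)=L_0(0)=0$. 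Thus the intermediate map carries $(0,id,0,0)$ to $(0,0,0)$, the centre of $W_1$, and since $\Psi$ is continuous (being $C^1$ by Lemma~6.3) and the $\Rset^2$‑factor is left unchanged, the intermediate map takes values in $W_1$ on some neighborhood of $(0,id,0,0)$.

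Granted this, the first assertion follows at once: $\Psi$ is of class $C^1$ by Lemma~6.3, and ${\mathcal P}$ is of class $C^{\infty}$ on $W_1$, being the inverse of the $C^{\infty}$‑diffeomorphism ${\mathcal S}$ of Lemma~6.6; a composition of a $C^1$ map with a $C^{\infty}$ map is $C^1$. The values lie in $W_0\subset{\rm Diff}^r(\overline I)$ because ${\mathcal P}$ maps $W_1$ onto $W_0$.

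For the differential at $(0,id,0,0)$ I would just differentiate the composition. The differential of $(t,h,c_0,c_1)\mapsto(\Psi(t,h),c_0,c_1)$ at $(0,id,0,0)$ is $(\delta t,\delta h,\delta c_0,\delta c_1)\mapsto\bigl(d\Psi_{(0,id)}(\delta t,\delta h),\delta c_0,\delta c_1\bigr)$, and by Lemma~6.3 one has $d\Psi_{(0,id)}(\delta t,\delta h)=L_0(D^3\delta\varphi)$ with $\delta\varphi=\Delta T(\delta t)$. Since ${\mathcal S}(id)=(0,0,0)$ we have ${\mathcal P}(0,0,0)=id$, and $P$ is by definition the differential of ${\mathcal P}$ at $(0,0,0)$; the chain rule therefore gives the differential
$$(\delta t,\delta h,\delta c_0,\delta c_1)\ \longmapsto\ P\bigl(L_0(D^3\delta\varphi),\delta c_0,\delta c_1\bigr)\,,$$
a map from $\Rset^{\ell+d^*}\times C^r_{0,0}(\overline I)\times\Rset^2$ into the tangent space at $id$ to ${\rm Diff}^r(\overline I)$, namely $C^r_{0,0}(\overline I)$, as claimed.

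I do not expect any genuine difficulty here: this lemma is the bookkeeping step that glues Lemmas~6.3 and~6.6 together, in preparation for the contraction argument that follows. The one point that deserves a moment's care is precisely the verification that the intermediate map stays inside the domain $W_1$ of ${\mathcal P}$, which rests on $ST_0=0$, i.e.\ on the fact that we are linearizing around a standard i.e.m.; everything else is the Banach‑space chain rule together with the regularity already proved for $\Psi$ and ${\mathcal S}$.
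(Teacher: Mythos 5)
Your proof is correct and takes essentially the same route as the paper, which simply notes that the lemma is a consequence of Lemmas 6.3 and 6.6; you have usefully spelled out the two ingredients (the chain rule for the composition $\mathcal{P}\circ(\Psi,\mathrm{id}_{\Rset^2})$, and the verification that $\Psi(0,\mathrm{id})=0$ because $ST_0\equiv 0$ for a standard i.e.m.) that the paper leaves implicit.
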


\begin{proof}
This is a consequence of lemmas 6.3 and 6.6.
\end{proof}

\begin{lemma}
There exist an open neighborhood $W_2$ of $id \in {\rm Diff}^r(\overline I)$ and an open neighborhood $W_3$ of $(0,0,0) \in
[-t_0,t_0]^{\ell +d^*} \times \Rset^2$ such that, for each $(t,c_0,c_1) \in W_3$, the map
$$h \mapsto {\mathcal P}(\Psi(t,h), c_0,c_1)$$
has exactly one fixed point in $W_2$, that we denote by ${\mathcal H}(t,c_0,c_1)$. Moreover, the map ${\mathcal H}$ is of
class $C^1$ on $W_3$, and its differential at $(0,0,0)$ is the map $(\delta t, \delta c_0, \delta c_1) \mapsto P(L_0(D^3
\delta \varphi),\delta c_0, \delta c_1)$, with  $\delta\varphi = \Delta T(\delta t)$, from $\Rset ^{\ell +d^*} \times \Rset^2$ to
$C^r_{0,0}(\overline I)$.
\end{lemma}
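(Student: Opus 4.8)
The plan is to deduce the lemma from the parametrized contraction principle (equivalently, the implicit function theorem in Banach spaces) applied to the $C^1$ map
$$F(t,h,c_0,c_1) := {\mathcal P}(\Psi(t,h),c_0,c_1)$$
of Lemma 6.7, regarded — after identifying a neighbourhood of $id$ in ${\rm Diff}^r(\overline I)$ with a ball in $C^r_{0,0}(\overline I)$ via $h \mapsto h-id$ — as a map into $C^r_{0,0}(\overline I)$. First I would record that $(0,id,0,0)$ is a fixed point of $h\mapsto F(0,h,0,0)$: since $T_0$ is standard, its restriction to each $\iat$ is a translation, so $ST_0\equiv 0$ and $\Phi(0,id)=0$; as $L_0$ is linear and ${\mathcal P}$ sends $(0,0,0)$ to $id$, this gives $\Psi(0,id)=0$ and $F(0,id,0,0)=id$.

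The decisive point is that the differential of $F$ at $(0,id,0,0)$, computed in Lemma 6.7 as $(\delta t,\delta h,\delta c_0,\delta c_1)\mapsto P(L_0(D^3\delta\varphi),\delta c_0,\delta c_1)$ with $\delta\varphi=\Delta T(\delta t)$, \emph{does not involve} $\delta h$; that is, $\partial_h F(0,id,0,0)=0$. By continuity of $\partial_h F$ I would fix a closed ball $W_2$ of radius $\rho$ around $id$ and a parameter neighbourhood on which $\|\partial_h F\|\leq \frac12$, so that for each fixed $(t,c_0,c_1)$ the map $h\mapsto F(t,h,c_0,c_1)$ is $\frac12$-Lipschitz on $W_2$. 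Using $F(0,id,0,0)=id$ and continuity once more, I would then shrink the parameters to an open neighbourhood $W_3\ni(0,0,0)$ on which $\|F(t,id,c_0,c_1)-id\|\leq \frac12\rho$. For $(t,c_0,c_1)\in W_3$ the map $h \mapsto F(t,h,c_0,c_1)$ therefore sends $W_2$ into itself and is a $\frac12$-contraction there, and Banach's fixed-point theorem furnishes a unique fixed point ${\mathcal H}(t,c_0,c_1)\in W_2$.

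For the $C^1$-dependence and the formula for the differential I would invoke the standard fact that the fixed point of a $C^1$ family of contractions depends $C^1$ on the parameters; equivalently, one applies the implicit function theorem to $G(t,h,c_0,c_1):=h-F(t,h,c_0,c_1)$, whose partial derivative in $h$ at $(0,id,0,0)$ is ${\rm Id}-0={\rm Id}$, hence invertible. Differentiating the identity ${\mathcal H}(t,c_0,c_1)=F(t,{\mathcal H}(t,c_0,c_1),c_0,c_1)$ at $(0,0,0)$ and using $\partial_h F(0,id,0,0)=0$ yields $D{\mathcal H}(0,0,0)=\partial_{(t,c_0,c_1)}F(0,id,0,0)$, which is exactly the map $(\delta t,\delta c_0,\delta c_1)\mapsto P(L_0(D^3\delta\varphi),\delta c_0,\delta c_1)$ with $\delta\varphi=\Delta T(\delta t)$ asserted in the statement.

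The argument is essentially routine once Lemma 6.7 is at hand; the one point deserving attention is domain bookkeeping — keeping $\Psi(t,h)$ inside the neighbourhood $W_1$ on which ${\mathcal P}$ is defined, and $F$ with values in the neighbourhood $W_0$ of $id$ fixed in Subsection 6.3, so that the fixed point is genuinely an element of ${\rm Diff}^r(\overline I)$ — which is arranged by first fixing $W_0,W_1$ and then shrinking $W_2$ and $W_3$ as needed. No small-divisor difficulty arises at this stage: it has all been absorbed into the boundedness of $L_0$ (Theorem 3.13) and the smoothness statements of Lemmas 6.2–6.7.
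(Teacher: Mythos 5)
Your proof is correct and takes essentially the same route as the paper, which simply states that the lemma ``is a consequence of the implicit function theorem applied to the fixed point equation ${\mathcal P}(\Psi(t,h), c_0,c_1)=h$''; you have merely made explicit the verifications (base point is a fixed point, $\partial_h F$ vanishes at the base point, hence $\partial_h G={\rm Id}$ is invertible) that the paper leaves to the reader.
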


\begin{proof}
This is a consequence of the implicit function theorem applied to the fixed point equation $ {\mathcal P}(\Psi(t,h), c_0,c_1)=h$.
\end{proof}

Let $(t,c_0,c_1) \in W_3$, $h= {\mathcal H}(t,c_0,c_1)$. Then $h$ satisfies
$$\Phi(t,h)=ST_t \circ h (Dh)^2 = L_1(\Phi(t,h)) + \int_0^1 \Phi(t,h)(x)\;dx\;\; + Sh \circ T_0 - Sh \;.$$

For $(t,c_0,c_1) \in W_3$, we write $H:= T_t \circ h \circ T_0^{-1}$. We have $H=h$ iff $T_t = h \circ T_0 \circ h^{-1}$.

\subsection{Conditions for H to be a diffeomorphism }
\begin{lemma}
For $(t,c_0,c_1) \in W_3$, the following are equivalent
\begin{enumerate}
\item $h(u^t_i) = u^t_i$ for all $0<i<d$;
\item $H$ is an homeomorphism of $\overline I$ satisfying $H(u^b_j) = u^b_j$ for all $0<j<d$.
\end{enumerate}
\end{lemma}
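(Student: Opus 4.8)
The plan is to reduce both implications to a single computation: for each $\beta\in\A$, I will evaluate the left-hand limit of $H$ at the point $u^b_{\pi_b(\beta)}$ --- an interior breakpoint when $\beta\ne\alpha_b$, and the right endpoint $u_d$ when $\beta=\alpha_b$. Put $\eta_i:=h(u^t_i)-u^t_i$ for $0\leq i\leq d$; since $h\in{\rm Diff}^r(\overline I)$ is close to the identity it fixes $u_0$ and $u_d$, so $\eta_0=\eta_d=0$. On $I^b_\beta$ one has $H=T_t\circ h\circ T_0^{-1}$, where $T_0^{-1}$ translates $I^b_\beta$ onto $I^t_\beta$ and $T_t$ is taken through its branch on $I^t_\beta$; this branch extends past the endpoints of $I^t_\beta$ because, the family being simple, it coincides there with the translation $T_0$. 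As $y\uparrow u^b_{\pi_b(\beta)}$ one has $T_0^{-1}(y)\uparrow u^t_{\pi_t(\beta)}$, and for $h$ in a small enough neighbourhood $W_2$ of the identity --- shrunk, uniformly in $t$, if necessary --- the value $h(u^t_{\pi_t(\beta)})$ lies in the region where $T_t=T_0$; since that translation carries $u^t_{\pi_t(\beta)}$ to $u^b_{\pi_b(\beta)}$, I obtain
$$\lim_{y\uparrow u^b_{\pi_b(\beta)}}H(y)=u^b_{\pi_b(\beta)}+\eta_{\pi_t(\beta)}.$$

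For $(1)\Rightarrow(2)$: if $h(u^t_i)=u^t_i$ for $0<i<d$, then $h$, being orientation preserving and also fixing $u_0$ and $u_d$, maps each $\overline{\iat}$ homeomorphically onto itself; hence, using the $C^{r+3}$ extension of each branch of $T_t$ to the closed intervals, $H$ restricted to each $\overline{I^b_\beta}$ is a composition of orientation-preserving homeomorphisms $\overline{I^b_\beta}\to\overline{I^t_\beta}\to\overline{I^t_\beta}\to\overline{I^b_\beta}$ fixing both endpoints. Since the $\overline{I^b_\beta}$ tile $\overline I$ and $H$ takes the value $u^b_j$ at each shared endpoint (and $u_0$, $u_d$ at the two ends), the pieces glue into a homeomorphism of $\overline I$ with $H(u^b_j)=u^b_j$ for $0<j<d$; this is $(2)$.

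For $(2)\Rightarrow(1)$: assume $H$ is a homeomorphism of $\overline I$ with $H(u^b_j)=u^b_j$ for $0<j<d$. On each $\overline{I^b_\beta}$ the map $H=T_t\circ h\circ T_0^{-1}$ is $C^0$-close to the identity (as $h$ is close to $\mathrm{id}$ and $T_t$ to $T_0$), so the global homeomorphism $H$ is $C^0$-close to $\mathrm{id}_{\overline I}$, hence orientation preserving, whence $H(u_0)=u_0$ and $H(u_d)=u_d$. Thus $H$ is continuous and fixes $u^b_{\pi_b(\beta)}$ for every $\beta\in\A$ (with $u^b_{\pi_b(\alpha_b)}=u_d$). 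Comparing with the limit formula above forces $\eta_{\pi_t(\beta)}=0$ for every $\beta\in\A$; since $\beta\mapsto\pi_t(\beta)$ is a bijection of $\A$ onto $\{1,\dots,d\}$, this gives $h(u^t_i)=u^t_i$ for $i=1,\dots,d-1$, which is $(1)$.

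I expect the only genuinely delicate point to be the combinatorial bookkeeping in this last step: the $d-1$ interior breakpoints $u^b_j$ by themselves yield $\eta_{\pi_t(\beta)}=0$ only for $\beta\ne\alpha_b$, and it is the behaviour of $H$ at the endpoint $u_d$ (or, working with right-hand limits instead, at $u_0$) that supplies the remaining relation $\eta_{\pi_t(\alpha_b)}=0$ --- exactly the phenomenon familiar from the circle case $d=2$, where both interior conditions are vacuous and the conjugacy condition lives entirely at the relevant point. Everything else --- the existence of the branch extensions, the uniform choice of $W_2$ and $t_0$ ensuring $h(u^t_i)$ stays where $T_t=T_0$, and the gluing of the piecewise homeomorphisms --- is routine, given simpleness of the family and compactness of the parameter domain.
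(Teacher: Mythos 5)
Your argument takes the same route as the paper's own (very terse) proof: record that $H=T_t\circ h\circ T_0^{-1}$ fixes $u_0$, $u_d$, is one-to-one mod $0$, and examine one-sided limits at the $u^b_j$. The direction $(1)\Rightarrow(2)$ and the endpoint bookkeeping (using $\beta=\alpha_b$, i.e.\ the left limit at $u_d$, to supply $\eta_{\pi_t(\alpha_b)}=0$) are fine.

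There is, however, a small inaccuracy in the displayed limit formula, which your proposal uses as a black box. In the composition $H=T_t\circ h\circ T_0^{-1}$ the branch of $T_t$ applied at $h(T_0^{-1}(y))$ is determined by which $I^t_\gamma$ contains $h(T_0^{-1}(y))$, not by which $I^b_\beta$ contains $y$. When $\eta_{\pi_t(\beta)}>0$, for $y$ just below $u^b_{\pi_b(\beta)}$ the point $h(T_0^{-1}(y))$ has already crossed $u^t_{\pi_t(\beta)}$ into $I^t_{\beta'}$, with $\pi_t(\beta')=\pi_t(\beta)+1$; there $T_t$ coincides with the translation carrying $I^t_{\beta'}$ onto $I^b_{\beta'}$, and the genuine left limit is
$$\lim_{y\uparrow u^b_{\pi_b(\beta)}}H(y)=u^b_{\pi_b(\beta')-1}+\eta_{\pi_t(\beta)}\,,$$
which differs from $u^b_{\pi_b(\beta)}+\eta_{\pi_t(\beta)}$ in general. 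The phrase ``$T_t$ is taken through its branch on $I^t_\beta$'' silently replaces $H$ by a different map on the short subinterval $(T_0(h^{-1}(u^t_{\pi_t(\beta)})),u^b_{\pi_b(\beta)})$. (The paper's one-line proof, which asserts $H$ is ``continuous except perhaps at the $u^b_j$'', has the same imprecision.) The implication $(2)\Rightarrow(1)$ is not endangered: with $\eta_{\pi_t(\beta)}>0$ the true $H$ has an internal jump at $T_0(h^{-1}(u^t_{\pi_t(\beta)}))\in I^b_\beta$, from $u^b_{\pi_b(\beta)}$ to $u^b_{\pi_b(\beta')-1}$; if the jump is nontrivial $H$ is not continuous, and if it is trivial then $u^b_{\pi_b(\beta')-1}=u^b_{\pi_b(\beta)}$, your formula holds after all, and it forces $\eta_{\pi_t(\beta)}=0$. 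Adding this one observation --- that an interior jump of $H$ already contradicts $(2)$ --- closes the gap; in the remaining case $\eta_{\pi_t(\beta)}\le 0$ your formula is correct as written.
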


\begin{proof}
The map $H$ is $1$-to-$1$ (mod.\ $0$), fixes $u_0$ and $u_d$ and is continuous except perhaps at the $u_j^b$, $0<j<d$.
Looking at the left and right limits of $H$ at these points gives the lemma.
\end{proof}

When the equivalent conditions of the lemma are satisfied, $H$ is in fact a piecewise $C^r$ diffeomorphism of $\overline
I$,
with possibly discontinuities of the  derivatives of order $\leq 2$ at the $u^b_j$.

\begin{lemma}
Let $(t,c_0,c_1) \in W_3$ such that the equivalent conditions of  the last Lemma  are satisfied. Then $H$ is a
$C^2$-diffeomorphism of $\overline I$ iff one has, for all $\upsilon = (\alpha,L) \in \AA$ with $\alpha \ne \,_b\alpha$
$$\Log Dh(u^t(\upsilon)) = \Log Dh(u^t(\sigma(\upsilon))),\quad D\Log Dh(u^t(\upsilon)) = D\Log
Dh(u^t(\sigma(\upsilon)))\;.$$
\end{lemma}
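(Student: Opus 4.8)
The plan is to exploit that the family is simple in order to reduce the one-sided derivatives of $H = T_t \circ h \circ T_0^{-1}$ at each singularity $u^b_j$ to values of derivatives of $h$ at singularities of $T_0$, and then to read the gluing conditions off the combinatorics of the permutation $\sigma$.

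First I would fix $0 < j < d$ and take the unique $\upsilon = (\alpha,L) \in \AA$ with $u^b(\upsilon) = u^b_j$, i.e.\ $\pi_b(\alpha) = j+1$; then $\alpha \ne \,_b\alpha$, so $\sigma(\upsilon) = (\beta,R)$ with $\pi_b(\beta) = j$, and $u^b(\sigma(\upsilon)) = u^b_j$ as well. As $\alpha$ runs over $\A \setminus \{\,_b\alpha\}$ these $\upsilon$ run bijectively over the singularities $u^b_1 < \cdots < u^b_{d-1}$ of $T_0^{-1}$, so it suffices to understand $H$ near one such point. Just to the right of $u^b_j$, the point $x$ lies in $\iab$ with index $\alpha$, so $T_0^{-1}(x) \in \iat$ and $T_0^{-1}(x) \to u^t(\upsilon)$ as $x \downarrow u^b_j$; just to the left, $x$ lies in the bottom interval indexed by $\beta$, so $T_0^{-1}(x) \to u^t(\sigma(\upsilon))$ as $x \uparrow u^b_j$. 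Since $h$ fixes every singularity of $T_0$ by the previous lemma and, by simplicity, $T_t$ coincides with the translation $T_0$ in a neighbourhood of each of $u^t(\upsilon)$ and $u^t(\sigma(\upsilon))$, the map $H$ near $u^b_j$ has, on each side, the form $x \mapsto h(x-c) + c$ with $c$ the relevant translation amount (depending on the side). Consequently $H$ is of class $C^r$ on the closure of each bottom interval and is continuous at $u^b_j$ with value $u^b_j$ (as in the previous lemma), while for $1 \le k \le r$ the right and left limits of $D^k H$ at $u^b_j$ equal $D^k h(u^t(\upsilon))$ and $D^k h(u^t(\sigma(\upsilon)))$ respectively.

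Since $H$ is an orientation-preserving homeomorphism of $\overline I$ fixing the $u^b_j$ (by the previous lemma), it is a $C^2$-diffeomorphism of $\overline I$ if and only if, at every $u^b_j$, the right and left limits of $D^k H$ agree for $k = 1$ and $k = 2$, the value of $H$ itself already matching. For $k=1$ this is $Dh(u^t(\upsilon)) = Dh(u^t(\sigma(\upsilon)))$, equivalent, since $Dh > 0$, to $\Log Dh(u^t(\upsilon)) = \Log Dh(u^t(\sigma(\upsilon)))$. Granting this, so that $Dh$ takes a common value $a > 0$ at the two points, the $k=2$ identity $D^2 h(u^t(\upsilon)) = D^2 h(u^t(\sigma(\upsilon)))$ is equivalent, after dividing by $a$ and using $D\Log Dh = D^2 h / Dh$, to $D\Log Dh(u^t(\upsilon)) = D\Log Dh(u^t(\sigma(\upsilon)))$. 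Letting $\upsilon$ range over all $(\alpha,L) \in \AA$ with $\alpha \ne \,_b\alpha$ then gives exactly the stated pair of equations.

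The computation is entirely elementary; the only point needing care — and the reason the permutation $\sigma$, rather than some other pairing of the points $u^t(\cdot)$, governs the gluing — is keeping track of which branch of $T_0^{-1}$ near $u^b_j$ produces which limit, together with the use of simplicity to discard the chain-rule factors $DT_t$ and $DT_0^{-1}$. Without simplicity those factors would contribute and one would instead need triviality of the conjugacy invariant in $J^2$; here that reduction has already been carried out in Proposition 5.3.
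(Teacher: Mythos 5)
Your proof is correct and follows the same line of reasoning as the paper, which compresses the whole argument into the single sentence that the gluing relations express matching of the left and right limits of the first two derivatives of $H$ at the $u^b_j$. You have simply spelled out the combinatorics of $\sigma$ governing which branch of $T_0^{-1}$ lands on which side, and the reduction (via simplicity and $h(u^t_i)=u^t_i$) to the identity $D^kH \to D^k h$ at the appropriate $u^t(\cdot)$.
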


\begin{proof}
Indeed these relations express that the left and right limits of the first two derivatives of $H$ at the $u_j^b$ are the same. 
\end{proof}

\begin{remark}
When $\upsilon = (\alpha,R)$, $\alpha \ne \alpha_t$, one has $u^t(\upsilon)=u^t(\sigma(\upsilon))$, hence
$$\Log Dh(u^t(\upsilon)) = \Log Dh(u^t(\sigma(\upsilon))),\quad D\Log Dh(u^t(\upsilon)) = D\Log
Dh(u^t(\sigma(\upsilon)))\;,$$
is always true.
\end{remark}

\begin{lemma}
Let $(t,c_0,c_1) \in W_3$ such that the equivalent conditions of the last two Lemmas are satisfied. Assume also that
$L_1(\Phi(t,{\mathcal H}(t,c_0,c_1))) =0$. Then $H$ is a $C^r$-diffeomorphism of $\overline I$.
\end{lemma}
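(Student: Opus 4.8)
The plan is to convert the fixed-point property of $h=\mathcal{H}(t,c_0,c_1)$, together with the vanishing of $L_1$, into a clean identity for the Schwarzian of $h$, use it to compute the Schwarzian of $H$ on each continuity interval of $H$, and then bootstrap the regularity of $H$ from $C^2$ to $C^r$ across the breakpoints $u^b_j$.

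Recall that the fixed point $h=\mathcal{H}(t,c_0,c_1)$ satisfies
$$\Phi(t,h)=ST_t\circ h\,(Dh)^2=L_1(\Phi(t,h))+\int_I\Phi(t,h)(x)\,dx+Sh\circ T_0-Sh\;.$$
Imposing the hypothesis $L_1(\Phi(t,h))=0$ and writing $a:=\int_I\Phi(t,h)(x)\,dx\in\Rset$, this reduces to $ST_t\circ h\,(Dh)^2=a+Sh\circ T_0-Sh$. I then compute $SH$: on each interval $(u^b_{j-1},u^b_j)$ the map $T_0^{-1}$ is a single translation, and since $h$ fixes every $u^t_i$ (condition of Lemma~6.9) and is increasing, $h$ carries the corresponding top interval of $T_0$ into itself; hence there $H=T_t\circ h\circ T_0^{-1}$ is a genuine $C^r$-diffeomorphism ($r\geq 3$) and $SH$ is well defined. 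Applying the composition rule for Schwarzian derivatives twice, together with $DT_0^{-1}\equiv 1$ and $ST_0^{-1}=0$, gives $SH=\bigl(ST_t\circ h\,(Dh)^2+Sh\bigr)\circ T_0^{-1}$; substituting the functional equation, the bracket equals $a+Sh\circ T_0$, whence $SH=a+Sh$ on each such interval. Thus the a priori piecewise function $SH$ is the restriction of the single function $a+Sh\in C^{r-3}(\overline I)$.

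Now I bootstrap the regularity at the points $u^b_j$. By the condition of Lemma~6.10, $H$ is a $C^2$-diffeomorphism of $\overline I$, so its nonlinearity $N_H:=D\Log DH=D^2H/DH$ is continuous on $\overline I$. On each continuity interval $DN_H=SH+\frac12 N_H^2=(a+Sh)+\frac12 N_H^2$, whose right-hand side is continuous on all of $\overline I$; hence the one-sided limits of $DN_H$ at each $u^b_j$ coincide and $N_H\in C^1(\overline I)$. Iterating --- if $N_H\in C^k(\overline I)$ then $DN_H\in C^{\min(k,r-3)}(\overline I)$, hence $N_H\in C^{\min(k+1,r-2)}(\overline I)$ --- we reach $N_H\in C^{r-2}(\overline I)$ (equivalently, one appeals to uniqueness for the Riccati equation $DN=\frac12 N^2+(a+Sh)$ through $(u^b_j,N_H(u^b_j))$). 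Then, exactly as in the proof of Lemma~6.4, $\Log DH$ is a $C^{r-1}$ primitive of $N_H$, so $DH\in C^{r-1}(\overline I)$ and $H\in C^r(\overline I)$; being moreover a homeomorphism of $\overline I$ with everywhere positive derivative (a composition of orientation-preserving diffeomorphisms and a translation), $H$ is a $C^r$-diffeomorphism of $\overline I$.

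The main obstacle is precisely this last bootstrap: one must be sure that the agreement at the $u^b_j$ of the derivatives of $H$ up to order $2$ (supplied by Lemmas~6.9--6.10) propagates, through the identity $SH=a+Sh$, to agreement of all derivatives up to order $r$. This works because $a+Sh$ is globally $C^{r-3}$ on $\overline I$ --- which in turn uses $L_1(\Phi(t,h))=0$, the simple normalization of the family, and $r\geq 3$ --- while the remaining ingredients are only the composition rule for Schwarzians and the elementary Lemmas~6.4--6.6.
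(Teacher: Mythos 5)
Your proof is correct, and it takes a genuinely different route than the paper's. The paper argues pointwise at the breakpoints: continuity of $D^{3+k}H$ at $u^b_j$ is translated (using that $T_t$, $T_0$ are translations near the singularities and $h$ fixes the $u^t_i$) into the identity $D^k Sh(u^t(\upsilon)) = D^k Sh(u^t(\sigma(\upsilon)))$, which is then read off directly from $D^k(ST_t\circ h\,(Dh)^2) = D^k Sh\circ T_0 - D^k Sh$ evaluated at $u^t(\upsilon)$ and $u^t(\sigma(\upsilon))$, since the left-hand side vanishes there. You instead compute, via the Schwarzian cocycle rule and the functional equation, that the piecewise Schwarzian of $H$ agrees on every continuity interval with the single globally $C^{r-3}$ function $a+Sh$, and then upgrade the regularity of $H$ from the $C^2$ supplied by Lemmas 6.9--6.10 to $C^r$ by the Riccati bootstrap $DN_H = (a+Sh) + \tfrac12 N_H^2$, finishing as in Lemma 6.4. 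The paper's route is a short local computation at each $u^b_j$; yours makes transparent the conceptual reason $H$ is smooth (its Schwarzian extends smoothly across the breakpoints) at the small price of the explicit Riccati induction. One small remark: when invoking the composition rule twice, the point to be made explicitly is that on each $(u^b_{j-1},u^b_j)$ the map $T_0^{-1}$ is a translation onto $\iat$, $h(\iat)=\iat$ because $h$ fixes $u^t_{j-1},u^t_j$, and $T_t$ restricted to $\overline{\iat}$ is $C^{r+3}$ (the simple-family hypothesis guarantees $\iat(t)=\iat$); you indicate this but it is the crux of why $SH$ is well defined and $C^{r-3}$ on each piece before the bootstrap.
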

\begin{proof}
We have to prove that the derivative of order $3+k$ of $H$ is continuous at each $u^b_j$ for all $0\leq k \leq r-3$,
$0<j<d$.
 This is equivalent to show that, for all $0\leq k \leq r-3$,  all $\upsilon = (\alpha,L) \in \AA$ with $\alpha \ne
 \,_b\alpha$
$$D^k Sh (u^t(\upsilon))= D^k Sh (u^t(\sigma(\upsilon)))\;,$$
with $h= {\mathcal H}(t,c_0,c_1)$ as above.

As $L_1(\Phi(t,{\mathcal H}(t,c_0,c_1))) =0$, we have
$$ST_t \circ h (Dh)^2 =  \int_0^1 \Phi(t,h)(x)\;dx\;\; + Sh \circ T_0 - Sh \;,$$
and, for $0<k\leq r-3$
$$D^k(ST_t \circ h (Dh)^2) =  D^kSh \circ T_0 - D^kSh \;.$$
As $D^kST_t$ vanishes at the $u^t_i$ for $0\leq i \leq d$, $0\leq k \leq r-3$,  and $D^k Sh$ is continuous at $u^b_j$,  the
required equalities follow.
\end{proof}

\subsection {Equations for the conjugacy class of $T_0$}

\begin{proposition}
Let $(t,c_0,c_1) \in W_3$ such that $h = {\mathcal H}(t,c_0,c_1)$ satisfies
\begin{eqnarray*}
h(u^t_i) &=&u^t_i  \quad \quad{\rm for\; all}\; 0<i<d,\\
\Log Dh( u^t(\upsilon)) &=& \Log Dh(u^t(\sigma(\upsilon))),\quad {\rm for \;all}\; \upsilon = (\alpha, L) \in \AA, \alpha
\ne \, _b\alpha, \\
D\Log Dh( u^t(\upsilon)) &= &D\Log Dh(u^t(\sigma(\upsilon))), \;{\rm for\; all}\; \upsilon = (\alpha, L) \in \AA, \alpha \ne
\, _b\alpha, \\
L_1(\Phi(t,h)) &=&0\;;
\end{eqnarray*}
Then , if $(t,c_0,c_1)$ is close enough to $(0,0,0)$, we have $T_t \circ h = h \circ T_0$.

Conversely, let $t \in [-t_0,t_0]^{\ell +d^*}$ and $ h \in {\rm Diff}^r(\overline I)$ such that $T_t \circ h = h \circ
T_0$.
Let $c_0 = S h(0)$, $c_1= D\Log Dh(0)$. If $t$ is close enough to $0$ and $ h$ is close enough to the identity, then  $ h =
{\mathcal H}(t,c_0,c_1)$
and the relations   above are satisfied.
\end{proposition}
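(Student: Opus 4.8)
The plan is to analyze the defining identity satisfied by $h = {\mathcal H}(t,c_0,c_1)$, namely
$$
\Phi(t,h) = ST_t \circ h\,(Dh)^2 = L_1(\Phi(t,h)) + \int_0^1 \Phi(t,h)(x)\,dx + Sh\circ T_0 - Sh\,,
$$
and show that, under the four listed conditions, the auxiliary map $H := T_t\circ h\circ T_0^{-1}$ coincides with $h$, which is equivalent to $T_t\circ h = h\circ T_0$. First I would invoke Lemma 6.9: the first listed condition $h(u^t_i)=u^t_i$ for all $0<i<d$ is exactly the hypothesis needed to conclude that $H$ is a homeomorphism of $\overline I$ fixing the $u^b_j$, hence a piecewise $C^r$ diffeomorphism with possible derivative discontinuities of order $\leq 2$ at the $u^b_j$. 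Then the second and third conditions (equality across $\sigma$ of $\Log Dh$ and $D\Log Dh$ at the relevant $u^t(\upsilon)$) are precisely the content of Lemma 6.10, upgrading $H$ to a genuine $C^2$-diffeomorphism; combined with $L_1(\Phi(t,h))=0$, Lemma 6.12 promotes $H$ to a full $C^r$-diffeomorphism of $\overline I$.

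Next I would show $H = h$ as $C^r$-diffeomorphisms. The key computation is that $H$ and $h$ have the same Schwarzian derivative. Using the composition rule $S(f\circ g) = Sf\circ g\,(Dg)^2 + Sg$ applied to $H = T_t\circ h\circ T_0^{-1}$, together with $ST_0 = 0$ (as $T_0$ is a standard i.e.m., hence locally a translation on each continuity interval, so its Schwarzian vanishes wherever defined), one gets $SH = (ST_t\circ h)(Dh)^2 \circ T_0^{-1} \cdot (DT_0^{-1})^2 + Sh\circ T_0^{-1}$. When $L_1(\Phi(t,h))=0$ the defining identity reads $ST_t\circ h\,(Dh)^2 = \int_0^1\Phi(t,h)(x)\,dx + Sh\circ T_0 - Sh$; substituting and simplifying (the constant $\int_0^1\Phi$ term and the telescoping $Sh$ terms cancel appropriately after composing with $T_0^{-1}$) yields $SH = Sh$ on each continuity interval, and by the $C^r$ regularity just established this holds on all of $\overline I$. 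Since $H$ and $h$ are both $C^r$-diffeomorphisms of $\overline I$ with the same Schwarzian, they differ by a Möbius transformation preserving $\overline I$; I would then pin down this Möbius transformation to be the identity using the normalizations at $u_0$: both $H$ and $h$ fix $u_0$ (and $u_d$), and the conditions $c_0 = Sh(0)$, $c_1 = D\Log Dh(0)$ force $SH(0) = Sh(0)$ and $D\Log DH(0) = D\Log Dh(0)$, which together with fixing the endpoints forces the Möbius factor to be the identity. Hence $H=h$, i.e. $T_t\circ h = h\circ T_0$.

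For the converse, suppose $t$ is close to $0$, $h\in{\rm Diff}^r(\overline I)$ is close to the identity, and $T_t\circ h = h\circ T_0$. Set $c_0 = Sh(0)$, $c_1 = D\Log Dh(0)$. Taking Schwarzian derivatives of the conjugacy equation and using $ST_0=0$ gives $ST_t\circ h\,(Dh)^2 = Sh\circ T_0 - Sh$, so $\Phi(t,h) = Sh\circ T_0 - Sh$ on each $\iat$; this function is a coboundary for $T_0$ in $C^{r-3}_0$, so applying the decomposition $\varphi = \int_I\varphi + L_1(\varphi) + L_0(\varphi)\circ T_0 - L_0(\varphi)$ from Theorem 3.13 and comparing, one reads off $L_1(\Phi(t,h)) = 0$, $\int_0^1\Phi(t,h) = 0$, and $L_0(\Phi(t,h)) = Sh$ up to an additive constant — more precisely $\Psi(t,h) = Sh - Sh(u_0)$. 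Then by construction of ${\mathcal P}$ and ${\mathcal S}$ (Lemma 6.7, 6.8), $h$ is the fixed point ${\mathcal P}(\Psi(t,h),c_0,c_1) = h$, so $h = {\mathcal H}(t,c_0,c_1)$ by uniqueness of the fixed point in $W_2$. Finally, the conjugacy equation together with $h$ being a genuine $C^r$-diffeomorphism fixing the $u^t_i$ (which follows because $T_t$ and $T_0$ have the same singularities and $h$ conjugates them, sending singularities to singularities, combined with closeness to the identity to fix the ordering) forces $h(u^t_i)=u^t_i$; and the $C^r$-regularity of $H=h$ forces, via Lemmas 6.10 and 6.12 read in reverse, the two $\sigma$-matching conditions on $\Log Dh$ and $D\Log Dh$. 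Thus all four relations hold.

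The main obstacle I expect is the bookkeeping in the Schwarzian computation showing $SH = Sh$: one must be careful that the constant term $\int_0^1\Phi(t,h)(x)\,dx$, which is harmless in $C^{r-3}_0$, does not obstruct the identity $SH = Sh$ — this is where the precise form of the decomposition in Theorem 3.13 (with the integral term isolated) and the vanishing $\int_0^1\Phi(t,h) = 0$ (which itself needs justification, coming from $\partial$-invariance, cf. equation (3.2), since $\Phi(t,h) = ST_t\circ h\,(Dh)^2$ lies in $C^{r-1}_{comp}$ and for a coboundary the integral vanishes) must be combined correctly. A secondary delicate point is verifying that the Möbius ambiguity is genuinely killed by the two normalization conditions at $u_0$ rather than needing a third; here one uses that a Möbius transformation of $\Rset$ fixing $u_0$ and $u_d$ with prescribed $Sh(u_0)$ (necessarily $0$, since Möbius maps have vanishing Schwarzian) and prescribed first log-derivative at $u_0$ is forced to be affine and then the identity.
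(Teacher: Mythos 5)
Your argument for the converse direction is essentially the paper's (and correct): the conjugacy equation yields $\Phi(t,h)=Sh\circ T_0-Sh$, which is a coboundary, so the uniqueness of the decomposition in Theorem~3.13 gives $\int_I\Phi=0$, $L_1(\Phi)=0$, $\Psi(t,h)=Sh-Sh(u_0)$, and hence $h={\mathcal H}(t,c_0,c_1)$; the four relations then follow by reading Lemmas~6.9, 6.10, 6.12 forward.

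The direct direction, however, contains a genuine gap that is exactly where the real content of the proposition lies. You assert that after substituting the fixed-point identity into the Schwarzian composition rule for $H=T_t\circ h\circ T_0^{-1}$, ``the constant $\int_0^1\Phi$ term \dots\ cancel[s] appropriately after composing with $T_0^{-1}$) yields $SH=Sh$''. This is not so: the correct computation gives $SH = (\Phi(t,h)+Sh)\circ T_0^{-1} = c + Sh$ with $c=\int_0^1\Phi(t,h)\,dx$, and the constant $c$ does \emph{not} drop out. Your attempted justification that $c=0$ (via the boundary operator, eq.~(3.2)) is also incorrect: eq.~(3.2) controls $\int_I D\Phi$, not $\int_I\Phi$, and the claim that ``for a coboundary the integral vanishes'' is circular here since in the direct direction you do not yet know $\Phi(t,h)$ is a coboundary. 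As a result, your Möbius argument has nothing to stand on: $SH=Sh+c$ with possibly $c\neq 0$ does not force $H=M\circ h$ for Möbius $M$. Moreover, even if $c=0$ were established, your claim that $SH(u_0)=Sh(u_0)$ and $D\Log DH(u_0)=D\Log Dh(u_0)$ is unjustified --- the fixed-point construction normalizes $Sh(u_0)$ and $D\Log Dh(u_0)$, but says nothing directly about $H$ at $u_0$ (note $T_0^{-1}(u_0)=u^t(\,_b\alpha,L)$ is generically an interior singularity, so $H$ does \emph{not} coincide with $h$ near $u_0$).

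What your proposal misses entirely is the case distinction that organizes the paper's argument: whether $(\alpha_t,R)$ and $(\,_b\alpha,L)$ lie in the same cycle of $\sigma$. When they do, the boundary conditions force the $r$-jets of $h$ (and of $H$) at $u_0$ and $u_d$ to match, so both descend to circle diffeomorphisms in ${\rm Diff}^r(\Tset,0)$; since $SH-SH(0)=Sh-Sh(0)$, the injectivity of $h\mapsto Sh-Sh(0)$ (Lemma~6.14) gives $H=h$ regardless of the value of $c$. When they do not, the boundary conditions instead force the $r$-jets of $h$ and $H$ at $u_d$ to agree, whence $SH(u_d)=Sh(u_d)$ and therefore $c=0$; with $SH=Sh$ and matching $3$-jets at $u_d$, uniqueness for the Schwarzian ODE on the interval gives $H=h$. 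Neither of these mechanisms appears in your proposal, so the direct implication remains unproved.
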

\begin{proof}
We first prove the second part of the proposition. Let $t \in [-t_0,t_0]^{\ell +d^*}$ close to $(0,0,0)$, $ h \in {\rm
Diff}^r(\overline I)$ close to
the identity, such that $T_t \circ h = h \circ T_0$. Then we have
$$ST_t \circ h (Dh)^2 = Sh \circ T_0 - Sh\;.$$
Let $c_0 = S h(0)$, $c_1= D\Log Dh(0)$. Then we have $h= {\mathcal P}(\Psi(t,h), c_0,c_1)$ and therefore
$h = {\mathcal H}(t,c_0,c_1)$. Moreover $L_1(\Phi(t,{\mathcal H}(t,c_0,c_1))) =0$ holds. Finally, $H:= T_t \circ h \circ
T_0^{-1}$ is equal to $h$, hence it follows from Lemmas 6.9 and 6.10 that the other relations in the proposition are
satisfied. This concludes the proof of the second part of the proposition.

 For the proof of the first part, the argument is slightly different, depending whether $(\alpha_t,R)$ and $(\,_b\alpha,L)$
 belong or not to the same cycle of $\sigma$ in $\AA$. Let $t,c_0,c_1,h$ as in the proposition. From Lemma 6.12, we already
 know that $h$ and $H$ belong to ${\rm Diff}^r(\overline I)$.
\begin{itemize}
\item We first assume that $(\alpha_t,R)$ and $(\,_b\alpha,L)$ belong to the same cycle of $\sigma$.\\

By assumption (and Remark 6.11), we have $\Log Dh( u^t(\upsilon)) = \Log Dh(u^t(\sigma(\upsilon)))$ for all $\upsilon
\in \AA$ except $(\alpha_t, R)$ and
$(\,_b\alpha,L)$. In particular, this gives
$$\Log Dh(u_0) = \Log Dh (u^t(\,_t\alpha,L))=\Log Dh (u^t(\alpha_t,R)) = \Log Dh(u_d),$$
$$\Log DH(u_0) = \Log Dh (u^t(\,_b\alpha,L))=\Log Dh (u^t(\alpha_b,R)) = \Log DH(u_d).$$
The same argument applies to $D\Log Dh$ and to $D^k Sh$ for $0\leq k \leq r-3$, according to the proof of Lemma 6.12.
This allows to conclude that both $h$ and $H$ are induced by $C^r$-diffeomorphisms of the circle $\Tset$ obtained by
identifying the endpoints of $\overline I$. Moreover, the relation $ST_t \circ h (Dh)^2 = c + Sh \circ T_0 - Sh $
implies $SH = Sh +c$, with $c = \int_0^1 \Phi(t,h)(x)\;dx\;$. The following lemma allows to conclude that $h = H$.

\begin{lemma}
Let ${\rm Diff}^r( \Tset,0)$ the group of orientation preserving $C^r$ diffeomorphisms of the circle fixing $0$, and let
$C^{r-3}_0(\Tset)$ be the space of $C^{r-3}$ functions on the circle vanishing at $0$. The map
$$ {\rm Diff}^r( \Tset,0) \rightarrow C^{r-3}_0(\Tset)$$
$$h \mapsto Sh - Sh(0)$$
is of class $C^{\infty}$ and its restriction to an appropriate neighborhood of the identity is a $C^{\infty}$
diffeomorphism onto a neighborhood of $0$ in $C^{r-3}_0(\Tset)$.
\end{lemma}
\begin{proof}
The first assertion is trivial, the differential at the identity being the map $\delta h \mapsto D^3 \delta h - D^3
\delta h(0)$ from $C^r_{0}(\Tset)$ (the space of $C^r$ functions on the circle vanishing at $0$) to $C^{r-3}_0(\Tset)$.
This is clearly an isomorphism, hence the lemma follows by the implicit function theorem.
\end{proof}

\item We now assume that $(\alpha_t,R)$ and $(\,_b\alpha,L)$ do not belong to the same cycle of $\sigma$.\\

By assumption (and Remark 6.11), we still have $\Log Dh( u^t(\upsilon)) = \Log Dh(u^t(\sigma(\upsilon)))$ for all
$\upsilon \in \AA$ except $(\alpha_t, R)$ and
$(\,_b\alpha,L)$. This now gives
$$\Log Dh(u_0) = \Log Dh (u^t(\,_t\alpha,L))= \Log Dh (u^t(\,_b\alpha,L))= \Log DH(u_0),$$
$$\Log Dh(u_d)= \Log Dh (u^t(\alpha_t,R)) =\Log Dh (u^t(\alpha_b,R)) = \Log DH(u_d).$$
The same argument applies to $D\Log Dh$ and to $D^k Sh$ for $0\leq k \leq r-3$, according to the proof of Lemma 6.12.
This allows to conclude that the $r$-jets at $u_d$ of $h$ and $H$ are the same. Moreover, the relation $ST_t \circ h
(Dh)^2 = c + Sh \circ T_0 - Sh $ implies $SH = Sh +c$, with $c = \int_0^1 \Phi(t,h)(x)\;dx\;$. As $SH(u_d) =Sh(u_d)$, we
must have $c=0$. As $Sh =SH$ and the $3$-jets of $h$ and $H$ at $u_d$ are equal, we conclude also in this case that
$h=H$.
\end{itemize}
\end{proof}

\subsection{End of the proof of Theorem 5.1 for $r\geq 3$}

From the proposition above, we have to determine in a neighborhood of $0 \in [-t_0,t_0]^{\ell +d^*}$ the set of $t$ for
which, for some $(c_0,c_1)$ close to $(0,0)$, the diffeomorphism  $h = {\mathcal H}(t,c_0,c_1)$ satisfies

\begin{equation}
h(u^t_i) =u^t_i  \quad \quad{\rm for\; all}\; 0<i<d,
\end{equation}
\begin{equation}
\Log Dh( u^t(\upsilon)) = \Log Dh(u^t(\sigma(\upsilon))),\quad {\rm for \;all}\; \upsilon = (\alpha, L) \in \AA, \alpha \ne
\, _b\alpha,
\end{equation}
\begin{equation}
D\Log Dh( u^t(\upsilon)) = D\Log Dh(u^t(\sigma(\upsilon))), \;{\rm for\; all}\; \upsilon = (\alpha, L) \in \AA, \alpha \ne
\, _b\alpha,
\end{equation}
\begin{equation}
L_1(\Phi(t,h)) =0\;.
\end{equation}

We will see that there are exactly $(d^* +2)$ independent equations for $t,c_0,c_1$ in the system above. Looking at the
linearized system at $(0,0,0)$ will allow to apply the implicit function theorem and conclude. We  deal separately with the
same two cases which appeared in the proof of  Proposition 6.13.\\

\begin{itemize}
\item We first assume that $(\alpha_t,R)$ and $(\,_b\alpha,L)$ belong to the same cycle of $\sigma$.\\

There are $(d-1)$ equations in (6.1), $(2r-5)(g-1)$ equations in (6.4) (the dimension of $\Gamma_u$). In (6.2), for each
cycle of $\sigma$ which does not contain $(\,_b\alpha,L)$, there is one redundant equation. So the number of equations
in (6.2) is really $(d-1)-(s-1)=(2g-1)$. Similarly, there are
$(2g-1)$ equations in (6.3).

Therefore the total number of equations in the system (6.1)-(6.4) is $(d-1) +(2r-5)(g-1) +(2g-1) +(2g-1) = d^* +2$ as
claimed.

Consider now the linearized system obtained from (6.1)-(6.4) at $(0,0,0)$. Writing as before $\delta \varphi = \Delta
T(t)$, we have, from Lemma 6.8
$$\delta h = P(L_0(D^3 \delta \varphi),\delta c_0, \delta c_1).$$

From the definition of $L_0$ and $P$ (cf.Lemmas 6.6), this is equivalent to
$$D^3\delta \varphi = D^3 \delta h \circ T_0 - D^3 \delta h + L_1(D^3 \delta \varphi),$$
$$ D^3 \delta h (0) = \delta c_0, \quad D^2 \delta h (0) = \delta c_1 \,,$$
where we have used in the first equation that $\int_0^1 D^3\delta \varphi(x)\, dx =0$.

Now, the linearized version of equation (6.4) is
\begin{equation}
L_1(D^3 \delta \varphi)=0.
\end{equation}

If this holds, we have
$$D^3\delta \varphi = D^3 \delta h \circ T_0 - D^3 \delta h$$
and then, by integration
$$D^2\delta \varphi = D^2 \delta h \circ T_0 - D^2 \delta h + \chi_2\,,$$
for some $\chi_2 \in \Gamma(1)$. But the linearized version of (6.3) is
\begin{equation}
 D^2\delta h( u^t(\upsilon)) = D^2 \delta h(u^t(\sigma(\upsilon))),\quad {\rm for \;all}\; \upsilon = (\alpha, L) \in
 \AA, \alpha \ne \, _b\alpha.
\end{equation}
If this holds, $\chi_2$ has to be constant (at each $u^b_j$, the left and right values of $\chi_2$ are the same). As
$\int_0^1 D^2\delta \varphi(x)\, dx =0$, we must have $\chi_2 =0$. Observe that the equation $D^2\delta \varphi = D^2
\delta h \circ T_0 - D^2 \delta h$ determines $\delta c_0$. One more integration then gives
$$D\delta \varphi = D \delta h \circ T_0 - D \delta h + \chi_1\,,$$
for some $\chi_1 \in \Gamma(1)$. Using now the linearized version of (6.2)
\begin{equation}
 D\delta h( u^t(\upsilon)) = D \delta h(u^t(\sigma(\upsilon))),\quad {\rm for \;all}\; \upsilon = (\alpha, L) \in \AA,
 \alpha \ne \, _b\alpha,
\end{equation}
we proceed in the same way to conclude that, if (6.5)-(6.7) holds, one has $\chi_1 =0$, $D\delta \varphi = D \delta h
\circ T_0 - D \delta h$
and $\delta c_1$ is determined. One last integration gives

$$\delta \varphi =  \delta h \circ T_0 -  \delta h + \chi_0\,,$$
for some $\chi_0 \in \Gamma(1)$. The linearized version of (6.1) is
\begin{equation}
\delta h (u^t_i) =0\, .
\end{equation}
If this holds, one obtains as above first that $\chi_0$ is constant; as $\delta h (u_d) =0 = \delta h
(u^t(\alpha_b,R))$, we have $\chi_0 =0$.

Recalling the definition of $\nu$ in Remark 3.3 and $\Pi$ in Theorem 3.13, we conclude that, if (6.5)-(6.8) holds, then
$$\Pi(\delta \varphi) =0, \quad \nu(\delta \varphi) =0.$$
Going backwards, we see that these relations are actually equivalent to (6.5)-(6.8).
In view of the transversality hypotheses (Tr1), (Tr2) of Subsection 5.1, the theorem in this case now follows from the
implicit function theorem.\\

\item We now assume that $(\alpha_t,R)$ and $(\,_b\alpha,L)$ do not belong to the same cycle of $\sigma$.\\

There are still $(d-1)$ equations in (6.1), $(2r-5)(g-1)$ equations in (6.4). In (6.2), for each cycle of $\sigma$ which
contains neither $(\,_b\alpha,L)$ nor $(\alpha_t,R)$, there is one redundant equation.
This would give $(d-1)-(s-2)=2g$ for the number of equations in (6.2) , and similarly in (6.3), leading to a grand total
of $(d^* +4)$ equations. However, we will now see that the equations
\begin{equation}
\Log Dh(u^t(\,_t\alpha ,L))= \Log Dh(u^t(\sigma(\,_t\alpha ,L))),
\end{equation}
\begin{equation}
D\Log Dh(u^t(\,_t\alpha ,L))= D\Log Dh(u^t(\sigma(\,_t\alpha ,L))),
\end{equation}

are also redundant.

Indeed, assume that (6.1)-(6.4) holds, with the exception of (6.9)-(6.10). Let $H=T_t \circ h \circ T_0^{-1}$ as above.
Following Lemmas 6.9, 6.10, 6.12, $H$ is a homeomorphism of $\overline I$ fixing $u_0, u_d$ and each $u^b_j$; moreover
the restrictions of $H$ to $[u_0,u^b(\,_t\alpha ,L)]$ and
$[u^b(\,_t\alpha ,L),u_d]$ are $C^r$-diffeomorphisms.

As in the end of the proof of Proposition 6.13, we obtain that the $r$-jets of $h$ and $H$ at $u_d$ are the same, and
that $Sh = SH$. This implies that $h=H$ on $[u^b(\,_t\alpha ,L),u_d]$.

Comparing the $r$-jets of $h$ and $H$ at $u^b(\,_t\alpha ,L)$ shows that the $r$-jets of $h$ at $u_0$ and
$u^b(\,_t\alpha ,L)$ are the same.

It is also true that the $r$-jets of $H$ at $u_0$
and $u^b(\,_t\alpha ,L)$ are the same, or, equivalently, that the $r$-jets of $h$ at $u^t(\,_b\alpha,L)$ and
$u^t(\sigma(\,_t\alpha ,L))$ are the same:
 for the first two derivatives, it follows from (6.2), (6.3) (without using (6.9)-(6.10)); for the higher derivatives,
 the argument is the same that in Lemma 6.12.

Therefore, the restrictions to $[u_0,u^b(\,_t\alpha ,L)]$ of both $h$ and $H$ satisfy periodic boundary conditions, and
we conclude by Lemma 6.14 that $h=H$ on the full interval $\overline I$.

This proves that (6.9), (6.10) are redundant and we are left with $(d^*+2)$ equations as in the first case.

We now consider the linearized system as in the first case. We still write
$$\delta \varphi = \Delta T(t),$$
$$\delta h = P(L_0(D^3 \delta \varphi),\delta c_0, \delta c_1),$$
hence
$$D^3\delta \varphi = D^3 \delta h \circ T_0 - D^3 \delta h + L_1(D^3 \delta \varphi),$$
$$ D^3 \delta h (0) = \delta c_0, \quad D^2 \delta h (0) = \delta c_1 \,.$$
Assuming (6.5), we get
$$D^3\delta \varphi = D^3 \delta h \circ T_0 - D^3 \delta h$$
and then, by integration
$$D^2\delta \varphi = D^2 \delta h \circ T_0 - D^2 \delta h + \chi_2\,,$$
for some $\chi_2 \in \Gamma(1)$.
The linearized version of (6.3) minus (6.10) is
\begin{equation}
 D^2\delta h( u^t(\upsilon)) = D^2 \delta h(u^t(\sigma(\upsilon))),\quad {\rm for \;all}\; \upsilon = (\alpha, L) \in
 \AA, \alpha \ne \, _b\alpha, \,_t\alpha.
\end{equation}
This implies that $\chi_2 \circ T_0^{-1}$ is constant on $(u_0,u^b(\,_t\alpha ,L))$ and on $(u^b(\,_t\alpha ,L),u_d)$.
Moreover, as
$D^2 \delta h (u^t(\alpha_b,R)) = D^2 \delta h (u_d)$, the value of $\chi_2$ on $(u^b(\,_t\alpha ,L),u_d)$ is $0$. But
we have also $\int_I \chi_2 = \int_I D^2 \delta \varphi =0$, hence $\chi_2 =0$ everywhere.

One more integration then gives
$$D\delta \varphi = D \delta h \circ T_0 - D \delta h + \chi_1\,,$$
for some $\chi_1 \in \Gamma(1)$.

The linearized version of (6.2) minus (6.9) is
\begin{equation}
 D\delta h( u^t(\upsilon)) = D \delta h(u^t(\sigma(\upsilon))),\quad {\rm for \;all}\; \upsilon = (\alpha, L) \in \AA,
 \alpha \ne \, _b\alpha, \,_t\alpha.
\end{equation}
This now implies $\chi_1 =0$. One last integration gives

$$\delta \varphi =  \delta h \circ T_0 -  \delta h + \chi_0\,,$$
for some $\chi_0 \in \Gamma(1)$. If we finally assume (6.8), we get as in the first case
$$\Pi(\delta \varphi) =0, \quad \nu(\delta \varphi) =0.$$

Going backwards, we see that these relations are actually equivalent to the conjunction of (6.5),(6.8),(6.11),(6.12).
Therefore we conclude as in the first case by the implicit function theorem.

\end{itemize}

\bigskip

The proof of the theorem for $r\geq 3$ is now complete. \hspace{1cm} $\Box$

\section{Proof: $C^2$-conjugacy}

In this section, we prove the theorem in the case $r=2$. It may help the reader to look first at  Appendix B.2, where the
main idea is presented in the simpler setting of circle diffeomorphisms. Let therefore
$(T_t)$ be a $C^1$ family of $C^{5}$ g.i.e.m.'s satisfying the hypotheses of subsection 5.1 with $r=2$. According to
Proposition 5.3, we can and will also assume that the family is simple.

In this section, we state the intermediate steps in the proof of the conjugacy theorem with $r=2$, but only comment on the parts
which substantially differ from the case $r\ge 3$ which was presented in Section 6.

\subsection{Smoothness of the composition map}

This subsection is identical with subsection 6.1.
The tangent space at $id$ to ${\rm Diff}^2(\overline I)$ is the space  $C^2_{0,0}(\overline I)$ of $C^2$-functions on
$\overline I$ vanishing at $u_0$ and $u_d$.

\begin{lemma} The composition map
$$C^2(\overline I) \times {\rm Diff}^2(\overline I) \rightarrow C^{1}(\overline I)$$
$$(\varphi,h) \mapsto \varphi \circ h$$
is of class $C^1$. Its differential at $(0,id)$ is the map $(\delta \varphi, \delta h) \mapsto \delta \varphi$ from
$C^2(\overline I) \times C^2_{0,0}(\overline I)$ to $C^{1}(\overline I)$.
\end{lemma}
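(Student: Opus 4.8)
The plan is to prove this exactly as Lemma 6.1, of which it is the case $r=2$; since that lemma was only sketched, I indicate the argument. Write $G(\varphi,h):=\varphi\circ h$. For $(\varphi,h)\in C^2(\overline I)\times {\rm Diff}^2(\overline I)$ and increments $\delta\varphi\in C^2(\overline I)$, $\delta h\in C^2_{0,0}(\overline I)$ small enough that $h+\delta h\in {\rm Diff}^2(\overline I)$, the natural candidate for the differential is
$$ L_{(\varphi,h)}(\delta\varphi,\delta h):=\delta\varphi\circ h+(D\varphi\circ h)\,\delta h\,. $$
Here $\delta\varphi\in C^2$ and $D\varphi\in C^1$, while $h\in{\rm Diff}^2$ and $\delta h\in C^2\subset C^1$, so both summands lie in $C^1(\overline I)$ and $L_{(\varphi,h)}$ is a bounded operator from $C^2(\overline I)\times C^2_{0,0}(\overline I)$ to $C^1(\overline I)$; at $(0,id)$ it reduces to $(\delta\varphi,\delta h)\mapsto\delta\varphi$, as claimed, and this formula is elementary.

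First I would check that $(\varphi,h)\mapsto L_{(\varphi,h)}$ is continuous in operator norm, which uses only the (easy) continuity of the composition maps $C^2\times{\rm Diff}^2\to C^1$ and $C^1\times{\rm Diff}^1\to C^1$. Then I would estimate the remainder $R:=G(\varphi+\delta\varphi,h+\delta h)-G(\varphi,h)-L_{(\varphi,h)}(\delta\varphi,\delta h)$. Setting $g:=\varphi+\delta\varphi$ and applying the fundamental theorem of calculus twice,
$$ R(x)=\delta h(x)\int_0^1\bigl[(Dg)(h(x)+s\,\delta h(x))-(Dg)(h(x))\bigr]\,ds\;+\;(D\delta\varphi\circ h)(x)\,\delta h(x)\,. $$
The second summand is bounded in $C^1$ by $C\,\|\delta\varphi\|_{C^2}\|\delta h\|_{C^2}$, hence is quadratic in the increment. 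For the first summand, differentiating once distributes the derivative onto $\delta h$ (producing quadratic terms) and, through the chain rule, onto $D^2g$ evaluated at $h+s\,\delta h$ and at $h$; every resulting term is manifestly $o(\|(\delta\varphi,\delta h)\|)$ except the one carrying the factor $\bigl[(D^2g)(h+s\,\delta h)-(D^2g)(h)\bigr]Dh$, which is controlled by the modulus of continuity $\omega_{D^2 g}$ of $D^2g=D^2\varphi+D^2\delta\varphi$ evaluated at $\|\delta h\|_{C^0}$. Using $\omega_{D^2g}(\varepsilon)\le\omega_{D^2\varphi}(\varepsilon)+2\|\delta\varphi\|_{C^2}$, this tends to $0$ as $(\delta\varphi,\delta h)\to0$, so $\|R\|_{C^1}=o(\|(\delta\varphi,\delta h)\|_{C^2\times C^2})$; together with the continuity of $(\varphi,h)\mapsto L_{(\varphi,h)}$ this gives that $G$ is of class $C^1$.

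The main obstacle is precisely this last estimate: because $g$ is only $C^2$, its second derivative cannot be differentiated again and one can only invoke its uniform continuity. This is exactly the mechanism by which the composition map acquires differentiability only at the cost of losing one derivative in the target, and it is why the conclusion is stated with values in $C^1(\overline I)$ rather than $C^2(\overline I)$; no further ingredient is needed beyond the Taylor-remainder bookkeeping above.
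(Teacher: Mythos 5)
Your proposal is correct, and it is exactly the elementary argument the paper has in mind (the paper's proof of Lemma~6.1 simply declares the statement "elementary" and yours supplies the Taylor-remainder bookkeeping). The formula for the candidate differential, the identification of $(D\delta\varphi\circ h)\,\delta h$ as quadratic, and the use of the modulus of continuity of $D^2g$ to handle the one non-obviously-small term are precisely the steps the authors are suppressing.
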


\begin{lemma}
 The map
$$ \Phi: [-t_0,t_0]^{\ell +d^*} \times {\rm Diff}^2(\overline I) \rightarrow C^{1}_{comp}(\sqcup \iat)$$
$$(t,h) \mapsto ST_t \circ h (Dh)^2$$
is of class $C^1$. Its differential at $(0,id)$ is the map $(\delta t, \delta h) \mapsto D^3 \delta \varphi$  from
$\Rset^{\ell +d^*} \times C^2_{0,0}(\overline I)$ to $C^{1}_{comp}(\sqcup \iat)$
, with $\delta\varphi = \Delta T(\delta t)$.
\end{lemma}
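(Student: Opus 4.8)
The plan is to transcribe the proof of Lemma 6.2 to the case $r=2$, keeping careful track of regularities (this is exactly where the hypothesis that each $T_t$ be of class $C^{5}=C^{r+3}$ is used). The first, and only substantial, step is to check that $t\mapsto ST_t$ is of class $C^1$ from $[-t_0,t_0]^{\ell+d^*}$ to $C^2(\sqcup\iat)$. Since each $T_t$ is of class $C^{5}$ and $t\mapsto T_t$ is $C^1$ in the sense of subsection 5.1 (in particular $\partial_{t_j}\partial_x^i T_t$ extends continuously to the closure of $\iat(t)$ for $0\leq i\leq 5$), the Schwarzian
$$ST_t=D^2\Log DT_t-\frac12(D\Log DT_t)^2$$
is obtained from $DT_t,D^2T_t,D^3T_t$ by composition with smooth functions and by multiplication, hence belongs to $C^2(\sqcup\iat)$ and depends on $t$ in a $C^1$ fashion. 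As $T_0$ is standard we have $DT_0\equiv 1$, so $\Log DT_0\equiv 0$ on each $\iat$; differentiating the formula at $t=0$ (the quadratic term contributes nothing since it carries a factor $D\Log DT_0\equiv 0$) yields $ST_0=0$ and $\frac{d}{dt}_{|t=0}ST_t=D^3\delta\varphi$, with $\delta\varphi=\Delta T(\delta t)$, exactly as in Lemma 6.2.

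Because the family is simple, $ST_t$ vanishes, uniformly in $t$, on a fixed neighbourhood of each singularity $u^t_i$ and of $u_0,u_d$, so it may be regarded as an element of $C^2(\overline I)$ by extension by $0$ (and $ST_0=0$). By Lemma 7.1 the map $(t,h)\mapsto ST_t\circ h$ is then of class $C^1$ from $[-t_0,t_0]^{\ell+d^*}\times {\rm Diff}^2(\overline I)$ to $C^1(\overline I)$, with differential at $(0,id)$ the map $(\delta t,\delta h)\mapsto D^3\delta\varphi$: the $\delta h$ term produced by the composition is multiplied by $ST_0=0$ and drops out, by the formula in Lemma 7.1 for the differential of the composition map. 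Since $h\mapsto(Dh)^2$ is a $C^\infty$ map ${\rm Diff}^2(\overline I)\to C^1(\overline I)$ and the multiplication $C^1(\overline I)\times C^1(\overline I)\to C^1(\overline I)$ is bilinear and continuous, the product $\Phi(t,h)=ST_t\circ h\,(Dh)^2$ is of class $C^1$ into $C^1(\overline I)$; and since $\Phi(0,id)=ST_0\cdot 1=0$, the product rule gives $D\Phi_{(0,id)}(\delta t,\delta h)=D^3\delta\varphi$, every $\delta h$-contribution being annihilated by the factor $ST_0=0$.

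It remains to see that $\Phi$ takes values in $C^1_{comp}(\sqcup\iat)$ (this is the only place where one should restrict $h$ to a neighbourhood of the identity, which is all that is needed in the sequel). For $h$ that is $C^0$-close to $id$, the function $ST_t\circ h$ --- and hence $\Phi(t,h)$ --- vanishes on $h^{-1}$ of the fixed neighbourhood of $\{u^t_i\}\cup\{u_0,u_d\}$ on which $ST_t$ vanishes, and this preimage still contains a neighbourhood of each $u^t_i$ and of $u_0,u_d$; thus $\Phi(t,h)\in C^1_{comp}(\sqcup\iat)$. Likewise $D^3\delta\varphi\in C^2_{comp}(\sqcup\iat)\subset C^1_{comp}(\sqcup\iat)$, since $\delta\varphi=\Delta T(\delta t)$ is supported away from the endpoints of the $\iat$. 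I do not expect a genuine obstacle here: the statement and its proof are the $r=2$ version of Lemma 6.2, and the only delicate point is that $T_t$ must be of class $C^{5}$ --- no less --- for $ST_t$ to be of class $C^2$ and for $\Phi$ to land in $C^1_{comp}(\sqcup\iat)$, which is precisely why the case $r=2$ is run with $C^5$ deformations.
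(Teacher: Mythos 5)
Your proof is correct and follows the same route the paper takes for Lemma~6.2 (the paper gives no separate proof of Lemma~7.2, indicating only that the parts not substantially different from Section~6 are not commented on). The key bookkeeping — that $T_t\in C^5$ gives $ST_t\in C^2(\sqcup\iat)$, that simplicity plus $ST_0=0$ lets one view $ST_t$ in $C^2(\overline I)$, that Lemma~7.1 (the $r=2$ analogue of Lemma~6.1) handles the composition with one derivative lost, and that every $\delta h$-contribution to the differential is killed by $ST_0\equiv 0$ — is exactly what is needed, and your computation of the differential $(\delta t,\delta h)\mapsto D^3\delta\varphi$ is accurate. The remark that one implicitly restricts $h$ near the identity so that $\Phi(t,h)$ stays supported away from the singularities is a fair observation; the paper has the same implicit restriction in Lemma~6.2.
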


\subsection{The cohomological equation}

We denote by $P^*$ the operator
$$\varphi \mapsto \int_{u_0}^x (\varphi(y) - \int_I \varphi) \,dy$$
from $C^{1}_{comp}(\sqcup \iat)$ to the space $C^2_c (\sqcup \iat)$ of functions in $ C^2 (\sqcup \iat)$ which are
continuous on $\overline I $ and vanish at $u_0$ and $u_d$.

We choose subspaces $\Gamma_c \subset \Gamma, \;\Gamma_u \subset \Gamma_{\partial}$ such that
$$\Gamma = \Gamma_c \oplus \Gamma_{\partial},\quad \Gamma_{\partial} = \Gamma_u \oplus \Gamma_s\;.$$

From Theorem 3.10, there exist bounded operators
$$L_c: C^2_c (\sqcup \iat) \rightarrow \Gamma_c,\quad L_u: C^2_c (\sqcup \iat) \rightarrow \Gamma_u,\quad L_0: C^2_c (\sqcup
\iat) \rightarrow C^0_0(\overline I)$$
such that, for $\varphi \in C^2_c (\sqcup \iat) $
$$\varphi + L_c(\varphi) + L_u(\varphi) = L_0(\varphi) \circ T_0 - L_0(\varphi)\;.$$

Here, $C^0_0(\overline I)$ denotes the space of continuous functions on $\overline I$ which vanish at $u_0$.

We write $L$ for the bounded operator from $\Gamma_s$ to $C^0_0(\overline I)$ such that $v = L(v)\circ T_0 -L(v)$.

\begin{lemma}
The map $\Psi_1: (t,h,v) \mapsto L(v)+L_0(P^*(\Phi(t,h)))$ from $[-t_0,t_0]^{\ell +d^*} \times {\rm Diff}^2(\overline I)
\times \Gamma_s$ to $C^0_0(\overline I)$ is of class $C^1$. Its differential at $(0,{\rm id}, v)$ is
$$(\delta t, \delta h, \delta v) \mapsto L(\delta v) + L_0(D^2 \delta \varphi).$$
\end{lemma}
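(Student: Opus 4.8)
The plan is to argue exactly as for Lemma 6.3, recognizing $\Psi_1$ as built out of maps of already-known regularity composed with bounded linear operators. First I would write $\Psi_1(t,h,v) = L(v) + (L_0\circ P^*\circ\Phi)(t,h)$. The summand $(t,h,v)\mapsto L(v)$ is the restriction of a bounded linear operator $L:\Gamma_s\to C^0_0(\overline I)$, hence $C^\infty$ with differential $(\delta t,\delta h,\delta v)\mapsto L(\delta v)$ at every point. Next I would note that $P^*:C^1_{comp}(\sqcup\iat)\to C^2_c(\sqcup\iat)$ is bounded linear — immediate from its explicit integral formula: it is a single primitive, the $C^2$-norm of $P^*\varphi$ is controlled by the $C^1$-norm of $\varphi$, and $P^*\varphi$ is continuous on $\overline I$ and vanishes at $u_0,u_d$ because of the subtracted mean. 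Together with the bounded linear operator $L_0:C^2_c(\sqcup\iat)\to C^0_0(\overline I)$ of Theorem 3.10, this makes $L_0\circ P^*$ a bounded linear, hence $C^\infty$, operator. Since $\Phi$ is $C^1$ by Lemma 7.2, the composite $L_0\circ P^*\circ\Phi$ is $C^1$, and therefore so is $\Psi_1$.

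For the differential at $(0,{\rm id},v)$ I would apply the chain rule and Lemma 7.2, obtaining
$$D\Psi_1(0,{\rm id},v)(\delta t,\delta h,\delta v) = L(\delta v) + L_0\bigl(P^*(D^3\delta\varphi)\bigr),\qquad \delta\varphi = \Delta T(\delta t);$$
this is independent of the base point $v$ because $\Psi_1$ is affine in $v$. The only step that is not purely formal is the identification $P^*(D^3\delta\varphi) = D^2\delta\varphi$. For this I would use that, the family being simple, $\delta\varphi = \Delta T(\delta t)$ vanishes near each endpoint of each $\iat$, so in particular $D^2\delta\varphi(u_0)=0$; and that, the conjugacy invariant in $J^{r+3}$ being trivial, Proposition 4.4 (as already noted in \S5.1) gives $\partial D^2\delta\varphi = 0$, so that the identity $\sum_{C\in\Sigma}(\partial\psi)_C = \int_I D\psi(x)\,dx$ applied to $\psi = D^2\delta\varphi$ forces $\int_I D^3\delta\varphi(x)\,dx = 0$. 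Then $P^*(D^3\delta\varphi)(x) = \int_{u_0}^x D^3\delta\varphi(y)\,dy = D^2\delta\varphi(x)$, whence $L_0(P^*(D^3\delta\varphi)) = L_0(D^2\delta\varphi)$, which is the asserted formula.

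I do not expect any real obstacle: this is the $r=2$ analogue of Lemma 6.3, and the one computation with content — $P^*(D^3\delta\varphi) = D^2\delta\varphi$ — uses only the standing hypotheses (simplicity of the family kills the boundary term at $u_0$; triviality of the $J^{r+3}$-invariant forces the zero-mean condition through the boundary operator). Conceptually, $P^*$ is inserted precisely so that the Schwarzian data $\Phi(t,h)$, which for $r=2$ lies only in $C^1_{comp}(\sqcup\iat)$, can be fed to the operator $L_0$ of Theorem 3.10; at the linearized level this replaces the $L_0(D^3\delta\varphi)$ of Lemma 6.3 by $L_0(D^2\delta\varphi)$ here. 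No small-divisor estimate enters at this stage: the operators $L$, $L_0$ (and $L_c$, $L_u$) of \S7.2 are used only as bounded linear maps, all the analytic difficulty having been packaged already into Theorem 3.10 and Lemma 7.2.
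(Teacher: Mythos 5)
Correct, and it follows the same strategy the paper uses for the $r\ge 3$ analogue (Lemma 6.3): express $\Psi_1$ as bounded linear operators composed with the $C^1$ map $\Phi$ of Lemma 7.2, then apply the chain rule, with the only substantive computation being $P^*(D^3\delta\varphi)=D^2\delta\varphi$, which you justify correctly from the vanishing of $\delta\varphi$ near the endpoints of each $\iat$ (simplicity of the family already gives $\int_I D^3\delta\varphi=0$ directly, without needing to route through the boundary operator, though that reasoning is also valid). The paper itself gives no proof for Lemma 7.3, noting only that the $r=2$ steps parallel Section~6; your argument supplies exactly that parallel.
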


\subsection{Relation between a diffeomorphism and the primitive of its Schwarzian derivative}

\begin{lemma}
The map
$${\mathcal Q}_1: C^{0}(\overline I) \rightarrow C^{0}_0(\overline I) \times \Rset$$
$$ N \mapsto (\psi_1(x) = N(x)-N(u_0)-\frac12 \int_{u_0}^x N^2(y)\,dy, c_1 = N(u_0)) $$
is of class $C^{\infty}$. Its differential at $0$ is given by
$$\delta \psi_1 = D \delta N - \delta c_1, \delta c_1 = \delta N(u_0) \,,$$
which is an isomorphism from $C^{0}(\overline I)$ onto $C^{0}_0(\overline I) \times \Rset$.
Therefore, the restriction of ${\mathcal Q}_1$ to an appropriate neighborhood of $0 \in C^{0}(\overline I)$ is a
$C^{\infty}$-diffeomorphism onto a neighborhood of $(0,0) \in  C^{0}_0(\overline I) \times \Rset$.
\end{lemma}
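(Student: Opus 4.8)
The plan is to argue exactly as for the map $\mathcal{Q}$ in subsection~6.3: exhibit $\mathcal{Q}_1$ as a polynomial map of degree two between Banach spaces, which is then automatically of class $C^{\infty}$, compute its differential at the origin, and check that this differential is a Banach space isomorphism so that the inverse function theorem applies.

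First I would split $\mathcal{Q}_1$ into its linear and quadratic parts, writing $\mathcal{Q}_1(N)=\Lambda(N)+\bigl(-\tfrac12\int_{u_0}^x N(y)^2\,dy,\,0\bigr)$, where $\Lambda\colon C^{0}(\overline I)\to C^{0}_0(\overline I)\times\Rset$ is the bounded linear map $N\mapsto\bigl(N-N(u_0),\,N(u_0)\bigr)$. The second term is the restriction to the diagonal of the symmetric bilinear map $(N,M)\mapsto\bigl(-\tfrac12\int_{u_0}^x N(y)M(y)\,dy,\,0\bigr)$, which is bounded because $x\mapsto\int_{u_0}^x N(y)M(y)\,dy$ is a continuous function vanishing at $u_0$ and of sup norm at most $|I|\,\|N\|_0\,\|M\|_0$. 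Hence $\mathcal{Q}_1$ is a degree-two polynomial between Banach spaces, so it is $C^{\infty}$, and its differential at $N=0$ is $\Lambda$, i.e.\ $\delta\psi_1=\delta N-\delta c_1$ with $\delta c_1=\delta N(u_0)$.

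It remains to see that $D\mathcal{Q}_1(0)=\Lambda$ is an isomorphism onto $C^{0}_0(\overline I)\times\Rset$; its two-sided inverse is the bounded linear map $(\psi_1,c_1)\mapsto\psi_1+c_1$ (the function $\psi_1$ plus the constant $c_1$), as one checks at once using that $\psi_1(u_0)=0$ for $\psi_1\in C^{0}_0(\overline I)$. The inverse function theorem in Banach spaces then gives that $\mathcal{Q}_1$ restricted to a suitable neighborhood of $0\in C^{0}(\overline I)$ is a $C^{\infty}$- (in particular $C^1$-) diffeomorphism onto a neighborhood of $(0,0)\in C^{0}_0(\overline I)\times\Rset$. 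I do not expect any genuine difficulty here: the argument is as routine as the one for $\mathcal{Q}$, and the only point meriting a line of checking is the boundedness of the quadratic term into $C^{0}_0(\overline I)\times\Rset$, which is the elementary estimate above.
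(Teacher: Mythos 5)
Your proof is correct and follows essentially the same route as the paper, which states this lemma without spelling out a proof (the analogous Lemma~6.5 is disposed of with ``elementary computation'' plus the inverse function theorem in Banach spaces, and your decomposition into a bounded linear part and a bounded quadratic part is exactly the expected way to make that elementary computation explicit). One useful side-observation: you have correctly computed the differential at $0$ as $\delta\psi_1=\delta N-\delta c_1$, which silently corrects a typographical slip in the printed statement (the $D\delta N$ there is evidently carried over from Lemma~6.5, where $\psi$ genuinely involves $DN$; in $\mathcal{Q}_1$ there is no derivative of $N$, only an integral of $N^2$).
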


Combining the last Lemma with Lemma 6.4, which is still valid for $r=2$, we obtain

\begin{lemma}
The map
$$ {\mathcal S}_1:= {\mathcal Q}_1 \circ {\mathcal N}:{\rm Diff}^2(\overline I) \rightarrow C^{0}_0(\overline I) \times
\Rset$$
$$ h \mapsto (\psi_1,c_1)=(D\Log Dh-D\Log Dh(u_0) -\frac 12 \int_{u_0} (D\Log Dh)^2 ,  D\Log Dh(u_0))$$
is of  class $C^{\infty}$, and its restriction to an appropriate neighborhood of $id \in {\rm Diff}^2(\overline I)$ is a
$C^{\infty}$-diffeomorphism
onto a neighborhood of $(0,0,0) \in C^{0}_0(\overline I) \times \Rset$. The differential of ${\mathcal S}_1$ at $id \in {\rm
Diff}^2(\overline I)$ is the isomorphism
$\delta h \mapsto (D^2\delta h - D^2\delta h(u_0), D^2\delta h(u_0))$ from $C^2_{0,0}(\overline I)$ to $C^{0}_0(\overline I)
\times \Rset$.
\end{lemma}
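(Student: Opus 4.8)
The plan is to realize ${\mathcal S}_1$ as a composition of two maps that have already been analyzed, exactly as Lemma 6.6 was obtained from Lemmas 6.4 and 6.5 in the case $r\geq 3$. Recall first that Lemma 6.4 is stated for every integer $r\geq 1$, so in particular for $r=2$ it gives that ${\mathcal N}\colon {\rm Diff}^2(\overline I)\to C^0(\overline I)$, $h\mapsto D\Log Dh$, is a $C^\infty$-diffeomorphism whose differential at $id$ is the isomorphism $\delta h\mapsto D^2\delta h$ from $C^2_{0,0}(\overline I)$ onto $C^0(\overline I)$. Note also that ${\mathcal N}(id)=D\Log D(id)=0$.

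Next I would invoke Lemma 7.5: the map ${\mathcal Q}_1\colon C^0(\overline I)\to C^0_0(\overline I)\times\Rset$ is of class $C^\infty$, its restriction to a suitable neighborhood $U$ of $0$ is a $C^\infty$-diffeomorphism onto a neighborhood of $(0,0)$, and its differential at $0$ is an explicit isomorphism of $C^0(\overline I)$ onto $C^0_0(\overline I)\times\Rset$.

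Now set ${\mathcal S}_1:={\mathcal Q}_1\circ{\mathcal N}$. Since ${\mathcal N}$ is continuous and ${\mathcal N}(id)=0$, there is a neighborhood of $id$ in ${\rm Diff}^2(\overline I)$ that ${\mathcal N}$ maps into $U$; on it ${\mathcal S}_1$ is a composition of two $C^\infty$ maps, hence of class $C^\infty$, and in fact a composition of two $C^\infty$-diffeomorphisms onto their images, so ${\mathcal S}_1$ restricts to a $C^\infty$-diffeomorphism of an appropriate neighborhood of $id$ onto a neighborhood of $(0,0)$ in $C^0_0(\overline I)\times\Rset$. For the differential at $id$, the chain rule gives $D{\mathcal S}_1(id)=D{\mathcal Q}_1(0)\circ D{\mathcal N}(id)$; substituting the differential of ${\mathcal N}$ into that of ${\mathcal Q}_1$ yields precisely the map $\delta h\mapsto(D^2\delta h-D^2\delta h(u_0),\,D^2\delta h(u_0))$ asserted in the statement, and this is an isomorphism $C^2_{0,0}(\overline I)\to C^0_0(\overline I)\times\Rset$ as a composition of the two isomorphisms just recalled.

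There is essentially no obstacle here; the single point deserving a line of care is the base-point matching, namely that ${\mathcal N}$ carries the identity precisely to the point $0$ at which the local inverse of ${\mathcal Q}_1$ is defined, so that the two local diffeomorphisms genuinely compose. One could alternatively apply the inverse function theorem in Banach spaces directly to ${\mathcal S}_1$ using the differential computed above, but factoring through ${\mathcal N}$ and ${\mathcal Q}_1$ reuses the previous two lemmas verbatim and is cleaner.
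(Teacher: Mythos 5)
Your proof is correct and follows exactly the paper's own route: the paper obtains this lemma by writing, just before its statement, ``Combining the last Lemma with Lemma 6.4, which is still valid for $r=2$, we obtain,'' i.e.\ it factors ${\mathcal S}_1={\mathcal Q}_1\circ{\mathcal N}$ and composes the two previously established local diffeomorphism statements, precisely as you do. The only slip is a reference label: the lemma you invoke for the properties of ${\mathcal Q}_1$ is Lemma~7.4, not Lemma~7.5 (the latter being the statement under proof); and note the target point should read $(0,0)\in C^0_0(\overline I)\times\Rset$, a typo already present in the paper's statement.
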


We denote by $W_0$, $W_1$ neighborhoods of $id $ in ${\rm Diff}^2(\overline I)$ and of $(0,0) $ in $C^{0}_0(\overline I)
\times \Rset$ respectively such that
${\mathcal S}_1$ defines a $C^{\infty}$-diffeomorphism from $W_0$ onto $W_1$. We denote by ${\mathcal P}_1: W_1 \rightarrow
W_0$ the inverse diffeomorphism, and by $P_1$ the differential of ${\mathcal P}_1$ at $(0,0) $.

\subsection{The fixed point theorem}

\begin{lemma}
The map
$$(t,h,v,c_1) \mapsto {\mathcal P}_1(\Psi_1(t,h,v),c_1)$$
is defined and of class $C^1$ in a neighborhood of $(0,id,0,0)$ in $[-t_0,t_0]^{\ell +d^*} \times {\rm Diff}^2(\overline
I)\times \Gamma_s \times \Rset$, with values in $W_0$. Its differential
at $(0,id,0,0)$ is the map $(\delta t, \delta h, \delta v, \delta c_1) \mapsto P_1(L_0(D^2 \delta \varphi)+L(v), \delta
c_1)$, with  $\delta\varphi = \Delta T(\delta t)$, from $\Rset ^{\ell +d^*} \times C^2_{0,0}(\overline I) \times \Gamma_s \times
\Rset$ to $C^2_{0,0}(\overline I)$.
\end{lemma}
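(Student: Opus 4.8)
The plan is to deduce the lemma from Lemmas 7.3 and 7.5, exactly as Lemma 6.7 was deduced from Lemmas 6.3 and 6.6. First I would write the map as the two-step composition
$$
(t,h,v,c_1)\;\longmapsto\;\bigl(\Psi_1(t,h,v),\,c_1\bigr)\;\longmapsto\;{\mathcal P}_1\bigl(\Psi_1(t,h,v),c_1\bigr).
$$
The first arrow is $C^1$ because $\Psi_1$ is $C^1$ by Lemma 7.3 while $(t,h,v,c_1)\mapsto c_1$ is linear continuous; the second arrow is the $C^{\infty}$ diffeomorphism ${\mathcal P}_1: W_1\to W_0$ coming from Lemma 7.5. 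So the first point to settle is that the composition is actually defined in a neighbourhood of $(0,{\rm id},0,0)$, i.e. that $\bigl(\Psi_1(t,h,v),c_1\bigr)$ lies in $W_1$ for $(t,h,v,c_1)$ close to the base point.

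For this it is enough to check that $\Psi_1(0,{\rm id},0)=0$, since then continuity of $\Psi_1$ together with $W_1$ being a neighbourhood of $(0,0)$ finishes the job. Here I would use that $T_0$ is standard, hence locally a translation, so $ST_0=0$ and therefore $\Phi(0,{\rm id})=ST_0\circ{\rm id}\,(D{\rm id})^2=0$; consequently $P^*(0)=0$, $L_0(0)=0$ and $L(0)=0$, which gives $\Psi_1(0,{\rm id},0)=0$. Once this is in place, the composition takes values in $W_0\subset{\rm Diff}^2(\overline I)$ near the base point and, being the composition of a $C^1$ map with a $C^{\infty}$ map restricted to its domain, is itself of class $C^1$.

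The differential at $(0,{\rm id},0,0)$ would then follow from the chain rule: it equals $P_1$, the differential of ${\mathcal P}_1$ at $(0,0)$, composed with the differential of $(t,h,v,c_1)\mapsto(\Psi_1(t,h,v),c_1)$ at the base point, which by Lemma 7.3 is $(\delta t,\delta h,\delta v,\delta c_1)\mapsto\bigl(L(\delta v)+L_0(D^2\delta\varphi),\,\delta c_1\bigr)$ with $\delta\varphi=\Delta T(\delta t)$ --- using $P^*(D^3\delta\varphi)=D^2\delta\varphi$, which holds because $\delta\varphi$ vanishes near $u_0$ and $u_d$, so that $D^2\delta\varphi(u_0)=0$ and $\int_I D^3\delta\varphi=0$. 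Composing gives the asserted formula $(\delta t,\delta h,\delta v,\delta c_1)\mapsto P_1\bigl(L_0(D^2\delta\varphi)+L(\delta v),\,\delta c_1\bigr)$, viewed as a map from $\Rset^{\ell+d^*}\times C^2_{0,0}(\overline I)\times\Gamma_s\times\Rset$ to $C^2_{0,0}(\overline I)$ (the $L(v)$ in the displayed statement is of course $L(\delta v)$). I do not expect any genuine obstacle here: the step is formal, the analytic content having already been packed into the bounded solution operators $L$ and $L_0$ of Theorem 3.10 and into the one-derivative loss built into the composition map (Lemma 7.1, which underlies Lemma 7.2). The only thing needing a little care is keeping straight which function spaces each operator acts between and checking that the base point genuinely lands in the small neighbourhood $W_1$ on which ${\mathcal P}_1$ is defined.
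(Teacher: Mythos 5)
Your proposal is correct and follows the paper's (implicit) approach: the paper gives no written proof of this lemma, treating it as the direct analogue of Lemma 6.7, whose proof is the one-line "consequence of Lemmas 6.3 and 6.6"; you fill in exactly that deduction from Lemmas 7.3 and 7.5, including the needed observation that $\Psi_1(0,\mathrm{id},0)=0$ so the composition with ${\mathcal P}_1$ is defined near the base point, and the identity $P^*(D^3\delta\varphi)=D^2\delta\varphi$ (valid since $\delta\varphi$ vanishes near the endpoints of each $\iat$, so $\int_I D^3\delta\varphi=0$ and $D^2\delta\varphi(u_0)=0$). You also correctly flag the typo $L(v)$ for $L(\delta v)$ in the stated differential.
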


\begin{lemma}
There exist an open neighborhood $W_2$ of $id \in {\rm Diff}^2(\overline I)$ and an open neighborhood $W_3$ of $(0,0,0) \in
[-t_0,t_0]^{\ell +d^*} \times \Gamma_s \times \Rset$ such that, for each $(t,v,c_1) \in W_3$, the map
$$h \mapsto {\mathcal P}_1(\Psi_1(t,h,v), c_1)$$
has exactly one fixed point in $W_2$, that we denote by ${\mathcal H}(t,v,c_1)$. Moreover, the map ${\mathcal H}$ is of
class $C^1$ on $W_3$, and its differential at $(0,0,0)$ is the map $(\delta t, \delta v, \delta c_1) \mapsto P_1(L_0(D^2
\delta \varphi)+L(v), \delta c_1)$, with  $\delta\varphi = \Delta T(\delta t)$, from $\Rset ^{\ell +d^*} \times \Gamma_s \times
\Rset$ to $C^2_{0,0}(\overline I)$.
\end{lemma}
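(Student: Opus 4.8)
The plan is to deduce the lemma from the implicit function theorem in Banach spaces applied to the fixed-point equation
\[
h = \mathcal{P}_1(\Psi_1(t,h,v),c_1),
\]
in exact parallel with the way Lemma 6.8 is obtained in Section 6. Set $F(t,h,v,c_1) := h - \mathcal{P}_1(\Psi_1(t,h,v),c_1)$, defined near $(0,id,0,0)$ in $[-t_0,t_0]^{\ell+d^*}\times {\rm Diff}^2(\overline I)\times \Gamma_s\times \Rset$ and valued in $C^2_{0,0}(\overline I)$. First I would check that this composition makes sense and, crucially, closes up with no loss of regularity: $h\mapsto \varphi\circ h$ drops one derivative (Lemma 7.1), so $\Phi$ is valued in $C^1_{comp}(\sqcup\iat)$; the operator $P^*$ restores one derivative, landing in $C^2_c(\sqcup\iat)$; the operators $L_0$ and $L$ from Theorem 3.10 are valued in $C^0_0(\overline I)$; and $\mathcal{P}_1$ sends a neighborhood of $(0,0)$ back into ${\rm Diff}^2(\overline I)$. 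Since $T_0$ is standard we have $\Phi(0,id)=ST_0=0$, hence $\Psi_1(0,id,0)=0$, so $F$ is indeed defined near the base point, and by Lemma 7.7 it is of class $C^1$ there. This absence of loss of regularity is exactly the $r=2$ form of Herman's Schwarzian-derivative trick, run with the primitive of the Schwarzian in place of the Schwarzian itself.

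Next I would compute $\partial_h F$ at $(0,id,0,0)$. By Lemma 7.3 the differential of $\Psi_1$ at $(0,id,v)$ is $(\delta t,\delta h,\delta v)\mapsto L(\delta v)+L_0(D^2\delta\varphi)$, which carries no $\delta h$ term; composing with the differential $P_1$ of $\mathcal{P}_1$, the $h$-direction of the differential of $\mathcal{P}_1\circ\Psi_1$ vanishes at the base point, so $\partial_h F|_{(0,id,0,0)}$ is the identity of $C^2_{0,0}(\overline I)$, in particular a topological isomorphism. The implicit function theorem then supplies neighborhoods $W_2$ of $id$ and $W_3$ of $(0,0,0)$ such that for every $(t,v,c_1)\in W_3$ the equation $F(t,h,v,c_1)=0$ has a unique solution $h=\mathcal{H}(t,v,c_1)$ in $W_2$, equivalently a unique fixed point of $h\mapsto \mathcal{P}_1(\Psi_1(t,h,v),c_1)$ in $W_2$, and $\mathcal{H}$ is of class $C^1$ on $W_3$. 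Its differential at $(0,0,0)$ is $D\mathcal{H} = -(\partial_h F)^{-1}\,\partial_{(t,v,c_1)}F = \partial_{(t,v,c_1)}\big[\mathcal{P}_1\circ\Psi_1\big]$, which by Lemmas 7.6 and 7.7 equals $(\delta t,\delta v,\delta c_1)\mapsto P_1(L_0(D^2\delta\varphi)+L(\delta v),\delta c_1)$ with $\delta\varphi=\Delta T(\delta t)$, as claimed.

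I do not expect any genuine obstacle here: every analytic ingredient, namely the $C^1$-smoothness of the composition map and of $\Phi$, $\Psi_1$, $\mathcal{P}_1$, together with the boundedness of the cohomological-equation operators, has already been isolated in Lemmas 7.1--7.7 and Theorem 3.10, so the argument is a bookkeeping application of the implicit function theorem, structurally identical to the proof of Lemma 6.8. The one point that genuinely deserves care is the vanishing of the $h$-direction of the differential of $\Psi_1$ at the base point, since this is what makes $\partial_h F$ literally the identity rather than merely invertible; it follows at once from the formula in Lemma 7.3.
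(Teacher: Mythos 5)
Your proposal is correct and follows essentially the same route as the paper: the paper proves the analogous Lemma 6.8 (and by parallel, this one) simply as "a consequence of the implicit function theorem applied to the fixed point equation $\mathcal{P}_1(\Psi_1(t,h,v),c_1)=h$", and your more detailed write-up (isolating $\partial_h F = \mathrm{id}$ at the base point because the $\delta h$-direction of the differential of $\mathcal{P}_1\circ\Psi_1$ vanishes there) is exactly what that terse statement encodes. The only slip is the self-referential citation of "Lemma 7.7" for the differential formula at the end, which should read Lemma 7.6.
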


Let $(t,v,c_1) \in W_3$, $h= {\mathcal H}(t,v,c_1)$. Then $h$ satisfies
$$P^*(\Phi(t,h)) +L_c(P^*(\Phi(t,h))) +L_u(P^*(\Phi(t,h)))+v  = N_1 h \circ T_0 - N_1 h  \;,$$
with
$$N_1 h(x) = D\Log Dh(x) -\frac 12 \int_{u_0}^x (D\Log Dh(y))^2 \,dy \;.$$

For $(t,v,c_1) \in W_3$, we write $H = {\mathcal H}(t,v,c_1):= T_t \circ h \circ T_0^{-1}$. We have $H=h$ iff $T_t = h \circ
T_0 \circ h^{-1}$.

\subsection{Conditions for H to be a diffeomorphism }
Lemma 6.9 is still valid in our present setting

\begin{lemma}
For $(t,v,c_1) \in W_3$, the following are equivalent
\begin{enumerate}
\item $h(u^t_i) = u^t_i$ for all $0<i<d$;
\item $H$ is an homeomorphism of $\overline I$ satisfying $H(u^b_j) = u^b_j$ for all $0<j<d$.
\end{enumerate}
\end{lemma}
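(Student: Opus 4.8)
The plan is to reproduce the argument of Lemma 6.9, the setting here being identical; no small‑divisor input is involved, only the hypotheses of Subsection 5.1 specialized to $r=2$ together with the simplicity of the family (which, by Proposition 5.3, we have assumed). Recall that simplicity gives that $T_t$ has the same singularities $u^t_1<\cdots<u^t_{d-1}$ as $T_0$, that the singularities $u^b_j$ of $T_t^{-1}$ are independent of $t$, and that $T_t$ coincides with $T_0$ near the endpoints of each $\iat$; in particular the restriction of $T_t$ to $\overline{\iat}$ extends to a homeomorphism onto $\overline{\iab}$. Recall also that $h\in W_2\subset{\rm Diff}^2(\overline I)$ is an orientation‑preserving homeomorphism of $\overline I$ fixing $u_0$ and $u_d$, and that $H:=T_t\circ h\circ T_0^{-1}$ is a bijection of $I$ mod $0$ fixing $u_0$ and $u_d$.

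First I would prove $(1)\Rightarrow(2)$. If $h(u^t_i)=u^t_i$ for all $0<i<d$, then $h$, being an orientation‑preserving homeomorphism of $\overline I$ that fixes the two endpoints of each $\overline{\iat}$, maps each $\overline{\iat}$ homeomorphically onto itself. On each $\overline{\iab}$ the map $H=T_t\circ h\circ T_0^{-1}$ is then the composition of the translation $T_0^{-1}\colon\overline{\iab}\to\overline{\iat}$, the homeomorphism $h\colon\overline{\iat}\to\overline{\iat}$, and the homeomorphism $T_t\colon\overline{\iat}\to\overline{\iab}$; hence the restriction of $H$ to $\overline{\iab}$ is a homeomorphism onto $\overline{\iab}$, in particular continuous there and fixing both endpoints. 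Since consecutive closed intervals $\overline{\iab}$ meet exactly in a common endpoint $u^b_j$, which $H$ fixes, these pieces glue to a homeomorphism of $\overline I$ with $H(u^b_j)=u^b_j$ for all $0<j<d$.

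For the converse $(2)\Rightarrow(1)$, suppose $H$ is a homeomorphism of $\overline I$ with $H(u^b_j)=u^b_j$ for all $0<j<d$. Being a homeomorphism of $\overline I$ fixing $u_0$ and $u_d$, $H$ is increasing, so it maps each open interval $\iab=(u^b_{\pi_b(\alpha)-1},u^b_{\pi_b(\alpha)})$ onto itself. Unwinding $H=T_t\circ h\circ T_0^{-1}$ and using $T_0^{-1}(\iab)=\iat$ and $T_t^{-1}(\iab)=\iat$, this gives $h(\iat)=\iat$ for every $\alpha\in\A$; since $h$ is a homeomorphism of $\overline I$, this forces $h(u^t_i)=u^t_i$ for all $0<i<d$, completing the equivalence.

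I do not expect any genuine obstacle: the statement is purely topological and combinatorial. The only points needing care are the mod‑$0$ bookkeeping — so that the equalities of intervals above are literal rather than merely mod $0$, which is exactly where simplicity of the family (continuity of $T_t$ up to the endpoints of each $\iat$, with singularities at the same $u^t_i$ as $T_0$) enters — and the gluing of the piecewise‑defined homeomorphism, which is immediate because the $u^b_j$ are fixed.
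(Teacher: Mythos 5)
Your proof is correct and takes essentially the same approach as the paper: the paper simply notes that Lemma~6.9 carries over verbatim to the $r=2$ setting, and its proof of Lemma~6.9 is the same left/right‑limit gluing argument at the $u^b_j$ that you spell out (your version is a bit more explicit about why potential discontinuities of $H$ other than at the $u^b_j$ are ruled out by the homeomorphism hypothesis).
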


When the equivalent conditions of the lemma are satisfied, $H$ is in fact a piecewise $C^2$ diffeomorphism of $\overline
I$,
with possibly discontinuities of the  derivatives of order $\leq r$ at the $u^b_j$. We will replace Lemma 6.10 by the next
two lemmas.

\begin{lemma}
Let $(t,v,c_1) \in W_3$ such that the equivalent conditions of  the last Lemma  are satisfied. Then $H$ is a
$C^1$-diffeomorphism of $\overline I$ iff one has, for all $\upsilon = (\alpha,L) \in \AA$ with $\alpha \ne \,_b\alpha$
$$\Log Dh(u^t(\upsilon)) = \Log Dh(u^t(\sigma(\upsilon)))\,.$$
\end{lemma}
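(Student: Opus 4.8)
The plan is to argue exactly as in Lemma 6.10, now controlling only the first derivative of $H$. By the previous lemma, under condition $(1)$ the map $H=T_t\circ h\circ T_0^{-1}$ is a homeomorphism of $\overline I$ fixing $u_0$, $u_d$ and every $u^b_j$, and it is a genuine $C^2$ diffeomorphism with positive derivative on each $\overline{\iab}$: there $T_0^{-1}$ is affine, $h$ maps $\overline{\iat}$ onto itself (it fixes both endpoints), and $T_t$ is of class $C^5$. Hence $H$ is a $C^1$-diffeomorphism of $\overline I$ if and only if $DH$ extends continuously across each interior point $u^b_j$, i.e.\ the one-sided limits of $DH$ at every $u^b_j$ coincide (positivity of $DH$ being then automatic).

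First I would compute these one-sided limits from $DH=(DT_t\circ h\circ T_0^{-1})\,(Dh\circ T_0^{-1})\,(DT_0^{-1})$, valid off the $u^b_j$. Letting $x\to u^b_j$, I would use that $T_0$ is a standard i.e.m.\ (so $DT_0\equiv DT_0^{-1}\equiv 1$), that the family is simple (so $T_t=T_0$, hence $DT_t\equiv 1$, near each singularity $u^t_i$), and that $h$ fixes every $u^t_i$, so that $h\circ T_0^{-1}(x)$ tends to the same endpoint of $\iat$ that $T_0^{-1}(x)$ approaches. Matching the two sides of $u^b_j$ — on the left side the right endpoint of one $\iat$, on the right side the left endpoint of another — through the definition of the permutation $\sigma$ of Subsection 3.1, one gets that the left and right limits of $DH$ at $u^b_j$ equal $Dh(u^t(\sigma(\upsilon)))$ and $Dh(u^t(\upsilon))$ respectively, where $\upsilon=(\alpha,L)$ is the unique element of $\AA$ with $\alpha\ne{}_b\alpha$ and $u^b(\upsilon)=u^b_j$.

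Therefore $DH$ is continuous at $u^b_j$ iff $Dh(u^t(\upsilon))=Dh(u^t(\sigma(\upsilon)))$, equivalently iff $\Log Dh(u^t(\upsilon))=\Log Dh(u^t(\sigma(\upsilon)))$. As $\upsilon$ runs over all $(\alpha,L)\in\AA$ with $\alpha\ne{}_b\alpha$, the point $u^b(\upsilon)$ runs exactly over $u^b_1,\dots,u^b_{d-1}$ (the excluded value $\upsilon=({}_b\alpha,L)$ corresponding to the endpoint $u_0$, where $H$ is automatically smooth), so the conjunction of these equalities is precisely the condition $H\in{\rm Diff}^1(\overline I)$. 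I expect no real obstacle here: this is a bookkeeping lemma, and the only delicate point is keeping top/bottom and left/right endpoints straight while composing with $T_0^{-1}$ and reading off the pairing encoded by $\sigma$ — the same care already exercised in the proofs of Proposition 3.2 and Lemma 6.10.
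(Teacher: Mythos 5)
Your proof is correct and follows the same approach the paper intends: the paper gives only a one-line proof for the $C^r$ analogue (Lemma 6.10, "these relations express that the left and right limits of the first two derivatives of $H$ at the $u_j^b$ are the same") and omits the proof of the present lemma altogether, pointing to Section 6. Your computation of the one-sided limits of $DH$ at $u^b_j$ — using $DT_0^{-1}\equiv 1$, the simplicity of the family (so $DT_t\equiv 1$ near the singularities), the fact that $h$ fixes each $u^t_i$, and the reading of the pairing via $\sigma(\alpha,L)=(\beta,R)$ with $\pi_b(\beta)=\pi_b(\alpha)-1$ — is exactly the bookkeeping the paper leaves implicit, and it is carried out correctly.
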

\begin{lemma}
 Let $(t,v,c_1) \in W_3$. The function $D\Log Dh \circ T_0$ is continuous on $\overline I$ iff one has, for all $\upsilon =
 (\alpha,R) \in \AA$ with $\alpha \ne \alpha_t$
 $$D\Log Dh(u^b(\upsilon)) = D\Log Dh(u^b(\sigma(\upsilon)))\;.$$
 \end{lemma}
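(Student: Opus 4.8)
The plan is to identify the one-sided limits of $D\Log Dh \circ T_0$ at each interior singularity $u^t_i$ of $T_0$ and to match them up through the permutation $\sigma$. Since $h \in W_2 \subset {\rm Diff}^2(\overline I)$, the function $D\Log Dh$ is continuous on $\overline I$; hence $D\Log Dh \circ T_0$, which is a priori only defined on $\sqcup \iat$, is continuous on each $\iat$ and admits one-sided limits at each $u^t_i$, $0<i<d$. Saying that it ``is continuous on $\overline I$'' means precisely that, for every such $i$, the left and right limits at $u^t_i$ coincide (at the endpoints $u_0,u_d$ there is only a one-sided limit, so nothing is required there).

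First I would fix $i$ with $0<i<d$ and set $\alpha := \pi_t^{-1}(i)$, $\beta := \pi_t^{-1}(i+1)$, so that $\iat$ has right endpoint $u^t_i$ and $I_\beta^t$ has left endpoint $u^t_i$; note $\alpha \ne \alpha_t$ since $\pi_t(\alpha)=i<d$. Because $T_0$ restricted to $\iat$ is the translation onto $\iab = (u^b_{\pi_b(\alpha)-1}, u^b_{\pi_b(\alpha)})$, as $x \uparrow u^t_i$ inside $\iat$ one has $T_0(x) \uparrow u^b_{\pi_b(\alpha)} = u^b(\alpha,R)$, and as $x \downarrow u^t_i$ inside $I_\beta^t$ one has $T_0(x) \downarrow u^b_{\pi_b(\beta)-1} = u^b(\beta,L)$. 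Writing $\upsilon := (\alpha,R)$, the definition of $\sigma$ in subsection 3.1 gives $\sigma(\upsilon) = (\beta,L)$, because $\pi_t(\beta) = \pi_t(\alpha)+1$. By continuity of $D\Log Dh$ on $\overline I$, the left limit of $D\Log Dh \circ T_0$ at $u^t_i$ equals $D\Log Dh(u^b(\upsilon))$ and its right limit equals $D\Log Dh(u^b(\sigma(\upsilon)))$.

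Then I would conclude: these two limits coincide exactly when $D\Log Dh(u^b(\upsilon)) = D\Log Dh(u^b(\sigma(\upsilon)))$, and as $i$ runs over $\{1,\dots,d-1\}$ the element $\upsilon = (\pi_t^{-1}(i),R)$ runs precisely over all $(\alpha,R) \in \AA$ with $\alpha \ne \alpha_t$; collecting these equivalences over all interior singularities of $T_0$ gives the statement. There is no genuine obstacle here — the lemma is pure bookkeeping with the combinatorics of $T_0$ and the permutation $\sigma$, entirely parallel to the first part of the proof of Proposition 3.2 and to Lemmas 6.9--6.10. The only point requiring care is to match the side (left versus right) of the limit of $T_0(x)$ with the correct endpoint of $\iab$, hence with the correct element of $\AA$, so that the indexing by $\sigma$ comes out right.
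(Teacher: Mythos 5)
Your proof is correct. The paper gives no proof of this lemma (Section 7 explicitly says it will only comment on what differs from Section 6), and the argument you supply is precisely the bookkeeping that the authors leave implicit, in the same spirit as their one-line proof of Lemma 6.10. Your identification is exactly right: the left and right limits of $D\Log Dh \circ T_0$ at $u^t_i$, $0<i<d$, are $D\Log Dh(u^b(\alpha,R))$ and $D\Log Dh(u^b(\sigma(\alpha,R)))$ respectively (with $\alpha = \pi_t^{-1}(i)$), because $\sigma(\alpha,R)=(\beta,L)$ with $\pi_t(\beta)=\pi_t(\alpha)+1$, and as $i$ ranges over $\{1,\dots,d-1\}$ the elements $(\alpha,R)$ range over exactly the set prescribed in the lemma. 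This also makes transparent why these conditions are non-trivial precisely here and automatic for $\upsilon=(\alpha,L)$ as noted in Remark 7.11.
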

\begin{remark}
When $\upsilon = (\alpha,R)$, $\alpha \ne \alpha_t$, one has $u^t(\upsilon)=u^t(\sigma(\upsilon))$, hence
$$\Log Dh(u^t(\upsilon)) = \Log Dh(u^t(\sigma(\upsilon)))$$
is always true.
Similarly, when $\upsilon = (\alpha ,L)$, $\alpha \ne \,_b\alpha$, one has $u^b(\upsilon)=u^b(\sigma(\upsilon))$, hence
$$D\Log Dh(u^b(\upsilon)) = D\Log Dh(u^b(\sigma(\upsilon)))$$
is always true.
\end{remark}

\subsection {Equations for the conjugacy class of $T_0$}

\begin{proposition}
Let $(t,v,c_1) \in W_3$ such that $h = {\mathcal H}(t,v,c_1)$ satisfies
\begin{eqnarray*}
h(u^t_i) &=&u^t_i  \quad \quad{\rm for\; all}\; 0<i<d,\\
\Log Dh( u^t(\upsilon)) &=& \Log Dh(u^t(\sigma(\upsilon))),\quad {\rm for \;all}\; \upsilon = (\alpha, L) \in \AA, \alpha
\ne \, _b\alpha, \\
D\Log Dh( u^b(\upsilon)) &= &D\Log Dh(u^b(\sigma(\upsilon))), \;{\rm for\; all}\; \upsilon = (\alpha, R) \in \AA, \alpha \ne
\alpha_t, \\
\end{eqnarray*}
Then , if $(t,v,c_1)$ is close enough to $(0,0,0)$, we have $T_t \circ h = h \circ T_0$.

Conversely, let $t \in [-t_0,t_0]^{\ell +d^*}$ and $ h \in {\rm Diff}^2(\overline I)$ such that $T_t \circ h = h \circ
T_0$.
Let $c_1= D\Log Dh(0)$. If $t$ is close enough to $0$ and $ h$ is close enough to the identity, then there exists $v \in
\Gamma_s$ such that $(t,v,c_1) $ belongs to $ W_3$, $ h = {\mathcal H}(t,v,c_1)$
and the relations   above are satisfied.
\end{proposition}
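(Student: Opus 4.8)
\emph{Proof proposal.} The plan is to transcribe the proof of Proposition 6.13 to the present setting, making the two changes imposed by the loss of one derivative: the Schwarzian derivative $Sh$, which is undefined for $h\in{\rm Diff}^2(\overline I)$, is replaced throughout by its continuous primitive $N_1h$ introduced in Subsection 7.5, and the auxiliary parameter $v\in\Gamma_s$ is carried along in all the bookkeeping.

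For the converse implication, let $h\in{\rm Diff}^2(\overline I)$ be close to the identity with $T_t\circ h=h\circ T_0$. Applying $\Log D$ to this equation and rewriting it through $N_1$, using that $T_0$ is standard (so $\Log DT_0\equiv 0$), one brings it into the shape of the fixed-point equation of Subsection 7.5 for a suitable $v$; that $v$ can be chosen in $\Gamma_s$ rests on Theorem 3.10 applied to $P^*(\Phi(t,h))\in C^2_c(\sqcup\iat)$ together with Remark 3.8, which, for $T_0$ of restricted Roth type, identifies the coboundaries inside $\Gamma=\RA$ with $\Gamma_s$. With $c_1$ as in the statement, the uniqueness part of the fixed-point lemma (Lemma 7.7) then yields $h={\mathcal H}(t,v,c_1)$. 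Finally $H:=T_t\circ h\circ T_0^{-1}$ equals $h$, so it is a $C^2$-diffeomorphism of $\overline I$; being a conjugacy between $T_t$ and $T_0$, which have the same singularities, and being increasing, it fixes every $u^t_i$ and hence every $u^b_j$, and Lemmas 7.8, 7.9 and 7.10 then give the three displayed relations.

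For the direct implication I would split, exactly as in Proposition 6.13, according to whether $(\alpha_t,R)$ and $(\,_b\alpha,L)$ lie in the same cycle of $\sigma$. In either case the hypothesis $h(u^t_i)=u^t_i$ and Lemma 7.8 make $H:=T_t\circ h\circ T_0^{-1}$ a homeomorphism of $\overline I$ fixing the $u^b_j$, and the $\Log Dh$-matching and Lemma 7.9 make it a $C^1$-diffeomorphism. The \emph{crucial} step is then to upgrade $H$ to a $C^2$-diffeomorphism: by simplicity of the family one has $\Log DH=\Log Dh\circ T_0^{-1}$ in a neighborhood of each $u^b_j$, so $H$ is $C^2$ precisely when $D\Log Dh$ matches across every identification $u^t(\upsilon)\leftrightarrow u^t(\sigma(\upsilon))$ with $\upsilon$ of the form $(\alpha,L)$ — a matching that is not among the hypotheses and must be deduced from them together with the fixed-point equation, which at the level of $N_1$ transports the $D\Log Dh$-matching at the $u^b(\upsilon)$ (the third hypothesis) to $D\Log Dh$-matching at the $u^t(\upsilon)$. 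Once $H$ is known to be a $C^2$-diffeomorphism, $N_1H$ is defined and the composition rule, carried out with $N_1$ in place of $S$ as in Subsection 6.7, shows that $N_1H-N_1h$ is affine, with slope $c=\int_I\Phi(t,h)$. When $(\alpha_t,R)$ and $(\,_b\alpha,L)$ belong to the same $\sigma$-cycle, the matching conditions propagate around $\sigma$ and show that $h$ and $H$ descend to $C^2$-diffeomorphisms of the circle $\Tset$ fixing $0$; the circle analogue of Lemma 6.14 (for $h\mapsto N_1h-N_1h(0)$ on ${\rm Diff}^2(\Tset,0)$) then gives $H=h$ and $c=0$. When they belong to different cycles, the matching conditions force the jets of $h$ and $H$ at $u_d$ to agree; evaluating the affine function $N_1H-N_1h$ at $u_d$ gives $c=0$, and $H=h$ on all of $\overline I$ follows as in Proposition 6.13.

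The main obstacle is precisely this $C^2$-regularity of $H$ in the direct implication. For $r\geq 3$ it was provided by an independent equation ($L_1(\Phi)=0$, as in Lemma 6.12) together with a differentiation of the fixed-point identity; here one can neither differentiate — the fixed-point identity lives at the level of continuous functions — nor invoke such an equation, so the missing matching of $D\Log Dh$ at the $u^t(\upsilon)$ must be squeezed out of the three boundary hypotheses and the integrated form of the fixed-point equation. This is exactly where the refinement of Herman's Schwarzian-derivative trick — working with the primitive $N_1h$ rather than $Sh$ — does its work; everything else is a routine transcription of Subsection 6.7.
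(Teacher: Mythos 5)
Your converse argument matches the paper's: express $\Log D$ of the conjugacy equation via $N_1$, identify the result with the fixed-point identity for a suitable $v\in\Gamma_s$ (using Theorem 3.10 and Remark 3.8), invoke uniqueness from Lemma 7.7, and read off the three displayed relations from Lemmas 7.8--7.10. That part is fine.

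The direct implication is where the gap lies, and you have correctly located it yourself but not closed it. You propose to first upgrade $H=T_t\circ h\circ T_0^{-1}$ to a $C^2$-diffeomorphism by deriving the matching of $D\Log Dh$ across each identification $u^t(\upsilon)\leftrightarrow u^t(\sigma(\upsilon))$ from ``the fixed-point equation, which at the level of $N_1$ transports the $D\Log Dh$-matching at the $u^b(\upsilon)$ \ldots\ to $D\Log Dh$-matching at the $u^t(\upsilon)$.'' This is precisely the claim that must be proved, and it is not clear it can be established by your route: the fixed-point identity lives at the level of the continuous primitive $N_1h$, and relating $D\Log Dh$-values at the $u^t_i$'s to those at the $u^b_j$'s through it would, on the face of it, require differentiating the identity — which is exactly the loss of one derivative that the whole $r=2$ argument was designed to avoid. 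You cannot recover a first-order matching from a zeroth-order identity without additional input.

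The paper's proof does not follow this route. It does not attempt to establish the $C^2$-regularity of $H$ as a separate preliminary step. Instead it proves relation $(7.1)$, namely $(D\Log DT_t\circ h)\,Dh = D\Log Dh\circ T_0 - D\Log Dh$, \emph{directly}, using the auxiliary nonlinear map $\mathcal T(\varphi,N)=\varphi-\Delta\varphi$ of Lemma 7.13 and its local injectivity near $0$. The mechanism is: one evaluates $\mathcal T(\cdot,N)$ at the two candidate functions $\varphi_0:=(D\Log DT_t\circ h)Dh$ and $\varphi_1:=D\Log Dh\circ T_0 - D\Log Dh$ (the latter being continuous thanks to the third hypothesis and Lemma 7.10, which is where that hypothesis enters), and checks by computation that both equal $P^*(\Phi(t,h))$ minus its mean; local injectivity of $\mathcal T(\cdot,N)$ then forces $\varphi_0=\varphi_1$. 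Once $(7.1)$ holds, $D\Log DH=D\Log Dh$ as functions on $\overline I$, which simultaneously gives $H\in C^2$ and, after two integrations using the fixed endpoints, $H=h$ — no case-splitting on the $\sigma$-cycle of $(\alpha_t,R)$ and $(\,_b\alpha,L)$ is needed at this stage of the argument. Your proposal replaces this single step with a chain (prove $C^2$-regularity, compute $N_1H-N_1h$ is affine, split into two cycle cases, invoke a circle version of Lemma 6.14) whose first link is unproven and whose later links would be dispensable once you had the paper's Lemma 7.13 in hand. Identifying and exploiting that lemma is the essential new idea of Section 7, and it is what your proposal is missing.
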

\begin{proof}
We first prove the second part of the proposition. Let $t \in [-t_0,t_0]^{\ell +d^*}$ close to $0$, $ h \in {\rm
Diff}^2(\overline I)$ close to
the identity, such that $T_t \circ h = h \circ T_0$. Then we have
\begin{equation}
(D\Log DT_t \circ h)\; Dh = D\Log Dh \circ T_0 - D\Log Dh \;.
\end{equation}
Let ${\mathcal S}_1 (h) = (\psi_1,c_1)$. Then, we have
$$\psi_1 \circ T_0 -\psi_1 = (D\Log DT_t \circ h)\; Dh -R\,,$$
with
\begin{eqnarray*}
DR &=& \frac 12 ((D\Log Dh \circ T_0)^2 -(D\Log Dh)^2)\\
&=& \frac 12 (D\Log DT_t \circ h)^2 (Dh)^2 + (D\Log DT_t \circ h) Dh (D\Log Dh).
\end{eqnarray*}
It follows that
$$D(\psi_1 \circ T_0 -\psi_1) = (ST_t \circ h) (Dh)^2\;.$$
As we have $\partial(\psi_1 \circ T_0 -\psi_1) =0$, the function $(ST_t \circ h) (Dh)^2$ has zero mean value and we have
$$\psi_1 \circ T_0 -\psi_1 = P^*((ST_t \circ h) (Dh)^2) + \chi$$
for some $\chi \in \Gamma$. But this means that we have $\psi_1 = \Psi_1(t,h,v)$ for some $v \in \Gamma_s$. This implies $ h
= {\mathcal H}(t,v,c_1)$.
Moreover $h = H$, hence the first two sets of relations in the proposition are satisfied from Lemmas 7.8 and 7.9. We also
have from (7.1) above that the function
$D\Log Dh \circ T_0$ is continuous on $\overline I$. Then the third set of relations in the proposition follows from Lemma
7.10. This concludes the proof of the second part of the proposition.

\bigskip

We now assume that $(t,v,c_1), h = {\mathcal H}(t,v,c_1)$ satisfy the three sets of relations in the proposition. We will
prove below that relation
(7.1) is satisfied. With $H$ as above, we have then $D\Log DH = D\Log Dh$. But $H$ is a $C^1$ diffeomorphism of $\overline
I$ (piecewise $C^2$)  by Lemmas 7.8 and 7.9. The relation $D\Log DH = D\Log Dh$ implies then that $H$ is a $C^2$
diffeomorphism and $h=H$.

To see that (7.1) is satisfied, we will use the following lemma, where $C^0(\overline I,0)$ denotes the space of continuous
functions on $\overline I$ with mean value $0$.

\begin{lemma}
The map
$$(\varphi,N) \mapsto {\mathcal T}(\varphi,N) = \varphi -\Delta \varphi$$
$$ C^0(\overline I,0) \times C^0(\overline I) \longrightarrow C^0(\overline I,0)$$
with $$ D\Delta \varphi = \frac 12 \varphi^2 + \varphi N - \int_I (\frac 12 \varphi^2 + \varphi N), \quad \int_I \Delta
\varphi =0$$
is of class $C^1$. Its differential at $(0,0)$ is $(\delta \varphi, \delta N) \mapsto \delta \varphi$. Thus, for $N$ close
enough to $0$, the map
$\varphi \mapsto {\mathcal T}(\varphi,N)$ is a $C^1$ diffeomorphism from a neighborhood of $0 \in C^0(\overline I,0)$ to
another neighborhood of $0$.
\end{lemma}

Let $N= D\Log Dh$. Take first $\varphi_0 = (D\Log DT_t \circ h) Dh$. This function does belong to $C^0(\overline I,0)$.

One has ${\mathcal T}(\varphi_0,N) = \varphi_0 - \Delta \varphi_0$ with
$$ D \Delta \varphi_0 = \frac 12 (D\Log DT_t \circ h)^2 (Dh)^2 + (D\Log DT_t \circ h) Dh (D\Log Dh) -c_0\,,$$
hence
$$ D(\varphi_0 -\Delta \varphi_0) = (ST_t \circ h) (Dh)^2 -c_0,$$
where the constant $c_0$ is the mean value of $(ST_t \circ h) (Dh)^2$ (as $D \varphi_0$ has mean value $0$). Therefore
${\mathcal T}(\varphi_0,N) =P^*(\Phi(t,h))-c$, with $c$ equal to the mean value of $P^*(\Phi(t,h))$.

Next take $\varphi_1 = D\Log Dh \circ T_0 - D\Log Dh$. This function has mean value $0$, and is continuous from Lemma 7.10.
Therefore it belongs to
$C^0(\overline I,0)$. One has ${\mathcal T}(\varphi_1,N) = \varphi_1 - \Delta \varphi_1$ with
$$ D \Delta \varphi_1 = \frac 12 [(D\Log Dh \circ T_0)^2 - (D\Log Dh)^2]\,.$$
Let ${\mathcal S}_1 h =(\psi_1,c_1)$. We have therefore
$$\varphi_1 - \Delta \varphi_1= \psi_1 \circ T_0 -\psi_1 + \chi$$
for some $\chi \in \Gamma$. As $\varphi_1 - \Delta \varphi_1$ is continuous with zero mean value, it must also be equal to
$P^*(\Phi(t,h))-c$.

We conclude that ${\mathcal T}(\varphi_0,N) = {\mathcal T}(\varphi_1,N)$, hence $\varphi_0 = \varphi_1$ by the lemma. This
is (7.1), and the proof of the proposition is complete.

\end{proof}

\subsection{End of the proof of Theorem 5.1 for $r=2$}

From the proposition above, we have to determine in a neighborhood of $0 \in [-t_0,t_0]^{\ell +d^*}$ the set of $t$ for
which, for some $(v,c_1)$ close to $(0,0)$, the diffeomorphism  $h = {\mathcal H}(t,v,c_1)$ satisfies

\begin{equation}
h(u^t_i) =u^t_i  \quad \quad{\rm for\; all}\; 0<i<d,
\end{equation}
\begin{equation}
\Log Dh( u^t(\upsilon)) = \Log Dh(u^t(\sigma(\upsilon))),\quad {\rm for \;all}\; \upsilon = (\alpha, L) \in \AA, \alpha \ne
\, _b\alpha,
\end{equation}
\begin{equation}
D\Log Dh( u^b(\upsilon)) = D\Log Dh(u^b(\sigma(\upsilon))), \;{\rm for\; all}\; \upsilon = (\alpha, R) \in \AA, \alpha \ne
\alpha_t\,.
\end{equation}

We will see that there are exactly $(d^* +g+1)$ independent equations for $t,v,c_1$ in the system above. Looking at the
linearized system at $(0,0,0)$ will allow to apply the implicit function theorem and conclude. We  deal separately with the
same two cases which appeared in Section 6.

\begin{itemize}
\item We first assume that $(\alpha_t,R)$ and $(\,_b\alpha,L)$ belong to the same cycle of $\sigma$.\\

There are $(d-1)$ equations in (7.2). In (7.3), for each cycle of $\sigma$ which does not contain $(\,_b\alpha,L)$,
there is one redundant equation. So the number of equations in (7.3) is really $(d-1)-(s-1)=(2g-1)$. Similarly, there
are
$(2g-1)$ equations in (7.4).

Therefore the total number of equations in the system (7.2)-(7.4) is $(d-1) +(2g-1) +(2g-1) = d^* +g+1$ as claimed.

Consider now the linearized system obtained from (7.2)-(7.4) at $(0,0,0)$. Writing as before $\delta \varphi = \Delta
T(t)$, we have, from Lemma 7.7
$$\delta h = P_1(L_0(D^2 \delta \varphi)+L(v), \delta c_1).$$
From the definition of $L_0$, $P^*$ and $P_1$, this is equivalent to
$$D^2 \delta h \circ T_0 -D^2 \delta h = D^2 \delta \varphi + \chi_2,\quad D^2 \delta h (u_0)=\delta c_1 \,,$$
with $\chi_2 = \delta v + L_c(D^2 \delta \varphi) + L_u(D^2 \delta \varphi)$.
The linearized version of (7.3) is
\begin{equation}
 D^2\delta h( u^b(\upsilon)) = D^2 \delta h(u^b(\sigma(\upsilon))),\quad {\rm for \;all}\; \upsilon = (\alpha, R) \in
 \AA, \alpha \ne \alpha_t \,.
\end{equation}
This implies that $\chi_2$ is continuous at each $u^t_i$, hence constant. As it has mean value $0$, we have $\chi_2 =0$.
Integrating  $D^2 \delta h \circ T_0 -D^2 \delta h = D^2 \delta \varphi $ gives
$$D \delta h \circ T_0 -D \delta h = D \delta \varphi + \chi_1\,,$$
for some $\chi_1 \in \Gamma(1)$. The linearized version of (7.2) is
\begin{equation}
 D\delta h( u^t(\upsilon)) = D \delta h(u^t(\sigma(\upsilon))),\quad {\rm for \;all}\; \upsilon = (\alpha, L) \in \AA,
 \alpha \ne \, _b\alpha\,.
\end{equation}
If it holds, $\chi_1$ has to be constant (at each $u^b_j$, the left and right values of $\chi_1$ are the same). As
$\int_0^1 D\delta \varphi(x)\, dx =0$, we must have $\chi_1 =0$. Observe that the equation $D\delta \varphi = D \delta h
\circ T_0 - D \delta h$ determines $\delta c_1$. One more integration then gives
$$\delta \varphi =  \delta h \circ T_0 -  \delta h + \chi_0\,,$$
for some $\chi_0 \in \Gamma(1)$.
The linearized version of (7.1) is
\begin{equation}
\delta h (u^t_i) =0\, .
\end{equation}
If this holds, one obtains as above first that $\chi_0$ is constant; as $\delta h (u_d) =0 = \delta h
(u^t(\alpha_b,R))$, we have $\chi_0 =0$.

Recalling the definition of $\nu$ in Remark 3.3 and $\Pi$ in Theorem 3.13, we conclude that, if (7.5)-(7.7) holds, then
$$\Pi(\delta \varphi) =0, \quad \nu(\delta \varphi) =0.$$
Going backwards, we see that these relations are actually equivalent to (7.5)-(7.7).
In view of the transversality hypotheses (Tr1), (Tr2) of Subsection 5.1, the theorem in this case now follows from the
implicit function theorem.\\

\item We now assume that $(\alpha_t,R)$ and $(\,_b\alpha,L)$ do not belong to the same cycle of $\sigma$.\\

There are still $(d-1)$ equations in (7.1). In (7.2), for each cycle of $\sigma$ which contains neither $(\,_b\alpha,L)$
nor $(\alpha_t,R)$, there is one redundant equation.
This would give $(d-1)-(s-2)=2g$ for the number of equations in (7.2) , and similarly in (7.3), leading to a grand total
of $(d^* +g+3)$ equations. However, we will now see that the equations
\begin{equation}
\Log Dh(u^t(\,_t\alpha ,L))= \Log Dh(u^t(\sigma(\,_t\alpha ,L))),
\end{equation}
\begin{equation}
D\Log Dh(u^b(\alpha_b ,R))= D\Log Dh(u^b(\sigma(\alpha_b ,R))),
\end{equation}

are also redundant.

Indeed, assume that (7.2)-(7.4) holds, with the exception of (7.8)-(7.9). Let $H=T_t \circ h \circ T_0^{-1}$ as above.
We first show that (7.1) (in subsection 7.6) holds. We will use a variant of Lemma 7.13.

Denote by $C^0_*(\overline I)$ the space of functions on $\overline I$ which vanish at $u_0$ and are continuous
on $\overline I$ except possibly at $u^t(\alpha_b,R)$ where they have a right and left limit. Let $C^0_*(\overline I,0)
= \{\varphi \in C^0_*(\overline I),\;\int_I \varphi =0\}$ and let $\pi: C^0_*(\overline I) \rightarrow C^0_*(\overline
I,0)$ be the projection operator
such that, for $\varphi \in C^0_*(\overline I)$, $\varphi - \pi(\varphi)$ is constant on $(u^t(\alpha_b,R),u_d)$ and $0$
on $(u_0,u^t(\alpha_b,R))$.
We define a map
$$(\varphi,N) \mapsto {\mathcal T}(\varphi,N) := \pi(\varphi -\Delta \varphi)$$
from $ C^0_*(\overline I,0) \times C^0(\overline I)$ to $C^0_*(\overline I,0)$ by the formulas
$$ D\Delta \varphi = \frac 12 \varphi^2 + \varphi N - \int_I (\frac 12 \varphi^2 + \varphi N), \quad  \Delta
\varphi(u_0) =0$$

\begin{lemma}
The map
${\mathcal T}$
is of class $C^1$ and satisfies ${\mathcal T}(0,N)=0$ for all $N \in C^0(\overline I)$. Its differential at $(0,0)$ is
$(\delta \varphi, \delta N) \mapsto \delta \varphi$. Thus, for $N$ close enough to $0$, the map
$\varphi \mapsto {\mathcal T}(\varphi,N)$ is a $C^1$ diffeomorphism from a neighborhood of $0$ in $C^0_*(\overline I,0)$
to another neighborhood of $0$ in $C^0_*(\overline I,0)$.
\end{lemma}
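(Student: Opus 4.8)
The plan is to mimic the proof of Lemma 7.13 almost verbatim, the only new ingredients being the normalization $\Delta\varphi(u_0)=0$ (instead of $\int_I\Delta\varphi=0$) and the insertion of the bounded linear projection $\pi$; one checks these do not disturb anything.

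First I would verify that $\mathcal T$ is well defined with values in $C^0_*(\overline I,0)$. For $(\varphi,N)\in C^0_*(\overline I,0)\times C^0(\overline I)$ the integrand $\frac12\varphi^2+\varphi N-\int_I(\frac12\varphi^2+\varphi N)$ lies in $C^0_*(\overline I)$ and has zero mean, so the primitive $\Delta\varphi$ normalized by $\Delta\varphi(u_0)=0$ is Lipschitz, in particular \emph{continuous on all of} $\overline I$ (the jump of $\varphi$ at $u^t(\alpha_b,R)$ is absorbed by integration). Hence $\varphi-\Delta\varphi\in C^0_*(\overline I)$, with its only possible discontinuity still at $u^t(\alpha_b,R)$, so $\pi(\varphi-\Delta\varphi)$ is meaningful and lands in $C^0_*(\overline I,0)$. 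The identity $\mathcal T(0,N)=0$ is immediate: for $\varphi=0$ the integrand defining $D\Delta\varphi$ vanishes identically, hence $\Delta\varphi\equiv 0$ and $\pi(0)=0$.

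Next I would check $C^1$-regularity. The space $C^0_*(\overline I)$ is a commutative Banach algebra (it is $C^0$ of the compact space obtained from $\overline I$ by splitting the point $u^t(\alpha_b,R)$ into two), so $(\varphi,N)\mapsto\frac12\varphi^2+\varphi N$ is a continuous quadratic polynomial map, hence $C^\infty$; the mean $\int_I$ is a bounded linear functional, primitivation normalized at $u_0$ is a bounded linear operator $C^0_*(\overline I)\to C^0_*(\overline I)$, and $\pi$ is bounded linear. Composing these, $\mathcal T$ is $C^\infty$, a fortiori $C^1$. For the differential at $(0,0)$, the only nonlinear ingredient $\frac12\varphi^2+\varphi N$ has vanishing differential at the origin, so the linearization of $\varphi\mapsto\Delta\varphi$ at $0$ is $0$, whence $D\mathcal T(0,0)(\delta\varphi,\delta N)=\pi(\delta\varphi)$; and $\pi$ restricted to $C^0_*(\overline I,0)$ is the identity, since the correction $\varphi-\pi(\varphi)$ is supported on $(u^t(\alpha_b,R),u_d)$ and has zero mean only if it vanishes. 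Thus $D\mathcal T(0,0)(\delta\varphi,\delta N)=\delta\varphi$, an isomorphism of $C^0_*(\overline I,0)$. Since $\mathcal T(0,N)=0$ and $D_\varphi\mathcal T(0,N)$ stays invertible for $N$ near $0$, the inverse function theorem in Banach spaces, applied to $\varphi\mapsto\mathcal T(\varphi,N)$ with $N$ as parameter, gives that this map is a $C^1$ diffeomorphism between neighborhoods of $0$ in $C^0_*(\overline I,0)$ fixing $0$.

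I do not expect a genuine obstacle here: the whole argument is the one already written for Lemma 7.13. The only two points deserving explicit attention are that $\Delta\varphi$ is continuous across $u^t(\alpha_b,R)$ (so that $\varphi-\Delta\varphi$ has its only jump exactly where $\pi$ is designed to act) and that $\pi$ acts as the identity on $C^0_*(\overline I,0)$ (so that the differential is precisely $\delta\varphi$ rather than merely close to it); both are routine.
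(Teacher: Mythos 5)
Your proposal is correct and follows the natural route; the paper omits the proof of this lemma (as it does for Lemma 7.13), so what you have written supplies exactly the expected argument, carefully noting the two new features introduced by the split at $u^t(\alpha_b,R)$: that $\Delta\varphi$ is continuous across that point (so $\varphi - \Delta\varphi$ inherits its only jump from $\varphi$, landing in $C^0_*(\overline I)$), and that the projection $\pi$ restricts to the identity on $C^0_*(\overline I,0)$ (so the differential at the origin really is $\delta\varphi$, not just $\pi(\delta\varphi)$). The algebra structure on $C^0_*(\overline I)$, the boundedness of integration from $u_0$ and of $\pi$, and the parametrized inverse function theorem then finish the job exactly as in Lemma 7.13.
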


Let $N:=D\Log Dh \in C^0(\overline I)$. The function $\varphi_0 := (D\Log DT_t \circ h) Dh$ belongs to $C^0_*(\overline
I,0)$ (it is actually continuous at $u^t(\alpha_b,R)$). A small computation gives ${\mathcal T}(\varphi_0,N)= \pi
(P^*(\Phi(t,h)))$.

Let $\varphi_1 :=  D\Log Dh \circ T_0 -D\Log Dh$. As (7.4) is satisfied with the exception of (7.9), the function
$\varphi_1$ belongs to $C^0_*(\overline I)$. Moreover, it clearly has mean value $0$, hence we have $\varphi_1 \in
C^0_*(\overline I,0)$. Writing ${\mathcal S}_1 (h)= (\psi_1,c_1)$, we obtain after a short computation that
$${\mathcal T}(\varphi_1,N) = \psi_1 \circ T_0 -\psi_1 + \chi$$
for some $\chi \in \Gamma$.
Observe that it follows from (7.4) minus (7.9) that $\varphi_1 (u_0) =0$. Therefore ${\mathcal T}(\varphi_1,N)(u_0) =0$.
As ${\mathcal T}(\varphi_1,N)$
belongs to $ C^0_*(\overline I,0)$, one must have ${\mathcal T}(\varphi_1,N)= \pi (P^*(\Phi(t,h)))$. We then conclude
from the lemma that $\varphi_0 =
\varphi_1$, which is (7.1).

From (7.2) minus (7.8), the function $\Log DH$ is continuous on $\overline I$, except perhaps at $u^b(\,_t\alpha,L)$.
From (7.1), we deduce by integration that $\Log DH -\Log Dh$ is constant on $(u_0, u^b(\,_t\alpha,L))$ and
$(u^b(\,_t\alpha,L),u_d)$. We have also from (7.2)
$$\Log DH(u_d) = \Log Dh(u^t(\alpha_b,R)) = \Log Dh (u^t(\alpha_t,R) = \Log Dh (u_d)\,.$$
As $\int_I Dh = \int_I DH$, we conclude that $Dh = DH$ and finally (as $h(u_0) = u_0 = H(u_0)$) that $h=H$. We have thus
proven that (7.8) and (7.9) are redundant.

We now consider the system linearized from (7.2)-(7.4) minus (7.8)-(7.9). As in the first case, we have, with $\delta
\varphi = \Delta T(t)$

$$\delta h = P_1(L_0(D^2 \delta \varphi)+L(v), \delta c_1),$$
which is equivalent to
$$D^2 \delta h \circ T_0 -D^2 \delta h = D^2 \delta \varphi + \chi_2,\quad D^2 \delta h (u_0)=\delta c_1 \,,$$
with $\chi_2 = \delta v + L_c(D^2 \delta \varphi) + L_u(D^2 \delta \varphi)$.

The linearized version of (7.3) minus (7.9) is
\begin{equation}
 D^2\delta h( u^b(\upsilon)) = D^2 \delta h(u^b(\sigma(\upsilon))),\quad {\rm for \;all}\; \upsilon = (\alpha, R) \in
 \AA, \alpha \ne \alpha_t, \alpha_b \,.
\end{equation}
This implies that $\chi_2$ is continuous on $(u_0,u^t(\alpha_b,R))$ and $(u^t(\alpha_b,R),u_d)$. We have also from
(7.10) that
$D^2\delta h(u_0) = D^2 \delta h (u^b(\,_t\alpha,L))$ hence $\chi_2(u_0) =0$. As $\chi_2$ has mean value $0$, we obtain
$\chi_2 =0$.

 Integrating  $D^2 \delta h \circ T_0 -D^2 \delta h = D^2 \delta \varphi $ gives
$$D \delta h \circ T_0 -D \delta h = D \delta \varphi + \chi_1\,,$$
for some $\chi_1 \in \Gamma(1)$. The linearized version of (7.2) is
\begin{equation}
 D\delta h( u^t(\upsilon)) = D \delta h(u^t(\sigma(\upsilon))),\quad {\rm for \;all}\; \upsilon = (\alpha, L) \in \AA,
 \alpha \ne \, _b\alpha, \, _t\alpha\,.
\end{equation}
If it holds, $\chi_1 \circ T_0^{-1}$ has to be constant on $(u_0, u^b(\,_t\alpha,L))$ and $(u^b(\,_t\alpha,L),u_d)$.
Also, from (7.11), we have
$D \delta h (u_d) = D \delta h( u^t(\alpha_b,R))$, hence $\chi_1 \circ T_0^{-1}(u_d) =0$. As $\int_0^1 D\delta
\varphi(x)\, dx =0$, we must have $\int_I \chi_1 =0$ and $\chi_1 =0$.

Observe that the equation $D\delta \varphi = D \delta h \circ T_0 - D \delta h$ determines $\delta c_1$. One more
integration then gives
$$\delta \varphi =  \delta h \circ T_0 -  \delta h + \chi_0\,,$$
for some $\chi_0 \in \Gamma(1)$.
The linearized version of (7.1) is
\begin{equation}
\delta h (u^t_i) =0\, .
\end{equation}
If this holds, one obtains as above first that $\chi_0$ is constant; as $\delta h (u_d) =0 = \delta h
(u^t(\alpha_b,R))$, we have $\chi_0 =0$.

Recalling the definition of $\nu$ in Remark 3.3 and $\Pi$ in Theorem 3.13, we conclude that, if (7.10)-(7.12) holds,
then
$$\Pi(\delta \varphi) =0, \quad \nu(\delta \varphi) =0.$$
Going backwards, we see that these relations are actually equivalent to (7.10)-(7.12).
In view of the transversality hypotheses (Tr1), (Tr2) of Subsection 5.1, the theorem in this case now follows from the
implicit function theorem.\\

\end{itemize}

\bigskip

The proof of the theorem for $r=2$ is now complete. \hspace{1cm} $\Box$

\section{Simple deformations of linear flows on translation surfaces}

\subsection{Translation surfaces} 

\begin{definition}
Let $M$ be a compact connected orientable surface and $\Sigma = \{A_1, \ldots , A_s\}$ be a non-empty finite subset 
of $M$. A {\it structure of translation surface} on $(M, \Sigma)$ is a maximal atlas $\zeta$ for $M- \Sigma$ of charts by open sets of $\Cset \simeq \Rset^2$ which satisfies the two following properties :
\begin{enumerate}
\item any coordinate change between two charts of the atlas is locally a translation of $\Rset^2$ ;
\item for every $1 \leq i \leq s$, there exists an integer $\kappa_i \geq 1$, a neighborhood $V_i$ of $A_i$, a neighborhood $W_i$ of $0$ in $\Rset^2$ and a ramified covering $\pi : (V_i , A_i) \rightarrow (W_i , 0)$ of degree $\kappa_i$ such that every injective restriction of $\pi$ is a chart of $\zeta$.
 \end{enumerate}
\end{definition}

\medskip

It is equivalent to equip $M$ with a complex structure and a holomorphic $1$-form which does not vanish on $M-\Sigma$ and has at $A_i$ a zero of order $\kappa_i -1$.

\medskip

For   a structure of translation surface $\zeta$ on $(M, \Sigma)$ and $g \in GL(2,\Rset)$, one defines a new structure $g.\zeta$ by postcomposing the charts of $\zeta$ by $g$.

\medskip

\begin{definition}
Let $\zeta$ be a structure of translation surface on $(M, \Sigma)$. The  {\it vertical vectorfield} is the vectorfield on $M-\Sigma$ which reads as $\frac {\partial}{\partial y}$ in the charts of $\zeta$. The associated flow is the {\it vertical flow}. An orbit of the vertical flow which ends (resp. starts) at a point of $\Sigma$ is called an {\it ingoing }
(resp. {\it outgoing} ) {\it vertical separatrix}. A {\it vertical connexion} is an orbit of the vertical flow which both starts and ends at a point of $\Sigma$. 
\end{definition}

\medskip

More generally, a {\it linear flow} on $(M,\Sigma, \zeta)$ is a flow on $M-\Sigma$ which is vertical for $g.\zeta$, for some $g \in GL(2,\Rset)$.

\medskip

\begin{definition}
A $C^r$ {\it simple deformation} of the vertical vectorfield $X_0$ of  $(M,\Sigma, \zeta)$ is a non vanishing $C^r$-vectorfield $X$ on $M-\Sigma$ which co\"incides with $X_0$ in a neighborhood  of $\Sigma$ and is appropriately $C^r$-close to $X_0$ on $M-\Sigma$.
\end{definition}

\medskip

\begin{definition}
Let $\zeta$ be a structure of translation surface on $(M, \Sigma)$. An open bounded horizontal segment  $I$ is in {\it good position} if 
\begin{enumerate}
\item $I$ meets every vertical connexion; 
\item the endpoints of $I$ are distinct and either belongs to $\Sigma$ or is connected to a point of $\Sigma$ by a vertical segment not meeting $I$.
\end{enumerate}
\end{definition}

If there is no vertical connexion, or no horizontal connexion, then such segments always exist. One may even ask that the left endpoint of $I$ is in $\Sigma$ ([Y4, Proposition 5.7, p.16]) In particular, one can always find $g \in GL(2,\Rset)$, preserving the vertical direction, and a segment in good position which is horizontal for $g.\zeta$.

\medskip

When $I$ is in good position, the return map $T_I$ of the vertical flow on $I$ is an i.e.m.  and the translation surface $(M,\Sigma, \zeta)$ can be recovered from  $T_I$ and the appropriate {\it suspension data} via Veech's {\it zippered rectangles construction}. 
 
\subsection{The boundary operator and the conjugacy invariant}

Let $(M,\Sigma, \zeta)$ be a translation surface, $(V^{\tau})$ its vertical flow, $I$ an horizontal segment in good position,
$T_I$ the associated return map. Let $ I = \sqcup \iat = \sqcup\iab $ the  partitions defining the i.e.m. $T_I$. 
We denote as usual by $u^t_1< \ldots < u^t_{d-1}$ the singularities of $T_I$ , by $u^b_1< \ldots < u^b_{d-1}$ those of $T_I^{-1}$.
\medskip

Let $r$ be an integer $\geq 0$. Denote by $C^r_c(M- \Sigma)$ the functions of class $C^r$ with compact support in $M-\Sigma$.
For a function  $\Phi \in C^r_c(M-\Sigma)$, one defines a function $\varphi :=I(\Phi)$ on $\sqcup \iat$ by
$$ \varphi(x) = \int_0^{r(x)} \Phi(V^{\tau}(x)) \, d\tau,$$
where $r(x)$ is the return time of $x$ to $I$.
Observe that $I$ commutes with horizontal partial derivatives: if $r \geq 1$ and  $\Phi \in C^r_c(M-\Sigma)$, then one has $\frac {\partial}{\partial x} \Phi \in C^{r-1}_c(M-\Sigma)$ and $I(\frac {\partial}{\partial x} \Phi)=  D[ I(\Phi)]$.

\begin{proposition} The operator $I$ sends $C^r_c(M-\Sigma)$ continuously into $C^{r}(\sqcup \iat)$. Its image is the subspace of functions $\varphi \in C^{r}(\sqcup \iat)$ satisfying $\partial D^i \varphi = 0$ for all $0 \leq i \leq r$.
\end{proposition}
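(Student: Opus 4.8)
The plan is to analyze the operator $I$ in two stages: first establish continuity and the inclusion of $\mathrm{Im}\,I$ in the space of functions killed by $\partial D^i$ for $0 \le i \le r$, then prove surjectivity onto that subspace. For continuity, I would note that $r(x)$ is bounded above (the segment $I$ is in good position, so return times are uniformly bounded away from the singularities; more precisely on each $\iat$ the return map is a translation composed with finitely many vertical excursions of bounded total length), and that the integral $\varphi(x) = \int_0^{r(x)} \Phi(V^\tau(x))\,d\tau$ depends on $\Phi$ through a finite sum of integrals along vertical segments whose endpoints vary smoothly with $x$ on the closure of each $\iat$; differentiating $i$ times in $x$ and using the stated identity $I(\tfrac{\partial}{\partial x}\Phi) = D[I(\Phi)]$ together with the smooth dependence of the flow boxes gives the $C^r$ bound $\|\varphi\|_{C^r(\sqcup\iat)} \le C\,\|\Phi\|_{C^r_c(M-\Sigma)}$.

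For the inclusion $\mathrm{Im}\,I \subset \{\varphi : \partial D^i\varphi = 0,\ 0 \le i \le r\}$, the key observation is that for $\Phi \in C^0_c(M-\Sigma)$ the function $\varphi = I(\Phi)$ extends continuously across the singularities in the following homological sense: the quantity $(\partial\varphi)_C$ for a cycle $C$ of $\sigma$ measures, going around the corresponding marked point $A$, the alternating sum of the boundary contributions of the rectangles incident to $A$; since $\Phi$ has compact support away from $\Sigma$, these contributions telescope around the cone point and cancel. Concretely, each $u^t(\upsilon)$ with $\upsilon=(\alpha,L)$ or $(\alpha,R)$ is connected to a marked point by a vertical separatrix, and the value $\varphi(\upsilon)$ is the integral of $\Phi$ along the appropriate combination of orbit segments; the permutation $\sigma$ is precisely designed (as in the proof of Proposition 3.2(1) and the discussion of cycles of $\sigma$) so that consecutive terms share a common vertical separatrix segment along which the contributions appear with opposite signs $\va(\upsilon) = -\va(\sigma(\upsilon))$. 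Hence $(\partial\varphi)_C = 0$. Applying this with $\tfrac{\partial^i}{\partial x^i}\Phi$ in place of $\Phi$ and using $I(\tfrac{\partial}{\partial x}\Phi) = D[I(\Phi)]$ gives $\partial D^i\varphi = 0$ for all $0 \le i \le r$.

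For surjectivity, given $\varphi \in C^r(\sqcup\iat)$ with $\partial D^i\varphi = 0$ for $0 \le i \le r$, I would construct $\Phi \in C^r_c(M-\Sigma)$ with $I(\Phi) = \varphi$ by a zippered-rectangles/partition-of-unity argument. Pick the zippered rectangles decomposition of $(M,\Sigma,\zeta)$ associated to $T_I$: each rectangle $R_\alpha$ sits over $\iat$ with some height $h_\alpha$, and $M - \Sigma$ is tiled (mod boundary) by the $R_\alpha$. Choose a smooth function $\rho_\alpha(\tau)$ on $[0,h_\alpha]$ supported in the interior, with $\int_0^{h_\alpha}\rho_\alpha = 1$, and set $\Phi(V^\tau(x)) = \rho_\alpha(\tau)\,\varphi(x)$ for $x \in \iat$, $0 \le \tau < h_\alpha$; then automatically $I(\Phi) = \varphi$ by Fubini along vertical fibers. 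The only issue is that this $\Phi$, defined rectangle-by-rectangle, need not be $C^r$ across the vertical sides where rectangles are glued — and this is where the vanishing conditions $\partial D^i\varphi = 0$ must be used: they are exactly the compatibility conditions ensuring that, after modifying $\Phi$ near the vertical gluings by a correction term supported in a neighborhood of those sides (built from a smooth interpolation between the $\rho_\alpha\varphi$ data on adjacent rectangles), one obtains a genuinely $C^r$ function with compact support away from $\Sigma$ that still has the prescribed vertical integrals. I expect this last matching step — verifying that the $\partial D^i$ conditions are precisely sufficient to glue across all vertical sides while keeping $\Phi$ compactly supported in $M-\Sigma$ — to be the main obstacle; it requires careful bookkeeping of how the vertical sides of the rectangles are identified (encoded again by the permutation $\sigma$) and a telescoping argument dual to the one used for the inclusion. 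The rest is routine partition-of-unity and Fubini.
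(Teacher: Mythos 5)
Your proof of the first assertion (continuity, via the commutation identity $I(\partial_x\Phi)=D[I(\Phi)]$ reducing to the $r=0$ case) and of the inclusion $\mathrm{Im}\,I\subset\{\varphi:\partial D^i\varphi=0,\ 0\le i\le r\}$ follows the same route as the paper: the paper introduces the quantities $L_\Phi(u^t_j)=\int_0^{\rho^t_j}\Phi(V^\tau(u^t_j))\,d\tau$ and $L_\Phi(u^b_j)=\int_{\rho^b_j}^0\Phi(V^\tau(u^b_j))\,d\tau$, rewrites each $\varphi(\upsilon)$ as a sum of two such terms (one per separatrix segment), and observes that under $\sigma$ the terms telescope around each cycle with opposite signs $\va(\upsilon)=-\va(\sigma(\upsilon))$ — exactly your argument.

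Where your write-up goes beyond the paper is instructive: the paper's own proof of this proposition, as printed, establishes \emph{only} the inclusion. It contains no argument for surjectivity, even though the statement asserts equality of the image with $\{\varphi:\partial D^i\varphi=0\}$. You correctly recognize that surjectivity is a separate claim requiring a construction, and your sketch (choose fiberwise bump profiles $\rho_\alpha$ over each rectangle of the zippered-rectangle decomposition, set $\Phi(V^\tau(x))=\rho_\alpha(\tau)\varphi(x)$, then correct near the glued vertical sides) is the natural one; the vanishing of $\partial D^i\varphi$ should furnish exactly the compatible boundary data $(L^t_j,L^b_j)$ along the separatrix segments, dual to the cancellation used in the inclusion direction. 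But as you yourself flag, the actual verification — that the gluing corrections can be chosen $C^r$, compactly supported in $M-\Sigma$, \emph{and} without disturbing the prescribed vertical integrals, and that the boundary conditions are sufficient (not merely necessary) — is precisely where all the content lies, and your argument does not carry it out. So there is a genuine gap in the surjectivity half of your proposal; it is, however, a gap that the paper's proof shares, since the paper says nothing about it at all.
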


\begin{proof}
Regarding  the first assertion, the case $r=0$ is clear and the assertion for higher $r$ follows from the commutation with
horizontal  partial derivatives. 

Let $\Phi \in C^0_c(M-\Sigma), \varphi := I(\Phi) \in C^{0}(\sqcup \iat) $.
For $1\leq j \leq d-1$, define 
$$ L_{\Phi}(u^t_j) = \int_0^{\rho^t_j} \Phi(V^{\tau}(u^t_j)) \, d\tau,$$
$$ L_{\Phi}(u^b_j) = \int_{\rho^b_j}^{0} \Phi(V^{\tau}(u^b_j)) \, d\tau,$$
where $ \rho^b_j <0< \rho^t_j$ are the times  such that $V^{\rho^t_j}(u^t_j)$ and $V^{\rho^b_j}(u^b_j)$ belong to $\Sigma$.

\smallskip
We use the notations of subsection 3.1. Let $\upsilon \in \AA$; if neither $u^t(\upsilon)$ nor $u^b(\upsilon)$ is an endpoint of $I$, one has
$$\varphi(\upsilon) = L_{\Phi}(u^t(\upsilon)) + L_{\Phi}(u^b(\upsilon)).$$

For the remaining elements of $\AA$, we have 
$$\varphi(_t\,\alpha,L) + \varphi(_b\,\alpha,L) = L_{\Phi}(u^t(_b\,\alpha,L) + L_{\Phi}(u^b(_t\,\alpha,L)),$$
$$\varphi(\alpha_t,R) + \varphi(\alpha_b,R) = L_{\Phi}(u^t(\alpha_b,R) + L_{\Phi}(u^b(\alpha_t,R)).$$

In view of the definitions of the boundary operator $\partial$ and the permutation $\sigma$ of $\AA$, there is a total cancellation of the terms in the formula for $\partial \varphi$. 
\smallskip

We have proven that $\partial I(\Phi) = 0$ for $\Phi \in C^0_c(M-\Sigma)$. The case of higher $r$ follows from the commutation of $I$ with horizontal partial derivatives. 

\end{proof}

The following proposition may be seen as a non-linear version of the previous proposition. Let $X_*$ is a $C^r$ simple deformation
of the vertical vectorfield $X_0$ of $\zeta$. We denote by $V_*^{\tau}$ the flow of $X_*$. We assume  that $X$ and $X_0$ co\"incide on the vertical separatrices segments connecting $\Sigma$ to the endpoints of $I$: this warrants that the return map $T_*$ of  $V_*^{\tau}$ on $I$ is a generalized i.e.m. of class $C^r$ with the same combinatorics than $T_0:=T_I$. We can therefore consider the conjugacy invariant $J(T_*) \in J^r$ introduced in subsection 4.1.

\begin{proposition}
The invariant $J(T_*)$ is trivial.
\end{proposition}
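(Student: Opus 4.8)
The plan is to read $J(T_*,C)$, for $C$ the cycle of $\sigma$ attached to a marked point $A\in\Sigma$, as the holonomy of the deformed vertical flow $V_*^\tau$ around $A$, and to use that $X_*=X_0$ near $\Sigma$ to see this holonomy is trivial. I would first recall the geometry behind the invariant: each $\upsilon\in C$ determines a singularity $u^t(\upsilon)$ of $T_*$ (or an endpoint of $I$) lying on a vertical separatrix issued from $A$, and a singularity $u^b(\upsilon)$ of $T_*^{-1}$ lying on a separatrix arriving at $A$; the orbit arc of $V_*^\tau$ realizing $j(T_*,\upsilon)^{\va(\upsilon)}$ runs between these two points and passes exactly once close to $A$; and — the cyclic matching of separatrices around $A$ being governed only by the combinatorial data $\pi$ — it is the same marked point $A$ for every element of a given cycle, as it is in the suspension of $T_0=T_I$. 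Fixing once and for all a small transversal $\Lambda$ to the flow near $A$, contained in the neighbourhood where $X_*=X_0$, I would factor each jet in $J^r$ as
$$ j(T_*,\upsilon)^{\va(\upsilon)}\;=\;\beta_\upsilon\circ\gamma_\upsilon\circ\alpha_\upsilon, $$
where $\alpha_\upsilon$ is the germ of the $V_*^{\pm}$-transition from $I$, near the entry point $x_-(\upsilon)$, up to $\Lambda$; where $\gamma_\upsilon$ is the germ of the \emph{corner map} at $A$ obtained by flowing inside the neighbourhood where $X_*=X_0$; and where $\beta_\upsilon$ is the transition germ from $\Lambda$ back to $I$, near the exit point $x_+(\upsilon)$. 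All three are germs of $C^r$ diffeomorphisms, and $\gamma_\upsilon$ depends only on the translation structure $\zeta$ near $A$.

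I would then substitute this factorization into the cyclic product $J(T_*,C)=j(T_*,\upsilon_0)^{\va(\upsilon_0)}\cdots j(T_*,\upsilon_\kappa)^{\va(\upsilon_\kappa)}$ and telescope. The matching identity $x_-(\upsilon_m)=x_+(\sigma(\upsilon_m))$, established in the proof of Proposition 4.2, says precisely that the exit point of $j(T_*,\upsilon_m)^{\va(\upsilon_m)}$ and the entry point of $j(T_*,\upsilon_{m+1})^{\va(\upsilon_{m+1})}$ coincide as points of $I$ and lie over the same separatrix of $A$; since $\Lambda$ is fixed, the germs $\alpha_{\upsilon_m}$ and $\beta_{\upsilon_{m+1}}$ are then literally inverse to each other — both are the $V_*$-holonomy between that common point of $I$ and $\Lambda$, traversed in opposite directions. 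Hence all the transition germs cancel in pairs around the cycle, and, recalling that $J(T_*,C)$ is considered only up to conjugacy in $J^r$, we conclude that $J(T_*,C)$ is conjugate to the product $\gamma_{\upsilon_0}\gamma_{\upsilon_1}\cdots\gamma_{\upsilon_\kappa}$ of corner maps at $A$, an element of $J^r$ that depends only on $\zeta$ in a neighbourhood of $A$.

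It remains to evaluate this product, and here is where triviality enters. Since $X_*=X_0$ near $A$, replacing $V_*^\tau$ by the undeformed vertical flow does not change it; carrying out the same factorization and telescoping for the \emph{standard} interval exchange map $T_0=T_I$, whose return orbits near $A$ are those of $V_0$, shows that the same corner-map product is conjugate to $J(T_0,C)$. But for a standard interval exchange map $T_0$ each $j(T_0,\upsilon)$ is the identity of $J^r$ — the restriction of $T_0$ to a continuity interval being a translation carrying $u^t(\upsilon)$ to $u^b(\upsilon)$ — so $J(T_0,C)=\mathrm{id}$; being conjugate to $\mathrm{id}$, the corner-map product equals $\mathrm{id}$, whence $J(T_*,C)=\mathrm{id}$. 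Since this holds for every cycle, $J(T_*)$ is trivial. The two cycles containing $(\alpha_t,R)$ and $(\,_b\alpha,L)$ — those attached to the marked points joined to the endpoints $u_0,u_d$ of $I$ — are dealt with in the same way, now invoking the standing hypothesis that $X_*$ and $X_0$ also agree along the vertical separatrix segments joining $\Sigma$ to $u_0$ and $u_d$. The genuinely delicate part of the argument is the telescoping step: one must keep careful track of the exponents $\va(\upsilon)$, of which one-sided germ of $T_*$ or $T_*^{-1}$ each jet represents, and of the identification of separatrices and sectors so that consecutive transition germs are truly mutually inverse — precisely the bookkeeping carried by the permutation $\sigma$ and recorded in Propositions 3.2 and 4.2.
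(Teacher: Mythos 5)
Your proposal takes essentially the same route as the paper's proof: factor each jet $j(T_*,\upsilon)^{\va(\upsilon)}$ through transition germs to and from transversals near $\Sigma$, then telescope around each cycle of $\sigma$ using the matching $x_-(\upsilon) = x_+(\sigma(\upsilon))$ established in the proof of Proposition 4.2. The paper, rather than keeping a separate corner map $\gamma_\upsilon$ and disposing of it by comparison with $T_0$, chooses for each separatrix its own small transversal $I_j^t$ or $I_j^b$ with a coordinate centered on that separatrix, so the near-$\Sigma$ step is literally the identity and the factorization reads directly $j(T_*,\upsilon) = J_X(u^b_*(\upsilon))\, J_X(u^t_*(\upsilon))$; your detour through $T_0$ is valid, just slightly longer. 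One imprecision to fix: you fix a \emph{single} transversal $\Lambda$ per marked point $A$, but when $A$ has cone angle $2\pi\kappa_i$ with $\kappa_i \geq 2$ the orbit arcs realizing the various $j(T_*,\upsilon)$ for $\upsilon \in C$ sweep through different sectors at $A$, and a single small horizontal segment near $A$ is not crossed by all of them. One must take one transversal per separatrix, as the paper does; after this change $\alpha_{\upsilon_m}$ and $\beta_{\upsilon_{m+1}}$ remain the two orientations of the holonomy between $I$ and the transversal attached to the separatrix above the common point $x_-(\upsilon_m) = x_+(\upsilon_{m+1})$, and the telescoping goes through unchanged.
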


\begin{proof}
The proof is essentially the same than the proof of the relation $\partial I(\Phi)=0$ in the previous proposition. Let
$u^t_{1,*}< \ldots < u^t_{d-1,*}$ be the singularities of $T_*$, $u^b_{1,*}< \ldots < u^b_{d-1,*}$ those of $T^{-1}_*$.
For each
$1\leq j \leq d-1$, let $I_j^t$ be a small horizontal segment  transverse to the separatrix from $u^t_j$ to $\Sigma$ and very close to $\Sigma$, with coordinate $x_j^t$ centered at the intersection with the separatrix. Define similarly $I_j^b$ with coordinate $x_j^b$. Let $J_X(u^t_{j,*})$ be the $r$-jet at $u^t_{j,*}$ of the transition map from $I$ to $I_j^t$ along $X_*$. Let $J_X(u^b_{j,*})$ be the $r$-jet at $x^b_j= 0$ of the transition map from $I_j^b$ to $I$  along $X_*$.

Let $\upsilon \in \AA$; if neither $u^t(\upsilon)$ nor $u^b(\upsilon)$ is an endpoint of $I$, one has
$$j(T_*,\upsilon) =  J_X(u^b_*(\upsilon)) J_X(u^t_*(\upsilon)) .$$
For the remaining elements of $\AA$, we have 
$$j(T_*,(_t\,\alpha,L)) j(T_*,(_b\,\alpha,L)) = J_X(u^b(_t\,\alpha,L)) J_X(u^t(_b\,\alpha,L),$$
$$j(T_*,(\alpha_t,R)) j(T_*,(\alpha_b,R)) = J_X(u^b(\alpha_t,R)) J_X(u^t(\alpha_b,R).$$
The same cancellation takes place. Therefore the conjugacy invariant is trivial. 
\end{proof}

\subsection{Statement of the result} 

\begin{definition}
Let $(M,\Sigma, \zeta)$ be a translation surface . It is of {\it Roth type} (resp. {\it restricted Roth type}) if there exists {\bf some} open bounded horizontal segment $I$  in  good position such that the return map $T_I$ of the vertical flow on $I$ is an i.e.m. of Roth type (resp. restricted Roth type).
\end{definition}              
              
 Actually, we show in Appendix C  that for a (restricted) Roth type translation surface, $T_I$ will be of (restricted) Roth type for {\bf any} horizontal segment $I$ in good position.
 
 \medskip
 
 Recall that two  vectorfields $X,Y$ are said to be $C^r$-equivalent if there exist a $C^r$-diffeomorphism $H$ sending
the time-oriented orbits of the flow of $X$ on the time-oriented orbits of the flow of $Y$. An equivalent formulation is that $H^*X$ is a positive scalar multiple of $Y$, or that $Y$ is obtained from $H^*X$ by time reparametrization.

\medskip
 
 \vskip .3 truecm\noindent {\bf Corollary of the main theorem.}\hspace{5mm} {\it Let $(M,\Sigma, \zeta)$ be a translation surface of restricted
 Roth type, and let $r$ be an integer $\geq 2$. Amongst the $C^{r+3}$ simple deformations of the vertical 
vectorfield $X_0$, those which are $C^r$-equivalent to $X_0$ by a diffeomorphism $C^r$ close to the
identity form a $C^1$ submanifold of
codimension $d^*=(g-1)(2r+1)+s$. }

\vskip .3 truecm\noindent
 
 Concerning conjugacy between $X_t$ and $X_0$ instead of equivalence, see the final remark after the proof.
                      
\subsection{Proof of the Corollary}

 Let  $(X_t)_{t \in [-t_0,t_0]^{\ell +d^*}}$ be a $C^1$ family of $C^{r+3}$ simple deformations  of the vertical 
vectorfield $X_0$ . We assume that the family is in general position. We will allow at various stages to restrict $t$ to a smaller neighborhood of $0$. We divide the proof into several steps.

\begin{enumerate}
\item Choose an open bounded horizontal segment $I$ in good position such that the return map $T_0$ of $X_0$ on $I$ is an
i.e.m. of restricted Roth type. By slightly shifting vertically $I$ if necessary, we may assume that the endpoints $u_0,\,u_d$ of $I$ are not in $\Sigma$. By definition of good position, there are vertical segments $J_0,\, J_d$ disjoint from $I$ connecting  these endpoints to points of $\Sigma$. 

\smallskip

There exists a $C^1$ family $(k_t)$ of $C^{r+3}$-diffeomorphisms of $M$, supported on a compact set of $M-\Sigma$, such that
$k_0$ is the identity and 
for all $t$ the vectorfield $k_t^*X_t$ co\"incides with $X_0$ on $J_0$ and $J_d$. Replacing $X_t$ by $k_t^*X_t$, we assume from now on that $X_t$ co\"incides with $X_0$ on $J_0$ and $J_d$. 

\medskip

\item The singularities $u^t_1< \ldots u^t_{d-1}$ of $T_0$ are the last intersections with $I$ of the ingoing vertical separatrices of $X_0$, while the singularities $u^b_1< \ldots u^b_{d-1}$ of $T_0^{-1}$ are the first intersections with $I$ of the outgoing vertical separatrices of $X_0$. As $X_t$ co\"incides with $X_0$ in the neighborhood of $\Sigma$, we can also define ingoing and outgoing separatrices for $X_t$. By the implicit function theorem, the ingoing separatrices will have as last intersection with $I$ points $u^t_1(t)< \ldots u^t_{d-1}(t)$ which are $C^1$-functions of $t$. Notice here that the fact that $X_t$ co\"incides with $X_0$ on $J_0$ and $J_d$ is crucial to guarantee that these are {\bf last} intersections.

\smallskip

Having the separatrices under control, we know that the first return map $T_t$ for $X_t$ on $I$ is a generalized i.e.m of class $C^{r+3}$ with the same combinatorics than $T_0$, and that $(T_t)$ is a $C^1$ family of such g.i.e.m.
Moreover, every infinite half-orbit of $X_t$ (in the past or in the future) intersects $I$. This is a consequence of the implicit function theorem, taking into account that the return times to $I$ for $X_0$ are bounded.

\medskip

\item From subsection 8.2 above, for all $t$ close to $0$, the conjugacy invariant of $T_t$ in $J^{r+3}$ is trivial.
On the other hand, this is the only restriction on $T_t$: the map $X \mapsto T_X$ , which associates to a $C^{r+3}$ 
simple deformation $X$ of $X_0$ such that $X = X_0$ on  $J_0 \cup J_d$ the return map to $I$ is a {\bf submersion} onto 
g.i.e.m's with trivial $J^{r+3}$-invariant.

\smallskip

It follows that the family $(T_t)$ will be itself in general position (amongst g.i.e.m's with trivial $J^{r+3}$-invariant).
By our main theorem, there is a $C^1$-submanifold $\mathcal C$ of codimension $d^*$ through $0$ which consists exactly of the parameters $t$ such that $T_t$ is conjugated to $T_0$ by a $C^r$-diffeomorphism  of $I$ which is $C^r$-close to the identity.
\medskip

\item  We  will promote , for $t \in \mathcal C$, the conjugacy $h_t$ between $T_0$ and $T_t$ (i.e $h_t \circ T_0 = T_t \circ h_t$) to a $C^r$-equivalence $H_t$ between $X_0$ and $X_t$. This is best done in two steps.

\smallskip

First, consider a small neighborhood $U$ of $I$ in $M-\Sigma$ and a $C^1$- family $(H_t^{\sharp})_{t \in \mathcal C}$ of $C^r$-diffeomorphisms of $M$, $C^r$-close to the identity,  with the following properties:
\begin{itemize}
\item for each $t \in \mathcal C$, $H_t^{\sharp}$ has support in $U$, and is the identity on $J_0$ and $J_d$;
\item  for each $t \in \mathcal C$, $H_t^{\sharp}$ preserves $I$ and the restriction of $H_t$ to $I$ is equal to $h_t^{-1}$;
\item $H_0^{\sharp}$ is the identity.
\end{itemize}

Then, for each $t \in \mathcal C$, the vectorfield $X_t^{\sharp}:= (H_t^{\sharp})^* X_t$ is a $C^r$ simple deformation of $X_0$ for which the return map to $I$ is equal to $T_0$. We also still have the property that every infinite half-orbit of 
$X_t^{\sharp}$ intersects $I$. Observe also that $X_0^{\sharp} = X_0$.

\medskip
\item
Denote by $(V_t^{\tau})$ the flow of $X_t^{\sharp}$. 
For $x \in I$,  not a singularity of $T_0$, let $r_t(x)$ be the return time to $I$ of $x$ under 
$X_t^{\sharp}$.

\smallskip

The $C^r$-equivalence $H_t$ we are looking for will be
$$ H_t  =   \wt H_t \circ (H_t^{\sharp})^{-1},$$
where $\wt H_t$ is a $C^r$-equivalence  between $X_0$ and $X_t^{\sharp}$ satisfying
$$ \wt H_t( V_0^{\tau} (x)) = V_t^{g_{x,t}(\tau)}(x), \quad {\rm for } \; x \in I , \; \tau \in [0, r_0(x)].$$
Here $g_{x,t}$ is a diffeomorphism from $[0, r_0(x)]$ onto $[0, r_t(x)]$. However, we have to be careful in the choice of
$ g_{x,t}$ when $x$ gets close to the endpoints of $I$ or the singularities of $T_0$ because we want  $\wt H_t$ to preserve $\Sigma$ and be of class $C^r$ on the whole of $M$. We will actually  define not only $g_{x,t}$ but also the right and left limits 
$g_{\upsilon,t}$, for $\upsilon \in \AA$ (cf. subsection 3.1). Observe that, for any $\alpha \in \A$, the restriction of the return time function $r_t$ to $\iat$ extends to a  $C^r$-function on the closure of $\iat$. In particular, the values
$r_t(\upsilon)$ are well defined.

\smallskip
\item

 For $1 \leq j \leq d-1$, let $A_j \in \Sigma$ be the endpoint of the ingoing $X_t^{\sharp}$-separatrix from $u^t_j$ and let $\rho_{j,t}$ the time span of this separatrix. 


We will only consider the case where both $J_0$ and $J_d$ are {\bf outgoing} separatices. The other cases are dealt in the same manner, with minor modifications. Under this assumption, we have , for each $1 \leq j \leq d-1$, 
$$ 0< \rho_{j,t}  < \min (r_t(\upsilon_{j,-}),r_t(\upsilon_{j,+})),$$
where $\upsilon_{j, \pm}$ are the elements of $\AA$ adjacent to $u^t_j$.

\item Let $1 \leq j \leq d-1$ and let $\va>0$ be small enough so that $X_t$ and $X_0$ are equal in a $3\va$-neighborhood of $\Sigma$. The image by $V_0^{ \rho_{j,0}}$ of the segment $(u_j^t - 2\va,u_j^t + 2\va) \subset I$ is a horizontal segment $I_j(2\va)$ through $A_j$. Let 
$$C(2\va) = \{ z \in \Cset , \, 0 < |z| < 2 \va , \,-\frac{3\pi}2<\arg z < \frac{\pi}2$$
and let $z_j = x_j + iy_j: C_j(2\va) \rightarrow C(2\va)$ be the chart of $\zeta$ such that the equation of $I_j(\va)$ is $y_j =0$. The domain $C_j(2\va)$ is a circular open cone of radius $2 \va$, aperture $2 \pi$ at $A_j$.

\smallskip
For $t$ close enough to $0$, there is a  $C^r$-maps  $G_{j,t}$ defined on $(-\va,\va)$ (and depending in a $C^1$ way on $t$) such that, for $s \in (-\va,\va)$, the point $V_t^{\tau}( u^t_j +s)$ belongs to $I_j(2\va)$ at a time $\tau$ close to $\rho_{j,0}$
and its $x_j$-coordinate is $G_{j,t} (s)$. Observe that 
$$  G_{j,0}(s) \equiv s, \quad \quad 
G_{j,t} (0) = 0.$$ 
Write $G_{j,t}(s) = s w_{j,t}(s)$, with $w_{j,t}(s)$ close to $1$. Define then a map $G^*_{j,t}: C_j(\va) \rightarrow C_j(2\va)$ given in the $z_j$-coordinate by 
$$ G^*_{j,t}(z_j) = z_j w_{j,t}(x_j).$$
Observe that $G^*_{j,0}$ is the identity, that $G^*_{j,t}$ preserves the vertical foliation and that its restriction to 
the segment $I_j(\va)$ is $(x_j,0) \mapsto (G_{j,t}(x_j),0)$.

\item We now choose the $g_{x,t}$ in order to satisfy the following properties:
\begin{itemize}
\item for each $x \in I$ which is  not a singularity of $T_0$, $g_{x,t}$ is a diffeomorphism from $[0, r_0(x)]$ onto $[0, r_t(x)]$;
\item for each $\upsilon \in \AA$, $\upsilon \ne (_t \,\alpha,L),(\alpha_t, R)$,  $g_{\upsilon,t}$ is a diffeomorphism from $[0, r_0(\upsilon)]$ onto $[0, r_t(\upsilon)]$;
\item for $\upsilon_0:= (_t \,\alpha,L)$, $g_{\upsilon_0,t}$ is a diffeomorphism from $[-|J_0|, r_0(\upsilon_0)]$ onto $[-|J_0|, r_t(\upsilon_0)]$; for $\upsilon_d:= (\alpha_t,R)$, $g_{\upsilon_d,t}$ is a diffeomorphism from $[-|J_d|, r_0(\upsilon_d)]$ onto $[-|J_d|, r_t(\upsilon_d)]$;
\item for $\tau \in [0,\va)$, any $x$ or $\upsilon$, we have
$$ g_{x,t}(\tau) = g_{\upsilon,t}(\tau) = \tau, $$
$$g_{x,t}(r_0(x)-\tau) = r_t(x) - \tau , \quad \quad g_{\upsilon,t}(r_0(\upsilon)-\tau) = r_t(\upsilon) -\tau.$$
\item for $1 \leq j \leq d-1$ , the diffeomorphisms $g_{\upsilon_{j,-},t}$ and $g_{\upsilon_{j,+},t}$ coincide on 
$[0, \rho_{j,0}]$ and send this interval onto $[0, \rho_{j,t}]$; we denote by $g_{u^t_j,t}$ this restriction.
\item The similar condition for the outgoing separatrices is as follows; let $1 \leq j \leq d-1$, and let $\upsilon'_{j,\pm}$  be the elements of $\AA$ adjacent to $u^b_j$; then we must have
$$ r_t(\upsilon'_{j,-})  - g_{\upsilon'_{j,-},t}(r_0 (\upsilon'_{j,-})-\tau)= r_t(\upsilon'_{j,+})  - g_{\upsilon'_{j,+},t}(r_0 (\upsilon'_{j,+})-\tau)$$
for $0 \leq \tau \leq \rho'_{j,0}$; here $\rho'_{j,0}$ is the time span from $\Sigma$ to $u^b_j$ for $X_0$.
\item Let $R_t$ be the open subset of $I \times \Rset$ formed of the pairs $(x,\tau)$ such that $0<\tau<r_t(x)$ if $x$ is not one of the $u^t_j$, $0<\tau< \rho_{j,t}$ if $x= u^t_j$; then the map $G_t: (x,\tau) \mapsto (x,g_{x,t}(\tau))$ is a $C^r$-diffeomorphism from  $R_0$ onto $R_t$, and $t \mapsto G_t$ is $C^1$. 
\item We also ask for a similar condition along the outgoing separatrices.
\item If $(x,\tau) \in R_0$ satisfies $V_0^{\tau(x)} \in C_j(\va)$ (for some $1 \leq j \leq d-1$), then
$$ V_t^{g_{x,t}(\tau)}(x) = G^*_{j,t}(V_0^{\tau(x)}).$$
\end{itemize}

It is fastidious but not difficult to check that these conditions are compatible and that one can indeed satisfy all conditions.

As mentioned earlier, from the $g_{x,t}$, we obtain an equivalence $\wt H_t$ between $X_0$ and $X_t^{\sharp}$ by the formula

$$\wt H_t( V_0^{\tau} (x)) = V_t^{g_{x,t}(\tau)}(x).$$

The properties required along the ingoing and outgoing separatrices guarantee that $\wt H_t$ is a $C^r$-diffeomorphism of $M - \Sigma$. Then, the properties of the $G^*_{j,t}$ warrant that $\wt H_t$ is also $C^r$ in the neighborhood of $\Sigma$.
Indeed, if $k$ is the ramification index of $\zeta$ at $A_j$ and we write $z_j = Z_j^k$, we will have in the  $Z_j$-coordinate that 
$$ G^*_{j,t} (Z_j) = Z_j (w_{j,t}(\Re Z_j^k))^{\frac 1k}.$$
We conclude that, for $t \in \mathcal C$, $X_t$ and $X_0$ are indeed $C^r$-equivalent.

\item On the other hand, if $X_t$ and $X_0$ are $C^r$-equivalent; the restriction of the $C^r$-equivalence to $I$ is a $C^r$-conjugacy between $T_t$ and $T_0$, hence $t \in \mathcal C$. $\Box$
\end{enumerate} 
\begin{remark}

One could look for a {\bf conjugacy} (respecting time) rather than an equivalence between $X_t$ and $X_0$. To transform an equivalence into a conjugacy, one needs that the return times to $I$ of $X_0$ and $X_t$ differ by the coboundary of an appropriately smooth function on $I$. Therefore, from the results on the cohomological equation (using also a transversality argument), one finds  a submanifold $\mathcal C^*$ of $\mathcal C$ of codimension $g$ such that $X_t$ and $X_0$ are $C^{r-2}$-conjugated for $t \in  \mathcal C^*$. However, it is not clear at all (and probably just wrong!) that $d^* +g$ is the right codimension for the $C^{r-2}$-conjugacy class of $X_0$ amongst $C^{r+3}$ (or $C^{\infty}$) simple deformations of $X_0$. 

\end{remark}

\appendix\section{The cohomological equation with $C^{1+\tau}$ data}

In this appendix, we show that Theorem 3.10 is also valid with $C^{1+\tau}$ data. Let $\tau \in (0,1)$. We denote by
$C_{\partial}^{1+\tau}(\sqcup \iat)$ the space of functions $\varphi \in C_{\partial}^{1}(\sqcup \iat)$ whose restrictions
to each $\iat$ is of class $C^{1+\tau}$.
Let $T$ be a standard i.e.m.\ of Roth type. We choose a subspace $\Gamma_u \subset \Gamma_{\partial}$ complementing
$\Gamma_T$.

\begin{theorem}
 There exist bounded linear operators $L_0: \varphi \mapsto \psi$ from $C_{\partial}^{1+\tau}(\sqcup \iat)$ to
 $C^0(\overline I)$ and $L_1: \varphi \mapsto \chi$ from $C_{\partial}^{1+\tau}(\sqcup \iat)$ to $\Gamma_u$ such that, for
 all $\varphi \in
C_{\partial}^{1+\tau}(\sqcup \iat)$, we have
$$\varphi = \chi + \psi \circ T -\psi  \;.$$
\end{theorem}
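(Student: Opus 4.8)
The plan is to revisit the proof of Theorem 3.10 given in [MMY1] and to observe that the hypothesis ``$D\varphi$ of bounded variation'' enters only through one estimate, namely a bound on the oscillation of $D\varphi$ over the renormalization intervals; the H\"older hypothesis $D\varphi \in C^{\tau}$ supplies an equally good (in fact strictly better) substitute for that estimate, so the rest of the argument carries over verbatim. Accordingly, first I would recall the structure of that proof: one fixes the complement $\Gamma_u \subset \Gamma_{\partial}$ of $\Gamma_T$, writes the rotation number of $T$ as an infinite concatenation $\underline\gamma = \gamma(1)*\cdots*\gamma(n)*\cdots$ of complete blocks of minimal length, and iterates the Rauzy--Veech algorithm to get the nested intervals $I^{(n)}$ with $|I^{(n)}| = O(\|B(n)\|^{\epsilon-1})$ for every $\epsilon>0$ (this is exactly condition (a)) together with the special Birkhoff sum operators $S^{(n)}$. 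Each $S^{(n)}\varphi$ is then split as a piecewise constant function in $\Gamma^{(n)}$ — transported by the Kontsevich--Zorich cocycle $B(n)$ and, after subtraction of the correction $\chi\in\Gamma_u$, controlled through conditions (b) and (c) — plus a fluctuating remainder; the solution is obtained as a limit, and the crux is the uniform bound $\|S^{(n)}(\varphi-\chi)\|_0 \leq C\,\|\varphi\|_{1+BV}$, which results from summing the contributions of the fluctuating remainders over all blocks $n$. In the $C^{1+BV}$ case the $n$-th such contribution is a fixed geometric weight times $\mathrm{Var}_{I^{(n)}}(D\varphi)$, and these variations add up to $|D\varphi|_{BV}$.

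The only modification needed is to replace, on each interval $I^{(n)}$, the inequality $\mathrm{osc}_{I^{(n)}}(D\varphi) \leq \mathrm{Var}_{I^{(n)}}(D\varphi)$ by the H\"older inequality $\mathrm{osc}_{I^{(n)}}(D\varphi) \leq |D\varphi|_{C^{\tau}}\,|I^{(n)}|^{\tau}$. Since $|I^{(n)}| = O(\|B(n)\|^{\epsilon-1})$, this introduces an extra factor $|I^{(n)}|^{\tau} = O(\|B(n)\|^{-\tau+\epsilon})$ in the $n$-th summand, which more than compensates for any growth of the geometric weights, so the series still converges absolutely and yields $\|S^{(n)}(\varphi-\chi)\|_0 \leq C\,\|\varphi\|_{1+\tau}$ uniformly in $n$, with $C$ independent of $\varphi$. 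Everything being linear in $\varphi$, this produces the bounded linear operator $L_1:\varphi\mapsto\chi$ and, after passing to the limit and applying Gottschalk--Hedlund's theorem, a bounded solution $\psi$ with $\|\psi\|_0 \leq C\,\|\varphi\|_{1+\tau}$; its continuity on $\overline I$ then follows from Corollary 3.6, exactly as in Remark 3.11, and $\varphi = \chi + \psi\circ T - \psi$.

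I expect the main obstacle to be purely bookkeeping: one must reopen the estimates of [MMY1] and verify that the H\"older seminorm indeed slots into every place where the BV norm is invoked — in particular that the geometric weights multiplying $\mathrm{osc}_{I^{(n)}}(D\varphi)$ are unchanged, and that no step secretly relies on additivity of the total variation over the pieces rather than on the pointwise modulus of continuity of $D\varphi$. An alternative, possibly cleaner, route is to approximate $\varphi$ by $C^{2}$ functions $\varphi_k$ (piecewise affine, or mollified on dyadic scales) with $\|\varphi-\varphi_k\|_{C^1}\to 0$, apply Theorem 3.10 to each block $\varphi_{k+1}-\varphi_k$, and sum the resulting coboundaries; here one must use the \emph{refined} form of the estimate above (the one carrying the $|I^{(n)}|^{\tau}$ factors), since the crude bound $\|\varphi_{k+1}-\varphi_k\|_{1+BV}$ does not tend to $0$ — and it is precisely the H\"older condition that makes the series of partial solutions summable.
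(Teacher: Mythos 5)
Your high-level plan --- reuse the structure of the proof of Theorem 3.10 and replace the BV estimate by a H\"older estimate --- is exactly the paper's plan in Appendix A, but your description of the key estimate is wrong in a way that would sink the proof as written. The regularity hypothesis on $D\varphi$ is not invoked to bound $\mathrm{osc}_{I^{(n)}}(D\varphi)$; it is invoked to bound the oscillation of the \emph{special Birkhoff sum} $S(0,\ell)D\varphi$ over each $\iatl$, which is $\sum_{0\le i < r(x)}\bigl[D\varphi(T^ix)-D\varphi(T^iy)\bigr]$ with the orbit intervals $T^i(\iatl)$ disjoint. In the BV case the disjointness gives a \emph{telescoping} bound $\le \mathrm{Var}_I(D\varphi)$ --- additivity of total variation across the pieces, exactly the thing you list as a worry at the end --- and the special Birkhoff sums of $D\varphi$ are therefore \emph{uniformly bounded}. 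In the H\"older case one only gets $r(x)\,|\iatl|^\tau\,|D\varphi|_{C^\tau}$, and since $r(x)\le\|B(\ell)\|$ while $|\iatl|\le C\|B(\ell)\|^{-1/2}$ this is of size $\|B(\ell)\|^{1-\tau/2}$: it \emph{grows} with $\ell$. Your claim that replacing $\mathrm{Var}$ by $|I^{(n)}|^\tau$ introduces a decaying factor drops the $r(x)$ factor, which reverses the sign of the exponent; the H\"older bound is genuinely worse than the BV one, not ``strictly better,'' and the argument does not carry over verbatim.

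Resolving this growth requires real work, and that is what Appendix A actually does. After writing $S(0,n)D\varphi = \varphi_n + \sum_{0\le\ell\le n} S(\ell,n)\chi_\ell$ with $\|\chi_\ell\|_0 \le C\|B(\ell)\|^{1-\tau/3}\|D\varphi\|_{C^\tau}$, one must bound $\sum_{0}^{n}\|B(\ell)\|^{1-\tau/3}\|B_0(\ell,n)\|$ by $C\|B(n)\|^{1-\delta}$. The paper splits at $\|B(\ell)\|\approx\|B(n)\|^{\theta/3}$: for small $\ell$ it uses condition (b) and the symplecticity of $B(\ell)$; for large $\ell$ it proves and uses a separate Claim --- a quasi-multiplicativity $\|B(\ell)\|\,\|B(\ell,n)\| = O(\|B(n)\|^{1+\eta})$, deduced from condition (a) and the positivity lemma of [MMY1] --- and then exploits the exponential growth of $\|B(n)\|$ to sum over $\ell$. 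Only then does one get $\|S(0,n)D\varphi\|_0 = O(\|B(n)\|^{1-\delta})$; integration (via condition (c), as in Theorem 3.10) gives the decaying bound $O(\|B(n)\|^{-\delta'})$ for $S(0,n)(\varphi-\chi)$, hence bounded ordinary Birkhoff sums, and Gottschalk--Hedlund and Corollary 3.6 finish as you say. So you correctly named the potential obstacle (dependence on additivity of BV) but then treated it as bookkeeping; it is actually the crux, and the fix is a new chain of estimates rather than a substitution.
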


\begin{proof}
We use the notations of subsection 3.3. Associated to any initial subpath $\gamma(1) * \cdots *\gamma(n)$ of the "rotation
number" $\underline \gamma$ of $T$, there is an i.e.m.\ $T^{(n)}$ defined on an interval $I^{(n)}$ with the sama left endpoint
$u_0$ than $I$: $T^{(n)}$ is the first return map
of $t$ on $I^{(n)}$ and is deduced from $T$ by the steps of the Rauzy-Veech algorithm represented by $\gamma(1) * \cdots
*\gamma(n)$. For $\ell < n$ we have a "special Birkhoff sum " operator $S(\ell,n)$ defined as follows: if $\varphi$ is a
function on $\sqcup \iatl$, $S(\ell,n) \varphi$ is defined on $\sqcup \iatn$ by
$$ S(\ell,n) \varphi(x) = \sum_{0\leq i <r(x)} \varphi((T^{(\ell)})^i (x))\,,$$
where $r(x)$ is the return time of $x$ in $I^{(n)}$ under $T^{(\ell)}$.
There are three steps in the proof of the theorem:
\begin{itemize}
\item One first obtains, for some $\delta >0$, and any function $\varphi \in C^{\tau} (\sqcup \iat)$ with $\int_I
    \varphi =0$,
$$||S(0,n) \varphi ||_{C^0}\, \leq\, C\, ||B(n)||^{1-\delta}\, ||\varphi||_{C^{\tau}}\,.$$
Here, only conditions (a) and (b) in the definition of Roth type are used.
\item One then obtain by integration (using also condition (c) in the definition of Roth type) that there exists
    $\delta'>0$ such that, for any
$\varphi \in C_{\partial}^{1+\tau}(\sqcup \iat)$, one can find a unique $\chi \in \Gamma_u$ such that
$$||S(0,n) (\varphi-\chi) ||_{C^0}\, \leq\, C\, ||B(n)||^{-\delta'}\, ||\varphi||_{C^{1+\tau}}\,.$$
\item This last estimate easily imply (using condition (a)) that the ordinary Birkhoff sums of $\varphi -\chi$ are
    bounded; it follows then, as explained in Section 3, that $\varphi -\chi= \psi\circ T - \psi $ for some $\psi \in
    C^0(\overline I)$.
\end{itemize}
 The last two steps are done in exactly the same way in the present setting than in the setting of Theorem 3.10. We will
 therefore only indicate how to prove the estimate of the first step.

 Let therefore $\varphi \in C^{\tau} (\sqcup \iat)$ with $\int_I \varphi =0$. The method is as in [MMY1]. We write
 $$\varphi = \varphi_0 + \chi_0\,$$
 with $\varphi_0$ of mean value $0$ on each $\iat$ and $\chi_0 \in \Gamma$ (of mean value $0$ as $\int_I \varphi =0$). For
 $0<\ell \leq n$, we write in the same way
 $$S(\ell -1, \ell) \varphi_{\ell -1} = \varphi_{\ell} + \chi_{\ell}\,$$
  with $\varphi_{\ell}$ of mean value $0$ on each $\iatl$ and $\chi_{\ell} \in \Gamma^{(\ell)}$ (of mean value $0$).

We have then
$$S(0,n)\varphi = \varphi_n + \sum_0^n S(\ell,n) \chi_{\ell}\,.$$
For $0 \leq \ell \leq n$, $\alpha \in \A$, $x,y \in \iatl$, one has
\begin{eqnarray*}
|\varphi_{\ell}(x) -\varphi_{\ell}(y) | &=& |S(0,\ell)\varphi(x) - S(0,\ell)\varphi(y)| \\
                                        &\leq & r(x)|\iatl |^{\tau} ||\varphi||_{C^{\tau}}.
\end{eqnarray*}
 Here $r(x)$ is the sum of the $\alpha$-column of $B(\ell)$. From condition (a), we have (cf. [MMY1, Proposition p.835])
$|\iatl| \leq C ||B(\ell)||^{-\frac 12}$, hence we obtain
$$ |\varphi_{\ell}(x) -\varphi_{\ell}(y) | \leq C ||B(\ell)||^{1-\frac {\tau}{2}} ||\varphi||_{C^{\tau}}.$$
As $\varphi_{\ell}$ vanishes in each $\iatl$, this implies
$$||\varphi_{\ell} ||_{C^0} \leq C ||B(\ell)||^{1-\frac {\tau}{2}} ||\varphi||_{C^{\tau}}.$$
This gives, for $0<\ell \leq n$
\begin{eqnarray*}
||\varphi_{\ell} + \chi_{\ell} ||_{C^0} &\leq & ||Z(\ell)||\; ||\varphi_{\ell -1} ||_{C^0} \\
                                       &\leq & C ||B(\ell)||^{1-\frac {\tau}{3}} ||\varphi||_{C^{\tau}},\\
||\chi_{\ell} ||_{C^0} &\leq & C ||B(\ell)||^{1-\frac {\tau}{3}} ||\varphi||_{C^{\tau}}.
\end{eqnarray*}
Putting these estimates in the expression for $S(0,n)\varphi$ above, we have to bound from above the sum

\begin{equation}
\sum_0^n ||B(\ell)||^{1-\frac {\tau} {3}} ||B_0(\ell,n)||,
\end{equation}

where $B_0(\ell,n)$ is the restriction of $B(\ell,n)$ to the hyperplane $\Gamma^{(\ell)}_0$ (of functions with mean value
$0$ on $I^{(\ell)}$, constant on each $\iatl$).
To estimate the sum in (A.1), we deal separately with the terms with small $\ell$ and large $\ell$.
\begin{itemize}
\item When $||B(\ell)|| < ||B(n)||^{\frac {\theta}{3}}$, we write
$$B_0(\ell,n) = B_0(n) \, B_0(\ell)^{-1}$$
and get from condition (b) of Roth type (as $B(\ell) $ is symplectic)
\begin{eqnarray*}
||B_0(\ell,n)|| &\leq & ||B_0(n)||\, ||B(\ell)^{-1}|| \\
  &\leq & C ||B(n)||^{1-\theta} \,||B(\ell)|| \\
  ||B(\ell)||^{1-\frac {\tau} {3}} ||B_0(\ell,n)|| &\leq & ||B(n)||^{1-\frac {\theta}{3}}.
\end{eqnarray*}
\item When $||B(\ell)|| \geq ||B(n)||^{\frac {\theta}{3}}$, we just bound $||B_0(\ell,n)||$ by $||B(\ell,n)||$.\\

{\bf Claim:} For every $\eta >0$, there exists $C(\eta)$ such that, for all $0\leq \ell \leq n$, one has
$$||B(n)|| \leq ||B(\ell)|| \,||B(\ell,n)|| \leq C(\eta) ||B(n)||^{1+\eta}\,.$$
The claim gives in this case the following bound
$$||B(\ell)||^{1-\frac {\tau} {3}} ||B_0(\ell,n)|| \leq  C ||B(n) ||^{1-\frac{\tau \theta}{10}}\,.$$
\end{itemize}

As $||B(n)||$ grows at least exponentially fast, one obtains that the sum in (A.1) is indeed bounded by
$C\,||B(n)||^{1-\delta}$ for $\delta < \frac {\tau \theta}{10}$.\\

{\it Proof of the claim}

The left-hand inequality is trivial. If $m-\ell \geq 2d-3$, all coefficients of $B(\ell,m)$ are $\geq 1$ ([MMY1], Lemma
p.833). Therefore, for $n \geq m \geq \ell \geq 0$ with $m-\ell \geq 2d-3$, we have $||B(n)|| \geq ||B(\ell)||\, ||B(m,n)||
$. The right-hand inequality in the claim now follows from condition (a) in the definition of Roth type. \hspace {1cm}
$\Box$

The proof of the inequality for special Birkhoff sums of $C^{\tau}$ functions is now complete. A mentioned above, the rest
of the proof of the theorem is the same than for Theorem 3.10.

\end{proof}

\section{The case of circle diffeomorphisms}

\subsection{The $C^r$-case, $r\geq 3$}

Let $F$ be a $C^{r+3}$ orientation preserving diffeomorphism of the circle $\Tset = \Rset / \Zset$ which is $C^{r+3}$-close
to a rotation $R_{\omega}$.
We assume that $\omega$ satisfies a diophantine condition $CD(\gamma, \tau)$ with $\tau <1$:
$$\forall \frac pq ,\quad |\omega - \frac pq | \geq \gamma q^{-2-\tau}.$$
Following Herman [He], we show that one can write
$$F=R_t \circ h \circ R_{\omega} \circ h^{-1},$$
for some unique $t$ close to $0$ and some unique $h \in {\rm Diff}_+^r(\Tset)$ normalized by $\int_{\Tset} (h-{\rm id}) =0$.
Both $t$ and $h$ are $C^1$-functions of $F$.

We denote by ${\rm Diff}_{+,0}^r(\Tset)$ the set of $h \in {\rm Diff}_+^r(\Tset)$ satisfying $\int_{\Tset} (h-{\rm id}) =0$,
by $C^r_0(\Tset)$ the space of $C^{r}$ functions on $\Tset$ with zero mean-value.

\begin{lemma}
The map $(F,h) \mapsto \Phi(F,h):= (SF \circ h) (Dh)^2$ from ${\rm Diff}_+^{r+3}(\Tset) \times {\rm Diff}_{+,0}^r(\Tset)$ to
$C^{r-1}(\Tset)$ is of class $C^1$. Its differential at $(R_{\omega}, {\rm id})$ is the map $(\delta F, \delta h) \mapsto
D^3 \delta F$.
\end{lemma}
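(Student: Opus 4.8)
The plan is to factor $\Phi$ through a short chain of maps each of which is manifestly $C^1$, exactly mirroring the proof of Lemma 6.2. Write $SF = DN_F - \tfrac12 N_F^2$ with $N_F := D\Log DF$ the nonlinearity of $F$. First I would check that $F\mapsto SF$ is a $C^{\infty}$ map from ${\rm Diff}_+^{r+3}(\Tset)$ to $C^r(\Tset)$: the map $F\mapsto DF$ is affine with values in the positive cone of $C^{r+2}(\Tset)$, $g\mapsto\Log g$ is a smooth Nemytskii operator on functions bounded away from $0$, and $D$ together with the pointwise square are bounded, respectively continuous bilinear. Next, by the circle version of Lemma 6.1, the composition map $C^r(\Tset)\times{\rm Diff}_+^r(\Tset)\to C^{r-1}(\Tset)$, $(g,\phi)\mapsto g\circ\phi$, is $C^1$ with differential $(\delta g,\delta\phi)\mapsto \delta g\circ\phi_0 + (Dg_0\circ\phi_0)\,\delta\phi$ at $(g_0,\phi_0)$; here one derivative is lost, and this loss is precisely what forces the target space $C^{r-1}(\Tset)$. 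Finally, $\phi\mapsto(D\phi)^2$ is $C^{\infty}$ from ${\rm Diff}_+^r(\Tset)$ into $C^{r-1}(\Tset)$, and multiplication $C^{r-1}(\Tset)\times C^{r-1}(\Tset)\to C^{r-1}(\Tset)$ is continuous bilinear, so $(g,\phi)\mapsto g\cdot(D\phi)^2$ is $C^{\infty}$. Composing these three $C^1$ maps (with the identity factor on $h$ inserted where needed) exhibits $\Phi$ as a $C^1$ map into $C^{r-1}(\Tset)$.

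To obtain the differential at $(R_{\omega},{\rm id})$ I would apply the chain and Leibniz rules along this factorization, using that $R_{\omega}$ is a rotation: $DR_{\omega}\equiv 1$, $N_{R_{\omega}}\equiv 0$, hence $SR_{\omega}\equiv 0$. The derivative of $F\mapsto N_F$ at $R_{\omega}$ is $\delta F\mapsto D^2\delta F$, since $\delta N = D(D\delta F/DF) = D^2\delta F$ when $DF\equiv 1$; consequently the derivative of $F\mapsto SF = DN_F-\tfrac12 N_F^2$ at $R_{\omega}$ is $\delta F\mapsto D^3\delta F - N_{R_{\omega}}\,D^2\delta F = D^3\delta F$. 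Because $SR_{\omega}=0$, the $\delta h$-contributions coming from the composition map (the term $(DSR_{\omega})\,\delta h$) and from differentiating $(Dh)^2$ (the term $2\,SR_{\omega}\,D\delta h$) both vanish, while the composition and product differentials at $(0,{\rm id})$ are simply $(\delta g,\delta h)\mapsto\delta g$. Therefore $D\Phi_{(R_{\omega},{\rm id})}(\delta F,\delta h) = D^3\delta F$, as asserted.

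I do not expect any genuine difficulty here: the statement is the circle warm-up for Lemma 6.2. The only point requiring care is the $C^1$-regularity of the composition operator together with its unavoidable loss of one derivative — this is the circle analogue of Lemma 6.1, which the paper treats as elementary and which is proved in the standard way, namely by differentiating under the composition and controlling the remainder via uniform continuity of the top-order derivatives on compact sets. Everything else is routine bookkeeping of which Banach spaces make each elementary operation $C^1$, identical to the argument of Section 6.
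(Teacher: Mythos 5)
Your proof is correct and follows the paper's intended route: the paper offers no written proof of this lemma (treating it as the circle analogue of Lemma 6.2, whose proof is itself only a one-line reduction to Lemma 6.1 plus ``elementary computation''), and your argument is precisely a careful spelling-out of that reduction — factor through $F\mapsto SF$ (smooth with one derivative gained back from the $\Log$ and lost to $D$), the $C^1$ composition map of Lemma 6.1 (the single genuine source of the derivative loss forcing the target $C^{r-1}(\Tset)$), and the smooth multiplication by $(Dh)^2$, then evaluate at $(R_\omega,{\rm id})$ using $SR_\omega\equiv 0$ and $DR_\omega\equiv 1$ to kill the $\delta h$-terms. No gaps.
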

\begin{lemma}
The map $h \mapsto Sh - \int_{\Tset} Sh$ from ${\rm Diff}_{+,0}^r(\Tset)$ to $C_0^{r-3}(\Tset)$  is of class $C^{\infty}$.
Its differential at ${\rm id}$ is $\delta h \mapsto D^3 \delta h$. Therefore its restriction to a neighborhood of the
identity in ${\rm Diff}_{+,0}^r(\Tset)$ is a $C^{\infty}$ diffeomorphism onto a neighborhood of $0$ in $C_0^{r-3}(\Tset)$.
\end{lemma}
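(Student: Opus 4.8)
The plan is to apply the inverse function theorem in Banach spaces to the map $\mathcal{S}_{\Tset}: h \mapsto Sh - \int_{\Tset} Sh$, exactly as in the proof of Lemma 6.14, the only new feature being that the normalizations are now of ``zero mean'' type ($\int_{\Tset}(h-\mathrm{id})=0$ on the source, zero mean-value on the target) rather than ``value at $0$'' type. First I would check that $\mathcal{S}_{\Tset}$ is well defined into $C^{r-3}_0(\Tset)$: for $h\in{\rm Diff}_+^r(\Tset)$ the function $Dh$ is positive and periodic with $\int_{\Tset}Dh=1$, so $\Log Dh$ is a genuine $C^{r-1}$ function on $\Tset$ and $Sh = D^2\Log Dh-\frac12(D\Log Dh)^2\in C^{r-3}(\Tset)$; subtracting its mean lands us in $C^{r-3}_0(\Tset)$. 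Smoothness of $\mathcal{S}_{\Tset}$ is routine, since the map is a composition of bounded linear operators ($D$ and $\varphi\mapsto\int_{\Tset}\varphi$), the smooth substitution operator $g\mapsto\log g$ on the open cone $\{g>0\}\subset C^{r-1}(\Tset)$, and the bounded bilinear map $(u,v)\mapsto uv$ on $C^{r-2}(\Tset)$, all of which are $C^{\infty}$ between the relevant Banach spaces. For bookkeeping I would actually write this as a factorization $\mathcal{S}_{\Tset}=\mathcal{Q}\circ\mathcal{N}$ with $\mathcal{N}:h\mapsto D\Log Dh$ from ${\rm Diff}_{+,0}^r(\Tset)$ to $C^{r-2}_0(\Tset)$ and $\mathcal{Q}:N\mapsto DN-\frac12\bigl(N^2-\int_{\Tset}N^2\bigr)$ from $C^{r-2}_0(\Tset)$ to $C^{r-3}_0(\Tset)$; a one-line computation using $\int_{\Tset}D^2\Log Dh=0$ verifies that $\mathcal{Q}(\mathcal{N}(h))=Sh-\int_{\Tset}Sh$.

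Next I would compute the differential at the identity. Linearizing $h=\mathrm{id}+\delta h$ gives, to first order, $Dh=1+D\delta h$, $\Log Dh=D\delta h$, $D^2\Log Dh=D^3\delta h$, and the quadratic term $\frac12(D\Log Dh)^2$ contributes nothing, so $D(\mathcal{S}_{\Tset})_{\mathrm{id}}(\delta h)=D^3\delta h-\int_{\Tset}D^3\delta h=D^3\delta h$ (a third derivative of a periodic function already has zero mean). The key algebraic point is then that $D^3:C^r_0(\Tset)\to C^{r-3}_0(\Tset)$ is an isomorphism of Banach spaces: it is injective because a zero-mean constant vanishes, and surjective because the preimage of $g\in C^{r-3}_0(\Tset)$ is obtained by taking three successive zero-mean primitives (each step legitimate since the function being integrated has zero mean), giving a bounded inverse. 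Granting this, the inverse function theorem yields that the restriction of $\mathcal{S}_{\Tset}$ to a neighbourhood of the identity is a $C^{\infty}$-diffeomorphism onto a neighbourhood of $0$ in $C^{r-3}_0(\Tset)$, which is the assertion. In the factored version one would instead note that $\mathcal{N}$ is in fact a \emph{global} $C^{\infty}$-diffeomorphism, as in Lemma 6.4: its inverse sends $N$ to the unique $h\in{\rm Diff}_{+,0}^r(\Tset)$ with $Dh=\bigl(\int_{\Tset}e^{N_1}\bigr)^{-1}e^{N_1}$, where $N_1$ is a (periodic) primitive of $N$, normalized by $\int_{\Tset}(h-\mathrm{id})=0$, and this only uses integration, the exponential and division by a positive quantity, hence is $C^{\infty}$; its differential at $\mathrm{id}$ is $\delta h\mapsto D^2\delta h$. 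Since $\mathcal{Q}$ is visibly $C^{\infty}$ with differential $\delta N\mapsto D\delta N$ at $0$, an isomorphism $C^{r-2}_0(\Tset)\to C^{r-3}_0(\Tset)$ ($D$ being inverted by the zero-mean primitive), the inverse function theorem applies to $\mathcal{Q}$ near $0$, and composing with $\mathcal{N}$ gives the statement, with differential $D^2\delta h$ followed by $D$, i.e.\ $\delta h\mapsto D^3\delta h$, at the identity.

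I do not expect a genuine obstacle here: as the paper says of the interval analogue, this is essentially a simple exercise. The one thing to be careful about is threading the two normalizations consistently, so that the relevant differential operators ($D$, $D^2$, $D^3$) really become isomorphisms of the \emph{zero-mean} subspaces and so that the mean-corrections in $\mathcal{Q}$ are placed exactly where they make $\mathcal{Q}\circ\mathcal{N}$ reproduce $h\mapsto Sh-\int_{\Tset}Sh$ on the nose; and, for the boundary case $r=3$, to record that the target $C^0_0(\Tset)$ is still a Banach space, so the inverse function theorem applies without any change.
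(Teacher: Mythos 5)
The paper states Lemma B.2 without proof, treating it as the elementary circle analogue of Lemmas 6.4--6.6, and your argument is correct and matches exactly the paper's strategy there: factor the Schwarzian through the nonlinearity operator $\mathcal{N}:h\mapsto D\Log Dh$ and a quasilinear first-order operator $\mathcal{Q}$, check that the composition reproduces $h\mapsto Sh-\int_{\Tset}Sh$ (your verification that $\int_{\Tset}Sh=-\tfrac12\int_{\Tset}(D\Log Dh)^2$ is the right bookkeeping), compute the differential $\delta h\mapsto D^3\delta h$, and invoke the inverse function theorem on the zero-mean Banach subspaces where $D^3$ is invertible by successive zero-mean primitives. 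The only adaptation to the circle case --- replacing the paper's value-at-$u_0$ normalizations by zero-mean normalizations, so that $D$, $D^2$, $D^3$ become isomorphisms of the relevant $C^k_0(\Tset)$ spaces --- is handled correctly, including the explicit inverse $Dh=e^{N_1}/\int_{\Tset}e^{N_1}$ of $\mathcal{N}$.
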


Let us write ${\mathcal P}$ for the inverse diffeomorphism, $P$ for its differential at $0$ (consisting in taking thrice a
primitive with mean value zero).

As $\omega$ satisfies $CD(\gamma, \tau)$ with $\tau<1$, there exists a bounded operator $L$ from $C^{r-1}(\Tset)$ to
$C_0^{r-3}(\Tset)$  such that, for every $\varphi \in C^{r-1}(\Tset)$
$$\varphi = \int_{\Tset} \varphi + L(\varphi) \circ \RO -L(\varphi).$$
From the two lemmas above, we see that the map
$$(F,h) \mapsto {\mathcal P}(L(\Phi(F,h)))$$
is defined and of class $C^1$ in a neighborhhood of $(\RO, {\rm id})$ in ${\rm Diff}_+^{r+3}(\Tset) \times {\rm
Diff}_{+,0}^r(\Tset)$, with values in
${\rm Diff}_{+,0}^r(\Tset)$. The differential at $(\RO, {\rm id})$
$$(\delta F,\delta h) \mapsto P(L(D^3 \delta F))$$
does not involve $\delta h$. Therefore, if $F$ is close enough to $\RO$, this map will have a unique fixed point
$h={\mathcal H}(F)$ close to the identity. This fixed point satisfies, with $c= \int_{\Tset} \Phi(F,h) $
$$S(F \circ h) = S(h\circ \RO) +c \,.$$
One then concludes from Lemma B.2 that $F \circ h = R_t \circ h \circ \RO$ for some $t$ close to $0$.

\subsection {The $C^2$-case}

We now show how to adapt the argument when $h$ in only of class $C^2$. The Schwarzian derivative of $h$ no longer exists but
its primitive can still be used!

Let $F \in {\rm Diff}_+^{5}(\Tset)$ be close to $\RO$, with $\omega$ still satisfying $CD(\gamma,\tau)$ for some $\gamma>0,
\tau <1$. Lemma B.1 with $r=2$ is still valid. For $h \in {\rm Diff}_{+,0}^2(\Tset)$ we define $N_1 h \in C^0_0(\Tset)$ by

$$N_1 h(x) = D\Log Dh(x) - \frac 12 \int^x ((D\Log Dh)^2(y) -c_1) dy$$
where $c_1 = \int_{\Tset} (D\Log Dh)^2(y) \,dy$ and the primitive is taken in order to have $\int_{\Tset} N_1 h(x)\,dx=0$.

\begin{lemma}
The map
$h \mapsto N_1h $ from ${\rm Diff}_{+,0}^2(\Tset)$ to $C^0_0(\Tset)$ is of class $C^{\infty}$. Its differential at ${\rm
id}$ is $\delta h \mapsto D^2 \delta h$. Therefore its restriction to a neighborhood of the identity in ${\rm
Diff}_{+,0}^2(\Tset)$ is a $C^{\infty}$ diffeomorphism onto a neighborhood of $0$ in $C_0^{0}(\Tset)$.
\end{lemma}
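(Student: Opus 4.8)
The plan is to factor the map $h\mapsto N_1h$ as a composition of two $C^\infty$ maps, in the spirit of the decomposition ${\mathcal S}_1={\mathcal Q}_1\circ{\mathcal N}$ used in Section 7, and then apply the inverse function theorem in Banach spaces. Write ${\mathcal N}(h)=D\Log Dh$. As in Lemma 6.4, adapted to the circle, ${\mathcal N}$ is $C^\infty$; and since $h-{\rm id}$ periodic forces $\Log Dh$ to be periodic, one has $\int_{\Tset}D\Log Dh=0$, so ${\mathcal N}$ maps ${\rm Diff}_{+,0}^2(\Tset)$ into $C^0_0(\Tset)$. In fact ${\mathcal N}$ is a $C^\infty$-diffeomorphism onto $C^0_0(\Tset)$: given $N\in C^0_0(\Tset)$, let $N_*$ be the mean-zero primitive of $N$ (well defined on $\Tset$ because $N$ has mean zero), set $Dh=\big(\int_{\Tset}e^{N_*}\big)^{-1}e^{N_*}$ and recover $h$ by integration, normalizing $\int_{\Tset}(h-{\rm id})=0$; the map $N\mapsto h$ is visibly $C^\infty$. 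Its differential at ${\rm id}$ is $\delta h\mapsto D^2\delta h$.

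Next introduce the map ${\mathcal M}:C^0_0(\Tset)\to C^0_0(\Tset)$,
\[
{\mathcal M}(N)(x)=N(x)-\tfrac12\int^x\big(N(y)^2-c_1\big)\,dy,\qquad c_1=\int_{\Tset}N(y)^2\,dy,
\]
the primitive being normalized to have mean zero (it is periodic since $N^2-c_1$ has mean zero). The quadratic map $N\mapsto N^2$ is continuous, hence $C^\infty$; subtracting the mean, taking the mean-zero primitive (a bounded operator from $C^0_0(\Tset)$ into $C^1_0(\Tset)\hookrightarrow C^0_0(\Tset)$) and rescaling are bounded linear; so ${\mathcal M}$ is $C^\infty$ with ${\mathcal M}(0)=0$, and its differential at $0$ is the identity because the only nonlinear term is quadratic. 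By the inverse function theorem, ${\mathcal M}$ restricts to a $C^\infty$-diffeomorphism between neighborhoods of $0$ in $C^0_0(\Tset)$.

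Finally $N_1h={\mathcal M}({\mathcal N}(h))$, so $h\mapsto N_1h$ is $C^\infty$ as a composition, with $N_1({\rm id})={\mathcal M}(0)=0$ and differential at ${\rm id}$ equal to $D{\mathcal M}(0)\circ D{\mathcal N}({\rm id})=(\delta h\mapsto D^2\delta h)$. This last map is an isomorphism from the tangent space $C^2_0(\Tset)$ onto $C^0_0(\Tset)$: injectivity and surjectivity both reduce to the fact that a mean-zero $C^k$ function on $\Tset$ has a unique mean-zero (periodic) $C^{k+1}$ primitive. The conclusion then follows from the inverse function theorem in Banach spaces. There is no serious obstacle; the only points demanding care are the repeated passage between mean-zero functions and their periodic primitives, which is precisely what keeps each intermediate map acting between the spaces stated in the lemma, and the verification that $\delta h\mapsto D^2\delta h$ is a Banach space isomorphism $C^2_0(\Tset)\to C^0_0(\Tset)$.
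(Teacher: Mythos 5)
Your proof is correct, and it follows exactly the strategy the paper uses for the analogous statements in the interval case (Lemmas 6.4, 7.4, 7.5): factor $N_1 = {\mathcal M}\circ{\mathcal N}$ with ${\mathcal N}(h)=D\Log Dh$ and ${\mathcal M}$ a continuous polynomial map whose differential at $0$ is the identity, then compute the differential of the composition and apply the inverse function theorem. The paper leaves Lemma B.3 without proof, but this decomposition is clearly the intended argument, and you correctly handle the circle-specific points (periodicity of $\Log Dh$, hence zero mean of $D\Log Dh$; subtracting $c_1$ so that the integrand of the primitive has zero mean and the primitive is periodic; the mean-zero normalization at each stage).
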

Let us write ${\mathcal P}_1$ for the inverse diffeomorphism, $P_1$ for its differential at $0$ (consisting in taking twice
a primitive with mean value zero).

Let us also write $P^*$ for the operator from $C^1(\Tset)$ to $C^2_0(\Tset)$
$$\varphi \mapsto \int^x (\varphi(y) - \int_{\Tset} \varphi)\,dy$$
the primitive being taken in order to have mean value $0$.

Consider now the map
$$(F,h) \mapsto {\mathcal P}_1 (L(P^*(\Phi(F,h))))\, . $$
It is defined and of class $C^1$ in a neighborhood of $(\RO, {\rm id})$ in  ${\rm Diff}_+^{5}(\Tset) \times {\rm
Diff}_{+,0}^2(\Tset)$, with values in ${\rm Diff}_{+,0}^2(\Tset)$, sending
$(\RO, {\rm id})$ to ${\rm id}$. The differential at $(\RO, {\rm id})$
$$(\delta F, \delta h) \mapsto P_1(L(D^2 \delta F))\,.$$
does not involve $\delta h$. Therefore, if $F$ is close enough to $\RO$, this map will have a unique fixed point
$h={\mathcal H}(F)$ close to the identity. This fixed point satisfies
\begin{equation}
 P^*( \Phi(F,h)) = N_1 h \circ \RO - N_1 h \;.
 \end{equation}
We will see below that this imply
\begin{equation}
(D\Log DF \circ h) (Dh) = D\Log Dh \circ \RO - D\Log Dh \;.
\end{equation}
From (B.2), we get $\Log D(F\circ h) = \Log D(h\circ \RO) + c_0$ by integration. As the integral over $\Tset$ of both
$D(F\circ h)$ and $D(h\circ \RO)$ is equal to $1$, the constant $c_0$ must be equal to $0$. We conclude that $ F\circ h =
R_t \circ h\circ \RO$ for some $t$ close to $0$.

To see that (B.1) indeed implies (B.2) we introduce the map
$$(\psi,h) \mapsto \psi - \Delta \psi$$
from $C^0_0(\Tset) \times  {\rm Diff}_{+,0}^2(\Tset)$ to $C^0_0(\Tset)$ defined by
$$D\Delta \psi = \frac 12 \psi^2 + \psi D\Log Dh -c(\psi,h)$$,
$$c(\psi,h) = \int_{\Tset}(\frac 12 \psi^2 + \psi D\Log Dh),\quad \int_{\Tset} \Delta \psi =0\;.$$
This map is of class $C^1$. The differential w.r.t.\ $\psi$ at $\psi =0, h= {\rm id}$ is the identity; therefore, as long as
$h$ is fixed close to the identity, it is a $C^1$ diffeomorphism from a neighborhood of $0 \in C^0_0(\Tset)$ to another
neighborhood of $0 \in C^0_0(\Tset)$.

Let $\psi_0 = (D\Log DF \circ h) Dh$. We have
\begin{eqnarray*}
D \psi_0 & = &(D^2 \Log DF \circ h) (Dh)^2 +  (D\Log DF \circ h) D^2h \,\\
D \Delta \psi_0 &=& \frac 12 (D\Log DF \circ h)^2 (Dh)^2 + (D\Log DF \circ h) (Dh) D\Log Dh - c(\psi_0,h) \,\\
D (\psi_0 - \Delta \psi_0)  &=& (SF \circ h) (Dh)^2 + c(\psi_0,h),
\end{eqnarray*}
and therefore $\psi_0 - \Delta \psi_0 = P^*(\Phi (F,h))$.

 On the other hand, let $\psi_1 = D\Log Dh \circ \RO - D\Log Dh$. We have
\begin{eqnarray*}
D \Delta \psi_1 &=& \frac 12 [(D\Log Dh \circ \RO)^2 - (D\Log Dh)^2]
\end{eqnarray*}
hence $\psi_1 - \Delta \psi_1 = N_1 h \circ \RO - N_1 h$.

Equation (B.1) means that $\psi_0 - \Delta \psi_0 = \psi_1 - \Delta \psi_1$. We conclude that $\psi_0 = \psi_1$, i.e
equation (B.2) holds.

\section{ Roth-type translation surfaces}

Let $(M,\Sigma, \zeta)$ be a translation surface with no vertical connexion, $I$ an open bounded horizontal segment  in  good position, $T = T_I$ the i.e.m. on $I$ which is the return map of the vertical flow. 

\smallskip

Let $\A$ the alphabet used to describe the combinatorics of $T$, $\pi$ the combinatorial data of $T$, $\cD$ the Rauzy diagram having $\pi$ as a vertex. Let $\gamma(T)$ be the rotation number of $T$ (cf. subsection 2.4): this is an infinite path in $\cD$ starting from $\pi$ . As in subsection 3.3, write $ \gamma(T)$ as an
infinite concatenation
$$ \gamma(T) = \gamma(1) * \cdots * \gamma(n)* \cdots $$
of finite complete paths of minimal length, and define , for $n > 0$
$$Z(n) := B_{\gamma(n)},\quad B(n):= B_{\gamma(1)*\cdots *\gamma(n)} = Z(n)\cdots Z(1).$$

For $n \geq 0$, let $T^{(n)}$ be the i.e.m. obtained from $T$ by the Rauzy-Veech steps corresponding to $\gamma(1)*\cdots *\gamma(n)$ ; $T^{(n)}$ is the return map of $T$ (or of the vertical flow) on some interval $I^{(n)}\subset I$ having the same left endpoint than $I = I^{(0)}$.

\medskip

We first deal with condition (a) in the definition of a Roth-type i.e.m. (cf. subsection 3.3).

\medskip

\begin{proposition}
The following conditions are equivalent:
\begin{enumerate}
\item Condition (a) of subsection 3.3 is satisfied by $T$: for all $\tau>0$, $||Z(n+1)|| = \mathcal {O} (||B(n)||^{\tau})$. 
\item For all $\tau >0$, we have   $ \max_{\A} |\iatn | =  \mathcal {O} (\min _{\A} |\iatn |^{1-\tau})$.
\item For all $\tau >0$, there exists $C=C(\tau)>0$ such that, for all $1 \leq i,j \leq d-1$, all $x\in I$ and all $N>0$, we have
$$  \min_{0 \leq \ell <N} |T^{\ell}(u^b_i)-u^t_j| \geq C^{-1} N^{-1-\tau}$$
and 

$$  \min_{0 \leq \ell <N} |T^{\ell}(u^b_i)-x| \leq C N^{-1+\tau},\;  \min_{0 \leq \ell <N} |T^{-\ell}(u^t_i)-x| \leq C N^{-1+\tau}.$$
\item For all $\tau >0$, there exists $C=C(\tau)>0$ such that, for any vertical separatrix segment $S$ (ingoing or outgoing)
with an endpoint in $\Sigma$ of length $|S| \geq 1$, and all $P \in M$, there is an horizontal segment of length $\leq C |S|^{-1+\tau}$ from $P$ to $S$, but there is no horizontal segment of length $\leq C |S|^{-1-\tau}$ from a point of $\Sigma$ to $S$. 

\end{enumerate}
\end{proposition}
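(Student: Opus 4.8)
The plan is to prove the chain $(1)\Leftrightarrow(2)\Leftrightarrow(3)\Leftrightarrow(4)$, translating the statement about the matrices $Z(n),B(n)$ first into one about the lengths $|\iatn|$ of the continuity intervals of $T^{(n)}$, then into one about closest approaches of orbits of the singularities of $T=T_I$, and finally into one about the transverse geometry of vertical separatrices on $(M,\Sigma,\zeta)$. Throughout I use the standard facts from [MMY1]: the length cocycle $\lambda={}^{t}B(n)\,\lambda^{(n)}$, so that the return time of a point of $\iatn$ to $I^{(n)}$ under $T$ is the $\alpha$-column sum $r^{(n)}_\alpha$ of $B(n)$ and $\sum_\alpha r^{(n)}_\alpha|\iatn|=|I|$; the comparability of all norms of a nonnegative matrix with $\max_{\alpha,\beta}B_{\alpha\beta}$, together with $\|B\|\asymp\|{}^{t}B\|$; and the positivity lemma that $B(m,n)$ has all entries $\ge1$ once $n-m\ge 2d-3=:K$, each block $\gamma(k)$ being a minimal complete path.

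\emph{$(1)\Leftrightarrow(2)$.} This stays entirely at the level of $T$ and is close to arguments in [MMY1]. Positivity of $B(n,n+K)$ gives $\min_{\A}|\iatn|=\min_\beta\lambda^{(n)}_\beta\ge|I^{(n+K)}|\ge\max_{\A}|I^{t,(n+K)}_\alpha|$ and, together with $\max_{\A}|\iatn|\le|I^{(n)}|\le C\|B(n,n+K)\|\,|I^{(n+K)}|$, yields
$$\frac{\max_{\A}|\iatn|}{\min_{\A}|\iatn|}\ \le\ C\,\|B(n,n+K)\|\ \le\ C\prod_{j=1}^{K}\|Z(n+j)\| ,$$
which, since $\|B(n+1)\|\le d\,\|Z(n+1)\|\,\|B(n)\|$, converts $(1)$ into $(2)$ after a short induction over the $K$ bounded blocks. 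For the converse one uses that an overlong step produces a short interval: $\lambda^{(n)}_\beta\ge Z(n+1)_{\alpha\beta}\lambda^{(n+1)}_\alpha$ forces $\min_{\A}|I^{t,(n+1)}_\alpha|\le|I^{(n)}|/\|Z(n+1)\|$, and comparing with $\max_{\A}|I^{t,(n+1)}_\alpha|$ through the column-sum identity turns $(2)$, read at level $n+1$, back into a subpolynomial bound on $\|Z(n+1)\|$ relative to $\|B(n+1)\|$ and hence to $\|B(n)\|$. I would organize both directions around the observation that, under either hypothesis, $\|B(n)\|$, $|I^{(n)}|^{-1}$ and $(\min_{\A}|\iatn|)^{-1}$ coincide up to subpolynomial factors.

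\emph{$(2)\Leftrightarrow(3)$.} Given $N$, choose the scale $n$ so that $N$ is comparable, up to a subpolynomial factor, to the return times $r^{(n)}_\alpha$ (hence to $\|B(n)\|\asymp|I^{(n)}|^{-1}$, matching $N$ with this scale via the positivity lemma as above). The endpoints of the partition of $I$ into maximal intervals on which $x\mapsto(x,Tx,\dots,T^{N-1}x)$ is continuous are the points $T^{-\ell}(u^t_j)$, $0\le\ell<N$, and the Rokhlin tower structure of $T$ over $I^{(n)}$ shows these intervals have lengths between $c\min_{\A}|\iatn|$ and $C\max_{\A}|\iatn|$; dually for $T^{-1}$ and the points $T^{\ell}(u^b_j)$. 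Hence $\min_{0\le\ell<N}|T^{\ell}(u^b_i)-u^t_j|$ is at least the smallest such interval length, $\ge c\min_{\A}|\iatn|$, while the orbit segment $\{T^{\ell}(u^b_i)\}_{0\le\ell<N}$ sweeps $I$ with gaps $\le C\max_{\A}|\iatn|$ (here $(2)$ guarantees that the finitely many tower heights involved are comparable). Substituting $N^{-1\mp\tau}$ for $\min_{\A}|\iatn|$ and $\max_{\A}|\iatn|$ via the scale comparison gives $(3)$; running the inequalities backwards, evaluated at $N=r^{(n)}_\alpha$, gives $(2)$.

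\emph{$(3)\Leftrightarrow(4)$, and the main obstacle.} Here I pass to $M$. Since $I$ is fixed and in good position, the vertical return times to $I$ are bounded above and below by positive constants, so a vertical separatrix segment $S$ of flow-length $|S|$ meets $I$ in $N=N(|S|)$ points with $N\asymp|S|$; if $S$ is ingoing with last intersection $u^t_i$ these points are $\{T^{-\ell}(u^t_i)\}_{0\le\ell<N}$, and if outgoing with first intersection $u^b_i$ they are $\{T^{\ell}(u^b_i)\}_{0\le\ell<N}$. Since the vertical flow preserves horizontal lengths, flowing a point $P$ vertically down to a point $x\in I$ carries a short horizontal segment from $P$ to an ingoing $S$ into one of the same length from $x$ to some $T^{-\ell}(u^t_i)$, so the density clause of $(4)$ is exactly the bound $\min_{0\le\ell<N}|T^{-\ell}(u^t_i)-x|\le CN^{-1+\tau}$ of $(3)$ (outgoing separatrices give the $T^{\ell}(u^b_j)$ version); flowing the outgoing separatrix of $A\in\Sigma$ up to its first hit $u^b_j$ likewise identifies the existence of a short horizontal segment from $A$ to an ingoing $S$ with the failure of $\min_{0\le\ell<N}|T^{\ell}(u^b_j)-u^t_i|\ge C^{-1}N^{-1-\tau}$. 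Taking the thresholds to be $C|S|^{-1\pm\tau}\asymp CN^{-1\pm\tau}$ and letting $S$, $P$ and $A$ vary, $(4)$ becomes $(3)$. I expect the main obstacle to be precisely making these last passages rigorous near $\Sigma$: establishing the uniform comparison $N(|S|)\asymp|S|$, and giving a clean meaning to ``horizontal segment from $P$ (or from $A\in\Sigma$) to $S$'' and to the vertical flow along it in a neighborhood of a marked point, using the local ramified-covering coordinates at the marked points --- the simplification being that the segments in play have length $\mathcal{O}(|S|^{-1+\tau})\to0$, so the difficulty is a purely local one at each $A_i$. A secondary, bookkeeping difficulty, already flagged above, is the unconditional matching of the scales $\|B(n)\|$, $|I^{(n)}|^{-1}$ and $N$ in the implications whose conclusion is $(1)$ or $(2)$.
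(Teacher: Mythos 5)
Your overall chain $(1)\Leftrightarrow(2)\Leftrightarrow(3)\Leftrightarrow(4)$ is a legitimate alternative organization to the paper's $(1)\Leftrightarrow(2)$, $(3)\Leftrightarrow(4)$, $(1)+(2)\Rightarrow(3)$, $(3)\Rightarrow(2)$, and your arguments for $(1)\Leftrightarrow(2)$ and $(3)\Leftrightarrow(4)$ are essentially those of the paper: the first rests on the comparability, under (a), of $\|B(n)\|$, $|I^{(n)}|^{-1}$ and $(\min_{\A}|\iatn|)^{-1}$ together with the positivity of $K$-fold products; the second encodes the geometric fact that (with $I$ fixed, in good position) vertical length and number of intersections with $I$ of a separatrix segment are comparable.

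However, there is a genuine gap, and it is not the ``secondary bookkeeping difficulty'' you flag. In proving $(2)\Rightarrow(3)$, first inequality, you claim that ``the Rokhlin tower structure of $T$ over $I^{(n)}$ shows these intervals have lengths between $c\min_{\A}|\iatn|$ and $C\max_{\A}|\iatn|$,'' with $n$ chosen so that $N\asymp r_\alpha(n)$, and you deduce $\min_{0\le\ell<N}|T^\ell(u^b_i)-u^t_j|\ge c\min_{\A}|\iatn|$ from it. This is not a consequence of condition (2) alone. The Rokhlin tower partition over $I^{(n)}$ consists of the $T^k(\iatn)$, all of length $\ge\min_{\A}|\iatn|$, and it is true that any two distinct \emph{endpoints of this partition} are at distance $\ge\min_{\A}|\iatn|$. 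But $T^\ell(u^b_i)$ is a tower partition endpoint only for $0\le\ell<r^b_i(n)$, where $r^b_i(n)$ is the entrance time of $u^b_i$ into $I^{(n)}$. For $\ell\ge r^b_i(n)$ the orbit point re-enters $I^{(n)}$ and is no longer a tower endpoint, and nothing prevents it from being extremely close to some $u^t_j$. So to match $N$ with the scale $n$ one must know that $r^b_i(n)$ (and $r^t_i(n)$) are bounded below by a quantity close to $\|B(n)\|$ — this is precisely the lemma the paper proves: under (a), $r^b_i(n), r^t_i(n)\ge C^{-1}\|B(n)\|^{1-\tau}$ for every $\tau>0$. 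Its proof is a non-trivial combinatorial argument about the Rauzy--Veech algorithm: one tracks the letter $\alpha^*$ labeling the interval whose left endpoint is $u^b_i$, shows that $r^b_i(n)$ increases by at least $\min_{\A}r_\alpha(n)$ each time $\gamma(n+1)$ contains a top-type arrow with loser $\alpha^*$, and shows that the opposite case (all arrows of $\gamma(n+1)$ with loser $\alpha^*$ are of bottom type) cannot occur more than $d+1$ times in a row. A similar observation is reused for the second inequality of $(3)$ (finding an iterate $T^m(u^b_i)$, $m\le\|B(n)\|$, that lands in $I^{(n-d-1)}$ in the interior of some $I_\beta^{t,(n-d-1)}$). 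Without this entrance-time lemma, the passage from $N$ to the appropriate scale $n$ — which is the heart of $(2)\Rightarrow(3)$ — is not justified, and your ``expected obstacle'' (the local analysis near $\Sigma$) is actually the routine part.
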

         
\begin{proof} We will show successively that (1) is equivalent to (2), that (3) is equivalent to (4), that (1)-(2) implies
(3) and that (3) implies (2).
\begin{itemize}
\item $(1) \Leftrightarrow (2)\;$. Recall from the proposition in [MMY1, p.835] that one has always
$ \max_{\A} |\iatn | \geq  ||B(n)||^{-1}|I| \geq \min _{\A} |\iatn |$ and that (1) is equivalent to 
$$ \max_{\A} |\iatn | = \mathcal {O} ( ||B(n)||^{\tau} \min _{\A} |\iatn |), \quad \forall \tau >0.$$

The equivalence of this last relation with (2) is clear.
\item $(3) \Leftrightarrow (4)\;$. Represent $(M,\Sigma, \zeta)$ as a collection of rectangles whose top sides are the $\iab$ and the bottom sides are the $\iat$. The $u^t_i, \,1 \leq i \leq d-1$, are the last intersection points of $I$ with the $d-1$ ingoing separatrices, while  the $u^b_i, \,1 \leq i \leq d-1$, are the first intersection points of $I$ with the $d-1$ outgoing separatrices. As the return times to $I$ (the height of the rectangles) are bounded from above and bounded away from $0$, the length of a (long enough) vertical segment and the cardinality of its intersection with $I$ are comparable. This makes clear the equivalence of (3) and (4). 

\item $(1)+(2) \Rightarrow (3)$.  We start with a result of independent interest. Recall that the return time $r_{\alpha}(n)$ of $\iatn$ in $I^{(n)}$ is given by $r_{\alpha}(n)= \sum_{\beta} B_{\alpha,\beta}(n)$.
\begin{lemma}
Assume that  property (1) holds. Then, for all $\tau>0$, there exists $C=C(\tau)>0$ such that the entrance times $r_i^b(n)$ of $u^b_i$ under $T$ in $I^{(n)}$ and the  entrance times $r_i^t(n)$ of $u^t_i$ under $T^{-1}$ in $I^{(n)}$ satisfy, for all $1 \leq i \leq d-1$:
$$ r^t_i(n) \geq C^{-1} ||B(n)||^{1-\tau},\quad r^b_i(n) \geq C^{-1} ||B(n)||^{1-\tau}.$$
\end{lemma}
{\it Proof of lemma}. Recall ([MMY1,p.833] and [Y4, Proposition 7.12, p.30]) that the product of 
$2d-3$ consecutive matrices $Z(n)$ have only positive coefficients. 
It follows then from the formula for the $r_{\alpha}(n) $ and 
property (1) that, for all $\tau >0$,
 
$$ (\min_{\A} r_{\alpha}(n))^{-1} = \mathcal O ( ||B(n)||^{-1+\tau}).$$
Let $1 \leq i \leq d-1$, and let $\alpha^* \in \A$ be the letter such that $u^b_i$ is the left endpoint of $I_{\alpha^*}^b$. Observe that $\alpha^* \ne \, _b  \alpha$. We have the following dichotomy:
\smallskip

\hspace{1cm} - Either all arrows of $\gamma(n+1)$ with loser $\alpha^*$ are of bottom type. Then we have  $r^b_i(n+1)=r^b_i(n)$. 

\smallskip

\hspace{1cm} -Or $\gamma(n+1)$ contains one arrow of top type with loser $\alpha^*$. Then we have 
$$  r^b_i(n+1)\geq r^b_i(n) + \min_{\A} r_{\alpha}(n).$$

\smallskip
But the first case cannot happen more than $d+1 $ consecutive times: each time $\, _b  \alpha$ is a winner (necessarily of an arrow of top type), $\pi_b(\alpha^*)$ goes up by $1$; once  $\pi_b(\alpha^*)=d$, the next arrow with winner $\alpha^*$ is of bottom type; and any sequence of arrows of bottom type with winner $\alpha^*$ is followed by an arrow of top type with loser $\alpha^*$. 

\smallskip
We get in this way the estimate for $r^b_i(n)$. The proof for  $r^t_i(n)$ is similar. $\Box$

\bigskip

We now assume that (1)-(2) hold . We prove the first inequality in (3). Let $N>0$. Let $n $ be the smallest integer such that $N< r_i^b(n)$. From the lemma, we have $ N \geq r_{i}^b(n-1) \geq C^{-1} ||B(n-1)||^{1-\tau}$, which gives also using (1) that $N \geq C_1^{-1} ||B(n)||^{1-2\tau}$. On the other hand we have with this choice of $n$ that
\begin{eqnarray*}
   \min_{0 \leq \ell <N} |T^{\ell}(u^b_i)-u^t_j| & \geq & \min _{\A} |\iatn | \\
   & \geq & C'^{-1} ||B(n)||^{-1- \tau} \\
   & \geq & {C'}_1^{-1} N^{-\frac {1+\tau}{1-2\tau}}.  
 \end{eqnarray*}
 
 As $\tau>0$ is arbitrary this proves indeed the first inequality of (3).
 \medskip

 We now prove the second part of property (3), regarding the forward orbit of $u^b_i$ (the proof for the backward orbit of $u^t_i$ is similar). 
Let $\alpha^* \in \A, \alpha^* \ne \, _b \alpha$ be the letter such that $u^b_i$ is the left endpoint of $I_{\alpha^*}^b$. Let $n$ be the largest integer such that $N \geq 2B(n)$; we can assume that $n > 3d+4$ and we have from property (1)
$$||B(n)||^{-1} =  \mathcal O (N^{-1+\tau}).$$
 By an argument given in the proof of the lemma, there exists in the path $\gamma(n-d) * \ldots *\gamma(n)$ an arrow of top type with loser $\alpha^*$. This corresponds to a forward iterate $T^m(u^b_i)$ with $0 \leq m \leq ||B(n)||$  which belongs to $I^{(n-d-1)}$ but is {\bf not} one of the endpoints of the $I_{\alpha}^{t,(n-d-1)}, \alpha \in \A$. Let $\beta^* \in \A$ such that  $T^m(u^b_i) \in I_{\beta^*}^{t,(n-d-1)}.$ 
\smallskip
Consider the orbit segment $$T^{\ell}(u^b_i), \, m \leq \ell < m + r_{\beta^*}(n-d-1).$$  Observe that $m + r_{\beta^*}(n-d-1) \leq N$.

One has a partition mod.0 of $I$ by the intervals
$$ T^k( I_{\alpha}^{t,(n-3d-4)}), \quad \alpha \in \A,\; 0 \leq k < r_{\alpha}(n-3d-4) .$$
By ([MMY1,p.833] and [Y4, Proposition 7.12, p.30]), every interval $T^k( I_{\alpha}^{t,(n-3d-4)})$ contains at least an interval $T^{k'}( I_{\beta^*}^{t,(n-d-1)})$ with $0 \leq k' < r_{\beta*}(n-d-1)$, and this last interval contains $T^{m+k'}(u^b_i)$. Choosing $k, \alpha $ such that $x$ belongs to the closure of $ T^k( I_{\alpha}^{t,(n-3d-4)})$, we have 
$$|x - T^{m+k'}(u^b_i)| \leq |I_{\alpha}^{t,(n-d-1)}|.$$

But we have, for all $\tau>0$, from property (2) 
$$ |I_{\alpha}^{t,(n-d-1)}| = \mathcal O (||B(n-d-1)||^{-1+\tau} ).$$
Using once again property (1) and the definition of $N$, we have 
 $$ |I_{\alpha}^{t,(n-d-1)}| = \mathcal O (N^{-1 + \tau})$$
 for all $\tau >0$, which gives the required inequality.

\item $(3) \Rightarrow (2)$. Assume that property (3) is satisfied. Let $n$ be an integer and let $\alpha \in \A$. First assume that  $\alpha \ne \,_b  \alpha, \alpha_b$ . Then  the length of $\iabn$ is given for some $1 \leq i < j < d$ by 
$$ |\iabn | = | T^{ r^b_i(n)} (u^b_i) - T^{ r^b_j(n)} (u^b_j)|.$$

Assume for instance that $r^b_j(n) \leq r^b_i(n)$ and write $r := r^b_i(n)-r^b_j(n)$. After $r^b_j(n)$ backward iterations, we get
$|\iabn | = |u^b_j - T^{r} (u^b_i)|$. This already gives a bound from below for $\iabn$ when $r\leq 1$; 
otherwise, iterating backwards once (if $\alpha \ne \, _t  \alpha $) or twice (if $\alpha =\,  _t  \alpha $), 
we get $|\iatn | = |u^t_{j'} - T^{r-a} (u^b_j)|$ for some $j'$ and some $a \in \{1,2\}$. 
As the entry times $r^b_j(n) , r^b_i(n)$ are bounded above by the return times $r_{\alpha}(n), \alpha \in \A$, 
which are themselves bounded by $||B(n)||$, we get from the first inequality of (3) that
$$ |\iabn |^{-1}  = \mathcal O (||B(n) ||^{1+\tau}) ,\quad \forall \tau >0.$$
The cases $ \alpha =\, _b \alpha$ and $\alpha = \alpha_b$ involve the endpoints of $I^{(n)}$ and require a slightly different argument that we omit, but lead to the same estimate.
\medskip

We now turn to a bound from above for $|I^{(n)}|$. Let $\alpha \in \A$ be the letter such that $r_{\alpha}(n)$ is the largest return time in $I^{(n)}$. We have that $r_{\alpha}(n) = ||B(n)||$ (choosing as norm the greatest column sum).
Assume first that $\alpha \ne \,_b  \alpha, \,_t\alpha$. There exists $1 \leq i,j \leq d-1$ such that
$u^t_i$ is the left endpoint of $T^{r^t_i(n)} (\iatn)$, $u^b_j$ is the left endpoint of $T^{-r^b_j(n)} (\iabn)$, and 
$ r_{\alpha}(n) = r^t_i(n) + r^b_j(n) +1$. Assume for instance that $r^b_j(n) \geq r^t_i(n)$ hence $r^b_j(n)\geq \frac 13 ||B(n)||$. For $0 \leq m < r^b_j(n)$, we have $T^m(u^b_j) \notin I^{(n)}$ by definition of the entrance time. 
Choosing $N = r^b_j(n)$ and for $x$ the middle point in $I^{(n)}$ in the second part of property (3) gives 
$$ |I^{(n)}| = \mathcal O (||B(n) ||^{-1+\tau}).$$
The cases $\alpha = \,_b  \alpha$, $\alpha= \,_t\alpha$ involve the left endpoint $u_0$ of $I$ and require a minor modification of the argument, but lead to the same estimate.

\medskip
These two bounds on the $|\iabn|$ clearly imply property (2).
\end{itemize}

\end{proof}              
              
 \bigskip

We now can prove what was announced in subsection 8.3

\begin{corollary}
Assume that $(M,\Sigma, \zeta)$ is a translation surface of (restricted) Roth type. Then $T_I$ is an i.e.m. of (restricted)
Roth type.
\end{corollary}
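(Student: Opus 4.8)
The plan is to verify the three Roth-type conditions (a), (b), (c), together with the equality $\dim \Gamma_s = g$ in the restricted case, by separating what is ``horizontal'' in nature, and hence already intrinsic to $(M,\Sigma,\zeta)$, from what is ``cocycle-theoretic'', which one handles by comparing the Kontsevich--Zorich cocycles attached to two transversals in good position. First I would dispose of condition (a). By the Proposition above, condition (a) for $T_I$ is equivalent to its item (4); but item (4) speaks only of vertical separatrix segments issued from $\Sigma$ and of horizontal segments joining points of $M$ (or of $\Sigma$) to such separatrices, and makes no reference to the transversal $I$. Hence if $(M,\Sigma,\zeta)$ is of Roth type, witnessed by some segment $I_0$ in good position with $T_{I_0}$ satisfying (a), then item (4) holds for $(M,\Sigma,\zeta)$, and the Proposition applied in the reverse direction to an arbitrary segment $I$ in good position shows that $T_I$ satisfies (a). One may assume throughout that $(M,\Sigma,\zeta)$ has no vertical connexion, since condition (b) for $T_{I_0}$ forces unique ergodicity and thus rules out connexions.

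For conditions (b), (c) and the value of $\dim \Gamma_s$, the key observation is that they are insensitive to replacing the rotation number $\underline\gamma = \gamma(1)*\gamma(2)*\cdots$ by a tail $\gamma(n_0+1)*\gamma(n_0+2)*\cdots$: passing to a tail multiplies the matrices $B(n)$ on the right by the fixed invertible matrix $B(n_0)^{-1}$, so that one only has to replace the hyperplane $\Gamma_0$ of (b) and the subspace $\Gamma_s$ of (c) by their images under $B(n_0)$ and shift the concatenation index, and none of the $\mathcal{O}$-estimates is affected. Consequently it suffices to prove the following: \emph{any two segments $I_1$, $I_2$ in good position admit a common descendant, i.e.\ a segment $I_3$ in good position such that $T_{I_3}$ is obtained from each of $T_{I_1}$ and $T_{I_2}$ by finitely many elementary Rauzy--Veech steps}. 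Granting this, the rotation numbers $\gamma(T_{I_1})$, $\gamma(T_{I_2})$, $\gamma(T_{I_3})$ agree up to a finite initial segment, so $T_{I_1}$ is of (restricted) Roth type iff $T_{I_3}$ is iff $T_{I_2}$ is; taking $I_1$ to witness the (restricted) Roth type of the surface and $I_2 = I$ arbitrary completes the argument.

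To establish the common-descendant statement I would argue in two steps. When $I' \subset I$ have the same left endpoint $u_0$, the Rauzy--Veech algorithm applied to $T_I$ produces a decreasing sequence of intervals $I^{(n)}$, all with left endpoint $u_0$, shrinking to $u_0$; for $N$ large $I^{(N)} \subset I'$, and the same combinatorial stabilization (cf.\ subsection 2.4 and [Y4, Proposition 7.12, p.30]) shows that $I^{(N)}$ is also a Rauzy--Veech descendant of $I'$, so $\gamma(T_I)$ and $\gamma(T_{I'})$ share a tail. For $I_1$, $I_2$ with distinct left endpoints, one first uses the latitude in the definition of good position together with a small vertical shift (as in step (1) of the proof of the Corollary in Section~8) to reduce to the previous situation up to finitely many Rauzy--Veech steps performed also on the left, each of which again multiplies the cocycle matrices by fixed invertible matrices; equivalently, one invokes Veech's zippered-rectangles construction, which realizes the suspensions of $T_{I_1}$ and $T_{I_2}$ as the \emph{same} translation surface $(M,\Sigma,\zeta)$ and hence with coinciding forward Rauzy--Veech (Teichm\"uller) orbits past a finite time.

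I expect the main obstacle to be precisely this last structural input: making rigorous, in the present combinatorial language, that two arbitrary transversals in good position are related by a controlled finite chain of Rauzy--Veech-type moves, so that their Kontsevich--Zorich cocycles are asymptotically equivalent. Once this is in hand, conditions (b) and (c) transfer verbatim under the fixed change of basis, and $\dim \Gamma_s$ transfers because it equals the number of negative Lyapunov exponents of the KZ cocycle on $H_1(M,\Rset)$, which is an invariant of $(M,\Sigma,\zeta)$; the restricted case then adds nothing beyond tracking the equality $\dim \Gamma_s = g$ through the reduction, while condition (a) has already been settled intrinsically via the Proposition.
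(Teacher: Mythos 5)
Your handling of condition (a) is essentially identical to the paper's: use the Proposition to translate (a) for the witnessing transversal $T^{\flat}$ into the intrinsic statement (4) about $(M,\Sigma,\zeta)$, then back to (a) for $T_I$. For conditions (b), (c) (and $\dim\Gamma_s=g$ in the restricted case), however, you take a genuinely different route from the paper. The paper observes that once (a) holds, these conditions are equivalent to conditions on the \emph{continuous-time} extended Kontsevich--Zorich cocycle over the Teichm\"uller flow, which is manifestly intrinsic to $(M,\Sigma,\zeta)$ and makes no reference to $I$; the key point is that the discrete times $t_n$ attached to $B(n)$ satisfy $t_{n+1}=\mathcal{O}(t_n^{1+\tau})$ and so are dense enough to detect the continuous-time conditions. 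You instead propose a purely combinatorial lemma: that any two transversals in good position admit a common Rauzy--Veech descendant, so the rotation numbers agree up to a tail, and then argue that (b), (c) are tail-invariant. The tail-invariance argument (right-multiplication of $B(n)$ by a fixed $B(n_0)^{-1}$, transporting $\Gamma_0$ and $\Gamma_s$ by $B(n_0)$) is sound modulo the issue of re-decomposing the tail into complete paths of minimal length, which is harmless once (a) holds.

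The genuine gap, which you yourself flag, is the common-descendant lemma. In particular, the intermediate claim that once $I^{(N)}\subset I'$ (same left endpoint), $I^{(N)}$ is automatically a Rauzy--Veech descendant of $I'$ is not obvious: the Rauzy--Veech sequence from $I'$ chooses a specific decreasing sequence of right endpoints, and it is not a priori clear that the right endpoint of $I^{(N)}$ appears in it. Making this rigorous (and then also handling distinct left endpoints via ``moves on the left'' or backward induction) requires nontrivial work that is neither carried out in the proposal nor a direct citation of [Y4, Proposition 7.12] (which is about positivity of products $Z(n)$, not about merging of induction sequences). The paper's route via the continuous-time cocycle sidesteps exactly this combinatorial lemma; both the paper and your proposal leave details to the reader, but the paper's remaining gap (discrete/continuous equivalence under (a)) is more routine than yours.
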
            
              
\begin{proof}
We have to check that $T_I$ satisfies condition (a) , (b), (c) of subsection 3.3 , and also (d) in the restricted case.
By assumption, there exists  an open bounded horizontal segment $I^{\flat}$ in  good position such that the return map $T^{\flat}$ of the vertical flow to $I^{\flat}$ is an i.e.m. of (restricted) Roth type.
Therefore, property (1) in the proposition is satisfied by $T^{\flat}$. Then, property (4) is satisfied by $(M,\Sigma, \zeta)$. Applying a second time the proposition, we conclude that property (1) is satisfied by $T_I$. This is condition (a) in subsection 3.3 . 
\smallskip
For conditions (b), (c) (and (d) in the restricted case), one has only to observe that, {\bf once (a) is satisfied}, they can be formulated directly in terms of the {\it continuous time} extended Kontsevich-Zorich cocycle over the Teichm\"uller flow in moduli space (without reference to the horizontal segment $I$). The main point is that the continuous times $t_n$ corresponding to the integers $n$ in $B(n)$ satisfy
$$ t_{n+1} = \mathcal O (t_n^{1+ \tau})$$
for all $\tau >0$, so they are "dense enough" to imply the same conditions for all times $t$. We leave the details to the reader.  
\end{proof}

\end{document}